\newtheorem{theorem}{Theorem}[section]
\newtheorem{lemma}{Lemma}[section]
\newtheorem{claim}{Claim}[section]
\newtheorem{mainTheorem}{Theorem}
\theoremstyle{definition}
\newtheorem{definition}{Definition}[section]
\newtheorem{remark}[theorem]{Remark}
\renewcommand\qedsymbol{$\blacksquare$}
\newcommand{\norm}[1]{\left\lVert#1\right\rVert}
\newcommand{\intd}{{\rm d}}
\newcommand{\interior}[1]{%
  {\kern0pt#1}^{\mathrm{o}}%
}
\title[Rigidity of some partially hyperbolic automorphisms]{Centralizer classification and rigidity for some partially hyperbolic toral automorphisms}
\author{Sven Sandfeldt}
\address{Sven Sandfeldt, Department of mathematics, Kungliga Tekniska högskolan, Lindstedtsvägen 25, SE-100 44 Stockholm, Sweden.}
\email{svensan@kth.se}
\date{July 2023}
\keywords{centralzier, rigidity, higher-rank actions, toral automorphism}
\subjclass[2020]{37C85}
\begin{document}

\begin{abstract}
In this paper, we consider local centralizer classification and rigidity of some toral automorphisms. In low dimensions, we classify up to finite index possible centralizers for volume preserving diffeomorphisms $f$ $C^{1}-$close to an ergodic irreducible toral automorphism $L$. Moreover, we show a rigidity result in the case that the centralizer of $f$ is large: if the smooth centralizer $Z^{\infty}(f)$ is virtually isomorphic to that of $L$ then $f$ is $C^{\infty}-$conjugate to $L$. In higher dimensions, we show a similar rigidity result for certain irreducible toral automorphisms. We also classify up to finite index all possible centralizers for symplectic diffeomorphisms $C^{5}-$close to a class of irreducible symplectic automorphisms on tori of any dimension.
\end{abstract}

\maketitle

\section{Introduction}

The $C^{r}-$smooth centralizer of a diffeomorphism $f:M\to M$ is the collection of all $g:M\to M$ $C^{r}-$diffeomorphisms such that $fg = gf$. That is, the centralizer of $f$ consists of the (global) smooth change of coordinates that keeps $f$ unchanged. We denote by $Z^{r}(f)$, $r = 0,1,2,...,\infty$, the $C^{r}-$smooth centralizer of $f$. Given any $C^{r}-$diffeomorphism $f$ and $n\in\mathbb{Z}$, we can always produce elements in $Z^{r}(f)$ by taking iterates of $f$, $f^{n}\in Z^{r}(f)$. We denote by $\langle f\rangle = \{f^{n}\}_{n\in\mathbb{Z}}\subset Z^{r}(f)$ and call this the trivial part of the centralizer. For $r\geq 1$ it was conjectured by Smale that a generic $C^{r}-$diffeomorphism has $C^{r}-$centralizer consisting entirely of the trivial part \cite{Smale1,Smale2}. This conjecture was solved in the $C^{1}-$topology by Bonatti, Crovisier and Wilkinson \cite{SmaleConjecture}. In the Anosov setting Smale's conjecture is known to hold for $C^{\infty}-$diffeomorphisms by Palis and Yoccoz \cite{SmaleConjectureYoccozPalis}, and for $C^{r}-$diffeomorphisms by Rocha and Varandas \cite{SmaleConjectureRochaVarandas}. For certain Anosov diffeomorphisms on tori (or more generally nilmanifolds) we can be more precise in describing the centralizer. Let $f:\mathbb{T}^{d}\to\mathbb{T}^{d}$ be Anosov, with the induced map on homology $f_{*}\in{\rm GL}(d,\mathbb{Z})$ irreducible\footnote{A matrix $A\in{\rm GL}(d,\mathbb{Z})$ is irreducible if its characteristic polynomial $p_{A}(t)\in\mathbb{Z}[t]$ is irreducible in $\mathbb{Q}[t]$.}. The results of Manning \cite{FranksManningConjugacy} and Adler Palais \cite{ConjRigidity} imply that $f$ has discrete $C^{\infty}-$centralizer isomorphic to some abelian subgroup $Z_{{\rm GL}(d,\mathbb{Z})}(f_{*})\subset{\rm GL}(d,\mathbb{Z})$. Moreover, the global rigidity result of F. Rodriguez Hertz and Wang \cite{HigherRankRigidity} implies that the diffeomorphism $f$ is smoothly conjugated to $f_{*}$ if $Z^{\infty}(f)$ has rank $\geq 2$. That is, either $f$ has a virtually trivial $C^{\infty}-$centralizer or $f$ is $C^{\infty}-$conjugated to $f_{*}$. So, for Anosov diffeomorphisms on tori with irreducible induced map on homology, we have a complete \textit{centralizer classification}: either $Z^{\infty}(f)$ is virtually trivial or $Z^{\infty}(f)\cong Z_{{\rm GL}(d,\mathbb{Z})}(f_{*})$. In the latter case we also obtain \textit{centralizer rigidity} when ${\rm rank}(Z_{{\rm GL}(d,\mathbb{Z})}(f_{*})) > 1$ (which is guaranteed, for example, if $d\geq 5$): if $Z^{\infty}(f)\cong Z_{{\rm GL}(d,\mathbb{Z})}(f_{*})$ then $f$ is $C^{\infty}-$conjugated to $f_{*}$. A step towards weakening the Anosov assumption in the centralizer classification statement was taken by Damjanović, Wilkinson and Xu in \cite{CentralizerRigidity}. The authors classify possible centralizers for (volume preserving and ergodic) perturbations of some trivial circle extensions of Anosov automorphisms. In \cite{CentralizerRigidity} the authors also raise the question of whether a similar type of centralizer classification is possible near other algebraic partially hyperbolic systems. In this paper, we answer this question for certain ergodic partially hyperbolic automorphisms on tori.

Let ${\rm Diff}_{\rm vol}^{\infty}(\mathbb{T}^{d})$ denote the volume preserving $C^{\infty}-$diffeomorphisms of $\mathbb{T}^{d}$.
\begin{mainTheorem}\label{Thm:MainTheorem1}
Let $L\in{\rm GL}(4,\mathbb{Z})$ be irreducible and induce an ergodic automorphism of $\mathbb{T}^{4}$. If $f\in{\rm Diff}_{\rm vol}^{\infty}(\mathbb{T}^{4})$ is $C^{1}-$close to $L$ then we have a dichotomy
\begin{enumerate}[label = (\roman*)]
    \item either $Z^{\infty}(f)$ is virtually trivial,
    \item or $f$ is $C^{\infty}-$conjugate to $L$.
\end{enumerate}
\end{mainTheorem}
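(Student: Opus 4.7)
The plan is to reduce the statement to the Anosov higher-rank rigidity result of \cite{HigherRankRigidity} by producing, from the nontrivial centralizer of $f$, a diffeomorphism that is itself Anosov. The Anosov case of the theorem follows already from the results cited in the introduction, so I assume $L$ has eigenvalues on the unit circle. Irreducibility and ergodicity in dimension four then force ${\rm Spec}(L)=\{\mu,\mu^{-1},e^{i\alpha},e^{-i\alpha}\}$ with $|\mu|>1$ and $\alpha/\pi\notin\mathbb{Q}$; in particular $L$ is partially hyperbolic with invariant splitting of dimensions $1,2,1$. The number field $K=\mathbb{Q}(\lambda)$ then has signature $(2,1)$, so by Dirichlet's unit theorem $Z_{{\rm GL}(4,\mathbb{Z})}(L)$ has rank two, and by choosing a unit whose complex absolute value is not one, one produces an Anosov element $M\in Z_{{\rm GL}(4,\mathbb{Z})}(L)$ with irreducible characteristic polynomial.

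Suppose (i) fails. Then some $g\in Z^\infty(f)$ has $g_*\in Z_{{\rm GL}(4,\mathbb{Z})}(L)$ not equal to a power of $L$ modulo torsion, so $\langle L,g_*\rangle$ has rank two. Taking a suitable combination $\phi=f^a g^b$---with $(a,b)$ chosen so that the corresponding unit is Anosov and irreducible, which is generic in the lattice---produces $\phi\in Z^\infty(f)$ with $\phi_*$ Anosov and irreducible in ${\rm GL}(4,\mathbb{Z})$. The plan is now to show $\phi$ is itself Anosov, apply \cite{HigherRankRigidity} to conjugate $\phi$ to $\phi_*$, and finally recover the conjugacy for $f$ from the centralizer theorems of Manning \cite{FranksManningConjugacy} and Adler--Palais \cite{ConjRigidity}.

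The key technical step, and the main obstacle, is upgrading the homological Anosov property of $\phi_*$ to genuine pointwise uniform hyperbolicity of $\phi$. Because $\phi$ commutes with $f$, it preserves the $f$-invariant splitting $E^s_f\oplus E^c_f\oplus E^u_f$, acting by scalar cocycles on $E^s_f$ and $E^u_f$ and by a $2\times 2$ matrix cocycle on the two-dimensional center. One must show that the two center Lyapunov exponents of $\phi$ equal $\log|\sigma(\epsilon)|\neq 0$, where $\epsilon$ is the unit corresponding to $\phi_*$, and that this Pesin-hyperbolicity is in fact uniform. I would pursue this by working within the $\mathbb{Z}^2$ action generated by $f$ and $\phi$: the commutation with $f$, whose center Lyapunov exponents vanish, together with measure rigidity for higher-rank abelian actions in the spirit of Katok--Spatzier and the partially hyperbolic centralizer techniques of Damjanović, Wilkinson and Xu \cite{CentralizerRigidity}, should pin the center cocycle of $\phi$ down to that of $\phi_*$ up to cohomology, yielding uniform hyperbolicity once accessibility and stable ergodicity of the perturbation $f$ are invoked.

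Once $\phi$ is Anosov with $\phi_*$ irreducible, $Z^\infty(\phi)$ contains $\langle f,\phi\rangle\cong\mathbb{Z}^2$ and hence has rank at least two, so \cite{HigherRankRigidity} produces $h\in{\rm Diff}^\infty(\mathbb{T}^4)$ with $h\phi h^{-1}=\phi_*$. Then $hfh^{-1}$ commutes with $\phi_*$; the Adler--Palais--Manning classification shows the smooth centralizer of the irreducible linear Anosov $\phi_*$ is exhausted by affine maps $x\mapsto Ax+b$ with $A\in Z_{{\rm GL}(4,\mathbb{Z})}(\phi_*)$, and matching of homotopy classes forces $A=L$. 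Ergodicity of $L$ makes $L-I$ invertible, so there is $c\in\mathbb{T}^4$ with $(I-L)c=b$; conjugating further by the translation $T_c$ yields $f$ smoothly conjugate to $L$, completing the proof.
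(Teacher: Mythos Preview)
Your overall architecture---pass to the linearization via homology, find an element $\phi\in Z^{\infty}(f)$ whose linear part $\phi_{*}$ is hyperbolic, show $\phi$ is genuinely Anosov, then invoke \cite{HigherRankRigidity}---is exactly the shape of the paper's argument. But two genuine gaps remain.

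First, the sentence ``Suppose (i) fails. Then some $g\in Z^{\infty}(f)$ has $g_{*}$ not equal to a power of $L$ modulo torsion'' is not justified. Failure of (i) only says $Z^{\infty}(f)$ is not virtually $\langle f\rangle$; a priori the extra elements could all be homotopic to the identity. You need the virtual injectivity of the homology representation $Z^{\infty}(f)\to Z_{\rm Aut}(L)$ (the paper's Theorem~\ref{Thm:InjectivityOfHom}), whose proof uses the Franks--Manning-type coordinates along the stable and unstable foliations together with the density of $su$-saturated invariant sets from \cite{StabelErgodicity}. Without this, you cannot conclude that the image in ${\rm GL}(4,\mathbb{Z})$ has rank two.

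Second, and more seriously, the step you flag as ``the main obstacle'' is in fact the heart of the matter, and the tools you propose do not apply. The paper states explicitly that the systems here are \emph{not} fibered and have center dimension two, so the Damjanovi\'c--Wilkinson--Xu machinery from \cite{CentralizerRigidity} cannot be used; and Katok--Spatzier measure rigidity concerns invariant measures for algebraic actions, not the cocycle problem you need. The paper's route is quite different: it first builds H\"older coordinates $\Phi_{\chi}$ along each Lyapunov foliation (Claim~\ref{Claim:ExistenceFMcoordinates}) to control growth of every centralizer element along $E_{f}^{\chi}$, then uses a volume argument (Lemma~\ref{L:VolumeExpansionEst}) together with the full-rank hypothesis to produce $g\in Z^{\infty}(f)$ with $\det(D_{x}g|_{E_{f}^{c}})\geq\lambda>1$ uniformly. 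It then analyzes the Franks--Manning semiconjugacy $H$ for $g$ restricted to a single center leaf $W_{F}^{c}(0)$, using a stationary normal form and the area-expansion bound to show $H|_{W_{F}^{c}(0)}$ is a $C^{1}$ diffeomorphism; only then does uniform hyperbolicity of $g$ along the center follow. None of this is captured by ``measure rigidity should pin down the center cocycle,'' and your sketch gives no mechanism to rule out, say, a zero center Lyapunov exponent for $\phi$ at some point.
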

In dimension $6$ we show a similar result for automorphisms with complex eigenvalues.
\begin{mainTheorem}\label{Thm:MainTheorem2}
Let $L\in{\rm GL}(6,\mathbb{Z})$ be irreducible with only complex eigenvalues and induce an ergodic automorphism of $\mathbb{T}^{6}$. If $f\in\rm Diff_{vol}^{\infty}(\mathbb{T}^{6})$ is a $C^{1}-$small perturbation of $L$ then we have a dichotomy
\begin{enumerate}[label = (\roman*)]
    \item either $Z^{\infty}(f)$ is virtually trivial,
    \item or $f$ is $C^{\infty}-$conjugate to $L$.
\end{enumerate}
\end{mainTheorem}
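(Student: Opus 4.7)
The plan follows the centralizer classification strategy of \cite{CentralizerRigidity}, adapted to the partially hyperbolic setting in dimension six, and reuses the framework developed for Theorem~\ref{Thm:MainTheorem1}. The starting point is the algebraic model. Because $L$ is irreducible of degree six with only complex eigenvalues, the number field $K = \mathbb{Q}(\lambda)$ has signature $(r_1,r_2) = (0,3)$, so by Dirichlet's unit theorem $Z_{\mathrm{GL}(6,\mathbb{Z})}(L)$ is virtually $\mathbb{Z}^{2}$. Ergodicity prevents any eigenvalue from being a root of unity; combined with Kronecker's theorem, this forces exactly one complex conjugate pair of eigenvalues of $L$ to lie on the unit circle in the nontrivial (non-Anosov) regime. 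Hence $L$ is partially hyperbolic with a $(2,2,2)$ splitting $E^{s}\oplus E^{c}\oplus E^{u}$, with Diophantine rotation dynamics on the two-dimensional center $E^{c}$.

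By structural stability of partial hyperbolicity, $f$ inherits a $(2,2,2)$ splitting. Any $g\in Z^{\infty}(f)$ acts on homology by $g_{*}\in Z_{\mathrm{GL}(6,\mathbb{Z})}(L)$, giving a homomorphism $\pi:Z^{\infty}(f)\to Z_{\mathrm{GL}(6,\mathbb{Z})}(L)$ whose image always contains $L=f_{*}$. The target dichotomy amounts to showing that either $\pi$ has image virtually equal to $\langle L\rangle$ and virtually trivial kernel (case (i)), or $\pi$ has image of finite index in $\mathbb{Z}^{2}$ (case (ii)). In the latter situation we have a $\mathbb{Z}^{2}$-action by volume preserving diffeomorphisms $C^{1}$-close to the linear action generated by $L$ and a companion $L'$; since $L$ and $L'$ together span $Z_{\mathrm{GL}(6,\mathbb{Z})}(L)\otimes\mathbb{Q}$, they produce two linearly independent Lyapunov functionals on each of $E^{s}$ and $E^{u}$, giving a genuinely higher-rank partially hyperbolic abelian action. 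Applying a higher-rank local rigidity theorem in the spirit of \cite{HigherRankRigidity}, extended to the $(2,2,2)$ partially hyperbolic setting, yields a $C^{\infty}$-conjugacy of the full $\mathbb{Z}^{2}$-action to the linear model, and in particular of $f$ to $L$.

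The hard step is the dichotomy itself. Controlling the kernel of $\pi$ requires ruling out volume preserving diffeomorphisms isotopic to the identity that commute with $f$; accessibility, ergodicity and transverse hyperbolicity are expected to reduce such a kernel to a finite group. More delicate is ruling out intermediate images $\langle L,g_{*}\rangle$ of infinite index in $\mathbb{Z}^{2}$, where one must manufacture a second independent centralizer element from a single non-trivial one. The plan is to exploit the low dimension of $E^{c}$ and the rotational nature of the center dynamics: combining a Katok--Spatzier style Lyapunov hyperplane analysis for the $\mathbb{Z}^{2}$-action generated by $f$ and $g$ with the rigidity of Diophantine rotations on two-tori should force the putative rank-one image to promote to a rank-two image. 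This is the step where the hypothesis that $L$ has only complex eigenvalues enters crucially, as it guarantees that the center is a single irreducible $2$-dimensional rotational block on which these arguments apply uniformly, without the additional real/complex mixing that would require separate treatment.
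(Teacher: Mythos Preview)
Your high-level outline is right up to the point where you set up the homology map $\pi:Z^{\infty}(f)\to Z_{\mathrm{GL}(6,\mathbb{Z})}(L)\cong\mathbb{Z}^{2}$ (virtually). But three things go wrong after that.

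First, your ``hard step'' is a phantom. A subgroup of $\mathbb{Z}^{2}$ has rank $0$, $1$, or $2$, and rank $2$ already means finite index. Since the image of $\pi$ contains $f_{*}=L$, the only possibilities are rank $1$ (virtually $\langle L\rangle$) or rank $2$ (finite index). There is no ``intermediate image of infinite index'' to rule out, and no need to ``manufacture a second independent centralizer element from a single non-trivial one.'' The paper makes exactly this observation: since $r_{1}+r_{2}-1=0+3-1=2$, Theorem~\ref{Thm:MainTheorem2} reduces immediately to Theorem~\ref{Thm:FullCentralizer} (plus Theorem~\ref{Thm:InjectivityOfHom} for virtual injectivity of $\pi$).

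Second, the real work is in the rank-$2$ case, and here you invoke a black box that does not exist. You write that one applies ``a higher-rank local rigidity theorem in the spirit of \cite{HigherRankRigidity}, extended to the $(2,2,2)$ partially hyperbolic setting.'' But \cite{HigherRankRigidity} requires an Anosov element, and no such extension is available; producing one is precisely the content of Theorem~\ref{Thm:FullCentralizer}. Moreover, your claim that the $\mathbb{Z}^{2}$-action is ``$C^{1}$-close to the linear action'' is unjustified: only $f$ is assumed close to $L$, while the other centralizer elements are a priori arbitrary, so no local rigidity theorem applies directly.

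The paper's route through Theorem~\ref{Thm:FullCentralizer} is quite different from what you sketch. Using the Franks--Manning-type coordinates $\Phi_{\chi}$ along the hyperbolic Lyapunov foliations (Section~\ref{Sec:FranksManninCoordinates}), one controls the growth of every $g\in Z^{\infty}(f)$ along $E_{f}^{\chi}$ in terms of its linearization (Lemma~\ref{L:PropLyapFunc}), and hence the center Jacobian (Lemma~\ref{L:VolumeExpansionEst}). When the image of $\pi$ has full rank, a lattice argument (Lemma~\ref{L:AlgCenterDomination}) produces $g\in Z^{\infty}(f)$ homotopic to a hyperbolic $M$ with $E_{L}^{c}\subset E_{M}^{u}$ and $\det(D_{x}g|_{E_{f}^{c}})\geq\lambda>1$ uniformly. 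The Franks--Manning semiconjugacy $H$ for $g$ (which exists since $M$ is hyperbolic) is then shown, via a stationary normal-form argument on $W_{F}^{c}(0)$ (Lemmas~\ref{L:ExistenceStationaryNormalForm}--\ref{L:FrankManningIsInjective}), to be a $C^{1}$ diffeomorphism along the center; this forces $g$ to be Anosov, and only then does \cite{HigherRankRigidity} apply. Your proposal skips this entire mechanism.
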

\begin{remark}
In both Theorems, the conclusion follows from the global rigidity of higher rank Anosov actions \cite{HigherRankRigidity} when $L$ is hyperbolic, so the novelty in our results is in the case when $L$ is not hyperbolic, but only partially hyperbolic.
\end{remark}
Theorems \ref{Thm:MainTheorem1} and \ref{Thm:MainTheorem2} can be put in the more general framework of centralizer classification and centralizer rigidity. If $f_{0}:M\to M$ is some model system with a known centralizer (for example, an irreducible automorphism of the torus) then one can study the centralizers of perturbations $f$ of $f_{0}$. We ask: what are the possible centralizers of perturbations $f$ of $f_{0}$? This is the problem of local centralizer classification at $f_{0}$. Second, assuming that the centralizer of $f$ is (virtually) isomorphic to the centralizer of $f_{0}$, what can be said about the dynamics of $f$? This is the problem of centralizer rigidity. Theorems \ref{Thm:MainTheorem1} and \ref{Thm:MainTheorem2} both show that the possible centralizers of $f$ close to $L$ are either $\mathbb{Z}$ (which should be the generic situation by Smale's conjecture) or $Z_{\rm aff}(L)$. We also answer the centralizer rigidity problem: if $f$ has a centralizer virtually isomorphic to the centralizer of $L$ then $f$ is smoothly conjugate to $L$, so the dynamics is smoothly equivalent to that of $L$.

Theorems \ref{Thm:MainTheorem1} and \ref{Thm:MainTheorem2} are the strongest possible in the sense that they give a dichotomy between the generic situation (which by Smale's conjecture corresponds to a trivial centralizer) and a smooth conjugacy to an algebraic system. Both theorems follow from general results on tori of any dimension, which we state in the next section.

The question of centralizer classification and centralizer rigidity has been formulated and addressed by Damjanović, Wilkinson and Xu in \cite{CentralizerRigidity}. In the same paper, a centralizer classification result for perturbations of geodesic flows on negatively curved surfaces was obtained. The authors also consider ergodic volume preserving perturbations of circle extensions of hyperbolic automorphisms of the torus. In particular, for product maps $A\times R_{\omega}$ where $A$ is a hyperbolic automorphism and $R_{\omega}$ a circle translation, it follows from results in \cite{CentralizerRigidity} that an ergodic volume preserving $C^{1}-$small perturbation can only have centralizer of a certain prescribed type. The main feature of the toral systems considered in \cite{CentralizerRigidity} is that they are fibered (the center foliation has uniformly compact leaves) and that the center foliation has dimension $1$. Centralizer classification and rigidity have also been studied for derived-from-Anosov maps on $\mathbb{T}^{3}$ in \cite{CentralizerOn3Torus}. In \cite{CentralizerOn3Mfd} centralizer classification was studied for partially hyperbolic diffeomorphisms on $3-$manifolds. In all these works the dimension of the center direction is $1$.

The main novelty in our work is that the partially hyperbolic systems we consider here are not fibered partially hyperbolic systems (as are the systems on the torus considered in \cite{CentralizerRigidity}) and the center dimension is not $1$, so the approach in \cite{CentralizerRigidity} can not be applied. 

In the next section, we formulate results for automorphisms on tori of any dimension, which have Theorems \ref{Thm:MainTheorem1} and \ref{Thm:MainTheorem2} as a consequence and we explain the strategy of proofs.

\subsection{Main results}

The systems that we will consider are irreducible toral automorphisms $L:\mathbb{T}^{d}\to\mathbb{T}^{d}$ with $2-$dimensional isometric center. These automorphisms have been heavily studied and can be shown to be, among other things, partially hyperbolic with the essential accessibility property, Bernoulli and exponentially mixing. We denote by $Z_{\text{Aut}}(L)$ the centralizer of $L\in{\rm GL}(d,\mathbb{Z})$ in ${\rm GL}(d,\mathbb{Z})$. Equivalently, we define by $Z_{\text{Aut}}(L)$ the centralizer of $L$ in the automorphism group of $\mathbb{T}^{d}$. Since any automorphism of the torus is smooth we have a natural injective map $Z_{\text{Aut}}(L)\to Z^{\infty}(L)$, it is well-known that this map is virtually an isomorphism \cite{ConjRigidity}. Moreover, in \cite{CentAut} it is shown that virtually $Z_{\text{Aut}}(L) = \mathbb{Z}^{r_{1} + r_{2} - 1}$ where $r_{1} = r_{1}(L)$ is the number of real eigenvalues of $L$ and $r_{2} = r_{2}(L)$ is the number of pairs of complex eigenvalues of $L$. Irreducibility of $L$ implies that any subgroup $\Gamma\leq Z_{\text{Aut}}(L)$, containing $L$, with $\rm rank(\Gamma) > 1$ does not factor through a $\mathbb{Z}-$action on some subtorus. A $\mathbb{Z}^{k}-$action by toral automorphisms is said to be of \textit{higher rank} if the action does not factor through a non-trivial $\mathbb{Z}-$action (see Definition \ref{Def:HigherRank}). So any $\Gamma\leq Z_{\text{Aut}}(L)$, $L\in\Gamma$, with ${\rm rank}(\Gamma) > 1$ defines a higher rank action.
\begin{theorem}\label{Thm:InjectivityOfHom}
Let $L\in{\rm GL}(d,\mathbb{Z})$ be an irreducible automorphism with isometric center of dimension $2$. Let $f\in\rm Diff_{vol}^{\infty}(\mathbb{T}^{d})$ be a $C^{1}-$close to $L$. The natural homomorphism $Z^{\infty}(f)\to Z_{\rm Aut}(L)$ given by the homology representation is virtually injective. In particular, if $Z^{\infty}(f)$ is not virtually trivial then the action of $Z^{\infty}(f)$ on $\mathbb{T}^{d}$ is higher rank.
\end{theorem}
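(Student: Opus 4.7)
The plan is to reduce the question to counting smooth solutions $u$ of a commutation equation on $\mathbb{T}^{d}$, and then to solve that equation by a perturbation argument based on Fourier analysis. Take $g\in Z^{\infty}(f)$ with $g_{*}=\mathrm{Id}$ on $H_{1}(\mathbb{T}^{d})$; then $g$ is homotopic to the identity and lifts to a diffeomorphism $\tilde g(x)=x+u(x)$ of $\mathbb{R}^{d}$ with $u\in C^{\infty}(\mathbb{T}^{d},\mathbb{R}^{d})$. Writing $\tilde f(x)=Lx+v(x)$ with $v$ a small $\mathbb{Z}^{d}$-periodic map, the identity $gf=fg$ on $\mathbb{T}^{d}$ becomes, after fixing a lift of $g$,
\[
u\circ\tilde f - L\,u \;=\; v\circ(\mathrm{Id}+u) - v + m,
\]
for some $m\in\mathbb{Z}^{d}$, well defined modulo $(L-I)\mathbb{Z}^{d}$ and depending homomorphically on $g$. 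The task is to show that this equation has only finitely many smooth solutions.

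First I would analyze the equation at $f=L$, where $v\equiv 0$ and it reduces to $u\circ L-Lu=m$. Writing $u(x)=\sum_{k\in\mathbb{Z}^{d}}\hat u(k)e^{2\pi i k\cdot x}$ yields the recurrence $\hat u(L^{-T}k)=L\hat u(k)+m\,\delta_{k,0}$. At $k=0$, $\hat u(0)=(I-L)^{-1}m$, which makes sense because $1$ is not an eigenvalue of $L$: irreducibility rules out rational eigenvalues and the two-dimensional isometric center only contributes non-real eigenvalues on the unit circle. For $k\neq 0$, irreducibility forces every proper non-trivial $L^{T}$-invariant subspace of $\mathbb{R}^{d}$ to be irrational, so $k$ has non-zero components in each of $E^{s}_{L^{T}}$, $E^{c}_{L^{T}}$, $E^{u}_{L^{T}}$; in particular $|L^{-nT}k|\to\infty$ as $|n|\to\infty$. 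Iterating the recurrence gives $\hat u(L^{-nT}k)=L^{n}\hat u(k)$, and the rapid Fourier decay of $u$ combined with the isometric behavior of $L$ on $E^{c}$ and the hyperbolicity on $E^{s}\oplus E^{u}$, applied separately as $n\to+\infty$ and $n\to-\infty$, forces $\hat u(k)=0$. Thus for $f=L$ the only solutions are the $|\det(L-I)|$ constant translations $u\equiv(I-L)^{-1}m$.

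For $f$ close to $L$ I would then view the commutation equation as a perturbation $\mathcal{F}(f,u)=m$. Its linearization at the constant $u_{m}=(I-L)^{-1}m$ (for $f=L$) is, up to lower order terms, the operator $Tu=u\circ L-Lu$. By the Fourier argument $T$ is injective on functions of zero mean, and using the hyperbolicity of $L^{T}$ on $\mathbb{Z}^{d}\setminus\{0\}$ one obtains tame estimates for a right-inverse of $T$ with a controlled loss of derivatives. An implicit function theorem in sufficiently regular Sobolev or H\"older spaces, combined with a bootstrapping argument to recover smoothness (or, if the loss of derivatives cannot be absorbed directly, a Nash--Moser scheme), then yields for each class $[m]\in\mathbb{Z}^{d}/(L-I)\mathbb{Z}^{d}$ at most one smooth solution $u^{([m])}_{f}$ in a $C^{\infty}$-neighborhood of the corresponding constant $u_{m}$. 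The homomorphism $g\mapsto[m_{g}]$ sends the kernel of the homology representation into the finite group $\mathbb{Z}^{d}/(L-I)\mathbb{Z}^{d}$; combined with a global compactness/continuation argument to rule out solutions $u$ outside the local IFT neighborhoods, this gives finiteness of the kernel, i.e.\ virtual injectivity.

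The main obstacle is producing the tame inverse of $T$ and running the perturbation argument uniformly in $f$: in the hyperbolic directions of $L^{T}$ the Fourier computation gives bounded estimates, but the two-dimensional isometric center produces Diophantine-type small divisors, and one has to exploit the fact that, by irreducibility, every non-zero integer frequency $k$ stays a controlled polynomial distance from the center subspace of $L^{T}$ to make the estimates uniform. A secondary difficulty is the global step ruling out solutions far from the constants $u_{m}$, which requires an a priori bound on possible solutions with fixed $[m]$ before the local uniqueness from the IFT can be invoked.
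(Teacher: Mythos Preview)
Your analysis for the linear case $f=L$ correctly recovers Adler--Palais rigidity: the only smooth maps commuting with $L$ and homotopic to the identity are the finitely many translations by $(I-L)^{-1}m$. But the perturbation step has genuine gaps that you yourself flag without resolving, and at least one of them is fatal for the theorem as stated.

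First, the tame inverse for $T$ on the center component is never produced. Katznelson-type lower bounds on ${\rm dist}(k,E^{c}_{L^{T}})$ do exist for irreducible $L$, but the equation you must invert on the center is a cohomological equation over the full $L^{T}$-action on $\mathbb{Z}^{d}\setminus\{0\}$, not a simple twist, and extracting a tame right inverse from this is substantial work you do not carry out. Second, and more seriously, even a successful IFT or Nash--Moser argument gives only \emph{local} uniqueness near each constant $u_{m}$; you have no a priori $C^{0}$ (let alone $C^{r}$) bound forcing an arbitrary smooth solution $u$ into one of those neighborhoods, so global finiteness of the kernel does not follow. Third, the theorem assumes $f$ is merely $C^{1}$-close to $L$, whereas any Nash--Moser scheme needs $v=\tilde f-L$ small in high regularity; your approach cannot recover the stated hypothesis.

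The paper's proof is completely different and bypasses all of this analysis. It is geometric: one constructs H\"older ``Franks--Manning'' coordinates $\Phi_{\chi}$ along each Lyapunov foliation $W^{\chi}_{f}$ satisfying the functional equation $\Phi_{\chi,gx}(gy)=g_{*}\,\Phi_{\chi,x}(y)$ for every $g\in Z^{\infty}(f)$. If $g$ is homotopic to the identity and fixes some point $x_{0}$ (which is automatic once $g$ acts trivially on the finite set of fixed points of $f$), the functional equation with $g_{*}={\rm Id}$ forces $g$ to be the identity on $W^{s}_{f}(x_{0})\cup W^{u}_{f}(x_{0})$; hence ${\rm Fix}(g)$ is a nonempty, closed, $f$-invariant, $su$-saturated set. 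Rodriguez Hertz's theorem that every $f$-invariant $su$-saturated set is dense (valid for $f$ $C^{1}$-close to $L$) then gives ${\rm Fix}(g)=\mathbb{T}^{d}$, i.e.\ $g={\rm id}$. Thus the kernel of the homology map injects into the symmetric group on ${\rm Fix}(f)$, which has order $|\det(L-I)|$. No Diophantine estimates, no implicit function theorem, and only $C^{1}$-closeness are needed.
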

Theorem \ref{Thm:InjectivityOfHom} puts an upper bound on the size of the smooth centralizer for any volume preserving $f$ that is $C^{1}-$close to $L$. Indeed, the homology representation is injective so we can define $\text{rank}(Z^{\infty}(f))$ as the rank of the image of $Z^{\infty}(f)$ in $Z_{\text{Aut}}(L)$ and immediately obtain $\text{rank}(Z^{\infty}(f))\leq r_{1}(L) + r_{2}(L) - 1$. When we have equality $\text{rank}(Z^{\infty}(f)) = r_{1}(L) + r_{2}(L) - 1$ (or equivalently, when $Z^{\infty}(f)\to Z_{\text{Aut}}(L)$ is virtually an isomorphism) we obtain a rigidity result for $f$.
\begin{theorem}\label{Thm:FullCentralizer}
Let $L\in{\rm GL}(d,\mathbb{Z})$ be an irreducible automorphism with isometric center of dimension $2$ and no three eigenvalues of the same modulus. If $f\in{\rm Diff_{vol}^{\infty}}(\mathbb{T}^{d})$ is $C^{1}-$close to $L$ and ${\rm rank}(Z^{\infty}(f)) = r_{1} + r_{2} - 1$, or equivalently the homology representation is virtually an isomorphism, then $f$ is $C^{\infty}-$conjugate to $L$.
\end{theorem}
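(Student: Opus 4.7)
The plan is to reduce the statement to the global rigidity theorem of F.~Rodriguez Hertz and Wang \cite{HigherRankRigidity} applied to a higher-rank abelian sub-action of $Z^{\infty}(f)$ containing an Anosov element. By Theorem \ref{Thm:InjectivityOfHom}, the homology representation $\pi : Z^{\infty}(f) \to Z_{\rm Aut}(L)$ is virtually injective; combined with the rank hypothesis ${\rm rank}(Z^{\infty}(f)) = r_{1}+r_{2}-1$, passing to finite-index subgroups gives an isomorphism $\pi : \Gamma \to \Gamma_{0}$ between $\Gamma \leq Z^{\infty}(f)$ and $\Gamma_{0} \leq Z_{\rm Aut}(L)$, each abstractly $\mathbb{Z}^{r_{1}+r_{2}-1}$. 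Under the standing assumptions ($L$ is irreducible, ergodic, with $2$-dimensional isometric center) the rank $r_{1}+r_{2}-1$ is at least $2$ in every case the theorem concerns, so $\Gamma$ yields a higher-rank $C^{\infty}$ abelian action on $\mathbb{T}^{d}$ whose linearization is the irreducible algebraic $\Gamma_{0}$-action.

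Next I would exhibit an Anosov element $A \in \Gamma_{0}$ whose lift $a = \pi^{-1}(A) \in \Gamma$ is itself an Anosov diffeomorphism of $\mathbb{T}^{d}$. Dirichlet's unit theorem applied to the order $\mathbb{Z}[L]$ shows that the Anosov locus in $Z_{\rm Aut}(L)$ is the complement of a finite union of proper rational hyperplanes in the log-modulus chart and therefore meets $\Gamma_{0}$ in a set of full rank; the hypothesis that no three eigenvalues of $L$ share modulus guarantees that the Lyapunov hyperplanes of the $\Gamma_{0}$-action on $\mathbb{R}^{d}$ are pairwise non-proportional outside the forced pairing of complex conjugates, which is the simple-spectrum input required for higher-rank rigidity. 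To promote $A$ to $a$, I would use that $a$ commutes with $f$ and hence preserves the partially hyperbolic splitting $E^{s}_{f} \oplus E^{c}_{f} \oplus E^{u}_{f}$, which is $C^{0}$-close to the $L$-invariant splitting: on $E^{s}_{f}$ and $E^{u}_{f}$ the hyperbolicity of $a_{*} = A$ propagates by $C^{1}$-closeness of $f$ to $L$, while on the $2$-dimensional center $E^{c}_{f}$ the restriction of $A$ to $E^{c}_{L}$ is hyperbolic in ${\rm GL}(2,\mathbb{R})$, and I would use matching of homology classes together with commutation with $f$ to force $a$ to act hyperbolically on each (perturbed) center leaf.

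Once $a \in \Gamma$ is known to be Anosov, the $\Gamma$-action is a higher-rank $C^{\infty}$ abelian action containing an Anosov element whose linearization is the irreducible $\Gamma_{0}$-action; applying \cite{HigherRankRigidity} produces a $C^{\infty}$-diffeomorphism $h$ conjugating the $\Gamma$-action to its linearization, and in particular $hfh^{-1} = L$. The main obstacle I expect is the second step: upgrading hyperbolicity of $A$ along $E^{c}_{L}$ to hyperbolicity of $a$ along $E^{c}_{f}$. Since $f$ itself has zero Lyapunov exponents on its center, standard structural-stability/persistence arguments do not directly supply hyperbolicity for $a$ there, and the proof will likely require some combination of accessibility of $f$, the simple coarse-Lyapunov spectrum granted by the "no three equal moduli" assumption, and a Livshitz- or cocycle-rigidity argument adapted to the partially hyperbolic setting.
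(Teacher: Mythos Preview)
Your overall architecture is correct and matches the paper: reduce to \cite{HigherRankRigidity} by producing an Anosov element in $Z^{\infty}(f)$. You also correctly isolate the real difficulty: upgrading hyperbolicity of the linearization $A$ along $E_{L}^{c}$ to hyperbolicity of $a$ along $E_{f}^{c}$. However, the methods you speculate about---accessibility of $f$, Livshitz-type or cocycle-rigidity arguments---are not what the paper does, and I do not see how they would work here. Accessibility actually goes the wrong way: the paper eventually needs $f$ to be \emph{non}-accessible (so that the $\mathbb{Z}^{d}$-action $T_{n}$ on $W_{F}^{c}(0)$ is well-defined), and there is no cocycle equation in sight for the center behaviour of $a$.

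The paper fills the gap by a rather different and more hands-on route. First, using full rank of $\Gamma_{0}$ and Dirichlet's unit theorem, one does not pick an arbitrary hyperbolic $A$ but a very special one: an $M\in\Gamma_{0}$ whose entire unstable space equals $E_{L}^{c}$ (Lemma~\ref{L:AlgCenterDomination}). Combining this with the H\"older Franks--Manning coordinates $\Phi_{\chi}$ along each Lyapunov foliation (Claim~\ref{Claim:ExistenceFMcoordinates}, Lemma~\ref{L:PropLyapFunc}) and volume preservation yields a uniform lower bound $\det(D_{x}g|_{E_{f}^{c}})\geq\lambda>1$ (Lemmas~\ref{L:VolumeExpansionEst}, \ref{L:NonlinearCenterDomination}). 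This is still not Anosov---$D_{x}g|_{E_{f}^{c}}$ could a priori have a contracting direction---so the paper builds the Franks--Manning semiconjugacy $H$ for $g$ (which exists because $M$ is hyperbolic), extends it to a semiconjugacy $Hf=LH$, and then proves by a delicate area-growth and normal-form argument on the single center leaf $W_{F}^{c}(0)$ that $H|_{W_{F}^{c}(0)}$ is in fact a $C^{1}$-diffeomorphism onto $E_{L}^{c}$ (Lemmas~\ref{L:ExistenceStationaryNormalForm}--\ref{L:FrankManningIsInjective} and the two lemmas following). Only then can one differentiate $Hg=MH$ along the center to conclude that $g$ is Anosov. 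The full-rank hypothesis is used precisely to guarantee the existence of such an $M$; for subgroups of smaller rank this step can fail (see the remark after Lemma~\ref{L:AlgCenterDomination} and Lemma~\ref{L:NecessityOfLargeAction}).
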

By Theorem \ref{Thm:FullCentralizer} we obtain centralizer rigidity: if $f$ has a centralizer virtually isomorphic to the centralizer of $L$, then $f$ is $C^{\infty}-$conjugate to $L$. We deduce Theorems \ref{Thm:MainTheorem1} and \ref{Thm:MainTheorem2} from Theorem \ref{Thm:FullCentralizer}.
\begin{proof}[Proof of Theorem \ref{Thm:MainTheorem1}]
When $d = 4$, any ergodic, irreducible $L\in\rm GL(4,\mathbb{Z})$ is either Anosov or irreducible with $2-$dimensional isometric center. The case where $L$ is Anosov follows from \cite{HigherRankRigidity}. If $L:\mathbb{T}^{4}\to\mathbb{T}^{4}$ is irreducible with $2-$dimensional isometric center, then $r_{1} = 2$ and $r_{2} = 1$ so the centralizer of $L$ is virtually $\mathbb{Z}^{2}$. It follows that a volume preserving $f$ that is $C^{1}-$close to $L$ either has a virtually trivial centralizer or a centralizer virtually isomorphic to that of $L$. So Theorem \ref{Thm:MainTheorem1} follows from Theorem \ref{Thm:FullCentralizer}.
\end{proof}
\begin{proof}[Proof of Theorem \ref{Thm:MainTheorem2}]
Similarly, if $L\in\text{GL}(6,\mathbb{Z})$ is irreducible with only complex eigenvalues then $L$ is either Anosov or $L$ has $2-$dimensional isometric center and the rank of the centralizer of $L$ is $2$. As in the case of $d = 4$, we see that Theorem \ref{Thm:MainTheorem2} follows from Theorem \ref{Thm:FullCentralizer}.
\end{proof}
We turn to the question of local centralizer classification. That is, describing possible centralizers for all perturbations of $L$. Since this question is answered in the volume preserving setting for $d = 4$ in Theorem \ref{Thm:MainTheorem1} we will restrict to the case $d\geq6$.

To state the next result, we need a property that is stronger than irreducibility. We say that that an automorphism $L$ has property $(P)$ if $L$ is irreducible, has precisely two eigenvalues on the unit circle and all other eigenvalues are real, see Definition \ref{Def:PropertyP}. The conjugacy in Theorem \ref{Thm:FullCentralizer} follows from the existence of an element $g\in Z^{\infty}(f)$ that is homotopic to a hyperbolic automorphism. By studying the maximal rank of a subgroup of $Z_{\text{Aut}}(L)$ that does not contain any hyperbolic matrix we obtain the following classification of possible centralizers.
\begin{theorem}\label{Thm:HomotopicToHyperbolic}
Let $L\in\text{GL}(d,\mathbb{Z})$, $d\geq6$, have property $(P)$ and let $f\in{\rm Diff_{vol}^{\infty}}(\mathbb{T}^{d})$ be $C^{5}-$close to $L$. Then
\begin{enumerate}[label = (\roman*)]
    \item either ${\rm rank}(Z^{\infty}(f))\leq (d-2)/2$,
    \item or $f$ is $C^{\infty}-$conjugate to $L$.
\end{enumerate}
\end{theorem}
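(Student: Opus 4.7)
The plan is to reduce the statement to an algebraic bound on the rank of non-hyperbolic subgroups of $Z_{\rm Aut}(L)$, and then apply the method of Theorem \ref{Thm:FullCentralizer}. First I would use Theorem \ref{Thm:InjectivityOfHom} to embed $Z^\infty(f)$ virtually as a subgroup $\Gamma\le Z_{\rm Aut}(L)$. Setting $K=\mathbb{Q}[x]/(p_L(x))$, Dirichlet's theorem identifies $Z_{\rm Aut}(L)$ virtually with the unit group $\mathcal{O}_K^\times$, and under this identification the eigenvalues of the matrix attached to a unit $u$ are its Galois conjugates $\{\sigma_v(u)\}_v$. Property $(P)$ gives signature $(d-2,1)$; at a real place $v$ the equation $|\sigma_v(u)|=1$ forces $u=\pm 1$ by injectivity of $\sigma_v$, so the only obstruction to hyperbolicity of a non-torsion unit comes from the unique complex place $w$. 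The heart of the proof is then the algebraic claim: if $\Gamma$ contains no hyperbolic element, then ${\rm rank}(\Gamma)\le(d-2)/2$.

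The decisive observation I would exploit is that property $(P)$ produces a totally real subfield $F\subset K$ of index two. Since $\sigma_w(K)=\mathbb{Q}(e^{i\theta})$ is stable under complex conjugation $\tau$ (as $e^{-i\theta}=(e^{i\theta})^{-1}\in\mathbb{Q}(e^{i\theta})$), pulling back gives a non-trivial involution $c:=\sigma_w^{-1}\tau\sigma_w$ on $K$, which in particular forces $d$ to be even. Set $F:=K^c$. For $u\in F$ we have $\sigma_w(u)=\sigma_w(c(u))=\overline{\sigma_w(u)}\in\mathbb{R}$; a count of $c$-orbits on the $d$ embeddings of $K$ (which biject with the $d/2$ embeddings of $F$) shows $c$ pairs up the real embeddings in $(d-2)/2$ transpositions without fixed points, so $F$ is totally real of degree $d/2$ and $\mathcal{O}_F^\times$ has rank $(d-2)/2$. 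Crucially, any non-torsion $u\in\mathcal{O}_F^\times$ satisfies $\sigma_w(u)\in\mathbb{R}\setminus\{\pm 1\}$ by injectivity of $\sigma_w$, so $|\sigma_w(u)|\ne 1$ and $u$ is hyperbolic.

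The rank bound would then come from a direct count: if $g\in\Gamma$ has the form $g=ut$ with $u\in\mathcal{O}_F^\times$ non-torsion and $t\in\mathcal{O}_K^\times$ torsion, then $|\sigma_v(g)|=|\sigma_v(u)|\ne 1$ for every $v$, making $g$ hyperbolic and contradicting the hypothesis on $\Gamma$. Therefore the images of $\Gamma$ and $\mathcal{O}_F^\times$ in $\mathcal{O}_K^\times/\mathrm{tor}\cong\mathbb{Z}^{d-2}$ intersect trivially, giving ${\rm rank}(\Gamma)+(d-2)/2\le d-2$, which is the desired bound. To conclude, if ${\rm rank}(Z^\infty(f))>(d-2)/2$ then $\Gamma$ contains a hyperbolic element $A$ and some $g\in Z^\infty(f)$ is homotopic to the Anosov automorphism $A$, and the argument of Theorem \ref{Thm:FullCentralizer}---which, as the paper notes, uses only the existence of such a $g$---then produces a $C^\infty$-conjugacy between $f$ and $L$. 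The hardest step is expected to be the verification that $F$ is totally real, as this is where property $(P)$, rather than mere irreducibility, is essential.
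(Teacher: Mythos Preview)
Your algebraic argument for the rank bound is correct and is a genuinely different, more structural route than the paper's Lemma~\ref{L:LargestSubgroupWithoutHyperbolic}. The paper argues directly with Lyapunov functionals: it shows that every non-hyperbolic $A\in Z_{\rm Aut}(L)$ with $A\neq\pm I$ again has property~$(P)$, so its Lyapunov exponents pair up via some involution $\sigma(A)$ on $\{1,\dots,d-2\}$; an asymptotic trick with $L^nA$ forces $\sigma(A)=\sigma(L)$ for all such $A$, and then discreteness of the image of $(\chi_1,\dots,\chi_N)$ gives the bound. Your use of the index-two totally real subfield $F=K^c$ (where $c$ is the involution $x\mapsto x^{-1}$, well-defined because $p_L(t)=t^dp_L(t^{-1})$) is cleaner and explains \emph{why} the bound is exactly $(d-2)/2$: the hyperbolic units $\mathcal{O}_F^\times$ already account for half the rank.

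There is, however, a real gap in the analytic step. You write that ``the argument of Theorem~\ref{Thm:FullCentralizer}\ldots uses only the existence of such a $g$'' homotopic to a hyperbolic matrix, but this is not so. The proof of Theorem~\ref{Thm:FullCentralizer} needs the specific element produced by Lemmas~\ref{L:AlgCenterDomination} and~\ref{L:NonlinearCenterDomination}, namely a $g$ with $\det(D_xg|_{E_f^c})\geq\lambda>1$ and $\lambda$ close to $\det(M|_{E_L^c})$; this is what drives Lemma~\ref{L:FrankManningIsInjective}. The Remark following Lemma~\ref{L:AlgCenterDomination} makes this explicit: the methods of that section require maximal rank, and Lemma~\ref{L:NecessityOfLargeAction} shows that for any rank below $r_1+r_2-1$ one can build subgroups where the needed inequality fails. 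With only ${\rm rank}>(d-2)/2$ you cannot invoke that machinery.

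The paper closes the gap by a different mechanism, Lemma~\ref{L:ConjugacyWhenHaveHyperbolic}: once some $g\in Z^\infty(f)$ is homotopic to a hyperbolic $M$, the Franks--Manning semiconjugacy $H$ to $M$ also semiconjugates $f$ to $L$ (Lemma~\ref{L:ExtendingFranksManningSemiConj}); since $L$ is not accessible, Claim~\ref{Claim:AccessibilityFactor} forces $f$ to be non-accessible, and then Rodriguez Hertz's dichotomy (Theorem~\ref{Thm:RHaccessibilityDichom}) gives a bi-H\"older conjugacy that is $C^1$ along $W_f^c$. Differentiating $Hg=MH$ along the center shows $g$ is Anosov, and Theorem~\ref{Thm:RigidityOfHigherRankAnosov} finishes. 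This is precisely where the $C^5$-closeness hypothesis is used --- it is the regularity threshold for Theorem~\ref{Thm:RHaccessibilityDichom} when $d\geq 6$ --- and your sketch gives no indication of how $C^5$ enters.
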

\begin{remark}
For $L$ with property $(P)$ we have ${\rm rank}(Z_{\text{Aut}}(L)) = d-2$, so the theorem says that if $f$ is not conjugated to $L$ then the rank of the smooth centralizer of $f$ can be at most half of the rank of the centralizer of $L$.
\end{remark}
\begin{remark}
Considering this theorem, it seems reasonable to ask if one has a dichotomy similar to Theorems \ref{Thm:MainTheorem1} and \ref{Thm:MainTheorem2} in any dimension: either $Z^{\infty}(f)$ is virtually trivial or $f$ is smoothly conjugate to $L$.
\end{remark}
If $L\in\text{GL}(d,\mathbb{Z})$ has property $(P)$ then $L$ naturally preserves a symplectic form $\omega$. We denote by ${\rm Diff}_{\omega}^{\infty}(\mathbb{T}^{d})$ the space of diffeomorphisms that preserve $\omega$. If we assume that the perturbation $f$ of $L$ is still symplectic, then more can be said about the centralizer of $f$. If $f:M\to M$ and $g:N\to N$ are partially hyperbolic diffeomorphisms with (uniquely integrable) center foliations $W_{f}^{c}$ and $W_{g}^{c}$, then we say that $f$ and $g$ are $C^{r}-$leaf conjugated if they are $C^{r}-$conjugated modulo their center foliations (see Definition \ref{Def:LeafConjugacy}). We have the following theorem showing that $f$ is always smoothly leaf conjugated to $L$ when the centralizer of $f$ is not virtually trivial.
\begin{theorem}\label{Thm:LeafConjugacy}
Let $L\in{\rm GL}(d,\mathbb{Z})$ have property $(P)$. If $f\in\rm Diff_{\omega}^{\infty}(\mathbb{T}^{d})$ is $C^{5}-$close to $L$ and $Z^{\infty}(f)$ is not virtually trivial then $f$ is $C^{1+\alpha}-$leaf conjugated to $L$.
\end{theorem}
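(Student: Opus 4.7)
The plan is to combine the existence of a continuous leaf conjugacy coming from structural stability of normally hyperbolic foliations with the higher-rank and symplectic structure forced on $f$ by its centralizer, in order to upgrade the leaf conjugacy to $C^{1+\alpha}$.

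First, I would invoke Hirsch--Pugh--Shub. Since $L$ has property $(P)$ it is partially hyperbolic with a $2$-dimensional isometric center, dynamically coherent and plaque expansive, so a $C^{5}$-small perturbation $f$ admits an $f$-invariant center foliation $W^{c}_{f}$ with $C^{\infty}$ leaves close to those of $W^{c}_{L}$, together with strong stable and unstable foliations $W^{s}_{f}, W^{u}_{f}$. Structural stability provides a Hölder homeomorphism $h_{0}\colon \mathbb{T}^{d}\to\mathbb{T}^{d}$, $C^{0}$-close to the identity, that maps $W^{c}_{L}$-leaves to $W^{c}_{f}$-leaves and conjugates $L$ and $f$ modulo the center foliations, that is, a $C^{0}$ leaf conjugacy. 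The problem reduces to upgrading $h_{0}$ to a $C^{1+\alpha}$ diffeomorphism.

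Next, I would extract a higher-rank commuting action. Since $Z^{\infty}(f)$ is not virtually trivial, Theorem \ref{Thm:InjectivityOfHom} supplies a $g\in Z^{\infty}(f)$ whose linearization $g_{*}\in Z_{\mathrm{Aut}}(L)$ is not virtually a power of $L$; hence $\langle f,g\rangle$ has rank $2$ and its homology representation is the higher-rank linear action $\langle L,g_{*}\rangle$. Commutativity implies $g$ preserves $W^{c}_{f}, W^{s}_{f}, W^{u}_{f}$ and $\omega$. The key input is that we now have a symplectic rank-$2$ partially hyperbolic action $C^{5}$-close to an irreducible higher-rank linear model.

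The main step is to use this symplectic higher-rank action to produce directional regularity of $h_{0}$ along the three invariant foliations. Along $W^{s}_{f}$ and $W^{u}_{f}$, the pair $(f,g)$ conjugates via $h_{0}$ to the transverse higher-rank Anosov linear action of $(L,g_{*})$; higher-rank rigidity in the spirit of Katok--Spatzier and Kalinin--Sadovskaya (applied to the induced transverse $\mathbb{Z}^{2}$-action, whose Lyapunov spectrum is rich enough thanks to property $(P)$ and the symplectic pairing between $W^{s}$ and $W^{u}$) forces $h_{0}$ to be $C^{1+\alpha}$ along the stable and unstable leaves. Along $W^{c}_{f}$ the restrictions of $f$ and $g$ are commuting area-preserving diffeomorphisms of the $2$-dimensional leaves, $C^{5}$-close to translations by the algebraic (hence Diophantine) rotation vectors determined by the unit-circle eigenvalues of $L$ and $g_{*}$; a KAM-type rigidity for such commuting symplectic perturbations of a Diophantine translation produces leafwise smoothness of $h_{0}$ in the center direction. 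Journé's lemma applied to $W^{s}_{f}, W^{u}_{f}, W^{c}_{f}$ then assembles these three partial regularities into a global $C^{1+\alpha}$ leaf conjugacy.

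The main obstacle is the transverse step: producing $C^{1+\alpha}$ regularity of $h_{0}$ along $W^{s}_{f}$ and $W^{u}_{f}$ when no single element of $Z^{\infty}(f)$ need be hyperbolic. Here property $(P)$ (which confines the unit-circle eigenvalues to a single complex pair) and the symplectic structure are both essential, as they jointly supply the independent hyperbolic Lyapunov data on the rank-$2$ action that substitutes for a hyperbolic centralizer element and allows one to run higher-rank rigidity on the transverse holonomies of the center foliation.
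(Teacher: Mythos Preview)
Your proposal has genuine gaps and differs substantially from the paper's argument.

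The paper does not start from an HPS leaf conjugacy and upgrade its regularity. Instead it \emph{constructs} the leaf conjugacy explicitly. In Claim~\ref{Claim:ExistenceFMcoordinates} the paper builds, for each nonzero Lyapunov exponent $\chi$ of $L$, a H\"older map $\Phi_\chi:\mathbb{R}^d\to E_L^\chi$ satisfying $\Phi_\chi(Fx)=L\Phi_\chi(x)$ and $\Phi_\chi(x+n)=\Phi_\chi(x)+n^\chi$ (a Franks--Manning coordinate in the $\chi$-direction). The heart of the proof is Claim~\ref{Claim:RegularityCoordinates}: when $\mathrm{rank}(Z^\infty(f))>1$ and $f$ is symplectic, each $\Phi_\chi$ is $C^{1+\alpha}$. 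The mechanism is a Katok--Lewis type argument (Lemma~\ref{L:RegularityOfFMalongLeaves}): nonstationary normal forms along the one-dimensional foliations $W_f^\lambda$, density of the image of the Lyapunov functionals (from rank $\geq 2$), and density of $Z^\infty(f)$-periodic points combine to force $\Phi_\chi\circ\Psi_{\lambda,p}$ to be of the form $c_p t^\eta$; the affine structure on leaves then pins down $\eta=1$, so $\Phi_\chi$ is smooth along each $W_f^\lambda$ and constant along $W_f^{\lambda'}$ for $\lambda'\neq\chi$. Journ\'e's lemma yields global $C^{1+\alpha}$ regularity. The leaf conjugacy is then simply
\[
H_{\mathrm{leaf}}(p)=\sum_{0\neq\chi\in\mathrm{Lyap}(L)}\Phi_\chi(p)+p^c,
\]
and the center component is the \emph{linear} projection $p\mapsto p^c$; no analytic work along the center is done or needed. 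The symplectic hypothesis enters only through the Avila--Viana dichotomy, which supplies the dense hyperbolic periodic points (Lemma~\ref{L:DenseHigherRankPeriodicPoints}) feeding the normal-form argument.

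Your KAM step along $W_f^c$ is misguided. The center leaves are dense immersed copies of $\mathbb{R}^2$, not tori, and $L|_{E_L^c}$ is an irrational rotation, not a translation, so there is no Diophantine rotation vector in the sense you invoke and no standard KAM result applies. More importantly, a leaf conjugacy need only send center leaves to center leaves and intertwine the dynamics modulo the foliation; it is not required to restrict to a conjugacy on individual leaves, and the paper's $H_{\mathrm{leaf}}$ manifestly does not. You are trying to prove something strictly stronger than the theorem asserts. Your transverse step is also not a proof: ``higher-rank rigidity in the spirit of Katok--Spatzier and Kalinin--Sadovskaya applied to the induced transverse action'' does not name a theorem that applies here, since those results concern smooth conjugacy of Anosov $\mathbb{Z}^k$-actions, not regularity of leaf conjugacies for partially hyperbolic actions with nontrivial center. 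Finally, the role you assign to the symplectic structure (a pairing between $W^s$ and $W^u$ giving ``independent hyperbolic Lyapunov data'') is not how it is used; it is needed only to access non-uniform hyperbolicity of $f$ and hence density of periodic points.
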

If $L$ has property $(P)$ then we say that $L$ has $r-$spread spectrum (see Definition \ref{Def:SpredSpec}) if for two eigenvalues $\lambda,\mu$ of $L$ such that $|\lambda| > |\mu| > 1$ we have $|\lambda| > |\mu|^{r}$. If we know that $L$ has property $(P)$ and $3-$spread spectrum then we obtain full local centralizer classification and rigidity for symplectic perturbations of $L$.
\begin{theorem}\label{Thm:SymplecticRigidity}
Let $L\in{\rm GL}(d,\mathbb{Z})$ have property $(P)$ and $3-$spread spectrum. If $f\in{\rm Diff}_{\omega}^{\infty}(\mathbb{T}^{d})$ is $C^{5}-$close to $L$ then we have a dichotomy
\begin{enumerate}[label = (\roman*)]
    \item either $Z^{\infty}(f)$ is virtually trivial,
    \item or $f$ is $C^{\infty}-$conjugate to $L$.
\end{enumerate}
\end{theorem}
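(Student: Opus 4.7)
The plan is to combine Theorem \ref{Thm:LeafConjugacy} with a bootstrap from the $C^{1+\alpha}$ leaf conjugacy to a genuine $C^\infty$ conjugacy, where the $3$-spread spectrum hypothesis and the symplectic structure provide the extra ingredients beyond Theorem \ref{Thm:LeafConjugacy}.

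Assume $Z^\infty(f)$ is not virtually trivial. Theorem \ref{Thm:LeafConjugacy} then gives a $C^{1+\alpha}$ leaf conjugacy $H:\mathbb{T}^d\to\mathbb{T}^d$ from $f$ to $L$; replacing $f$ with $H\circ f\circ H^{-1}$, we may assume $f$ preserves the linear center foliation of $L$ and acts trivially on the leaf space, so that the stable and unstable foliations of $f$ agree topologically with those of $L$. Pick a non-trivial $g\in Z^\infty(f)$. By Theorem \ref{Thm:InjectivityOfHom}, $g$ descends to a non-trivial element of $Z_{\mathrm{Aut}}(L)$, and since $f$ is symplectic so is $g$; thus $\langle f,g\rangle$ generates a higher-rank abelian symplectic partially hyperbolic action whose homology image has rank at least $2$ inside $Z_{\mathrm{Aut}}(L)\cap \mathrm{Sp}(d,\mathbb{Z})$.

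Property $(P)$ implies that the stable and unstable bundles of $L$ split into one-dimensional Lyapunov subspaces, and the $3$-spread hypothesis means that consecutive Lyapunov exponents along this splitting are separated by a factor of more than $3$. This gap is precisely the narrow-spectrum condition under which Kalinin--Sadovskaya non-stationary normal forms are $C^\infty$-smooth along the finest stable and unstable subfoliations of $f$, endowing each such subfoliation with a canonical smooth affine structure. Combining these smooth normal forms with the commuting symplectic action of $\langle f,g\rangle$ and an adaptation of the higher-rank rigidity technique underlying Theorem \ref{Thm:FullCentralizer} (in the style of F.\ Rodriguez Hertz--Wang) should show that $H$ is smooth along every fine stable and unstable subfoliation; a Journ\'e-type lemma then yields smoothness of $H$ transverse to the center. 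The remaining center component of $H$ is pinned down by the fact that $f$ and $g$ restrict on each $2$-dimensional center leaf to commuting symplectic near-translations, via a Moser-type straightening together with rotational rigidity on the $2$-torus leaves.

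The hard part is the middle step: running a higher-rank rigidity argument in the symplectic setting, where the symplectic centralizer of $L$ has rank only $(d-2)/2$ rather than the full $d-2$ exploited in Theorem \ref{Thm:FullCentralizer}. The $3$-spread spectrum and the symplectic constraint are exactly what should compensate for this smaller rank --- the $3$-spread spectrum by making the fine normal forms sufficiently regular, and the symplectic structure by coupling stable and unstable data and by rigidifying the induced action on the center leaves. Identifying the precise combination that closes the gap between leaf conjugacy and smooth conjugacy is the technical heart of the argument.
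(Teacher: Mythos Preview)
Your outline has several genuine problems, and it misses the paper's much more direct route.

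First, two concrete errors. Conjugating $f$ by the $C^{1+\alpha}$ leaf conjugacy $H$ destroys the $C^{\infty}$ regularity of $f$: $H\circ f\circ H^{-1}$ is only $C^{1+\alpha}$, so after this step you can no longer speak of smooth normal forms or run Journ\'e. Second, the center leaves are not $2$-tori; for an irreducible $L$ with two eigenvalues on $S^{1}$ they are densely immersed copies of $\mathbb{R}^{2}$, so ``rotational rigidity on the $2$-torus leaves'' and a Moser-type argument on compact center leaves do not apply. Finally, you explicitly acknowledge that the ``technical heart'' --- upgrading the leaf conjugacy to a smooth conjugacy with only rank two available --- is unresolved; this is not a gap you can close with an adaptation of the argument behind Theorem~\ref{Thm:FullCentralizer}, which genuinely needs maximal rank.

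The paper's proof avoids all of this. It does not bootstrap the leaf conjugacy at all. Under the $3$-spread hypothesis, Claim~\ref{Claim:RegularityCoordinates} gives that each Franks--Manning coordinate $\Phi_{\chi}:\mathbb{R}^{d}\to E_{L}^{\chi}$ is $C^{3-\varepsilon}$. Hence the $1$-forms $\omega_{\chi}=\mathrm{d}\Phi_{\chi}$ are $C^{2-\varepsilon}$ and nowhere vanishing, and since $E_{f}^{c}=\bigcap_{\chi}\ker\omega_{\chi}$, the center bundle is $C^{2-\varepsilon}$. The symplectic structure enters only here: $E_{f}^{s}\oplus E_{f}^{u}$ is the $\omega$-orthogonal complement of $E_{f}^{c}$, so it inherits $C^{2-\varepsilon}$ regularity. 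With both $E_{f}^{c}$ and $E_{f}^{s}\oplus E_{f}^{u}$ of class $C^{2+}$, one invokes Gogolev--Kalinin--Sadovskaya \cite[Corollary~1.8(6)]{CenterFoliationRig} to conclude directly that $f$ is $C^{\infty}$-conjugate to $L$. No higher-rank rigidity machinery, no analysis on center leaves, and no upgrading of $H$ are needed.
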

\begin{remark}
In Lemma \ref{L:ExistenceSpreadSpectrum} we show that for any $r\in\mathbb{N}$ and even $d$ there is $L\in\text{GL}(d,\mathbb{Z})$ with property $(P)$ and $r-$spread spectrum.
\end{remark}

\subsubsection{Strategy of proofs}

Our results rely in an essential way on the results and methods developed by F. Rodriguez Hertz for studying stable ergodicity of partially hyperbolic toral automorphisms \cite{StabelErgodicity}. From \cite{StabelErgodicity} we have a good understanding of the stable and unstable foliations for perturbations $f$ of some irreducible automorphism $L$ with isometric $2-$dimensional center. In particular, we have a dichotomy: if $f\in{\rm Diff}_{\rm vol}^{\infty}(\mathbb{T}^{d})$ is sufficiently $C^{r}-$close (where $r = 5$ for $d\geq 6$ and $r = 22$ for $d = 4$) to $L$ then $f$ is non-accessible if and only if $f$ is topologically conjugated to $L$. So, to find a topological conjugacy from $f$ to $L$ it suffices to show that $f$ is not accessible. This is used directly in the proof of Theorem \ref{Thm:HomotopicToHyperbolic} and indirectly in the proof of Theorem \ref{Thm:SymplecticRigidity}. The conjugacy produced in \cite{StabelErgodicity} requires control of the perturbation in high regularity, so to deal with $C^{1}-$small perturbations in Theorem \ref{Thm:FullCentralizer} we develop a different method to produce a topological conjugacy.

The starting point of all proofs is Theorem \ref{Thm:InjectivityOfHom} which is proved in Section \ref{Sec:InjectivityOfHom}. By Theorem \ref{Thm:InjectivityOfHom} the assumption ${\rm rank}(Z^{\infty}(f)) > 1$ implies that $Z^{\infty}(f)$ acts on $\mathbb{T}^{d}$ as a higher rank action. The centralizer $Z^{\infty}(f)$ might not contain an Anosov element, so it is not immediate that the action of $Z^{\infty}(f)$ is bi-Hölder conjugated to its linearization (as it is in \cite{HigherRankRigidity,FisherKalininSpatzier}). It follows that the approach proving rigidity in \cite{HigherRankRigidity,FisherKalininSpatzier} can not be applied. Instead, in Section \ref{Sec:FranksManninCoordinates} we construct, following Manning \cite{FranksManningConjugacy}, Hölder coordinates along the stable and unstable foliations that conjugate $f$ to its linearization along the leaves. Using these coordinates we can control the behavior of all elements in $Z^{\infty}(f)$ along the stable and unstable leaves. This is used in two different ways. In Section \ref{Sec:FullCentralizer} we prove that if $Z^{\infty}(f)$ has maximal rank then there is an element $g\in Z^{\infty}(f)$ such that $g$ is homotopic to a hyperbolic automorphism and therefore is semi-conjugated to its linearization \cite{KatokIntroDynSyst}. We show that this semi-conjugacy is, in fact, a bi-Hölder homeomorphism. Once it is established that the action of $Z^{\infty}(f)$ is bi-Hölder conjugate to its linearization, it follows that any element $g\in Z^{\infty}(f)$ homotopic to some hyperbolic automorphism is Anosov. Once we have an Anosov element in $Z^{\infty}(f)$, the global rigidity result by F. Rodriguez Hertz and Wang \cite{HigherRankRigidity} can be applied. In Section \ref{Sec:SymplecticPerturbations} we use methods from Katok and Lewis \cite{KatokLewis} to show that the coordinates constructed in Section \ref{Sec:FranksManninCoordinates} are more regular than Hölder when the perturbation is symplectic. The regularity of the coordinates along stable and unstable foliations implies that the stable and unstable foliation themselves are more regular and we can show that $f$ is $C^{\infty}-$conjugated to $L$ by applying results by Gogolev, Kalinin and Sadovskaya \cite{CenterFoliationRig}.

\subsubsection{Structure of paper}

In Section \ref{Sec:DefBackground} we recall some definitions and results from partially hyperbolic dynamics and higher rank actions. In Section \ref{Sec:FranksManninCoordinates} we show the existence of special coordinates along some invariant foliations, which are used as a substitute for a Franks-Manning conjugacy \cite{FranksManningConjugacy}. These coordinates are then used to control the volume growth of elements $g\in Z^{\infty}(f)$ along the center direction. In Section \ref{Sec:InjectivityOfHom} we show that the homology representation is injective proving Theorem \ref{Thm:InjectivityOfHom}. In Section \ref{Sec:FullCentralizer} we prove Theorem \ref{Thm:FullCentralizer} and \ref{Thm:HomotopicToHyperbolic}. In Section \ref{Sec:SymplecticPerturbations} we prove Theorem \ref{Thm:LeafConjugacy} and \ref{Thm:SymplecticRigidity}. Finally in Appendix \ref{Sec:AlgebraicProperties} we recall and prove some basic properties of irreducible partially hyperbolic automorphisms of the torus.

\subsubsection*{Acknowledgement}
The author is supported by Swedish Research Council grant
2019-67250. I would like to thank Pennsylvania State University for their hospitality during the visit where a large part of this paper was written. I would like to thank Federico Rodriguez Hertz, Joshua Paik and Danijela Damjanović for useful discussions while writing this paper. I would also like to thank Boris Kalinin for explaining his result \cite{RealNormalFormsExist} and Davi Obata for pointing out an error with the application of Journé's lemma in a previous version of the paper. I would also like to thank the anonymous referee for useful comments, that improved the quality of the paper.

\section{Definitions and background}
\label{Sec:DefBackground}

\subsection{Partially hyperbolic diffeomorphisms}

Let $M$ be a closed smooth manifold and let $f\in{\rm Diff}^{r}(M)$. We say that $f$ is (absolutely) partially hyperbolic if there is an $f-$invariant continuous decomposition
\begin{align*}
TM = E^{u}\oplus E^{c}\oplus E^{s}
\end{align*}
such that $E^{u}$ is uniformly expanded, $E^{s}$ is uniformly contracted and the behaviour on $E^{c}$ is dominated by the behaviour along $E^{u}$ and $E^{s}$. That is, we ask that $D_{x}fE^{\sigma}(x) = E^{\sigma}(fx)$, for $\sigma = s,c,u$, and there exists $\lambda,\nu,\widehat{\lambda},\widehat{\nu}\in(0,1)$, $\lambda < \widehat{\lambda}$ and $\nu < \widehat{\nu}$, such that for unit vectors $v^{u}\in E^{u}(p)$, $v^{c}\in E^{c}(p)$ and $v^{s}\in E^{s}(p)$ we have
\begin{align*}
& \norm{D_{p}f^{n}(v^{s})}\leq C\cdot\lambda^{n}, \\
& \frac{1}{C}\cdot\widehat{\lambda}^{n}\leq\norm{(D_{p}f^{n})^{-1}(v^{c})}^{-1}\leq\norm{D_{p}f^{n}(v^{c})}\leq C\cdot\widehat{\nu}^{-n}, \\
& \frac{1}{C}\cdot\nu^{-n}\leq\norm{\left(D_{p}f^{n}\right)^{-1}(v^{u})}^{-1}
\end{align*}
where $C\geq1$ is some uniform constant.

The distributions $E^{s}$ and $E^{u}$ are always uniquely integrable to Hölder foliations with uniformly $C^{r}-$leaves. We denote these foliations by $W^{s}$ and $W^{u}$ or $W_{f}^{s}$ and $W_{f}^{u}$. It is not always true that the bundle $E^{c}$ is integrable, and even if it is, it is not always true that $E^{c}$ is uniquely integrable. A sufficient condition to guarantee that $W^{c}$ is integrable is that $f$ is \textit{Dynamically coherent}.
\begin{definition}\label{Def:DynamicalCoherence}
We say that a partially hyperbolic map $f\in{\rm Diff}^{r}(M)$, $r\geq1$, is dynamically coherent if $E^{u}\oplus E^{c}$ and $E^{c}\oplus E^{s}$ are integrable to foliations $W^{cu}$ and $W^{cs}$. In this case $E^{c}$ is integrable to the foliation $W^{c} = W^{cu}\cap W^{cs}$.
\end{definition}
\begin{remark}
Note that Definition \ref{Def:DynamicalCoherence} does not assert that $W^{cs}$ and $W^{cu}$ are \textit{uniquely integrable}, that is there might exist more than one foliation $W^{cs}$ tangent to $E^{s}\oplus E^{c}$ and similarly for $W^{cu}$.  In this paper, however, all partially hyperbolic systems considered will be dynamically coherent with $W^{cs}$ and $W^{cu}$ uniquely integrable, see Section \ref{Sec:AutomorphismsOnTori} and in particular Theorem \ref{Thm:RegularityOfCenter}.
\end{remark}
We denote by $W^{\sigma}_{\delta}(x)$ the $\delta-$ball inside $W^{\sigma}(x)$. Let $f:M\to M$ and $g:N\to N$ be two partially hyperbolic diffeomorphsims with (uniquely integrable) center foliations $W_{f}^{c}$ and $W_{g}^{c}$. We say that $f$ and $g$ are \textit{leaf conjugate} if they are conjugate modulo the center foliations. More precisely:
\begin{definition}\label{Def:LeafConjugacy}
We say that $f$ and $g$ are leaf conjugate if there is a homeomorphism $H:M\to N$ such that
\begin{align}
H(W_{f}^{c}(fx)) = W_{g}^{c}(gH(x))
\end{align}
for $x\in M$.
\end{definition}
\begin{definition}
Let $(M,\mathcal{F})$ be a foliated manifold where $\mathcal{F}$ is a continuous foliation with uniformly $C^{1}-$leaves. Let $f:M\to M$ preserve the foliation $\mathcal{F}$. We say that $(f,\mathcal{F})$ is $r-$normally hyperbolic if the tangent bundle has a $Df-$invariant splitting
\begin{align*}
T_{p}M = E^{s}(p)\oplus E^{c}(p)\oplus E^{u}(p)
\end{align*}
where $E^{c}(p) = T_{p}\mathcal{F}(p)$, $E^{s}$ is uniformly contracted, $E^{u}$ is uniformly expanded and there is some $k\geq1$ such that
\begin{align*}
& \norm{(D_{p}f^{k}|_{E^{u}})^{-1}}\cdot\norm{D_{p}f^{k}|_{E^{c}}}^{r} < 1, \\
& \norm{D_{p}f^{k}|_{E^{s}}}\cdot\norm{(D_{p}f^{k}|_{E^{c}})^{-1}}^{r} < 1
\end{align*}
for all $p\in M$.
\end{definition}
We have the following results from \cite{InvariantManifolds}.
\begin{theorem}\label{Thm:HPS1}
Let $f\in{\rm Diff}^{\infty}(M)$ and let $\mathcal{F}$ be a $r-$normally hyperbolic foliation for $f$. Then the leaves of $\mathcal{F}$ are uniformly $C^{r}$.
\end{theorem}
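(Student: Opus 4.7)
The statement is classical and appears in Hirsch--Pugh--Shub \cite{InvariantManifolds}; I would simply reproduce the graph transform argument used there. The plan is as follows.

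First, I would localize the problem. Fix a leaf $L\in\mathcal{F}$. Using a tubular neighborhood construction together with the continuous splitting $TM = E^{s}\oplus E^{c}\oplus E^{u}$, I would identify a small neighborhood of $L$ with a subset of the normal bundle $E^{s}\oplus E^{u}\to L$, so that any $C^{1}$ leaf $L'$ close to $L$ can locally be written as the graph of a continuous section $\sigma\colon U\subset L\to E^{s}\oplus E^{u}$. Let $\mathcal{X}^{r}$ denote the Banach space of $C^{r}$ sections with sufficiently small $C^{r}$-norm (or the space of uniformly $C^{r}$ sections with a uniform bound on derivatives, equipped with an adapted norm).

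Second, I would introduce the graph transform $\Gamma$ induced by an iterate $f^{k}$. Since $f$ preserves $\mathcal{F}$ and permutes its leaves, the image $f^{k}(\mathrm{graph}\,\sigma)$ is, after rewriting via the implicit function theorem, the graph of a new section $\Gamma(\sigma)$. The standard calculation (using Fa\`a di Bruno's formula to control higher derivatives of a composition) shows that the $j$-th derivative of $\Gamma(\sigma)$ in the center direction is controlled by products of derivatives of $\sigma$ up to order $j$, with the top-order contribution multiplied by a factor of order
\begin{align*}
\max\bigl\{\,\norm{(D f^{k}|_{E^{u}})^{-1}}\cdot\norm{Df^{k}|_{E^{c}}}^{j},\ \norm{Df^{k}|_{E^{s}}}\cdot\norm{(Df^{k}|_{E^{c}})^{-1}}^{j}\,\bigr\}.
\end{align*}
The $r$-normal hyperbolicity hypothesis is precisely the statement that these quantities are strictly less than $1$ for $j\leq r$, so $\Gamma$ becomes a contraction on $\mathcal{X}^{r}$ (after choosing $k$ and shrinking to an adapted norm if needed), with lower-order derivative terms absorbed as contributions from a lower-dimensional contractive part of the graph transform.

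Third, since $\Gamma$ is a uniform contraction on a closed ball in $\mathcal{X}^{r}$ containing the (a priori only continuous) section representing $L$, the Banach fixed point theorem forces the invariant section to lie in $\mathcal{X}^{r}$; hence $L$ is $C^{r}$, with uniform bounds on its $C^{r}$-norm that do not depend on the leaf. Running this argument at every leaf gives the uniform $C^{r}$-regularity claimed. The main technical obstacle is the careful combinatorial bookkeeping of derivatives in $\Gamma$ and the construction of an adapted norm making the contraction manifest; since this is carried out in full in \cite{InvariantManifolds}, I would invoke that reference rather than redo the estimates here.
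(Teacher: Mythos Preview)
Your outline is correct and faithful to the Hirsch--Pugh--Shub graph transform argument. The paper does not prove this theorem at all: it simply states it as a result from \cite{InvariantManifolds} and cites that reference. So your approach---sketching the graph transform and then invoking \cite{InvariantManifolds} for the estimates---is exactly in line with (indeed, more detailed than) what the paper does.
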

\begin{theorem}\label{Thm:HPS2}
Let $f\in{\rm Diff}^{\infty}(M)$ and let $\mathcal{F}$ be a $r-$normally hyperbolic foliation for $f$. If $g$ is sufficiently $C^{1}-$close to $f$ then there is some foliation $\mathcal{G}$ which is $r-$normally hyperbolic for $g$. Moreover $T_{x}\mathcal{G}$ can be made to be close to $T_{x}\mathcal{F}$ by letting $g$ be sufficiently $C^{1}-$close to $f$.
\end{theorem}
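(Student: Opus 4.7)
The plan is to carry out the classical graph-transform argument of Hirsch-Pugh-Shub. Fix a background Riemannian metric and let $E^{\perp}$ be the orthogonal complement to $E^{c}:=T\mathcal{F}$, so that every continuous distribution $C^{0}$-close to $E^{c}$ is the graph of a unique continuous section $\sigma$ of the bundle $\mathrm{Hom}(E^{c},E^{\perp})\to M$ of small sup-norm. The space of such sections is a closed ball $\mathcal{B}_{\varepsilon}$ in a Banach space, and for any $C^{1}$-perturbation $g$ of $f$ the differential $Dg$ induces a graph transform $\Gamma_{g}:\mathcal{B}_{\varepsilon}\to\mathcal{B}_{\varepsilon}$, obtained by pushing the graph forward under $Dg$ and re-expressing the image as a graph over $E^{c}$.

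The next step is to verify that $\Gamma_{g}$ is a contraction on $\mathcal{B}_{\varepsilon}$ whenever $g$ is sufficiently $C^{1}$-close to $f$. The quantitative input is the $r$-normal hyperbolicity of $(f,\mathcal{F})$, in particular the $r=1$ consequence of the defining inequalities
\[
\norm{(D_{p}f^{k}|_{E^{u}})^{-1}}\cdot\norm{D_{p}f^{k}|_{E^{c}}}<1,\qquad \norm{D_{p}f^{k}|_{E^{s}}}\cdot\norm{(D_{p}f^{k}|_{E^{c}})^{-1}}<1.
\]
These produce the spectral gap making $\Gamma_{f}$ a contraction; since all the operator norms depend continuously on $g$ in the $C^{1}$-topology, the contraction property persists on a $C^{1}$-neighborhood of $f$. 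The unique fixed point $\sigma_{g}$ yields a $Dg$-invariant distribution $E^{c}_{g}:=\mathrm{graph}(\sigma_{g})$ depending continuously on $g$, which already gives the closeness of $T\mathcal{G}$ to $T\mathcal{F}$ asserted by the theorem.

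The main obstacle is the third step, integrating $E^{c}_{g}$ to a continuous foliation $\mathcal{G}$ with $C^{1}$-leaves. In general $E^{c}_{g}$ is only continuous and need not be uniquely integrable as a distribution, so I would instead run the graph transform at the level of plaques rather than tangent planes. Concretely, work in a tubular neighborhood of each leaf of $\mathcal{F}$, parameterize candidate plaques as graphs of $C^{1}$-maps from a leaf of $\mathcal{F}$ into $E^{\perp}$, and verify that $g$ induces a contraction on a suitable Banach space of such plaque families. The fixed point is a $g$-invariant plaque family tangent to $E^{c}_{g}$, and its plaques glue into a continuous foliation $\mathcal{G}$ with uniformly $C^{1}$ leaves; this plaque-family step is the technical heart of the argument.

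Finally, $r$-normal hyperbolicity of $(g,\mathcal{G})$ is automatic: the same fixed-point argument in the stable and unstable directions produces $Dg$-invariant bundles $E^{s}_{g}$ and $E^{u}_{g}$ that are $C^{0}$-close to $E^{s}$ and $E^{u}$, and the strict inequalities defining $r$-normal hyperbolicity are open conditions in the associated operator norms, hence persist for every $g$ sufficiently $C^{1}$-close to $f$.
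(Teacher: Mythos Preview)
The paper does not prove this statement; it is quoted as a background result from Hirsch--Pugh--Shub \cite{InvariantManifolds}, so there is no in-paper argument to compare against. Your sketch is indeed the classical HPS strategy from that reference: graph transform on sections of $\mathrm{Hom}(E^{c},E^{\perp})$ to obtain the invariant center bundle, followed by a plaque-level graph transform to produce invariant plaques.

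There is, however, a genuine gap in your third step. The fixed point of the plaque-family graph transform yields only a \emph{locally invariant plaque family}, not a foliation: the plaques through nearby points need not coincide on overlaps, so they do not automatically ``glue into a continuous foliation $\mathcal{G}$'' as you assert. In the HPS framework one needs an additional hypothesis---\emph{plaque expansiveness} of $(f,\mathcal{F})$---to force coherence of the perturbed plaques and hence obtain an honest foliation (see \cite[Theorem 7.1]{InvariantManifolds}). Without it, the conclusion may fail. In the paper's applications this is harmless because the unperturbed center foliation is a linear foliation of $\mathbb{T}^{d}$, which is plaque expansive; but as a proof of the theorem as stated, you must either add that hypothesis or supply a separate argument that the plaques cohere.
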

Let $L:\mathbb{T}^{d}\to\mathbb{T}^{d}$ be a partially hyperbolic toral automorphism with a partially hyperbolic invariant splitting $V\oplus V'\oplus V''$ (here we do not assume that $V'$ is isometric). Any $f$ that is $C^{1}-$close to $L$ is partially hyperbolic with locally uniquely integrable $W^{\sigma}$, $\sigma = s,c,u,cs,cu$, \cite{BrinDynCoh} or \cite{InvariantManifolds}. Combining this with Theorems \ref{Thm:HPS1} and \ref{Thm:HPS2}, the following theorem follows.
\begin{theorem}\label{Thm:RegularityOfCenter}
If $L:\mathbb{T}^{d}\to\mathbb{T}^{d}$ is a partially hyperbolic toral automorphism with invariant splitting $V\oplus V'\oplus V''$ (with $L$ not necesserily isometric along $V'$) then any $f$ that is $C^{1}-$close to $L$ is partially hyperbolic, dynamically coherent with $E_{f}^{c}$ close to $V'$ and each $W_{f}^{\sigma}$ is locally uniquely integrable. If $\norm{(L|_{V''})^{-1}}\cdot\norm{L|_{V'}}^{r} < 1$ and $\norm{L|_{V}}\cdot \norm{(L|_{V'})^{-1}}^{r} < 1$ then $W_{f}^{c}$ has uniformly $C^{r}-$leaves. Moreover if $g\in\rm Diff^{1}(\mathbb{T}^{d})$ commute with $f$ then $g$ preserves all foliations $W_{f}^{\sigma}$, $\sigma = s,c,u,cs,cu$.
\end{theorem}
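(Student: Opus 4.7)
The plan is to assemble the statement from three classical ingredients: openness of partial hyperbolicity together with its invariant splitting in the $C^{1}$ topology, Brin's dynamical coherence theorem \cite{BrinDynCoh} for perturbations of partially hyperbolic toral automorphisms, and the normal hyperbolicity theory of Hirsch--Pugh--Shub, already quoted as Theorems \ref{Thm:HPS1} and \ref{Thm:HPS2}. The theorem as stated is essentially a plumbing statement, so the main task is to check that the hypotheses of each referenced result are satisfied.

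First, because $L$ is partially hyperbolic with invariant splitting $V\oplus V'\oplus V''$ and the existence of a continuous dominated splitting is an open condition in ${\rm Diff}^{1}$, every $f$ sufficiently $C^{1}$-close to $L$ inherits an invariant splitting $E_{f}^{s}\oplus E_{f}^{c}\oplus E_{f}^{u}$ whose summands are $C^{0}$-close to $V,V',V''$. The foliations $W_{f}^{s}$ and $W_{f}^{u}$ are furnished by the standard stable manifold theorem. For the mixed foliations $W_{f}^{cs}, W_{f}^{cu}$, and hence $W_{f}^{c}=W_{f}^{cs}\cap W_{f}^{cu}$, I would invoke \cite{BrinDynCoh}, which for $C^{1}$-small perturbations of partially hyperbolic toral automorphisms yields dynamical coherence together with local unique integrability of all five invariant foliations.

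For the $C^{r}$ regularity of the center leaves, the key observation is that the hypotheses $\norm{(L|_{V''})^{-1}}\cdot\norm{L|_{V'}}^{r}<1$ and $\norm{L|_{V}}\cdot\norm{(L|_{V'})^{-1}}^{r}<1$ are precisely the $r$-normal hyperbolicity inequalities for $L$ together with its linear center foliation. These are strict inequalities involving operator norms of $Df$ restricted to continuous bundles, so by $C^{1}$-proximity of $f$ to $L$ and $C^{0}$-proximity of the bundles $E_{f}^{\sigma}$ to $V,V',V''$, the analogous inequalities persist for $f$ and $W_{f}^{c}$, possibly after passing to a uniform iterate. One then concludes by Theorem \ref{Thm:HPS1} that the leaves of $W_{f}^{c}$ are uniformly $C^{r}$.

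For the final assertion, let $g\in{\rm Diff}^{1}(\mathbb{T}^{d})$ commute with $f$. Differentiating $f^{n}g=gf^{n}$ gives $Df^{n}\circ Dg = Dg\circ Df^{n}$, and the uniform bound on $\norm{Dg}$ from compactness of $\mathbb{T}^{d}$ ensures that $Dg$ preserves the asymptotic $Df^{n}$-growth rates of tangent vectors. Since each bundle $E_{f}^{\sigma}$ is dynamically characterized by such growth rates, $Dg_{x}$ sends $E_{f}^{\sigma}(x)$ into $E_{f}^{\sigma}(gx)$, with equality by dimension. Local unique integrability then upgrades this to $g(W_{f}^{\sigma}(x))=W_{f}^{\sigma}(gx)$: the set $g(W_{f}^{\sigma}(x))$ is a connected integral manifold of $E_{f}^{\sigma}$ passing through $gx$, which must locally coincide with $W_{f}^{\sigma}(gx)$. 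The only step requiring any genuine care is the bookkeeping needed to transfer the bunching inequalities from $L$ to $f$ and to verify the $r$-normal hyperbolicity hypotheses of Theorem \ref{Thm:HPS1}; once the $C^{0}$-proximity of the invariant bundles is in hand, this is routine.
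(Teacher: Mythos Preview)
Your proposal is correct and follows essentially the same route as the paper. The paper's own argument is terse: it states that partial hyperbolicity and local unique integrability of the $W_{f}^{\sigma}$ come from \cite{BrinDynCoh} or \cite{InvariantManifolds}, that the $C^{r}$ regularity of center leaves then follows by combining with Theorems \ref{Thm:HPS1} and \ref{Thm:HPS2}, and that the final claim about $g\in Z^{1}(f)$ preserving the foliations ``follows by the local uniqueness.'' You have simply unpacked these references, including the growth-rate argument showing $Dg$ preserves each $E_{f}^{\sigma}$ that the paper leaves implicit.
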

\begin{remark}
The last claim follows by the local uniqueness.
\end{remark}
Let $f\in{\rm Diff}^{\infty}(M)$ be a partially hyperbolic diffeomorphism. We say that a curve $\gamma:[0,1]\to M$ is a $su-$path if there are $0 = t_{0} < t_{1} < ... < t_{N} = 1$ such that ${\rm Im}(\gamma|_{[t_{i-1},t_{i}]})$ is contained entirely in a $W^{s}-$leaf or $W^{u}-$leaf.
\begin{definition}
Let $f\in{\rm Diff}^{\infty}(M)$ be partially hyperbolic. We say that $f$ is accessible if every $x,y\in M$ can be connected by an $su-$path.
\end{definition}
\begin{definition}
Let $f\in{\rm Diff}^{r}(M)$ be partially hyperbolic. Let $E\subset M$ be a subset. We say that $E$ is $s-$saturated if $x\in E$ implies that $W^{s}(x)\subset E$. Similarly we say that $E$ is $u-$saturated if $x\in E$ implies that $W^{u}(x)\subset E$. We say that $E\subset M$ is $su-$saturated if $E$ is $s-$saturated and $u-$saturated.
\end{definition}
The following lemma is immediate from the definitions.
\begin{lemma}
A partially hyperbolic diffeomorphism $f\in{\rm Diff}^{\infty}(M)$ is accessible if and only if the only $su-$saturated sets are $M$ and $\emptyset$.
\end{lemma}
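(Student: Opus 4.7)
The plan is to prove both implications by a direct accessibility-class argument, since the lemma is essentially a reformulation of accessibility in saturation terms. Both directions exploit the fact that being reachable by an $su$-path from a fixed basepoint is automatically an $su$-saturated condition.

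For the forward direction, I would assume $f$ is accessible and take any nonempty $su$-saturated set $E\subset M$. Fixing $x\in E$ and an arbitrary $y\in M$, accessibility supplies an $su$-path $x = x_{0}, x_{1}, \ldots, x_{N} = y$ in which each consecutive pair $x_{i-1}, x_{i}$ lies in a common $W^{s}$- or $W^{u}$-leaf. A straightforward induction then shows $x_{i}\in E$ for all $i$: if $x_{i-1}\in E$, then since $E$ is either $s$- or $u$-saturated at $x_{i-1}$, the entire leaf through $x_{i-1}$ is contained in $E$, and in particular $x_{i}\in E$. Hence $y\in E$, so $E = M$.

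For the converse, I would fix a basepoint $x\in M$ and define $E_{x} = \{y\in M : y \text{ is joined to } x \text{ by an } su\text{-path}\}$. The set $E_{x}$ is nonempty because $x\in E_{x}$ (using the trivial partition $t_{0} = 0, t_{1} = 1$ with a constant segment), and it is $su$-saturated: if $y\in E_{x}$ and $z\in W^{s}(y)$, one concatenates the existing $su$-path from $x$ to $y$ with the single stable segment from $y$ to $z$, producing an $su$-path from $x$ to $z$, so $z\in E_{x}$; the analogous argument handles $W^{u}(y)$. The hypothesis forces $E_{x} = M$, and since $x$ was arbitrary, every pair of points in $M$ is $su$-path-connected, proving accessibility.

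No step here is really a genuine obstacle; the only mild care needed is the bookkeeping on concatenating $su$-paths (choosing a new partition $0 = t_{0}' < \cdots < t_{N+1}' = 1$ so that the appended leaf segment is one of the pieces) and the verification that both $s$- and $u$-saturation are preserved by this concatenation. Both are essentially formal consequences of the definition of an $su$-path, which is why the lemma is flagged as immediate.
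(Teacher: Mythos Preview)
Your proof is correct and is precisely the standard argument one writes out when unpacking ``immediate from the definitions,'' which is all the paper says about this lemma. There is nothing to compare: the paper gives no proof, and your accessibility-class argument is the canonical one.
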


\subsection{Automorphisms of the torus}\label{Sec:AutomorphismsOnTori}

Let $\mathbb{T}^{d} = \mathbb{R}^{d}/\mathbb{Z}^{d}$ be the $d-$dimensional torus. We denote by ${\rm GL}(d,\mathbb{Z})$ the integer $d\times d-$matrices with determinant $\pm1$. Any $L\in{\rm GL}(d,\mathbb{Z})$ induce an automorphism $L:\mathbb{T}^{d}\to\mathbb{T}^{d}$. We will in the remainder abuse notation and not make a distinction between the automorphism $L:\mathbb{T}^{d}\to\mathbb{T}^{d}$, the automorphism $L:\mathbb{R}^{d}\to\mathbb{R}^{d}$ and the matrix $L\in{\rm GL}(d,\mathbb{Z})$.
\begin{theorem}
An automorphism $L:\mathbb{T}^{d}\to\mathbb{T}^{d}$ is volume preserving and $L$ is ergodic with respect to volume if and only if $L$ has no root of unity as an eigenvalue.
\end{theorem}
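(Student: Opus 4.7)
Volume preservation is immediate: since $L\in\mathrm{GL}(d,\mathbb{Z})$ has $\det L = \pm 1$, the pushforward of Lebesgue measure under $L$ is Lebesgue measure on $\mathbb{T}^d$. So the content of the theorem is the ergodicity characterization, which I would prove by standard Fourier analysis on $\mathbb{T}^d$.

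The plan is to use that ergodicity of $L$ with respect to Haar measure is equivalent to the only $L$-invariant functions in $L^2(\mathbb{T}^d)$ being constants. Expanding $f\in L^2(\mathbb{T}^d)$ in the Fourier basis $\chi_n(x) = e^{2\pi i\langle n,x\rangle}$, $n\in\mathbb{Z}^d$, one has $\chi_n\circ L = \chi_{L^T n}$, so the condition $f\circ L = f$ translates to $\hat{f}(n) = \hat{f}(L^T n)$ for every $n\in\mathbb{Z}^d$. In other words, $\hat{f}$ must be constant on every $L^T$-orbit in $\mathbb{Z}^d$. Since $\hat{f}\in\ell^2(\mathbb{Z}^d)$, the coefficient must vanish on every infinite orbit. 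Ergodicity is therefore equivalent to every nonzero $n\in\mathbb{Z}^d$ having infinite $L^T$-orbit.

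The key linear-algebra step is then the following equivalence: there exists $n\in\mathbb{Z}^d\setminus\{0\}$ with finite $L^T$-orbit if and only if $L$ has a root of unity among its eigenvalues. For the forward direction, if some nonzero $n$ is fixed by $(L^T)^k$, then $(L^T)^k - I$ has a nonzero integer vector in its kernel, so $\det(L^k - I) = \det((L^T)^k - I) = 0$, producing an eigenvalue of $L$ that is a $k$-th root of unity. Conversely, if $L$ has an eigenvalue $\zeta$ with $\zeta^k = 1$, then $\det(L^k - I) = 0$, so $(L^T)^k - I$ is an integer matrix with nontrivial rational kernel; clearing denominators gives a nonzero $n\in\mathbb{Z}^d$ fixed by $(L^T)^k$, hence with finite $L^T$-orbit.

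Combining the two paragraphs yields the stated equivalence. The one place requiring a little care is producing the integer vector fixed by $(L^T)^k$ from the eigenvalue condition; I would make this explicit by noting that the kernel of an integer matrix is defined over $\mathbb{Q}$, so it is spanned by rational (hence, after scaling, integer) vectors. Once that linear-algebra lemma is stated, the ergodicity characterization follows by direct Fourier analysis, and I do not expect any further obstacle.
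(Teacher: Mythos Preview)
Your argument is correct and is the standard Fourier-analytic proof of this classical result. The paper states this theorem as background without proof, so there is no proof in the paper to compare against; your proposal supplies exactly the well-known argument one would expect.
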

Given $L\in{\rm GL}(d,\mathbb{Z})$ we denote by $p_{L}(t)$ the characteristic polynomial of $L$.
\begin{definition}
We say that $L:\mathbb{T}^{d}\to\mathbb{T}^{d}$ is irreducible if $p_{L}(t)$ is irreducible over $\mathbb{Q}$ (or equivalently, by Gauss lemma, over $\mathbb{Z}$).
\end{definition}
Given $L\in{\rm GL}(d,\mathbb{Z})$ we define the Lyapunov exponents of $L$ as the numbers that we can obtain by
\begin{align*}
\chi(L) = \chi = \lim_{n\to\infty}\frac{1}{n}\log\norm{L^{n}v},\quad v\neq0
\end{align*}
and denote by ${\rm Lyap}(L)$ the set of Lyapunov exponents of $L$. One notes that $\chi$ is a Lyapunov exponent if $\chi = \log|\lambda|$ for some eigenvalue $\lambda$ of $L$. Associated to $0\neq\chi\in{\rm Lyap}(L)$ we define the $L-$invariant subspace $E_{L}^{\chi}$ by
\begin{align*}
E_{L}^{\chi}\setminus\{0\} =\left\{v\text{ : }\lim_{n\to\pm\infty}\frac{1}{n}\log\norm{L^{n}v} = \chi\right\}.
\end{align*}
We also define the center space
\begin{align*}
E_{L}^{c}\setminus\{0\} =\left\{v\text{ : }\lim_{n\to\pm\infty}\frac{1}{n}\log\norm{L^{n}v} = 0\right\}.
\end{align*}
These spaces, $E_{L}
^{\chi}$ and $E_{L}^{c}$, produce a decomposition
\begin{align*}
\mathbb{R}^{d} = E_{L}^{c}\oplus\bigoplus_{0\neq\chi\in{\rm Lyap}(L)}E_{L}^{\chi}
\end{align*}
into Lyapunov subspaces. We fix an inner product on $\mathbb{R}^{d}$ making the decomposition into Lyapunov subspaces orthogonal and in the case where $L$ has no non-trivial Jordan blocks we also choose the inner product such that for $v\in E_{L}^{\chi}$ we have $\norm{Lv} = e^{\chi}\norm{v}$. Given $v\in\mathbb{R}^{d}$ denote by $v^{\chi}$ and $v^{c}$ the projection of $v$ onto $E_{L}^{\chi}$ and $E_{L}^{c}$ respectively:
\begin{align}\label{Eq:NotationForProjections}
\mathbb{R}^{d}\ni v\mapsto v^{\chi}\in E_{L}^{\chi},\quad\mathbb{R}^{d}\ni v\mapsto v^{c}\in E_{L}^{c}
\end{align}
We also define
\begin{align*}
E_{L}^{u} = \bigoplus_{\chi > 0}E_{L}^{\chi},\quad E_{L}^{s} = \bigoplus_{\chi < 0}E_{L}^{\chi},\quad E_{L}^{cs} = E_{L}^{s}\oplus E_{L}^{c},\quad E_{L}^{cu} = E_{L}^{c}\oplus E_{L}^{u}
\end{align*}
and similarly we denote by $v^{\sigma}$, $\sigma = c,u,s,cs,cu$, the projection of $v$ onto $E_{L}^{\sigma}$.

Let $L\in{\rm GL}(d,\mathbb{Z})$ be an ergodic automorphism of the torus. If $f\in{\rm Diff}^{\infty}(\mathbb{T}^{d})$ is sufficiently $C^{1}-$close to $L$ then there is an $f-$invariant, Hölder continuous dominated splitting
\begin{align*}
T_{x}\mathbb{T}^{d} = E_{f}^{c}(x)\oplus\bigoplus_{0\neq\chi\in{\rm Lyap}(L)}E_{f}^{\chi}(x).
\end{align*}
Each distribution $E_{f}^{\chi}(x)$ is $C^{0}-$close to $E_{L}^{\chi}$ and integrable to a Hölder foliation $W_{f}^{\chi}(x)$ with $C^{1+\alpha}-$leaves for some $\alpha > 0$. The integrability follows from repeated use of Theorem \ref{Thm:RegularityOfCenter}, see also \cite{LyapSpecRigidity} where the same construction is used for the slow foliations. We will denote the distance along $W_{f}^{\chi}$ by $\intd_{\chi}$. More generally, given $0 < \chi_{0} < \chi_{1}$ we define 
\begin{align*}
E_{f}^{\chi_{0} < \chi < \chi_{1}} := \bigoplus_{\chi_{0} < \chi < \chi_{1}}E_{f}^{\chi}
\end{align*}
which also integrate some some foliation $W_{f}^{\chi_{0} < \chi < \chi_{1}}$ that is subfoliated by each $W_{f}^{\eta}$ with $\chi_{0} < \eta < \chi_{1}$.

For any homeomorphism $f:\mathbb{T}^{d}\to\mathbb{T}^{d}$ we will denote by $F:\mathbb{R}^{d}\to\mathbb{R}^{d}$ some lift of $f$. If $f\in{\rm Diff}^{\infty}(\mathbb{T}^{d})$ is $C^{1}-$close to some ergodic linear automorphism $L\in{\rm GL}(d,\mathbb{Z})$ then $f$ has a unique fixed point $x_{0}$ that is close to $0\in\mathbb{T}^{d}$. After conjugating $f$ with a translation close to $0$ we may assume without loss of generality that $x_{0} = 0$. We will make the convention that a lift $F$ of $f$ is chosen such that $F(0) = 0$. We can lift the distributions $E_{f}^{\sigma}$ and foliations $W_{f}^{\sigma}$ to $\mathbb{R}^{d}$, we denote these distributions and foliations by $E_{F}^{\sigma}$ and $W_{F}^{\sigma}$ respectively (here $\sigma = s,u,c,cu,su$ or $\sigma = \chi$ for some $0\neq\chi\in{\rm Lyap}(L)$). From the construction of invariant foliations in \cite{InvariantManifolds}, see also \cite{StabelErgodicity}, we can write
\begin{align*}
W_{F}^{\sigma}(x) = x + \text{Graph}(\gamma_{x}^{\sigma}) = \{x + v + \gamma_{x}^{\sigma}(v)\text{ : }v\in E_{L}^{\sigma}\}
\end{align*}
where $\gamma_{x}^{\sigma}:E_{L}^{\sigma}\to (E_{L}^{\sigma})^{\bot}$ and the orthogonal complement is taken with respect to the decomposition into Lyapunov subspaces. The functions $(x,v)\mapsto\gamma_{x}^{\sigma}(v)$ are Hölder continuous in $x$ and as smooth as the leaves of $W_{F}^{\sigma}$ in $v$. Moreover, since the foliations $W_{F}^{\sigma}$ are lifted from $\mathbb{T}^{d}$ we have $\gamma_{x+n}(v) = \gamma_{x}(v)$ for every $n\in\mathbb{Z}^{d}$. We obtain estimates on $\gamma_{x}^{\sigma}(v)$, see \cite[Lemma 2.1]{StabelErgodicity}.
\begin{lemma}\label{L:InvMfdAreGraph}
There is a constant $\kappa = \kappa(f)$ such that $\kappa(f)\to 0$ as $f\to L$ in $C^{1}$ such that for any $\sigma = s,c,u,cs,cu,\chi$
\begin{align*}
\norm{\gamma_{x}^{\sigma}(v)}\leq\kappa\norm{v}
\end{align*}
and for $\sigma = c,cs,cu$ we have
\begin{align*}
\norm{\gamma_{x}^{\sigma}(v)}\leq\kappa
\end{align*}
uniformly in $x$ and $v$.
\end{lemma}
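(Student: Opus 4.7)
The plan is to realize $W_F^\sigma$ as the fixed point of an $F$-induced graph transform acting on a Banach space of $\mathbb{Z}^d$-equivariant sections, and to read off the two estimates from a contraction-mapping argument driven by the $C^1$-smallness of the perturbation $\phi := F - L$. For each $\sigma$, I would work in the Banach space $\mathcal{B}_\sigma$ of continuous $\mathbb{Z}^d$-equivariant sections $\Gamma : \mathbb{R}^d \times E_L^\sigma \to (E_L^\sigma)^\perp$ with $\Gamma(x,0) = 0$, normed by $\|\Gamma\|_{C^0} + \|\partial_v \Gamma\|_{C^0}$. The graph transform $\mathcal{T}_F$ sends $\Gamma$ to the unique section whose graph over $E_L^\sigma$ represents $F$ applied to the graph of $\Gamma$; its fixed point describes $W_F^\sigma$ in the required form.

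For $F = L$ the zero section is a fixed point, and the linearisation $D\mathcal{T}_L$ at zero has operator norm strictly less than one by the domination gap between $E_L^\sigma$ and $(E_L^\sigma)^\perp$. Since these rates are $C^1$-open, $\mathcal{T}_F$ remains a contraction on a small neighbourhood of zero in $\mathcal{B}_\sigma$ whenever $\|\phi\|_{C^1}$ is small, and the unique fixed point $\Gamma_F$ satisfies $\|\Gamma_F\|_{C^0} + \|\partial_v \Gamma_F\|_{C^0} \le C\|\phi\|_{C^0}$ with $C$ uniform in a $C^1$-neighbourhood of $L$. For $\sigma \in \{s, u, \chi\}$ the slope bound combined with $\Gamma_F(x,0) = 0$ yields the linear estimate $\|\gamma_x^\sigma(v)\| \le \kappa\|v\|$ by integration along a ray in $E_L^\sigma$, and $\kappa = \kappa(f) \to 0$ as $f \to L$ in $C^1$.

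For $\sigma \in \{c, cs, cu\}$ the same reasoning gives only the linear bound, which is unbounded in $v$ since $E_L^\sigma$ is non-compact. To upgrade to the uniform bound $\|\gamma_x^\sigma(v)\| \le \kappa$ I would split $(E_L^\sigma)^\perp$ into its $E^s$ and $E^u$ parts and solve the invariance equation for $\Gamma_F$ component-wise: forward iteration of $L$ contracts the $E^s$-component, backward iteration contracts the $E^u$-component, and in both cases the inhomogeneous terms are bounded by $\|\phi\|_{C^0}$ uniformly in $(x,v)$ because $\phi$ is $\mathbb{Z}^d$-periodic. Summing the resulting geometric series gives $\sup_{x,v}\|\gamma_x^\sigma(v)\| \le C\|\phi\|_{C^0} =: \kappa$, which again tends to zero as $f \to L$ in $C^1$. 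The main obstacle is precisely this uniform bound in the center-type cases: a cone-condition argument alone is insufficient, and one must use $\mathbb{Z}^d$-periodicity of $\phi$ to control how the deviation from the affine plane $x + E_L^\sigma$ accumulates along the (typically dense) leaf $W_F^\sigma(x)$.
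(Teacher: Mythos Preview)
The paper does not actually prove this lemma; it cites \cite[Lemma~2.1]{StabelErgodicity} for $\sigma\in\{s,c,u,cs,cu\}$ and, in a one-line remark, handles the remaining case $\sigma=\chi$ by observing that $E_f^\chi$ is $C^0$-close to $E_L^\chi$, hence $\norm{D_v\gamma_x^\chi}$ is uniformly small, and integrating from $\gamma_x^\chi(0)=0$ gives the linear estimate directly. Your graph-transform sketch is essentially the argument underlying the cited reference, and your diagnosis of the uniform bound for $\sigma\in\{c,cs,cu\}$---that a pure cone/slope argument is insufficient and one must exploit the $\mathbb{Z}^d$-periodicity of $\phi=F-L$ together with the hyperbolicity of $L$ on $(E_L^\sigma)^\perp$ to sum a geometric series---is precisely the point.

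One technical quibble: for $\sigma\in\{s,u,\chi\}$ the sections $\Gamma(x,\cdot)$ are not $C^0$-bounded on $E_L^\sigma$ (only Lipschitz, since $\norm{\gamma_x^\sigma(v)}$ grows linearly in $\norm{v}$), so the Banach-space norm you work with should be the slope norm $\sup_{x,v}\norm{\partial_v\Gamma(x,v)}$ (or equivalently $\sup_{x,v\neq0}\norm{\Gamma(x,v)}/\norm{v}$) rather than $\norm{\Gamma}_{C^0}+\norm{\partial_v\Gamma}_{C^0}$. This does not affect the substance of your argument, and indeed once you have the slope bound the integration step you describe is exactly the paper's remark for $\sigma=\chi$.
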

\begin{remark}
In \cite{StabelErgodicity} this is only shown for $\sigma = s,c,u,cs,cu$ but it also holds for $\sigma = \chi$. To see this, note that if $f$ is sufficiently $C^{1}-$close to $L$ then we have an estimate $\norm{D_{v}\gamma_{x}^{\chi}}\leq\delta$ for some small $\delta$ (since $E_{f}^{\chi}$ is $C^{0}-$close to $E_{L}^{\chi}$) and if we integrate this we obtain the lemma.
\end{remark}
We also have the following global product structure \cite[Lemma 2.2]{StabelErgodicity}.
\begin{lemma}
For any $p,q\in\mathbb{R}^{d}$
\begin{align*}
& \#W_{F}^{cs}(p)\cap W_{F}^{u}(q) = 1, \\
& \#W_{F}^{s}(p)\cap W_{F}^{cu}(q) = 1.
\end{align*}
\end{lemma}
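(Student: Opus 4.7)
The strategy is to translate each intersection question into a fixed point problem for a contraction on a Euclidean space, using the graph representations of the invariant foliations provided just before the lemma statement. I will work out the $W_F^{cs}\cap W_F^u$ case in detail; the second case is completely symmetric.

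Since $\mathbb{R}^d = E_L^{cs}\oplus E_L^u$ orthogonally, a point $z$ lies in $W_F^{cs}(p)\cap W_F^u(q)$ iff there exist $v\in E_L^{cs}$ and $w\in E_L^u$ with
\[
z = p + v + \gamma_p^{cs}(v) = q + w + \gamma_q^{u}(w).
\]
Projecting this equation onto $E_L^{cs}$ and onto $E_L^u$ (and recalling that $\gamma_p^{cs}(v)\in E_L^u$ while $\gamma_q^u(w)\in E_L^{cs}$) gives the equivalent system
\[
v = (q-p)^{cs} + \gamma_q^{u}(w), \qquad w = (p-q)^{u} + \gamma_p^{cs}(v),
\]
i.e.\ a fixed point equation $(v,w) = \Phi_{p,q}(v,w)$ on $E_L^{cs}\times E_L^u$.

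The key step is to show that $\Phi_{p,q}$ is a contraction on $E_L^{cs}\times E_L^u$, equipped with a suitable box norm. This is where $C^1$-closeness to $L$ enters: under that hypothesis the invariant distributions $E_f^\sigma$ are $C^0$-close to $E_L^\sigma$, so in the construction of the leaves as graphs (via the standard Hadamard--Perron argument, as used in the proof of Lemma \ref{L:InvMfdAreGraph}) the derivatives $D_v\gamma_x^{\sigma}$ are uniformly small — the same integration argument indicated in the remark following Lemma \ref{L:InvMfdAreGraph} for $\sigma=\chi$ works for $\sigma = cs, u$ as well. Hence both $\gamma_q^u$ and $\gamma_p^{cs}$ are Lipschitz in $v$ with a common constant $\kappa'<1/2$ once $f$ is sufficiently $C^1$-close to $L$. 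Consequently $\Phi_{p,q}$ contracts each coordinate with factor $\kappa'$, so it has a unique fixed point by the Banach fixed point theorem. That fixed point produces exactly one intersection point, proving $\#W_F^{cs}(p)\cap W_F^u(q)=1$.

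The same argument, interchanging the roles of $(cs,u)$ with $(s,cu)$ and using the orthogonal decomposition $\mathbb{R}^d = E_L^s\oplus E_L^{cu}$, yields $\#W_F^{s}(p)\cap W_F^{cu}(q)=1$. The only real subtlety is making sure that one really has a small Lipschitz constant for the center-stable (respectively center-unstable) graph map — not merely the uniform bound stated in Lemma \ref{L:InvMfdAreGraph} — but as indicated above this is a standard byproduct of the Hadamard--Perron construction once $f$ is sufficiently $C^1$-close to $L$, so no essentially new estimate is needed.
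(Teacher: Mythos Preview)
Your argument is correct and is the standard contraction-mapping proof of global product structure. Note that the paper does not actually supply a proof of this lemma: it simply cites \cite[Lemma 2.2]{StabelErgodicity}, and your fixed-point argument via the graph representations is precisely the approach used there, so you have essentially reconstructed the referenced proof.
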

Consider $L\in{\rm GL}(d,\mathbb{Z})$ irreducible with precisely $2$ eigenvalues on the unit circle. That is, $L$ is irreducible and partially hyperbolic with $2-$dimensional isometric center. The fact that $L$ is irreducible with an eigenvalue on the unit circle implies that $d$ must be even, see \cite[Corollary A.4]{StabelErgodicity}, so we will restrict attention to $d$ even. The following results are due to F. Rodriguez Hertz \cite{StabelErgodicity}.
\begin{theorem}\label{Thm:RHsuMinimality}
If $f\in{\rm Diff}_{\rm vol}^{\infty}(\mathbb{T}^{d})$ is $C^{1}-$close to $L$ then any $f-$invariant $su-$saturated set is dense.
\end{theorem}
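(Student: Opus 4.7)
The plan is to argue by contradiction: suppose some non-empty $f$-invariant $su$-saturated set $X$ were not dense. Since the foliations $W_f^s$ and $W_f^u$ are continuous, $\bar{X}$ is still $su$-saturated and $f$-invariant, so we may assume $X$ is closed and $X \neq \mathbb{T}^d$. Lifting to the universal cover, $\tilde{X} \subset \mathbb{R}^{d}$ is a proper closed set which is $F$-invariant, $\mathbb{Z}^{d}$-invariant, and saturated by the lifted foliations $W_F^{s}, W_F^{u}$.

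The first step is a purely linear computation. For the model $L$, the $su$-accessibility class through a lift $\tilde{x}$ is the proper affine subspace $\tilde{x} + E_L^{s} + E_L^{u}$. However, the closure of $(E_L^{s} + E_L^{u}) + \mathbb{Z}^{d}$ inside $\mathbb{R}^{d}$ is a closed additive subgroup containing a full-rank lattice; its identity component is a rational, $L$-invariant subspace containing $E_L^{s} + E_L^{u}$, and by irreducibility of $L$ this subspace must be all of $\mathbb{R}^{d}$. Hence for $L$ every $su$-accessibility class already projects densely into $\mathbb{T}^{d}$, which handles the base case and identifies the algebraic mechanism driving the density.

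To transfer this mechanism to $f$, I would combine three ingredients. First, Lemma \ref{L:InvMfdAreGraph} realises the lifted leaves $W_F^{\sigma}(\tilde{x})$, $\sigma = s,u$, as graphs over $E_L^{\sigma}$ with slope controlled by $\kappa(f) \to 0$ as $f \to L$, so the lifted $su$-accessibility class of $\tilde{x}$ stays in a $\kappa$-neighbourhood, in the product coordinates, of the linear one $\tilde{x} + E_L^{s} + E_L^{u}$. Second, the global product structure lemma preceding this theorem gives a local decomposition $\tilde{X} \cap V \cong W_{\mathrm{loc}}^{s} \times W_{\mathrm{loc}}^{u} \times C$ at any point, where $C$ is a closed subset of a local center transversal, and it suffices to show that $C$ is dense in this transversal. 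Third, since the center action of $f$ is $C^{0}$-close to the isometric, irrationally rotating center action of $L$, the iterates $F^{n}$ together with $\mathbb{Z}^{d}$-translates should push $C$ through a subset approximating the dense linear orbit arbitrarily well.

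The hard part will be quantitative: the foliations $W_f^{s}, W_f^{u}$ are only $C^{0}$-close and transversely Hölder, so the closed-subgroup argument for $L$ does not apply verbatim. One has to show that the errors accumulated after composing holonomies along an $su$-path of arbitrary combinatorial length still allow the saturation $\tilde{X}$ to become dense. A natural way to handle this is a two-scale argument: use finitely many $su$-legs to realise an almost-linear orbit up to a small error, then apply a high power of $F$ to contract the unwanted transverse component exponentially in the stable direction, and finally translate by an element of $\mathbb{Z}^{d}$ chosen using the irreducibility density fact above to recover density in the remaining directions, exploiting throughout the volume preservation of $f$ to prevent $\tilde{X}$ from being pushed off to infinity.
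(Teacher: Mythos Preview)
The paper does not actually prove this theorem: it is stated as a background result attributed to F.~Rodriguez Hertz \cite{StabelErgodicity}, so there is no ``paper's own proof'' to compare against. What you have written is therefore an attempt at an independent proof of a result the paper takes as input.

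Your treatment of the linear case is correct and identifies the right algebraic mechanism: the closure of $E_L^{s}\oplus E_L^{u}+\mathbb{Z}^{d}$ is a closed subgroup whose identity component is a nonzero rational $L$-invariant subspace, hence all of $\mathbb{R}^{d}$ by irreducibility. This shows every $su$-leaf of $L$ is already dense, without even using $L$-invariance of the set.

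The nonlinear transfer, however, is where the content lies, and your outline has a genuine gap. The difficulty is that for $f$ the $su$-accessibility class of a point need not be dense (indeed, by Theorem~\ref{Thm:RHaccessibilityDichom} the non-accessible case does occur), so $f$-invariance must be used in an essential way; you acknowledge this but do not explain how. Your proposed ``two-scale argument'' --- build an $su$-path approximating a linear one, then apply $F^{n}$ to contract the stable error --- does not close: iterating $F$ contracts the stable component of the error but simultaneously expands the unstable component, and the center error is neither contracted nor controlled. The errors coming from Lemma~\ref{L:InvMfdAreGraph} live in all transverse directions, so one cannot simply iterate them away. The actual argument in \cite{StabelErgodicity} proceeds quite differently, via a careful analysis of the center holonomy maps $T_{n}$ (see Definition~\ref{Def:TnMapsFromFederico}) and their dynamics on $W_F^{c}(0)$, together with a measure-theoretic argument exploiting volume preservation; the density conclusion is not obtained by a direct $C^{0}$-perturbation of the linear picture. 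As written, your proposal is a plausible heuristic but not a proof.
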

\begin{theorem}\label{Thm:RHaccessibilityDichom}
If $f\in{\rm Diff_{vol}^{\infty}}(\mathbb{T}^{d})$ is $C^{r}-$close to $L$ (with $r = 22$ for $d = 4$ and $r = 5$ for $d\geq6$) then we have a dichotomy
\begin{enumerate}[label = (\roman*)]
    \item either $f$ is accessible,
    \item or $f$ is bi-Hölder conjugate to $L$.
\end{enumerate}
\end{theorem}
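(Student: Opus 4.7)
The plan is to prove the contrapositive: if $f$ is not accessible then $f$ is bi-Hölder conjugate to $L$. Non-accessibility means there is more than one accessibility class. Fix one and call its closure $C$; then $C \subsetneq \mathbb{T}^d$ is closed and $su$-saturated, while the orbit $\bigcup_{n \in \mathbb{Z}} f^n(C)$ is $f$-invariant and $su$-saturated, hence dense by Theorem \ref{Thm:RHsuMinimality}. The tension between $C$ being a proper set and its $f$-orbit being dense should force the family of accessibility classes to be arranged as a topological foliation $\mathcal{A}$ of $\mathbb{T}^d$.

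Next I would identify the dimension and regularity of $\mathcal{A}$. Each accessibility class contains all $W_f^s$ and $W_f^u$ leaves through its points, so its leafwise tangent cone contains $E_f^s \oplus E_f^u$; on the other hand, dynamical coherence (Theorem \ref{Thm:RegularityOfCenter}) places accessibility classes inside the plaques of $W_f^{cs} \cap W_f^{cu} = W_f^c$. The goal is to show that each accessibility class has dimension exactly $\dim E_f^c = 2$ and in fact coincides with a leaf of $W_f^c$. This is the heart of the proof, and it is here that the sharp regularity thresholds ($C^5$ for $d\geq 6$, $C^{22}$ for $d=4$) enter: they secure enough smoothness of $W_f^c$ together with $W_f^s$ and $W_f^u$ to run Journé-type regularity arguments and show that each accessibility class is a $C^1$ submanifold of the expected dimension.

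With the foliation $\mathcal{A}$ in hand I would construct the conjugacy $H : \mathbb{T}^d \to \mathbb{T}^d$ by a modified Franks--Manning procedure on a lift $F : \mathbb{R}^d \to \mathbb{R}^d$. Since $L$ is only partially hyperbolic, the naive limit $\lim_{n\to\infty} L^{-n} F^n$ does not converge globally; instead one builds $H$ by shadowing along the hyperbolic directions of $L$ and correcting in the two-dimensional center direction so that each leaf of $\mathcal{A}$ is sent to a single center translate of $L$. The resulting continuous surjection satisfies $H \circ F = L \circ H$. Injectivity then follows because distinct accessibility classes have distinct hyperbolic shadows, while points within the same class are separated by the injective isometric center action of $L$ on its center leaves. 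The bi-Hölder regularity of $H^{\pm 1}$ comes from the uniform Hölder regularity of $W_f^s$, $W_f^u$, $W_f^c$ together with the $C^{1+\alpha}$ leafwise estimates already supplied by Lemma \ref{L:InvMfdAreGraph}.

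The hard part is the middle step: for a general partially hyperbolic system with multi-dimensional center the accessibility classes can have any dimension between $\dim E^s + \dim E^u$ and $\dim M$, and there is no a priori reason for them to coincide with center leaves. Pinning them down to exactly the center leaves relies in an essential way on the irreducibility of $L$, the isometric nature of the two-dimensional center, and the high regularity of $f$. This is precisely the step where F. Rodriguez Hertz's arguments are most intricate, and it is what forces the dimension-dependent regularity thresholds in the statement of the theorem.
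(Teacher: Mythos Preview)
There is a fundamental confusion in your proposal about the geometry of accessibility classes. You write that dynamical coherence ``places accessibility classes inside the plaques of $W_f^{cs}\cap W_f^{cu}=W_f^c$'' and that the goal is to show each class ``has dimension exactly $\dim E_f^c=2$ and in fact coincides with a leaf of $W_f^c$.'' This is backwards. An accessibility class through $x$ contains $W_f^s(x)$ and $W_f^u(x)$, both of which are transverse to $W_f^c(x)$; the class therefore has dimension at least $\dim E_f^s+\dim E_f^u=d-2$, not at most $2$. When $f$ is non-accessible the correct picture is that $E_f^s\oplus E_f^u$ integrates to a codimension-$2$ foliation $W_f^{su}$ whose leaves are the accessibility classes, and it is the \emph{leaf space} --- not the leaves --- that is two-dimensional and modelled on $E_L^c$.

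The paper does not give its own proof of this theorem; it is quoted from Rodriguez Hertz \cite{StabelErgodicity}. The actual argument there works on the center leaf $W_F^c(0)$: non-accessibility forces the holonomy maps $T_n$ of Definition~\ref{Def:TnMapsFromFederico} to define a genuine $\mathbb{Z}^d$-action on $W_F^c(0)$, and the heart of the proof is to show this action is conjugate to the dense translation action $x\mapsto x+n^c$ on $E_L^c$. This linearization is a KAM-type problem, and it is the small-divisor estimates there --- not Journ\'e's lemma --- that dictate the regularity thresholds $C^{22}$ for $d=4$ and $C^5$ for $d\ge 6$. Once the $\mathbb{Z}^d$-action on the center is linearized, the global conjugacy $H$ is assembled from the $su$-holonomies together with this center linearization. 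Your Franks--Manning shadowing sketch, while reasonable in spirit for the hyperbolic directions, does not touch the genuinely hard analytic step, and your explanation of where the regularity is spent is off target.
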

\begin{remark}
This theorem is not stated explicitly in \cite{StabelErgodicity} but is key in the proof of the main result. These results have also been used in, for example, \cite{CenterFoliationRig,AvilaViana}.
\end{remark}
\begin{remark}
In \cite{StabelErgodicity} it is assumed that $L$ has an extra property called \textit{pseudo Anosov}, where one also assumes that $p_{L}(t)$ is not a polynomial of $t^{n}$ with $n\geq 2$. However, this is implied if $L$ has precisely two eigenvalues on $S^{1}$, see Lemma \ref{L:PseudoIsAllTrue}.
\end{remark}
Let $f\in{\rm Diff}^{\infty}(\mathbb{T}^{d})$ be  $C^{1}-$close to $L$. For $x,y\in\mathbb{R}^{d}$ we define the stable and unstable holonomy $\pi_{x,y}^{\sigma}$, $\sigma = s,u$, by
\begin{align*}
\pi_{x,y}^{s}:W_{F}^{cu}(x)\to W_{F}^{cu}(y),\quad \pi_{x,y}^{s}(z) = W_{F}^{s}(z)\cap W_{F}^{cu}(y),
\end{align*}
see Figure \ref{Fig:DefinitionOfHolonomies1}. Similarly we define $\pi_{x,y}^{u}$.
\begin{figure}[h!]
\begin{tikzpicture}
\draw [red] plot [smooth, tension=0.9] coordinates {(0,-3) (0,-2) (0.5,0) (0.2,2) (0.1,3)} node[anchor=west] {\scriptsize$W_{F}^{cu}(x)$};
\draw [red] plot [smooth, tension=1.3] coordinates {(3.1,-3) (3,-2.1) (3.5,-0.1) (3.2,1.95) (3.1,3)} node[anchor=west] {\scriptsize$W_{F}^{cu}(y)$};
\draw [blue] plot [smooth, tension=0.85] coordinates {(-1,0) (-0.5,0.1) (0,0.2) (0.5,0.3) (1,0.21) (1.5,0.12) (2,0.02) (2.5,0) (3,-0.1) (3.5,-0.2) (4,-0.1) (4.5,0)} node[anchor=west] {\scriptsize$W_{F}^{s}(x)$};
\draw [blue] plot [smooth, tension=0.85] coordinates {(-1,-2.1) (-0.5,-2) (0,-1.9) (0.5,-1.8) (1,-1.75) (1.5,-1.83) (2,-1.95) (2.5,-2) (3,-2.05) (3.5,-2.1) (4,-2.1) (4.5,-2)} node[anchor=west] {\scriptsize$W_{F}^{s}(x')$};
\draw [blue] plot [smooth, tension=0.85] coordinates {(-1,2.05) (-0.5,2.1) (0,2.15) (0.5,2.19) (1,2.21) (1.5,2.13) (2,2.07) (2.5,2) (3,1.9) (3.5,1.8) (4,1.9) (4.5,2)} node[anchor=west] {\scriptsize$W_{F}^{s}(x'')$};
\filldraw [purple] (0.5,0.3) circle (2pt) node[anchor=south east] {\tiny$x$};
\filldraw [purple] (0.03,-1.905) circle (2pt) node[anchor=south east] {\tiny$x'$};
\filldraw [purple] (0.18,2.17) circle (2pt) node[anchor=south east] {\tiny$x''$};
\filldraw [purple] (3.5,-0.2) circle (2pt) node[anchor=north west] {\Tiny$\pi_{x,y}^{s}(x)$};
\filldraw [purple] (3,-2.05) circle (2pt) node[anchor=north west] {\Tiny$\pi_{x,y}^{s}(x')$};
\filldraw [purple] (3.22,1.84) circle (2pt) node[anchor=north west] {\Tiny$\pi_{x,y}^{s}(x'')$};
\end{tikzpicture}
\caption{Stable holonomies from $x$ to $y\in W_{F}^{s}(x)$.}\label{Fig:DefinitionOfHolonomies1}
\end{figure}
Given $n\in\mathbb{Z}^{d}$ we define a point $x_{n}$ and two maps $\pi_{n}^{u}:W_{F}^{c}(n)\to W_{F}^{c}(x_{n})$, $\pi_{n}^{s}:W_{F}^{c}(x_{n})\to W_{F}^{c}(0)$, see Figure \ref{Fig:DefinitionOfHolonomies2}, by
\begin{align*}
x_{n} = W_{F}^{u}(n)\cap W_{F}^{cs}(0),\quad\pi_{n}^{u} = \pi_{n,x_{n}}^{u},\quad\pi_{n}^{s} = \pi_{x_{n},0}^{s}.
\end{align*}
\begin{figure}[h!]
\begin{tikzpicture}
\draw [black!60!green] plot [smooth, tension=0.9] coordinates {(0.2,-3) (0,-2) (0.3,0) (0.2,2) (0.1,3)} node[anchor=west] {\scriptsize$W_{F}^{c}(0)$};
\draw [black!60!green] plot [smooth, tension=0.9] coordinates {(4.1,-3) (4,-2) (4.2,0) (4.3,2) (4.2,3)} node[anchor=west] {\scriptsize$W_{F}^{c}(n)$};
\draw [red] plot [smooth, tension=0.9] coordinates {(1.5,2) (2,1) (3,-0.5) (4,-1.5) (5,-2)} node[anchor=west] {\scriptsize$W_{F}^{u}(x+n)$};
\draw [blue] plot [smooth, tension=0.9] coordinates {(2.5,2) (1.5,1) (0.5,-0.5) (-0.5,-1.5) (-1,-2)} node[anchor=east] {\scriptsize$W_{F}^{s}(x_{n})$};
\filldraw [gray] (0.29,1) circle (2pt) node[anchor=south west] {\Tiny$0$};
\filldraw [gray] (0.31,0.3) circle (2pt) node[anchor=south west] {\Tiny$x$};
\filldraw [gray] (4.1232,-0.8) circle (2pt) node[anchor=west] {\Tiny$n$};
\filldraw [gray] (4.03,-1.51) circle (2pt) node[anchor=south west] {\Tiny$x+n$};
\filldraw [gray] (1.815,1.33) circle (2pt) node[anchor=west] {\Tiny$\pi_{n}^{u}(x+n)$};
\filldraw [gray] (0.194,-0.87) circle (2pt) node[anchor=west] {\Tiny$T_{n}(x)$};
\end{tikzpicture}
\caption{Definition of the map $T_{n}:W_{F}^{c}(0)\to W_{F}^{c}(0)$.}\label{Fig:DefinitionOfHolonomies2}
\end{figure}
\begin{definition}\label{Def:TnMapsFromFederico}
We define the $C^{1}-$map $T_{n}:W_{F}^{c}(0)\to W_{F}^{c}(0)$, see Figure \ref{Fig:DefinitionOfHolonomies2},  by
\begin{align*}
T_{n}(x) := \pi_{n}^{s}\circ\pi_{n}^{u}(x + n)
\end{align*}
where $x + n\in W_{F}^{c}(n)$ since the foliation $W_{F}^{c}$ is lifted from the torus.
\end{definition}
\begin{remark}
The fact that each $T_{n}$ is $C^{1}$ follows from \cite[Corollary 2.4]{StabelErgodicity} (note that in \cite{StabelErgodicity} the maps $T_{n}$ are defined on $E_{L}^{c}$ by projecting $W_{F}^{c}(0)$ onto $E_{L}^{c}$).
\end{remark}
If $g:\mathbb{T}^{d}\to\mathbb{T}^{d}$ is homotopic to a hyperbolic matrix $A\in{\rm GL}(d,\mathbb{Z})$, then $f$ is semiconjugatd to $A$ \cite[Theorem 2.6.1]{KatokIntroDynSyst}. If $g$ is Anosov then the semiconjugacy is a conjugacy, the \textit{Franks-Manning conjugacy} \cite{Franks1969}. Since $f$ is not Anosov, and is not homotopic to a hyperbolic matrix there may not exists a semiconjugacy from $f$ to $L$. However, if a conjugacy exists it shares properties with the Franks-Manning conjugacy. In particular, if $H:\mathbb{T}^{d}\to\mathbb{T}^{d}$ is a semiconjugacy $H\circ f = L\circ H$, then $H$ maps $W_{f}^{\sigma}$, $\sigma = s,c,u,cs,cu$, to $W_{L}^{\sigma}$. The following two lemmas helps us to study accesibility classes of $f$. The first lemma is \cite[Lemma 6.1]{StabelErgodicity}, and the second follows since the semiconjugacy takes stable manifolds to stable manifolds and unstable manifolds to unstable manifolds.
\begin{lemma}
If $f$ is not accessible then $T_{n}:W_{F}^{c}(0)\to W_{F}^{c}(0)$ generate a $\mathbb{Z}^{d}-$action. That is $T_{n+m}(z) = T_{n}(T_{m}(z))$. In particular the map $T_{n}$ is a $C^{1}-$diffeomorphism for $n\in\mathbb{Z}^{d}$.
\end{lemma}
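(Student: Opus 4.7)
The goal is to prove the cocycle identity $T_{n+m}=T_n\circ T_m$ on $W_F^c(0)$. Once this is established, setting $m=-n$ and using $T_0=\mathrm{id}$ gives each $T_n$ a two-sided $C^1$ inverse, hence $T_n$ is a $C^1$-diffeomorphism.

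The first step is to interpret both $T_{n+m}(z)$ and $T_n\circ T_m(z)$ as terminal points of two $su$-paths issuing from the common point $z+n+m\in\mathbb{R}^d$ and ending in $W_F^c(0)$. The path for $T_{n+m}$ is a single $u$-then-$s$ excursion, while the path for $T_n\circ T_m$ is built by first traversing the $n$-translate of the $su$-path underlying $T_m$ (sending $z+n+m$ to $T_m(z)+n$ via the $\mathbb{Z}^d$-invariance of the foliations) and then the $su$-path underlying $T_n$ (sending $T_m(z)+n$ to $T_n(T_m(z))$). So the identity expresses that these two distinct $su$-paths from $z+n+m$ reach $W_F^c(0)$ at the same point.

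To establish this, I would introduce the closed coincidence set
\begin{equation*}
S_{n,m}=\bigl\{z\in W_F^c(0): T_{n+m}(z)=T_n\circ T_m(z)\bigr\}
\end{equation*}
and show that the projection to $\mathbb{T}^d$ of the complement $W_F^c(0)\setminus S_{n,m}$, taken together with its $f$-orbit, is an open $f$-invariant $su$-saturated subset. The $f$-invariance would follow from the equivariance $F\circ T_n=T_{L(n)}\circ F$, itself a consequence of $F(z+n)=F(z)+L(n)$ and the $F$-invariance of the foliations. The $su$-saturation is the delicate part: one perturbs the basepoint $p=z+n+m$ along a short segment of $W_F^s$ or $W_F^u$ and tracks how both composite paths transform, using the graph description of Lemma \ref{L:InvMfdAreGraph} and the unique integrability of $W_F^{cs}$, $W_F^{cu}$ from Theorem \ref{Thm:RegularityOfCenter} to bound the distortion of the holonomies. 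The outcome is that the group-law defect $T_{n+m}(z)-T_n\circ T_m(z)$ is invariant under local $su$-holonomy. Once $su$-saturation is established, Theorem \ref{Thm:RHsuMinimality} forces the offending set to be empty or dense. Since $S_{n,m}=W_F^c(0)$ for the linear model (where $T_n^L(z)=z+n^c$, so both sides equal $z+n^c+m^c$), a perturbative and density argument, combined with the closedness of $S_{n,m}$, yields $S_{n,m}=W_F^c(0)$ for any non-accessible $f$ sufficiently $C^1$-close to $L$.

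The chief technical obstacle I foresee is the $su$-saturation step, which is where non-accessibility actually does the work: without it, the accessibility class of any point is all of $\mathbb{T}^d$ and one would be forced to conclude that the defect is globally constant, not identically zero. Under non-accessibility, Rodriguez Hertz's analysis from \cite{StabelErgodicity} shows that accessibility classes intersect each center leaf in a well-controlled (one-dimensional) set, reducing the $su$-comparison of the two composite paths to a single-variable problem to which the $C^1$ regularity of the $T_n$'s can be applied. I would spend the bulk of the effort on this leaf-by-leaf comparison of holonomies and on verifying that the $su$-saturation is strict enough to feed Theorem \ref{Thm:RHsuMinimality} effectively.
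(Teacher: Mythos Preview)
The paper does not prove this lemma; it simply cites it as \cite[Lemma 6.1]{StabelErgodicity}. The argument there is short and direct, and quite different from your outline. Under non-accessibility, one shows (this is \cite[Theorem 5.1]{StabelErgodicity}, also invoked in the proof of Theorem \ref{Thm:FullCentralizer}) that $E_f^s\oplus E_f^u$ is integrable to a foliation $W_F^{su}$ with a global product structure against $W_F^c$. Both $T_{n+m}(z)$ and $T_n(T_m(z))$ lie in $W_F^c(0)$, and both lie in the $su$-leaf through $z+n+m$: the first by definition, the second because the $\mathbb{Z}^d$-translate of the $su$-path for $T_m$ followed by the $su$-path for $T_n$ stays inside a single $W_F^{su}$-leaf. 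Since $W_F^{su}(z+n+m)\cap W_F^c(0)$ is a single point, the two expressions coincide.

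Your proposal has a genuine gap at the step where you invoke Theorem \ref{Thm:RHsuMinimality}. That theorem says any $f$-invariant $su$-saturated set is \emph{dense}, not that it is empty or all of $\mathbb{T}^d$. So even if you succeed in showing that the $f$-orbit of the projection of $W_F^c(0)\setminus S_{n,m}$ is $su$-saturated, you would only learn that this set is dense whenever it is nonempty. Combined with the closedness of $S_{n,m}$, that yields at best that $S_{n,m}$ has empty interior in $W_F^c(0)$, which is the opposite of what you want. Your attempt to close the gap by appealing to the linear model and ``a perturbative and density argument'' does not work either: $f$ is a fixed perturbation, not a one-parameter family converging to $L$, and the identity $S_{n,m}=W_F^c(0)$ for $L$ gives no information about a nearby $f$. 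The correct use of non-accessibility is not as a density statement but as an \emph{integrability} statement for $E_f^s\oplus E_f^u$, which reduces the problem to the uniqueness of an intersection point.
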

Recall that in Equation \ref{Eq:NotationForProjections} we defined $v^{\sigma}$ to be the projection of a vector $v\in\mathbb{R}^{d}$ into $E_{L}^{\sigma}$ for $\sigma = s,c,u,cs,cu,\chi$ where $\chi$ is some Lyapunov exponent for $L$.
\begin{lemma}
If $H:\mathbb{T}^{d}\to\mathbb{T}^{d}$ is a semiconjugacy $H\circ f = L\circ H$ chosen such that $H(0) = 0$ then $H|_{W_{F}^{c}(0)}:W_{F}^{c}(0)\to E_{L}^{c}$ and $H(T_{n}(z)) = H(z) + n^{c}$. Moreover
\begin{align*}
H(\pi_{x,y}^{\sigma}(z)) = H(z) + (y-x)^{\sigma}
\end{align*}
for $\sigma = s,u$.
\end{lemma}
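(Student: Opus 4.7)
The plan is to pass to the universal cover and show that the lifted semiconjugacy intertwines the invariant foliations of $F$ with the affine foliations of $L$. Choose a lift $\widetilde{H}\colon\mathbb{R}^{d}\to\mathbb{R}^{d}$ of $H$ with $\widetilde{H}(0)=0$ and $\widetilde{H}\circ F=L\circ\widetilde{H}$. Since $f$ is $C^{1}$-close to $L$, the semiconjugacy $H$ is homotopic to the identity, so $\widetilde{H}(x+n)=\widetilde{H}(x)+n$ for every $n\in\mathbb{Z}^{d}$; in particular $\widetilde{H}-\mathrm{id}$ is $\mathbb{Z}^{d}$-periodic, bounded, and uniformly continuous on $\mathbb{R}^{d}$.

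The central step I would carry out is to show that $\widetilde{H}$ sends each lifted leaf $W_{F}^{\sigma}(x)$ into the affine subspace $\widetilde{H}(x)+E_{L}^{\sigma}$ for $\sigma\in\{s,u,c,cs,cu\}$. For the stable case, $y\in W_{F}^{s}(x)$ gives $\norm{F^{n}y-F^{n}x}\to 0$, hence $\norm{L^{n}(\widetilde{H}(y)-\widetilde{H}(x))}=\norm{\widetilde{H}(F^{n}y)-\widetilde{H}(F^{n}x)}\to 0$ by uniform continuity, forcing $\widetilde{H}(y)-\widetilde{H}(x)\in E_{L}^{s}$. The unstable case is identical with $n\to-\infty$, and the cases $\sigma=cs,cu$ follow from the corresponding forward/backward boundedness characterization of the leaves, using the boundedness of $\widetilde{H}-\mathrm{id}$ to transport the boundedness of the $F$-orbit of $y-x$ to boundedness of the $L$-orbit of $\widetilde{H}(y)-\widetilde{H}(x)$; the center case is $W_{F}^{c}=W_{F}^{cs}\cap W_{F}^{cu}$. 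The first assertion of the lemma is then immediate, since $\widetilde{H}(W_{F}^{c}(0))\subset W_{L}^{c}(0)=E_{L}^{c}$.

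The two remaining formulas reduce to explicit intersection computations in $\mathbb{R}^{d}$. By foliation preservation, $\widetilde{H}(\pi_{x,y}^{s}(z))$ lies in $(\widetilde{H}(z)+E_{L}^{s})\cap(\widetilde{H}(y)+E_{L}^{cu})$, which is the single point $\widetilde{H}(z)+(\widetilde{H}(y)-\widetilde{H}(z))^{s}$; using $z\in W_{F}^{cu}(x)$ to replace the $s$-component of $\widetilde{H}(z)$ by that of $\widetilde{H}(x)$ yields the holonomy formula (with $(y-x)^{s}$ understood via the lift $\widetilde{H}$), and the unstable case is symmetric. For the $T_{n}$ formula I would first identify $\widetilde{H}(x_{n})$: since $x_{n}=W_{F}^{u}(n)\cap W_{F}^{cs}(0)$, foliation preservation gives $\widetilde{H}(x_{n})\in(n+E_{L}^{u})\cap E_{L}^{cs}$, whose unique element is $n-n^{u}=n^{c}+n^{s}$. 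Applying the holonomy formula successively to $\pi_{n}^{u}$ and $\pi_{n}^{s}$, and using $\widetilde{H}(z+n)=\widetilde{H}(z)+n$ together with $\widetilde{H}(z)\in E_{L}^{c}$, the unstable contribution introduced by $\pi_{n}^{u}$ cancels the component $n^{u}$ and the stable contribution introduced by $\pi_{n}^{s}$ cancels the component $n^{s}$, leaving $\widetilde{H}(T_{n}z)=\widetilde{H}(z)+n^{c}$.

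The main obstacle I expect is the foliation-preservation step for $\sigma=cs,cu$, which rests on boundedness rather than convergence of orbits and requires the boundedness of $\widetilde{H}-\mathrm{id}$ in an essential way; once this is in hand, the rest is linear algebra in the invariant splitting of $L$.
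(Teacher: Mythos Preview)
Your approach is exactly what the paper intends: it offers only the one-line justification that the lemma ``follows since the semiconjugacy takes stable manifolds to stable manifolds and unstable manifolds to unstable manifolds,'' and you have carried this out in detail, including the $cs/cu$ cases via boundedness of $\widetilde{H}-\mathrm{id}$ and the explicit intersection computations for $x_{n}$ and $T_{n}$.

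Two small remarks. First, the assertion that $H$ is homotopic to the identity is not a consequence of $f$ being $C^{1}$-close to $L$; rather, it is an implicit hypothesis throughout the paper (all semiconjugacies that arise are constructed in the homotopy class of the identity, cf.\ Lemma~\ref{L:ExtendingFranksManningSemiConj}). Second, you are right to hedge on the holonomy formula: the direct computation gives $\widetilde{H}(\pi_{x,y}^{s}(z))=\widetilde{H}(z)+(\widetilde{H}(y)-\widetilde{H}(x))^{s}$, not $H(z)+(y-x)^{s}$, and these differ by $(\varphi(y)-\varphi(x))^{s}$ where $\widetilde{H}=\mathrm{id}+\varphi$. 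The paper's displayed formula is slightly imprecise on this point, but the $T_{n}$ identity---which is the only consequence used later---is exactly as stated, and your derivation of it (via $\widetilde{H}(x_{n})=n^{c}+n^{s}$ and the two successive cancellations of $n^{u}$ and $n^{s}$) is correct.
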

\begin{remark}
Since the $\mathbb{Z}^{d}-$action $\rho^{n}:E_{L}^{c}\to E_{L}^{c}$, $\rho^{n}(x) = x + n^{c}$ is minimal it follows, in particular, that if $f$ is topologically conjugated to $L$ then the $\mathbb{Z}^{d}-$action $T_{n}(z)$ is minimal.
\end{remark}
We will be interested in a less general class of automorphisms.
\begin{definition}\label{Def:PropertyP}
Let $L\in{\rm GL}(d,\mathbb{Z})$ be an automorphism where $d\geq4$. We say that $L$ has property $(P)$ if
\begin{enumerate}[label = (\roman*)]
    \item the automorphism $L$ is irreducible,
    \item the automorphism $L$ has precisely one pair of complex eigenvalues on the unit circle $S^{1}$,
    \item all eigenvalues of $L$ that do not lie on $S^{1}$ are real.
\end{enumerate}
\end{definition}
One checks (see Lemma \ref{L:PropertiesOfP}) that $L$ with property $(P)$ has Lyapunov exponents 
\begin{align*}
{\rm Lyap}(L) = \{\chi_{1},...,\chi_{N},0,0,-\chi_{N},...,-\chi_{1}\}
\end{align*}
where $\chi_{1} > \chi_{2} > ... > \chi_{N}$ and $N = (d-2)/2$. In the remainder, we will always use this numbering of the Lyapunov exponents of $L$.
\begin{definition}\label{Def:SpredSpec}
If $L\in{\rm GL}(d,\mathbb{Z})$ has property $(P)$ then we say that $L$ has $r-$spread spectrum if $\chi_{j} > r\chi_{j+1}$ for $j = 1,...,N - 1$.
\end{definition}
\begin{remark}
In Lemma \ref{L:ExistenceSpreadSpectrum} we prove that for any even $d$ there is some $L\in{\rm GL}(d,\mathbb{Z})$ that has property $(P)$ and $r-$spread spectrum.
\end{remark}
Our interest in automorphisms having $r-$spread spectrum comes from the following lemma
\begin{lemma}\label{L:SmoothnesSpreadSpec}
Let $L\in{\rm GL}(d,\mathbb{Z})$ have property $(P)$ and $r-$spread spectrum. If $f\in{\rm Diff}^{\infty}(\mathbb{T}^{d})$ is $C^{1}-$close to $L$ then $f$ is dynamically coherent and the $2-$dimensional center foliation $W_{f}^{c}(x)$ has uniformly $C^{s}-$leaves where $s$ can be made as large as we want by letting $f$ be sufficiently close to $L$. Moreover, the $1-$dimensional leaves of $W_{f}^{\chi}$ are uniformly $C^{r}$. The same holds for $W_{f}^{\chi_{0} < \chi < \chi_{1}}$.
\end{lemma}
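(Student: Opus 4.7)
The plan is to verify the $r$-normal hyperbolicity inequalities of the HPS theorems (Theorems \ref{Thm:HPS1} and \ref{Thm:HPS2}) for each foliation in the statement, using the Lyapunov exponents of $L$ and the spread-spectrum hypothesis, and then transfer the regularity conclusion to $f$ by perturbation. Dynamical coherence of $f$ follows immediately from Theorem \ref{Thm:RegularityOfCenter}, since $L$ is a partially hyperbolic toral automorphism with invariant splitting $E^s_L \oplus E^c_L \oplus E^u_L$.

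\textbf{Center foliation.} In an adapted inner product the $2$-dimensional center $E^c_L$ is isometric, so $\|L|_{E^c_L}\| = \|(L|_{E^c_L})^{-1}\| = 1$. With the splitting $E^s_L \oplus E^c_L \oplus E^u_L$ both $s$-normal-hyperbolicity inequalities reduce to $e^{-\chi_N} < 1$, which holds for \emph{every} $s \geq 1$. Hence $(L, W_L^c)$ is $s$-normally hyperbolic for arbitrary $s$; by Theorem \ref{Thm:HPS2} the pair $(f, W_f^c)$ inherits $s$-normal hyperbolicity for $f$ sufficiently $C^1$-close to $L$, and Theorem \ref{Thm:HPS1} upgrades the center leaves to uniformly $C^s$.

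\textbf{Lyapunov foliations.} Fix $\chi_j > 0$ and use the $Df$-invariant splitting $E^{<\chi_j} \oplus E^{\chi_j} \oplus E^{>\chi_j}$, where $E^{>\chi_j} := \bigoplus_{\chi > \chi_j} E_L^\chi$ and $E^{<\chi_j}$ is the complement. The two $r$-normal hyperbolicity inequalities become
\begin{align*}
\|(L|_{E^{>\chi_j}})^{-1}\| \cdot \|L|_{E^{\chi_j}}\|^r = e^{-\chi_{j-1} + r\chi_j}, \qquad \|L|_{E^{<\chi_j}}\| \cdot \|(L|_{E^{\chi_j}})^{-1}\|^r = e^{\chi^\ast - r\chi_j},
\end{align*}
where $\chi^\ast$ is the largest exponent in $E^{<\chi_j}$ (equal to $\chi_{j+1}$, or to $0$ if $j=N$). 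The first is strictly $<1$ precisely by the $r$-spread spectrum assumption $\chi_{j-1} > r\chi_j$ (vacuous at the extreme $j=1$), and the second is automatic since $\chi^\ast < \chi_j \leq r\chi_j$. By the symmetry of the Lyapunov spectrum of an automorphism with property $(P)$ (it preserves a symplectic form), the negative exponents are handled by an identical computation. Theorems \ref{Thm:HPS2} and \ref{Thm:HPS1} then produce uniformly $C^r$-leaves for the foliation tangent to $E_f^{\chi_j}$; matching this with the foliation $W_f^{\chi_j}$ already constructed in the paper uses that $E_f^{\chi_j}$ is $C^0$-close to $E_L^{\chi_j}$ together with the local uniqueness statement of Theorem \ref{Thm:RegularityOfCenter}. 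The grouped foliation $W_f^{\chi_0 < \chi < \chi_1}$ is treated identically, replacing $\chi_{j-1}, \chi_j, \chi_{j+1}$ by the first Lyapunov exponent above $\chi_1$, the extremal exponents inside $(\chi_0, \chi_1)$, and the first exponent below $\chi_0$.

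The only care-point is that the ``center" direction of the Lyapunov foliation is itself genuinely expanding or contracting, rather than neutral; this means the HPS theorems must be read in their pseudo-hyperbolic (dominated splitting) form, which is the standard content of \cite{InvariantManifolds}, and the relative rates between $E^c$ and $E^{s}, E^u$ are exactly what is used in the proofs. Beyond this bookkeeping no substantive obstacle arises: once one writes down the inequalities at the adjacent exponents, the spread-spectrum hypothesis is exactly the condition needed to feed into HPS.
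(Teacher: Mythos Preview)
Your proof is correct and follows essentially the same approach as the paper: verify the domination inequalities at adjacent Lyapunov exponents and invoke HPS-type regularity. The paper's one-line proof simply says ``repeated use of Theorem~\ref{Thm:RegularityOfCenter}'', which already packages the dominated-splitting (pseudo-hyperbolic) version of HPS you invoke at the end; had you cited Theorem~\ref{Thm:RegularityOfCenter} directly rather than Theorems~\ref{Thm:HPS1}--\ref{Thm:HPS2}, your final care-point about $E^{<\chi_j}$ not being genuinely contracting would have been absorbed automatically.
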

\begin{proof}
The lemma follows by repeated use of Theorem \ref{Thm:RegularityOfCenter}.
\end{proof}

\subsubsection{Centralizers of ergodic irreducible automorphisms}

Let $M$ be a closed smooth manifold and $f\in{\rm Diff}^{\infty}(M)$. We define the centralizer of $f$ to be the set of maps that commute with $f$. That is, we define
\begin{align*}
Z^{r}(f) = \{g\in{\rm Diff}^{r}(M)\text{ : }fg = gf\}.
\end{align*}
In the case when $M$ is a homogeneous space and $f$ is an affine map, we also define 
\begin{align*}
& Z_{{\rm aff}}(f) = \{g\text{ is affine }\text{ : }fg = gf\}, \\
& Z_{{\rm Aut}}(f) = \{g\text{ is an automorphism }\text{ : }fg = gf\}.
\end{align*}
Given two finitely generated groups $\Gamma$, $\Lambda$ and a homomorphism $\psi:\Lambda\to\Gamma$ we say that a property virtually holds if it holds up to finite index. In particular, $\psi$ is virtually injective if $\psi$ is injective on a finite index subgroup, and $\psi$ is virtually surjective if ${\rm Im}(\psi)$ has finite index in $\Gamma$. In the case that $\Lambda$ and $\Gamma$ are abelian it holds that $\Lambda$ is virtually isomorphic to $\mathbb{Z}^{\text{rank}(\Lambda)}$ and $\Gamma$ is virtually isomorphic to $\mathbb{Z}^{\text{rank}(\Gamma)}$ (here the $\text{rank}(\Gamma)$ can be defined, for examples, as the dimension of the $\mathbb{Q}-$vector space $\mathbb{Q}\otimes\Gamma$). As a consequence, we see that a homomorphism $\psi:\Lambda\to\Gamma$ between finitely generated abelian groups is a virtual isomorphism if and only if $\text{rank}(\Lambda) = \text{rank}(\Gamma)$ and $\psi$ is virtually injective. We can always define a subgroup $\{f^{n}\text{ : }n\in\mathbb{Z}\} = \langle f\rangle\subset Z^{r}(f)$ which is the \textit{trivial} part of the centralizer. We say that $f$ has a virtually trivial centralizer if the inclusion $\langle f\rangle\to Z^{\infty}(f)$ is a virtual isomorphism. In most cases (the exceptions being when $f^{n} = \text{id}$ for some $n > 0$) the property of having a virtually trivial centralizer is equivalent to $Z^{\infty}(f)$ being virtually isomorphic to $\mathbb{Z}$.

Let $L\in{\rm GL}(d,\mathbb{Z})$ be an automorphism of the torus. For irreducible automorphisms it is particularly simple to describe the centralizer of $L$. The following result is due to Adler and Palais \cite{ConjRigidity}.
\begin{theorem}\label{Thm:AdlerPalais}
If $L:\mathbb{T}^{d}\to\mathbb{T}^{d}$ is ergodic then $Z^{0}(L) = Z_{{\rm aff}}(L)$.
\end{theorem}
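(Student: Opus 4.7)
The plan is to show that any continuous $g \in Z^{0}(L)$ lifts to an affine map of $\mathbb{R}^{d}$. I will proceed in three stages: extract a linear part from the deck-translation behaviour, reduce the remaining periodic part to a Fourier problem on $\mathbb{T}^{d}$, and then annihilate that part using ergodicity (no eigenvalue of $L$ is a root of unity).

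\emph{Stage 1 (linear part).} Lift $g$ to a continuous $G:\mathbb{R}^{d}\to\mathbb{R}^{d}$. For each $n\in\mathbb{Z}^{d}$ the map $x\mapsto G(x+n)-G(x)$ is continuous, integer-valued, and hence constant; denote this constant by $\tau(n)\in\mathbb{Z}^{d}$. A direct check shows $\tau(n+m)=\tau(n)+\tau(m)$, so $\tau$ extends to an $\mathbb{R}$-linear map $A\in M_{d}(\mathbb{Z})$. Writing $G(x)=Ax+\phi(x)$, the function $\phi$ is $\mathbb{Z}^{d}$-periodic and descends to a continuous $\bar\phi:\mathbb{T}^{d}\to\mathbb{R}^{d}$. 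Expanding the identity $L\circ G=G\circ L$ gives
\begin{equation*}
(LA-AL)x \;=\; \phi(Lx)-L\phi(x).
\end{equation*}
The right-hand side is bounded in $x$ while the left-hand side is linear, forcing $LA=AL$ and $\bar\phi\circ L = L\circ\bar\phi$.

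\emph{Stage 2 (Fourier).} Expand $\bar\phi(x)=\sum_{k\in\mathbb{Z}^{d}} c_{k}\,e^{2\pi i \langle k,x\rangle}$ with $c_{k}\in\mathbb{C}^{d}$. Substituting into $\bar\phi\circ L = L\circ\bar\phi$ and matching coefficients yields the recursion
\begin{equation*}
c_{(L^{T})^{m} k} \;=\; L^{-m} c_{k}, \qquad k\in\mathbb{Z}^{d},\ m\in\mathbb{Z}.
\end{equation*}

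\emph{Stage 3 (killing the modes).} Fix $k\neq 0$. Since no eigenvalue of $L$ (equivalently, of $L^{T}$) is a root of unity, no nonzero $k$ is periodic under $L^{T}$, so the orbit $\{(L^{T})^{m}k\}_{m\in\mathbb{Z}}$ is infinite in $\mathbb{Z}^{d}$. Because bounded sets of $\mathbb{Z}^{d}$ are finite, this orbit is unbounded in \emph{both} time directions (otherwise it would repeat). By the Riemann--Lebesgue lemma applied to the continuous function $\bar\phi$, $\|c_{j}\|\to 0$ as $\|j\|\to\infty$, so $L^{-m_{j}}c_{k}\to 0$ along some $m_{j}\to+\infty$ and some $m_{j}'\to-\infty$. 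Decomposing $c_{k}$ along the Lyapunov spaces of $L$ on $\mathbb{C}^{d}$, the stable component must vanish (else $L^{-m_{j}}$ expands it as $m_{j}\to+\infty$), the unstable component must vanish (as $m_{j}'\to-\infty$), and the center component must vanish because $L$ acts on $E_{L}^{c}$ by a linear map all of whose eigenvalues have modulus one, so orbit norms are bounded away from zero. Hence $c_{k}=0$ for every $k\neq 0$, $\bar\phi$ is a constant $b\in\mathbb{R}^{d}$, and $G(x)=Ax+b$ is affine.

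\emph{Main obstacle.} The delicate point is the center contribution in Stage 3: for ergodic but merely partially hyperbolic $L$, the restriction $L|_{E_{L}^{c}}$ can in principle have nontrivial Jordan blocks attached to unit-modulus eigenvalues, so one cannot simply invoke an isometric action. I would handle this by the elementary linear-algebra observation that for an invertible matrix whose spectrum lies on $S^{1}$, $\|M^{n}v\|$ is bounded below by a positive constant on every nonzero orbit (any Jordan-block growth is polynomial, not decaying), which is all that is needed to contradict $L^{-m_{j}}c_{k}^{c}\to 0$.
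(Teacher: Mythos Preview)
The paper does not give its own proof of this theorem; it is simply attributed to Adler and Palais \cite{ConjRigidity} and quoted as background. So there is no in-paper argument to compare against.

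Your proof is correct and is essentially the classical Fourier-analytic argument. A couple of small remarks: in Stage~3 you only need \emph{boundedness} of the Fourier coefficients (not Riemann--Lebesgue) to kill the stable and unstable components, since $\|L^{-m}c_k\|\to\infty$ already contradicts $\sup_j\|c_j\|<\infty$; Riemann--Lebesgue is only genuinely needed for the center component. Your handling of the center is right, and the justification can be made completely precise by passing to the complex Jordan basis of $L|_{E_L^c}$: if $v\neq0$ has last nonzero coordinate $v_j$ in a Jordan block with eigenvalue $\lambda\in S^1$, then the $j$th coordinate of $L^m v$ is $\lambda^m v_j$, so $\|L^m v\|\ge|v_j|>0$ for all $m$. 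This is exactly the ``bounded below on every nonzero orbit'' statement you asserted. Finally, since $g$ is a homeomorphism its linear part $A$ actually lies in ${\rm GL}(d,\mathbb{Z})$, not merely $M_d(\mathbb{Z})$; this is needed for $g$ to be an affine \emph{automorphism}.
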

\begin{remark}
For any $r\geq0$ we have $Z_{{\rm aff}}(L)\subset Z^{r}(L)\subset Z^{0}(L)$, so this theorem reduces all questions about the centralizer to calculating $Z_{{\rm aff}}(L)$ which is an algebraic question.
\end{remark}
If we know that $L$ is irreducible then we can calculate $Z_{{\rm Aut}}(L)$. Since $Z_{{\rm Aut}}(L)$ has finite index in $Z_{{\rm aff}}(L)$, we can also calculate $Z_{{\rm aff}}(L)$ up to finite index. We denote by $r_{1}(L) = r_{1}$ the number of real eigenvalues of $L$ and by $r_{2}(L) = r_{2}$ the number of pairs of complex eigenvalues of $L$. We have the following result \cite{CentAut}.
\begin{theorem}\label{Thm:RankCentralizer}
Let $L\in{\rm GL}(d,\mathbb{Z})$ be an irreducible automorphism, then $Z_{{\rm Aut}}(L) = \mathbb{Z}^{r_{1} + r_{2} - 1}\times F$ where $F$ is some finite group.
\end{theorem}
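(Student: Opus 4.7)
The plan is to reduce the theorem to Dirichlet's unit theorem via the standard identification of $Z_{\mathrm{Aut}}(L)$ with the unit group of an order in the number field $K = \mathbb{Q}(\lambda)$, where $\lambda$ is any eigenvalue of $L$. Since $L$ is irreducible, $p_{L}(t)\in\mathbb{Z}[t]$ is irreducible of degree $d$, so $K$ is a number field of degree $d$, $\lambda$ is an algebraic integer, and the signature $(r_1, r_2)$ of $K$ matches exactly the count of real eigenvalues and pairs of complex eigenvalues of $L$ (via the natural bijection between Galois conjugates of $\lambda$ and embeddings $K\hookrightarrow\mathbb{C}$).

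First I would identify the $\mathbb{Q}$-centralizer of $L$. View $V=\mathbb{Q}^{d}$ as a module over $\mathbb{Q}[L]\cong\mathbb{Q}[t]/(p_{L}(t))\cong K$. Irreducibility of $p_{L}$ together with $\dim_{\mathbb{Q}}V=\dim_{\mathbb{Q}}K=d$ forces $V$ to be free of rank $1$ over $K$. Hence, by Schur-type reasoning,
\begin{equation*}
Z_{{\rm GL}(d,\mathbb{Q})}(L)=\mathrm{End}_{K}(V)^{\times}=K^{\times},
\end{equation*}
with the isomorphism realized by fixing a $K$-basis vector $v_{0}\in V$ and sending $\mu\in K^{\times}$ to the $\mathbb{Q}$-linear map $v_0\mapsto \mu v_0$ (equivalently, any element of $\mathbb{Q}[L]\subset K$ acts as a polynomial in $L$).

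Next I would pass to the integral level. Let $\Lambda=\mathbb{Z}^{d}\subset V$, identified with a full $\mathbb{Z}$-lattice in $K$. Any $M\in Z_{\mathrm{Aut}}(L)$ corresponds to some $\mu_{M}\in K^{\times}$ with $\mu_{M}\Lambda\subseteq\Lambda$ and $\mu_{M}^{-1}\Lambda\subseteq\Lambda$. Thus $Z_{\mathrm{Aut}}(L)$ is isomorphic to the group of units of the order
\begin{equation*}
\mathcal{O}_{\Lambda}=\{\mu\in K\ :\ \mu\Lambda\subseteq\Lambda\},
\end{equation*}
which is a subring of $K$ containing $\mathbb{Z}[\lambda]$ and contained in the full ring of integers $\mathcal{O}_{K}$; in particular it is an order in $K$.

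The conclusion then follows from Dirichlet's unit theorem (which applies to every order, not only to the maximal order): $\mathcal{O}_{\Lambda}^{\times}\cong\mathbb{Z}^{r_{1}+r_{2}-1}\times F$, where $F$ is the finite group of roots of unity lying in $\mathcal{O}_{\Lambda}$. The main conceptual point, rather than a technical obstacle, is the verification that the two constants $r_{1},r_{2}$ matching $K$'s signature agree with the counts of real/complex eigenvalues of $L$; this is immediate because the embeddings $K\hookrightarrow\mathbb{C}$ are parametrized by the roots of $p_{L}$, i.e.\ by the eigenvalues of $L$, with real embeddings corresponding to real eigenvalues and complex embeddings to pairs of complex conjugate eigenvalues. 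No partially hyperbolic input is needed; the argument is purely algebraic.
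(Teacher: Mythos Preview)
Your argument is correct and is precisely the standard reduction to Dirichlet's unit theorem. Note that the paper does not actually prove this theorem; it is quoted as a known result from \cite{CentAut}. That said, the paper uses exactly the same algebraic setup you describe (the identification $\mathbb{Q}[L]\cong K=\mathbb{Q}[t]/\langle p_L(t)\rangle$ and the correspondence between $Z_{\rm Aut}(L)$ and units in an order of $K$) in the appendix, both in the opening paragraph of Appendix~\ref{Sec:AlgebraicProperties} and in the proof of Lemma~\ref{L:AlgebraicFullCentralizer}, so your approach is fully aligned with the paper's perspective. Your explicit observation that Dirichlet's theorem holds for arbitrary orders (not only $\mathcal{O}_K$) is a useful clarification, since the lattice $\mathbb{Z}^d$ need not correspond to the maximal order.
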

\begin{remark}
By combining Theorems \ref{Thm:AdlerPalais} and \ref{Thm:RankCentralizer}, if $L$ is irreducible and ergodic then $Z^{r}(L)$ is virtually isomorphic to $\mathbb{Z}^{r_{1} + r_{2} - 1}$ for all $r\geq0$.
\end{remark}
\begin{definition}
Given an abelian subgroup $\Gamma\leq{\rm GL}(d,\mathbb{Z})$ there is a set, ${\rm Lyap}(\Gamma)$, of homomorphisms $\chi:\Gamma\to\mathbb{R}$ and a decomposition
\begin{align*}
\mathbb{R}^{d} = \bigoplus_{\chi\in{\rm Lyap}(\Gamma)}E^{\chi}
\end{align*}
such that $0\neq v\in E^{\chi}$ if and only if
\begin{align*}
\chi(\gamma) = \lim_{n\to\pm\infty}\frac{1}{n}\log\norm{\gamma^{n}v}.
\end{align*}
We call $\chi\in{\rm Lyap}(\Gamma)$ the Lyapunov functionals of $\Gamma$ and $E^{\chi}$ the Lyapunov subspaces.
\end{definition}
If $L$ is irreducible with precisely one pair of eigenvalues on $S^{1}$ and with no three eigenvalues of the same modulus then all eigenspaces correspond to eigenvalues of distinct moduli and therefore to distinct Lyapunov exponents. Indeed, if we have two distinct eigenvalues of the same modulus $\mu,\lambda\in\mathbb{C}$ then either $\overline{\lambda} = \mu$ or $\lambda = -\mu$ since otherwise $L$ would have more than three eigenvalues of modulus $|\mu|$. If $\lambda = -\mu$ then the characteristic polynomial $p_{L}(t)$ satisfy $p_{L}(-t) = p_{L}(t)$, so we can write $p_{L}(t) = Q(t^{2})$ for some polynomial $Q(x)$, which is a contradiction by Lemma \ref{L:PseudoIsAllTrue}. This implies the following lemma.
\begin{lemma}\label{L:InjectiveLyapunovExponentMap}
Let $L\in{\rm GL}(d,\mathbb{Z})$ be irreducible with $2-$dimensional isometric center and no three eigenvalues of the same modulus. For any abelian subgroup $\Gamma\leq Z_{{\rm Aut}}(L)$ that contains $L$ the natural map ${\rm Lyap}(\Gamma)\to{\rm Lyap}(L)$ defined by $\chi\mapsto\chi(L)$ is a bijection. That is, if $M$ commute with $L$ then we have a unique limit $\chi(M)$ such that for $0\neq v\in E_{L}^{\chi}$
\begin{align*}
\chi(M) = \lim_{n\to\infty}\frac{1}{n}\log\norm{M^{n}v}.
\end{align*}
\end{lemma}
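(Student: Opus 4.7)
The plan is to exploit that commuting matrices on $\mathbb{C}^{d}$ share simultaneous eigenspaces and combine this with the hypotheses on eigenvalue moduli to identify the $\Gamma$-Lyapunov decomposition with the $L$-Lyapunov decomposition.

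First, I would observe that irreducibility of $p_{L}(t)$ over $\mathbb{Q}$ implies $p_{L}(t)$ is separable (characteristic zero), so $L$ is diagonalisable over $\mathbb{C}$ with $d$ distinct eigenvalues $\lambda_{1},\ldots,\lambda_{d}$ and one-dimensional complex eigenspaces $V_{\lambda_{i}}$. Any $M\in\Gamma$ commutes with $L$, hence preserves each $V_{\lambda_{i}}$; since $\dim_{\mathbb{C}}V_{\lambda_{i}}=1$, $M$ acts on $V_{\lambda_{i}}$ by a scalar $\mu_{i}(M)\in\mathbb{C}\setminus\{0\}$.

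Next, I would invoke the observation from the paragraph preceding the lemma: under the hypotheses, each real Lyapunov subspace $E_{L}^{\chi}$ of $L$ is either a one-dimensional real eigenspace for a real eigenvalue, or the two-dimensional realification of a complex-conjugate pair $\{V_{\lambda_{i}},V_{\bar\lambda_{i}}\}$. In the latter case, the scalars by which $M$ acts on $V_{\lambda_{i}}$ and $V_{\bar\lambda_{i}}$ are complex conjugates of each other (because $M$ is real), hence share the modulus $|\mu_{i}(M)|$. In either case every (complex) eigenvalue of $M|_{E_{L}^{\chi}}$ has modulus $|\mu_{i}(M)|$, so for every $0\neq v\in E_{L}^{\chi}$,
\begin{equation*}
\lim_{n\to\infty}\frac{1}{n}\log\norm{M^{n}v}=\log|\mu_{i}(M)|,
\end{equation*}
a value depending only on $\chi$ and $M$. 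Setting $\widetilde{\chi}(M):=\log|\mu_{i}(M)|$ yields a homomorphism $\widetilde{\chi}:\Gamma\to\mathbb{R}$, and the splitting $\mathbb{R}^{d}=\bigoplus_{\chi\in{\rm Lyap}(L)}E_{L}^{\chi}$ realises $\{\widetilde{\chi}\}_{\chi\in{\rm Lyap}(L)}$ as a Lyapunov decomposition for $\Gamma$. By uniqueness of the Lyapunov decomposition, ${\rm Lyap}(\Gamma)=\{\widetilde{\chi}:\chi\in{\rm Lyap}(L)\}$, and evaluation at $L$ gives $\widetilde{\chi}(L)=\chi$, which is the claimed bijection.

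There is essentially no technical obstacle beyond this bookkeeping: the genuine content is the hypothesis ``no three eigenvalues of the same modulus,'' which together with irreducibility (ruling out $p_{L}(t)=Q(t^{2})$ via Lemma \ref{L:PseudoIsAllTrue}) forces each $L$-Lyapunov block to consist of at most a single conjugate pair, so that no commuting automorphism can refine it further. This algebraic translation is already carried out in the discussion preceding the lemma, so the proof reduces to packaging these observations in terms of Lyapunov data for $\Gamma$.
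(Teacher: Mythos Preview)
Your proposal is correct and follows essentially the same approach as the paper: the paper's argument is precisely the paragraph preceding the lemma (showing each $E_{L}^{\chi}$ is at most a single conjugate pair, via Lemma~\ref{L:PseudoIsAllTrue} to exclude the $\lambda=-\mu$ case), after which the lemma is declared immediate. You have simply supplied the bookkeeping the paper omits---diagonalisability of $L$ over $\mathbb{C}$, the scalar action of any commuting $M$ on one-dimensional eigenspaces, and conjugacy of these scalars on a real pair---so your write-up is a faithful expansion of the paper's proof rather than a different route.
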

For our purposes, the following lemma will be useful. The proof is contained in Appendix \ref{Sec:AlgebraicProperties}.
\begin{lemma}
Let $L\in{\rm GL}(d,\mathbb{Z})$ be irreducible with affine centralizer $Z_{{\rm aff}}(L)$. For any subgroup $\Gamma\leq Z_{{\rm aff}}(L)$ we define
\begin{align*}
\Lambda:\Gamma\to\mathbb{R}^{r_{1} + r_{2} - 1},\quad\Lambda(\gamma) = (\chi_{1}(\gamma),...,\chi_{r_{1} + r_{2}-1}(\gamma))
\end{align*}
where each $\chi_{j}:\Gamma\to\mathbb{R}$ is the Lyapunov functional along some eigendirection for the action of $\Gamma$. If $\Gamma\leq Z_{{\rm Aut}}(L)$ has rank $r_{1} + r_{2} - 1$ then the image of $\Lambda$ is a lattice. Moreover, if $\chi_{j}:\Gamma\to\mathbb{R}$ is a Lyapunov exponent that corresponds to a real eigenvalue of the action of $\Gamma$ and $\gamma\in\Gamma$ satisfies $\chi_{j}(\gamma) = 0$ then $\gamma = \pm I$.
\end{lemma}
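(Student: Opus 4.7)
The plan is to convert the statement into a question about the unit group of a number field and invoke Dirichlet's unit theorem together with Kronecker's theorem. Since $L$ is irreducible, the $\mathbb{Q}$-algebra $\mathbb{Q}[L]\subset M_{d}(\mathbb{Q})$ is a field $K$ isomorphic to $\mathbb{Q}(\lambda)$ for any eigenvalue $\lambda$ of $L$, and a standard centralizer argument in matrix algebras shows that any element of ${\rm GL}(d,\mathbb{Q})$ commuting with $L$ must already lie in $\mathbb{Q}[L]$. In particular, $Z_{\rm Aut}(L)$ sits inside the unit group of the order $\mathbb{Z}[L]\subset K$. Under this identification an element $\gamma\leftrightarrow u\in K$ acts on $\mathbb{R}^{d}$ with spectrum $\{\sigma_{i}(u)\}$, where $\sigma_{i}:K\hookrightarrow\mathbb{C}$ runs over the $r_{1}+2r_{2}$ embeddings of $K$, and the Lyapunov functionals take the form $\chi_{j}(\gamma)=\log|\sigma_{j}(u)|$, with one functional per real embedding and per complex conjugate pair.

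Next I would use the norm identity $\prod_{i}\sigma_{i}(u)=N_{K/\mathbb{Q}}(u)=\pm 1$ for units to obtain the linear relation $\sum_{j} m_{j}\chi_{j}=0$, with $m_{j}\in\{1,2\}$ according to whether the embedding is real or complex. This relation means that any $r_{1}+r_{2}-1$ of the $r_{1}+r_{2}$ Lyapunov functionals determine the remaining one, so knowing $\Lambda(\gamma)$ is equivalent to knowing the full log-embedding of $\gamma$ inside the hyperplane $\sum_{j} m_{j}x_{j}=0\subset\mathbb{R}^{r_{1}+r_{2}}$. It therefore suffices to show that the full log-embedding of $\Gamma$ is a discrete subgroup of rank $r_{1}+r_{2}-1$ of that hyperplane. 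Both discreteness of the image and finiteness of $\ker\Lambda$ follow from Kronecker's theorem: if $\Lambda(\gamma)$ is small then every eigenvalue of $\gamma$ lies close to the unit circle, so the coefficients of the integer characteristic polynomial of $\gamma$ are bounded and only finitely many such $\gamma$ can occur; in $\ker\Lambda$ one has all eigenvalues exactly on $S^{1}$, so Kronecker forces them to be roots of unity, and since $\gamma$ is diagonalizable (it preserves the distinct eigenlines of the irreducible $L$) some power of $\gamma$ equals the identity. Combining finiteness of $\ker\Lambda$ with the hypothesis ${\rm rank}(\Gamma)=r_{1}+r_{2}-1$, the image $\Lambda(\Gamma)$ is a discrete subgroup of $\mathbb{R}^{r_{1}+r_{2}-1}$ of full rank, hence a lattice.

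For the last assertion, suppose $\chi_{j}$ corresponds to a real eigendirection, so the associated embedding $\sigma_{j}:K\to\mathbb{R}$ is real, and suppose $\chi_{j}(\gamma)=0$. Then $\sigma_{j}(u)\in\{\pm 1\}$, and since $\sigma_{j}$ is an injective field homomorphism with $\sigma_{j}(\pm 1)=\pm 1$, we conclude $u=\pm 1$, i.e.\ $\gamma=\pm I$. The main conceptual step of the plan is the identification of $Z_{\rm Aut}(L)$ with the units of the order $\mathbb{Z}[L]$; once this is in place the rest is a direct packaging of Dirichlet's unit theorem and Kronecker's theorem, and no new dynamical input is required.
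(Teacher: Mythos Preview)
Your proposal is correct and follows essentially the same route as the paper: both identify $Z_{\rm Aut}(L)$ with the units of an order in the number field $K=\mathbb{Q}[L]$, realize the Lyapunov functionals as the logarithmic embedding, and invoke Dirichlet's unit theorem to conclude the image is a lattice. For the last assertion the paper argues via the rational $L$-invariant subspace $\ker(\gamma\mp I)$ rather than injectivity of the real embedding $\sigma_{j}$, but this is only a cosmetic difference.
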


\subsection{Higher rank actions}

Let $\Gamma\cong\mathbb{Z}^{k}$, $k\geq 2$, be some abelian group and $M$ a smooth closed manifold. Given a smooth action $\alpha:\Gamma\times M\to M$ of $\Gamma$ we make the following definition \cite{RodriguezHertzPeriodicPoints}.
\begin{definition}
A point $p\in M$ is periodic for $\alpha$ if the stabilizer $\Gamma_{p} = \{\gamma\text{ : }\alpha(\gamma)p = p\}$ has finite index in $\Gamma$.
\end{definition}
Given a periodic point $p\in M$ for $\alpha$ we can define the Lyapunov functionals at $p$ by
\begin{align*}
\chi_{p}(v,\gamma) = \lim_{n\to\infty}\frac{1}{n}\log\norm{D_{p}\gamma^{n}(v)},\quad v\in T_{p}M.
\end{align*}
There are finitely many $\chi_{p}^{1},...,\chi_{p}^{\ell}:\Gamma\to\mathbb{R}$ such that for every $v\in T_{p}M$ there is some $j$ such that $\chi_{p}(v,\gamma) = \chi_{p}^{j}(\gamma)$ for all $\gamma\in\Gamma$. We say that $\{\chi_{p}^{1},...,\chi_{p}^{\ell}\}$ are the Lyapunov functionals at $p$. If $\alpha$ preserves a foliation $\mathcal{F}$ then we can consider the Lyapunov functionals along $\mathcal{F}$ which consists of those $\chi_{p}^{j}$ such that $\chi_{p}^{j}(\gamma) = \chi_{p}(v,\gamma)$ for some $v\in T_{p}\mathcal{F}$.

In the case of the torus, we can also define \textit{higher rank actions}. We begin by defining a linear higher rank action. An abelian subgroup $\Gamma\leq{\rm GL}(d,\mathbb{Z})$ has a natural action on $\mathbb{T}^{d}$, written $\alpha_{0}$, given by matrix multiplication. A $\mathbb{Z}-$factor, or a rank $1$ factor, of $\alpha_{0}$ is some projection $\pi:\mathbb{T}^{d}\to T$ to a torus and an automorphism $A:T\to T$ such that $\pi\circ\alpha_{0}(\gamma) = A^{n(\gamma)}\circ\pi$ for all $\gamma\in\Gamma$. That is, $\alpha_{0}$ has a $\mathbb{Z}-$factor if it factors through an action of $\mathbb{Z}$. The following standard definition is used in \cite{HigherRankRigidity,FisherKalininSpatzier}.
\begin{definition}\label{Def:HigherRankLinear}
The natural action of some abelian subgroup $\Gamma\leq{\rm GL}(d,\mathbb{Z})$ is of higher rank if $\Gamma$ has no non-trivial $\mathbb{Z}-$factor.
\end{definition}
The following result follows since irreducible automorphisms can not preserve non-trivial proper rational subspaces.
\begin{theorem}\label{Thm:AllSubgroupsHigerRank}
If $L\in{\rm GL}(d,\mathbb{Z})$ is irreducible then any subgroup $\Gamma\leq Z_{{\rm Aut}}(L)$ with $L\in\Gamma$ and ${\rm rank}(\Gamma) > 1$ is higher rank.
\end{theorem}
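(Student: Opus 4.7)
The plan is to argue by contradiction, assuming the natural $\Gamma$-action $\alpha_0$ on $\mathbb{T}^d$ admits a nontrivial $\mathbb{Z}$-factor: a surjective homomorphism of tori $\pi:\mathbb{T}^d\to T$, an automorphism $A:T\to T$, and a homomorphism $n:\Gamma\to\mathbb{Z}$ with $n(\Gamma)\neq 0$, such that $\pi\circ\alpha_0(\gamma) = A^{n(\gamma)}\circ\pi$ for every $\gamma\in\Gamma$. The expectation is that irreducibility of $L$ together with the rank hypothesis $\mathrm{rank}(\Gamma) > 1$ will be enough to derive a contradiction, with no analytic input needed.

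The first step is to analyze the kernel of $\pi$. Lifting $\pi$ to a surjective linear map $\tilde\pi:\mathbb{R}^d\to\mathbb{R}^m$ (writing $T = \mathbb{R}^m/\mathbb{Z}^m$) that sends $\mathbb{Z}^d$ into $\mathbb{Z}^m$, the subspace $V := \ker\tilde\pi$ is rational in $\mathbb{R}^d$. The lifted relation $\tilde\pi\circ\gamma = \tilde A^{n(\gamma)}\circ\tilde\pi$ forces $V$ to be invariant under every $\gamma\in\Gamma$, and in particular under $L$. Since $p_L(t)$ is irreducible over $\mathbb{Q}$, the module $\mathbb{Q}^d$ is simple over $\mathbb{Q}[L]$, so $L$ admits no proper nonzero $\mathbb{Q}$-rational invariant subspace of $\mathbb{R}^d$. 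Hence $V\in\{0,\mathbb{R}^d\}$. The case $V = \mathbb{R}^d$ forces $T = \{0\}$, contradicting nontriviality of the factor; so $V = 0$, $\tilde\pi$ is a linear isomorphism, and $\pi:\mathbb{T}^d\to T$ is a finite-sheeted covering with finite kernel.

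The second step is to show that $K := \ker n \leq \Gamma$ is trivial. For $\gamma\in K$ the relation $\pi\circ\gamma = \pi$ implies that $x\mapsto \gamma(x) - x$ is a continuous map from the connected space $\mathbb{T}^d$ into the finite discrete group $\ker\pi$, and must therefore be constant; evaluating at $x = 0$ and using that $\gamma$ is a linear automorphism give $\gamma \equiv I$. Therefore $K$ is trivial, so $n$ injects $\Gamma$ into $\mathbb{Z}$, forcing $\mathrm{rank}(\Gamma)\leq 1$ and contradicting the hypothesis.

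The argument is essentially algebraic, and I do not anticipate a real obstacle. The only point requiring some care, rather than being a hard step, is the fact that $K$ comes out \emph{trivial} and not merely finite; this is what closes the rank comparison, and it relies on $\pi$ being a finite cover together with elements of $Z_{\mathrm{Aut}}(L)$ fixing the origin and the connectedness of $\mathbb{T}^d$.
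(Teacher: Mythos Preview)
Your proof is correct and is precisely the natural way to fill in the details behind the paper's one-line justification (``follows since irreducible automorphisms can not preserve non-trivial proper rational subspaces''). The first step, reducing to $V=0$ via irreducibility of $L$, is exactly what the paper points to; the second step---showing $\ker n$ is trivial via the finite-kernel/connectedness argument and then obtaining the rank contradiction---is the routine finish the paper leaves implicit.
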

Given any smooth action $\alpha:\Gamma\times\mathbb{T}^{d}\to\mathbb{T}^{d}$ we denote by $\Gamma\to\rm Diff^{\infty}(\mathbb{T}^{d})$ the associated homomorphism. We define the linearization of $\alpha$ to be the homomorphism $\Gamma\to\rm Diff^{\infty}(\mathbb{T}^{d})\to{\rm GL}(d,\mathbb{Z})$ where the last map is the homology representation. We make the following definition following \cite{HigherRankRigidity}.
\begin{definition}\label{Def:HigherRank}
We say that a smooth action $\alpha:\Gamma\times\mathbb{T}^{d}\to\mathbb{T}^{d}$ is of higher rank if its linearization is of higher rank.
\end{definition}
The following result was shown in \cite{HigherRankRigidity}
\begin{theorem}\label{Thm:RigidityOfHigherRankAnosov}
If $\alpha:\Gamma\times\mathbb{T}^{d}\to\mathbb{T}^{d}$ is of higher rank and there is some $\gamma\in\Gamma$ such that $\alpha(\gamma)$ is Anosov, then $\alpha$ is $C^{\infty}-$conjugated to its linearization.
\end{theorem}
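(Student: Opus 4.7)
My plan is to first construct a topological conjugacy from $\alpha$ to its linearization $\alpha_{*}$ out of the single Anosov element, and then use the higher rank hypothesis to upgrade it to a smooth one. Let $\gamma_{0}\in\Gamma$ with $\alpha(\gamma_{0})$ Anosov. By the Franks--Manning theorem the linearization $L_{0}:=\alpha(\gamma_{0})_{*}\in{\rm GL}(d,\mathbb{Z})$ is hyperbolic, and there is a unique homeomorphism $H:\mathbb{T}^{d}\to\mathbb{T}^{d}$ homotopic to the identity with $H\circ\alpha(\gamma_{0})=L_{0}\circ H$. I would take this $H$ as the candidate conjugacy of the whole action.

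To see that $H$ in fact conjugates every $\alpha(\eta)$ to $\alpha(\eta)_{*}$, I would consider the auxiliary map $K_{\eta}:=\alpha(\eta)_{*}^{-1}\circ H\circ\alpha(\eta)$. Since $\alpha(\eta)$ commutes with $\alpha(\gamma_{0})$ and $\alpha(\eta)_{*}$ commutes with $L_{0}$, a direct computation gives $K_{\eta}\circ\alpha(\gamma_{0})=L_{0}\circ K_{\eta}$; moreover $\alpha(\eta)$ is homotopic to $\alpha(\eta)_{*}$, so $K_{\eta}$ is homotopic to the identity. The uniqueness clause of Franks--Manning then forces $K_{\eta}=H$, i.e.\ $H\circ\alpha(\eta)=\alpha(\eta)_{*}\circ H$. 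Hence $H$ is a topological conjugacy between $\alpha$ and $\alpha_{*}$.

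The remaining task is to promote $H$ to a $C^{\infty}-$diffeomorphism, and this is where the higher rank hypothesis becomes essential. The linearization $\alpha_{*}$ has a joint eigenspace decomposition, and higher rank ensures that the Lyapunov hyperplanes of $\alpha_{*}$ in $\Gamma\otimes\mathbb{R}$ are in general position, so every open Weyl chamber contains Anosov elements. Intersecting the stable distributions of sufficiently many such Anosov elements of $\alpha$ produces a continuous $\Gamma-$invariant coarse Lyapunov splitting of $T\mathbb{T}^{d}$, tangent to Hölder foliations $\mathcal{F}^{\chi}$ that $H$ sends to the linear Lyapunov foliations of $\alpha_{*}$.

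Along each $\mathcal{F}^{\chi}$ the restriction of $H$ should be smooth: matching of periodic Lyapunov data between $\alpha$ and $\alpha_{*}$ (a Kalinin--Sadovskaya-type higher-rank rigidity, using commuting Anosov partners to suppress deviations from the linear model) combined with Katok--Spatzier non-stationary smooth linearization along the relevant $1-$dimensional invariant subfoliations of generic Anosov elements gives smoothness of $H$ along each $\mathcal{F}^{\chi}$. Iteratively applying Journé's regularity lemma to these intersecting Hölder foliations with smooth leafwise restrictions then yields $H\in C^{\infty}$. The hardest step is precisely this periodic Lyapunov-data rigidity: it fails for a single Anosov diffeomorphism of $\mathbb{T}^{d}$, so the full strength of the higher rank hypothesis (many commuting Anosov elements with Lyapunov functionals in general position) must be invoked to force matching of all Lyapunov exponents at every periodic orbit, which is what eventually propagates linear rigidity from the homology representation down to the pointwise level.
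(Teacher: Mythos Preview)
The paper does not prove this theorem: it is quoted from \cite{HigherRankRigidity} (Rodriguez Hertz--Wang) and used as a black box, so there is no ``paper's own proof'' to compare your attempt against.

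On the merits of your sketch: the first two paragraphs (Franks--Manning conjugacy for the Anosov element, then extension to the whole action via the uniqueness clause) are correct and standard. The third paragraph is where the actual content of \cite{HigherRankRigidity} lies, and your outline is too optimistic in two places. First, the assertion that ``higher rank ensures that the Lyapunov hyperplanes of $\alpha_{*}$ are in general position, so every open Weyl chamber contains Anosov elements'' conflates the hypothesis (no rank-$1$ factor, Definition~\ref{Def:HigherRank}) with a much stronger conclusion; producing many Anosov elements from a single one is itself one of the principal difficulties in \cite{HigherRankRigidity}, not a free consequence of the definition. Second, the route you propose through ``matching of periodic Lyapunov data'' in the style of Kalinin--Sadovskaya is not how the cited paper proceeds; the argument there builds smoothness of $H$ along coarse Lyapunov foliations by a more direct exponential-convergence mechanism rather than by first establishing periodic-data rigidity. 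So your plan has the right overall architecture (topological conjugacy first, then leafwise regularity plus Journ\'e) but the specific mechanisms you invoke for the smoothness upgrade are either circular or not the ones that actually work.
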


\section{Coordinates along invariant foliations}
\label{Sec:FranksManninCoordinates}

In this section we show the following result for perturbations.
\begin{claim}\label{Claim:ExistenceFMcoordinates}
Let $L\in{\rm GL}(d,\mathbb{Z})$ be any automorphism of the torus and let $f\in{\rm Diff}^{\infty}(\mathbb{T}^{d})$ be $C^{1}-$close to $L$. For each non-zero Lyapunov exponent $\chi$ of $L$ there exist a unique Hölder continuous $\Phi_{\chi}:\mathbb{R}^{d}\to E_{L}^{\chi}$ such that 
\begin{align*}
& \Phi_{\chi}(Fx) = L\Phi_{\chi}(x), \\
& \Phi_{\chi}(x + n) = \Phi_{\chi}(x) + n^{\chi},\quad n\in\mathbb{Z}^{d}.
\end{align*}
We define $\Phi_{\chi,x}:W_{F}^{\chi}(x)\to E_{L}^{\chi}$ by $\Phi_{\chi,x}(z) = \Phi_{\chi}|_{W_{F}^{\chi}(x)}(z) - \Phi_{\chi}(x)$, then $\Phi_{\chi,x}$ is a bi-Hölder homeomorphism with Hölder exponent close to $1$ and Hölder norm uniformly bounded. The map $x\mapsto\Phi_{\chi,x}$ is continuous and well-defined on the torus in the sense that $\Phi_{\chi,x+n} = \Phi_{\chi,x}$ after we identify $W_{F}^{\chi}(x)$ and $W_{F}^{\chi}(x+n)$. Finally, $\Phi_{\chi,x}(x) = 0$ and for any $g\in Z^{\infty}(f)$ homotopic to $M$
\begin{align}\label{Eq:FunctionalEquation}
\Phi_{\chi,gx}(gy) = M\Phi_{\chi,x}(y).
\end{align}
\end{claim}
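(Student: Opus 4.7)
The plan is to construct $\Phi_\chi$ by a Franks–Manning-type geometric series along a single Lyapunov direction, and then to derive every remaining property from the defining cohomological equation together with its built-in uniqueness. Writing $F = L + P$ with $P:\mathbb{R}^d\to\mathbb{R}^d$ a $\mathbb{Z}^d$-periodic $C^1$-small correction, I will seek $\Phi_\chi(x) = x^\chi + u(x)$ with $u:\mathbb{R}^d\to E_L^\chi$ bounded; the two defining identities then reduce to the cohomological equation
\[
u(Fx) - L u(x) = -P(x)^\chi, \qquad u(x+n) = u(x) \text{ for } n\in\mathbb{Z}^d.
\]
For $\chi>0$, $L^{-1}$ is a contraction on $E_L^\chi$ with rate close to $e^{-\chi}$, and iterating backward gives the absolutely convergent series $u(x) = \sum_{k\geq 0} L^{-(k+1)} P(F^k x)^\chi$; for $\chi<0$ one uses the analogous forward series. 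Periodicity of $u$ is automatic from $F(x+n)=F(x)+Ln$ and the periodicity of $P$, giving $\Phi_\chi(x+n)=\Phi_\chi(x)+n^\chi$. Hölder regularity comes from the standard split of each summand between $e^{-k\chi}\|P\|_\infty$ and $e^{-k\chi}\|DP\|_\infty\|F^k x - F^k y\|$ and then optimizing the cutoff. Uniqueness is immediate: the difference of two candidates descends to a continuous $D:\mathbb{T}^d\to E_L^\chi$ satisfying $D\circ f = L D$, and iterating by $L^{\pm n}|_{E_L^\chi}$ forces $D\equiv 0$.

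The bi-Hölder statement for $\Phi_{\chi,x}:W_F^\chi(x)\to E_L^\chi$ follows from the cocycle identity
\[
\Phi_{\chi,Fx}\circ F|_{W_F^\chi(x)} = L\circ \Phi_{\chi,x}
\]
obtained by evaluating the functional equation on a leaf. On a small neighborhood of $x$ inside $W_F^\chi(x)$ the leaf is a $C^{1+\alpha}$ graph over $E_L^\chi$ with slope tending to $0$ as $f\to L$, and $\Phi_{\chi,x}$ differs from the projection $z\mapsto (z-x)^\chi$ only by the bounded Hölder error $u(z)-u(x)$; this shows $\Phi_{\chi,x}$ is a near-identity bi-Lipschitz embedding of small leaf balls. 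The cocycle spreads this to the full leaf: for $\chi>0$ and $z\in W_F^\chi(x)$ at large leaf distance, the $F^{-n}$-iterates contract $z$ into the small-scale regime where the near-identity estimate applies, and one transports back via $L^n$; for $\chi<0$ one iterates in the opposite direction. Since the leaf expansion rate of $F$ tends to the dilation rate $e^\chi$ of $L$ on $E_L^\chi$, this produces both surjectivity and a leaf-wise Hölder exponent approaching $1$. Injectivity is proved dually by contracting two supposed preimages back into the near-identity region.

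The identity $\Phi_{\chi,x+n}=\Phi_{\chi,x}$ is immediate from $\Phi_\chi(x+n)=\Phi_\chi(x)+n^\chi$. For the functional equation, observe that $g(0)$ is a fixed point of $f$ (since $fg=gf$), and that $F$ has a unique lift of every fixed point of $f$ because $L-I$ is invertible. Let $G$ be the lift of $g$ with $G(0)$ equal to the $F$-fixed lift of $g(0)$; then $G(x+n)=G(x)+Mn$ and $FG=GF$ (both are lifts of $fg=gf$ agreeing at $0$). Since $M\in Z_{\rm Aut}(L)$ commutes with $L$, it preserves every generalized eigenspace of $L$, in particular $E_L^\chi$, so $(Mn)^\chi = M(n^\chi)$. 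Setting $\Psi_\chi(x):=M^{-1}\Phi_\chi(Gx)$ and checking that $\Psi_\chi(Fx)=L\Psi_\chi(x)$ and $\Psi_\chi(x+n)=\Psi_\chi(x)+n^\chi$, uniqueness forces $\Psi_\chi=\Phi_\chi$, giving $\Phi_\chi\circ G = M\Phi_\chi$; subtracting this identity at $x$ from the one at $y$ yields $\Phi_{\chi,gx}(gy)=M\Phi_{\chi,x}(y)$. I expect the leaf-wise bi-Hölder claim with exponent close to $1$ to be the main obstacle: the global Hölder exponent of $\Phi_\chi$ on $\mathbb{R}^d$ is typically far smaller, controlled by ratios of Lyapunov exponents of $f$, so pure restriction is not enough; one must exploit the cocycle identity to transfer the near-identity small-scale behavior to all scales while quantitatively matching the leaf expansion rate of $F$ to the exact dilation $e^\chi$ of $L$ on $E_L^\chi$.
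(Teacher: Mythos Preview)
Your overall strategy matches the paper's: set up and solve the hyperbolically twisted cohomological equation for $u$ (the paper's $\varphi_\chi$), invoke uniqueness to derive the $g$-equivariance, and iterate the leaf cocycle to obtain the bi-H\"older estimates. Two concrete steps fail as written.

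First, in the functional equation you assert that $g(0)$ has an $F$-fixed lift, so that a suitable lift $G$ satisfies $FG=GF$. This is false in general: the map $x\mapsto -(L-I)^{-1}P(x)$ is a contraction, so $F$ has a \emph{unique} fixed point on $\mathbb{R}^d$, namely $0$; but $g$ only permutes the $|\det(L-I)|$ fixed points of $f$ and need not fix $0$. The paper instead allows $GF-FG=m\in\mathbb{Z}^d$ to be arbitrary, sets $\tilde\Phi_\chi(x)=M^{-1}\Phi_\chi(Gx)-v$ with $v=(I-L)^{-1}M^{-1}m^\chi$, checks that $\tilde\Phi_\chi$ satisfies both defining identities, and concludes by uniqueness that $\Phi_\chi(Gx)=M\Phi_\chi(x)+Mv$. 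The additive constant $Mv$ cancels when you form $\Phi_{\chi,gx}(gy)$, so the final formula is unaffected, but the route through $FG=GF$ is not available.

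Second, your bi-H\"older lower bound has the scales inverted. You claim $\Phi_{\chi,x}$ is a near-identity bi-Lipschitz embedding of \emph{small} leaf balls and then contract large-distance pairs back there. But on small balls the H\"older error $|u(z)-u(x)|\leq C' d_\chi(x,z)^\beta$ with $\beta<1$ dominates the linear term $c\,d_\chi(x,z)$, so $c\,d - C' d^\beta<0$ for $d$ small and there is no small-scale injectivity to contract to. The usable lower bound lives at scale of order one, where $|\Phi_{\chi,x}(z)|\geq|(z-x)^\chi|-2\|u\|_\infty$ is positive once $d_\chi(x,z)\gtrsim\|u\|_\infty$. Accordingly the paper iterates \emph{forward} (for $\chi>0$) from a close pair $p,q$ until $d_\chi(F^np,F^nq)$ is of order one, applies this trivial bound there, and transports back via $L^{-n}$, obtaining $\|\Phi_\chi(p)-\Phi_\chi(q)\|\geq K\,d_\chi(p,q)^{(\chi+\varepsilon)/(\chi-\varepsilon)}$ for $d_\chi(p,q)\leq 1$; injectivity then follows from the cocycle by expanding any hypothetical collision out to a contradiction.
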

\begin{remark}
The uniqueness statement for $\Phi_{\chi}$ holds once we have fixed a lift $F$ of $f$. However, the two defining properties of $\Phi_{\chi}$ immediately implies that two different lifts for $f$ will define functions $\Phi_{\chi}$ that differ by a constant vector in $E_{L}^{\chi}$ since any two lifts differ by an integer vector.
\end{remark}
\begin{proof}\renewcommand{\qedsymbol}{}
One notes that $\Phi_{\chi}(x + n) = \Phi_{\chi}(x) + n^{\chi}$ is equivalent to the fact that we can write $\Phi_{\chi}$ as $\Phi_{\chi}(x) = x^{\chi} + \varphi_{\chi}(x)$ where $\varphi_{\chi}$ is lifted from $\mathbb{T}^{d}$. Writing $(Fx)^{\chi} = Lx^{\chi} + v_{\chi}(x)$ with $v_{\chi}:\mathbb{T}^{d}\to E_{L}^{\chi}$ we obtain
\begin{align*}
\Phi_{\chi}(Fx) = (Fx)^{\chi} + \varphi_{\chi}(Fx) = Lx^{\chi} + v_{\chi}(x) + \varphi_{\chi}(fx) = L(x^{\chi} + \varphi_{\chi}(x)).
\end{align*}
Equivalently, $v_{\chi}(x) = L\varphi_{\chi}(x) - \varphi_{\chi}(fx)$. Since $L$ is hyperbolic along $E_{L}^{\chi}$, this is a hyperbolically twisted cohomological equation and therefore has a unique Hölder solution $\varphi_{\chi}$. Since $f$ is $C^{1}-$close to $L$ the expansion of $f$ along $W_{f}^{\chi}$ is close to that of $L$ along $W_{L}^{\chi}$ so $\varphi_{\chi}$ has Hölder exponent close to $1$ and Hölder norm close to $0$ along $W_{f}^{\chi}$. It follows that $\Phi_{\chi}|_{W_{F}^{\chi}(x)}$ is Hölder with Hölder exponent and Hölder norm close to $1$. Since $\varphi_{\chi}$ is unique, $\Phi_{\chi}$ is unique. Let $g\in{Z^{\infty}(f)}$ be homotopic to $g_{*} = M\in{\rm GL}(d,\mathbb{Z})$ (recall that we can identify $M$ with the induced map on the first homology group, $g_{*}$). Since $L$, also, coincides with the induced map on the first homology group for $f$, $f_{*}$, we have 
\begin{align*}
ML = g_{*}f_{*} = (gf)_{*} = (fg)_{*} = f_{*}g_{*} = LM.
\end{align*}
That is, $M\in Z_{\rm Aut}(L)$. We choose a lift $G$ of $g$ and define $m\in\mathbb{Z}^{d}$ by
\begin{align*}
m = GFx - FGx.
\end{align*}
Let $\Tilde{\Phi}_{\chi}:\mathbb{R}^{d}\to E_{L}^{\chi}$ be defined by
\begin{align*}
\Tilde{\Phi}_{\chi}(x) = M^{-1}\Phi_{\chi}(Gx) - v,\quad v\in E_{L}^{\chi}
\end{align*}
for some $v$ to be decided. It holds that $\Tilde{\Phi}_{\chi}(x + n) = \Tilde{\Phi}_{\chi}(x) + n^{\chi}$ since $G(x + n) = G(x) + Mn$ and $M$ preserves $E_{L}^{\chi}$. Moreover, we have a relation
\begin{align*}
\Tilde{\Phi}_{\chi}(Fx) = & M^{-1}\Phi_{\chi}(GFx) - v = M^{-1}\Phi_{\chi}(FGx + m) - v = \\ &
M^{-1}\Phi_{\chi}(FGx) + M^{-1}m^{\chi} - v = L\left(\Tilde{\Phi}_{\chi}(x) + v\right) + M^{-1}m^{\chi} - v.
\end{align*}
If we choose $v$ to solve $Lv - v + M^{-1}m^{\chi} = 0$ (which can be done since $L - I$ is invertible) we obtain
\begin{align*}
\Tilde{\Phi}_{\chi}(Fx) = L\Tilde{\Phi}_{\chi}(x).
\end{align*}
Uniqueness of $\Phi_{\chi}(x)$ implies that $\Tilde{\Phi}_{\chi}(x) = \Phi_{\chi}(x)$ or
\begin{align*}
\Phi_{\chi}(Gx) = M\Phi_{\chi}(x) + Mv.
\end{align*}
For $y\in W_{F}^{\chi}(x)$ we obtain
\begin{align*}
\Phi_{\chi,Gx}(Gy) = & \Phi_{\chi}(Gy) - \Phi_{\chi}(Gx) = M\Phi_{\chi}(y) + Mv - \left[M\Phi_{\chi}(x) + Mv\right] = \\ &
M\left(\Phi_{\chi}(y) - \Phi_{\chi}(x)\right) = M\Phi_{\chi,x}(y)
\end{align*}
which proves Equation \ref{Eq:FunctionalEquation}.
\end{proof}
Before proceeding we show a lemma that allows us to compare lengths on $W_{F}^{\chi}$ with lengths in $\mathbb{R}^{d}$.
\begin{lemma}\label{L:QuasiIsometryForPert}
For $p,q\in W_{F}^{\chi}(x)$ we have
\begin{align*}
\intd(p,q)\leq\intd_{\chi}(p,q)\leq C\cdot\intd(p,q)
\end{align*}
for some $C\geq1$, that can be chosen close to $1$ by letting $f$ be sufficiently close to $L$. Moreover, we have
\begin{align*}
\intd(p,q)\leq c\norm{(p-q)^{\chi}}
\end{align*}
where $(p-q)^{\chi}$ is the projection of $p-q$ onto $E_{L}^{\chi}$.
\end{lemma}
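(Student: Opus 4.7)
The plan is to reduce everything to the graph representation of the leaves stated just before Lemma \ref{L:InvMfdAreGraph}: for each $x$ we have $W_{F}^{\chi}(x) = x + \mathrm{Graph}(\gamma_{x}^{\chi})$ with $\gamma_{x}^{\chi}:E_{L}^{\chi}\to (E_{L}^{\chi})^{\bot}$, where the orthogonal complement is taken with respect to the Lyapunov decomposition. The remark after Lemma \ref{L:InvMfdAreGraph} gives a pointwise bound $\norm{D_{v}\gamma_{x}^{\chi}}\leq\delta$ with $\delta = \delta(f)\to 0$ as $f\to L$ in $C^{1}$, and this is the only analytic input needed.

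First I would record the key algebraic identity. Writing $p = x + v_{p} + \gamma_{x}^{\chi}(v_{p})$ and $q = x + v_{q} + \gamma_{x}^{\chi}(v_{q})$ for unique $v_{p},v_{q}\in E_{L}^{\chi}$, the fact that $\gamma_{x}^{\chi}$ takes values in $(E_{L}^{\chi})^{\bot}$ gives
\begin{equation*}
(p-q)^{\chi} = v_{p} - v_{q},\qquad (p-q) - (p-q)^{\chi} = \gamma_{x}^{\chi}(v_{p}) - \gamma_{x}^{\chi}(v_{q}).
\end{equation*}
Since the Lyapunov decomposition is orthogonal, Pythagoras gives $\norm{p-q}^{2} = \norm{v_{p}-v_{q}}^{2} + \norm{\gamma_{x}^{\chi}(v_{p})-\gamma_{x}^{\chi}(v_{q})}^{2}$, and the mean value inequality combined with $\norm{D\gamma_{x}^{\chi}}\leq\delta$ bounds the second summand by $\delta^{2}\norm{v_{p}-v_{q}}^{2}$. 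This yields the third inequality $\norm{p-q}\leq\sqrt{1+\delta^{2}}\,\norm{(p-q)^{\chi}}$, which is the claim $\intd(p,q)\leq c\norm{(p-q)^{\chi}}$ with $c = \sqrt{1+\delta^{2}}$.

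The inequality $\intd(p,q)\leq\intd_{\chi}(p,q)$ is immediate, since the intrinsic distance along $W_{F}^{\chi}$ is the infimum of lengths of curves in the leaf joining $p$ and $q$, while $\intd(p,q)$ is the length of the straight line. For the remaining estimate $\intd_{\chi}(p,q)\leq C\intd(p,q)$ I would exhibit an explicit curve: parametrise
\begin{equation*}
\eta(t) = x + v(t) + \gamma_{x}^{\chi}(v(t)),\qquad v(t) = (1-t)v_{p} + tv_{q},\quad t\in[0,1],
\end{equation*}
which lies entirely in $W_{F}^{\chi}(x)$. Then $\eta'(t) = (v_{q}-v_{p}) + D_{v(t)}\gamma_{x}^{\chi}(v_{q}-v_{p})$, so $\norm{\eta'(t)}\leq(1+\delta)\norm{v_{q}-v_{p}}$. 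Integrating and using $\norm{v_{q}-v_{p}} = \norm{(p-q)^{\chi}}\leq\norm{p-q}$ (the last by orthogonal projection) gives $\intd_{\chi}(p,q)\leq\mathrm{length}(\eta)\leq(1+\delta)\norm{p-q}$, which is the stated inequality with $C = 1+\delta$.

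There is no genuine obstacle here; the only thing to be slightly careful about is correctly identifying the orthogonal splitting so that $(p-q)^{\chi}$ really equals $v_{p}-v_{q}$ (not some other component), and invoking the $C^{1}$-estimate on $\gamma_{x}^{\chi}$ from the remark after Lemma \ref{L:InvMfdAreGraph} rather than the $C^{0}$-estimate stated in the lemma itself. Both $C$ and $c$ can be taken arbitrarily close to $1$ by shrinking the $C^{1}$-neighbourhood of $L$, which is exactly how the lemma is used in the rest of the paper.
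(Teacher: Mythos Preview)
Your proof is correct and is exactly the argument the paper has in mind: the paper's own proof is a single sentence saying that the lemma is an immediate consequence of the graph description of $W_{F}^{\chi}$ and Lemma \ref{L:InvMfdAreGraph}, and you have simply unpacked that consequence in full. The only minor remark is that, since $(v_{q}-v_{p})$ and $D_{v(t)}\gamma_{x}^{\chi}(v_{q}-v_{p})$ are orthogonal, one can even take $C=\sqrt{1+\delta^{2}}$ rather than $1+\delta$, but this makes no difference.
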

\begin{proof}
The lemma is an immediate consequence of the descriptions of $W_{F}^{\chi}$ as graphs and Lemma \ref{L:InvMfdAreGraph}.
\end{proof}
We can now finish the proof of the claim.
\begin{proof}[Proof of Claim \ref{Claim:ExistenceFMcoordinates}]
We will assume $\chi > 0$. For the other case, reverse time. Let $q\in W_{F}^{\chi}(p)$ satisfy $\intd_{\chi}(p,q)\leq 1$ and let $0 < \varepsilon < 0.01\cdot\chi$ be some small number. We calculate with Lemma \ref{L:QuasiIsometryForPert}
\begin{align*}
\norm{\Phi_{\chi}(p) - \Phi_{\chi}(q)} = & \norm{L^{-n}\Phi_{\chi}(F^{n}p) - L^{-n}\Phi_{\chi}(F^{n}q)}\geq \\ &
Ce^{-n(\chi + \varepsilon)}\norm{\Phi_{\chi}(F^{n}p) - \Phi_{\chi}(F^{n}q)}\geq \\ &
Ce^{-n(\chi + \varepsilon)}\left(\norm{(F^{n}p)^{\chi} - (F^{n}q)^{\chi}} - 2\norm{\varphi_{\chi}}_{C^{0}}\right)\geq \\ &
Ce^{-n(\chi + \varepsilon)}\left(c\norm{F^{n}p - F^{n}q} - 2\norm{\varphi_{\chi}}_{C^{0}}\right)\geq \\ &
C'e^{-n(\chi + \varepsilon)}\left(c\intd_{\chi}(F^{n}p,F^{n}q) - 2\norm{\varphi_{\chi}}_{C^{0}}\right)
\end{align*}
and since $f$ is $C^{1}-$close to $L$ and $E_{f}^{\chi}$ is $C^{0}-$close to $E_{L}^{\chi}$ we have $\intd_{\chi}(F^{n}p,F^{n}q)\geq C\cdot e^{n(\chi - \varepsilon)}\intd_{\chi}(p,q)$. It follows that we have an estimate
\begin{align}\label{Eq:HölderEstimateInverse1}
\norm{\Phi_{\chi}(p) - \Phi_{\chi}(q)}\geq Ce^{-n(\chi + \varepsilon)}\left(ce^{n(\chi - \varepsilon)}\intd_{\chi}(p,q) - 2\norm{\varphi_{\chi}}_{C^{0}}\right).
\end{align}
We find some constant $C$ such that if
\begin{align*}
n = \left\lfloor-\frac{\log\intd_{\chi}(p,q)}{\chi - \varepsilon} + C\right\rfloor + 1
\end{align*}
then $ce^{n(\chi - \varepsilon)}\intd_{\chi}(p,q) - 2\norm{\varphi_{\chi}}_{C^{0}}\geq1$. With this $n$ in equation \ref{Eq:HölderEstimateInverse1} we find some constant $K > 0$ such that
\begin{align}\label{Eq:HölderEstimateInverse2}
\norm{\Phi_{\chi}(p) - \Phi_{\chi}(q)}\geq K\cdot\intd_{\chi}(p,q)^{\frac{\chi + \varepsilon}{\chi - \varepsilon}},\quad\intd_{\chi}(p,q)\leq 1.
\end{align}
Define $\Phi_{\chi,x}:W_{F}^{\chi}(x)\to E_{L}^{\chi}$, $\Phi_{\chi,x}(z) = \Phi_{\chi}(z) - \Phi_{\chi}(x)$ as in the claim. We have
\begin{align*}
\Phi_{\chi,x+n}(z + n) = & \Phi_{\chi}(z+n) - \Phi_{\chi}(x + n) = \\ &
\left[\Phi_{\chi}(z) + n^{\chi}\right] - \left[\Phi_{\chi}(x) + n^{\chi}\right] = \Phi_{\chi,x}(z)
\end{align*}
which proves that $\Phi_{\chi,x}$ is defined for $x\in\mathbb{T}^{d}$. Moreover, it is immediate from the definition that $x\mapsto\Phi_{\chi,x}$ is a continuous map. From equation \ref{Eq:HölderEstimateInverse2} we have
\begin{align*}
\norm{\Phi_{\chi,x}(p) - \Phi_{\chi,x}(q)}\geq K\cdot\intd(p,q)^{\frac{\chi + \varepsilon}{\chi - \varepsilon}},\quad\intd_{\chi}(p,q)\leq 1
\end{align*}
which, after iterating the functional equation $\Phi_{\chi,Fx}(Fy) = L\Phi_{\chi,x}(y)$, proves that $\Phi_{\chi,x}$ is injective. Since $\Phi_{\chi,x}$ is injective, it follows by invariance of domain that $\Phi_{\chi,x}$ has open image and is a homeomorphism onto its image. From the functional equation and the fact that $L$ is expanding along $E_{L}^{\chi}$, it follows that $\Phi_{\chi,x}$ is surjective. So $\Phi_{\chi,x}$ is a homeomorphism. Since $\Phi_{\chi,x}(x) = 0$ and $x\mapsto\Phi_{\chi,x}$ is defined on $\mathbb{T}^{d}$ we find some $0 < R < \infty$ such that for $v,w\in E_{L}^{\chi}$ if $\norm{v},\norm{w}\leq R$ then $\intd_{\chi}(\Phi_{\chi,x}^{-1}(v),\Phi_{\chi,x}^{-1}(w))\leq 1$. Using equation \ref{Eq:HölderEstimateInverse2} with $\Phi_{\chi,x}^{-1}(v),\Phi_{\chi,x}^{-1}(w)$ where $v,w\in E_{L}^{\chi}$, $\norm{v},\norm{w}\leq R$, we obtain
\begin{align*}
\intd_{\chi}(\Phi_{\chi,x}^{-1}(v),\Phi_{\chi,x}^{-1}(w))\leq k\cdot\norm{v - w}^{\alpha},\quad\alpha = \frac{\chi - \varepsilon}{\chi + \varepsilon}.
\end{align*}
So $\Phi_{\chi,x}^{-1}$ is uniformly Hölder on the $R-$ball in $E_{L}^{\chi}$. In fact, $\Phi_{\chi,x}^{-1}$ is uniformly Hölder on any ball of radius $R$, not necessarily the $R-$ball about $0$ in $E_{L}^{\chi}$. This follows from the following 
\begin{align*}
\Phi_{\chi}(\Phi_{\chi,x}^{-1}(v)) = \Phi_{\chi}(\Phi_{\chi,x}^{-1}(v)) - \Phi_{\chi}(x) + \Phi_{\chi}(x) = v + \Phi_{\chi}(x).
\end{align*}
If $y\in W_{F}^{\chi}(x)$ then we have $\Phi_{\chi,x}^{-1}(v) = \Phi_{\chi,y}^{-1}(v + \Phi_{\chi}(x) - \Phi_{\chi}(y))$. So, we have shown that $\Phi_{\chi,x}^{-1}$ is uniformly Hölder on any ball $B_{R}(v)$ in $E_{L}^{\chi}$.
\end{proof}
We obtain some immediate consequences of Claim \ref{Claim:ExistenceFMcoordinates}.
\begin{lemma}\label{L:PropLyapFunc}
Let $L\in{\rm GL}(d,\mathbb{Z})$ be irreducible with $2-$dimensional isometric center and no three eigenvalues of the same modulus. Let $f\in{\rm Diff}^{\infty}(\mathbb{T}^{d})$ be $C^{1}-$close to $L$, $g\in Z^{\infty}(f)$ homotopic to $M$, $\chi(L)\neq0$. Then there is some small $\varepsilon > 0$ (that can be chosen arbitrarily small by letting $f$ be close to $L$) and $C\geq 1$, depending on how $C^{1}-$close $f$ is to $L$, such that
\begin{align*}
\frac{1}{C}\cdot e^{n(1-\varepsilon)\chi(M)}\leq\norm{D_{x}g^{n}|_{E_{f}^{\chi}}}\leq C\cdot e^{n(1+\varepsilon)\chi(M)},\quad\chi(M)\geq0
\end{align*}
and similarly for $\chi(M) < 0$.
\end{lemma}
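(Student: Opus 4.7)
The plan is to combine the bi-Hölder conjugation $\Phi_\chi$ from Claim~\ref{Claim:ExistenceFMcoordinates} with the precise behavior of $M^n$ on $E_L^\chi$. By Lemma~\ref{L:InjectiveLyapunovExponentMap} applied to $\langle L,M\rangle\leq Z_{\mathrm{Aut}}(L)$, the Lyapunov exponent $\chi(M)$ is well defined. Since $L$ is irreducible and commutes with $M$, both matrices are simultaneously diagonalizable over $\mathbb{C}$, so $M$ has no nontrivial Jordan blocks. Consequently, in the inner product chosen in Section~\ref{Sec:AutomorphismsOnTori} we have $\|M^n v\|=e^{n\chi(M)}\|v\|$ exactly for every $v\in E_L^\chi$ and $n\in\mathbb{Z}$ (in both the one- and two-dimensional cases allowed by the no-three-eigenvalues-of-same-modulus assumption).

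Combining this with the functional equation $\Phi_{\chi,g^n x}(g^n y)=M^n\Phi_{\chi,x}(y)$ from Claim~\ref{Claim:ExistenceFMcoordinates} gives, for $y\in W_F^\chi(x)$,
\begin{equation*}
\|\Phi_{\chi,g^n x}(g^n y)\|=e^{n\chi(M)}\|\Phi_{\chi,x}(y)\|.
\end{equation*}
Applying the bi-Hölder estimates on $\Phi_{\chi,x}$ (whose Hölder exponents are close to $1$ and whose constants depend only on how $C^1$-close $f$ is to $L$, uniformly in $x$ and independently of $g$), together with the global Hölder bound on $\Phi_\chi^{-1}$ obtained by chaining the local estimate of Claim~\ref{Claim:ExistenceFMcoordinates} over balls of bounded diameter in $E_L^\chi$, yields the coarse-scale control
\begin{equation*}
\frac{1}{C}\,e^{n\chi(M)(1-\varepsilon)}\intd_\chi(x,y)^{1+\varepsilon}\leq \intd_\chi(g^n x,g^n y)\leq C\,e^{n\chi(M)(1+\varepsilon)}\intd_\chi(x,y)^{1-\varepsilon}
\end{equation*}
for $\intd_\chi(x,y)\leq 1$, with $\varepsilon>0$ small when $f$ is $C^1$-close to $L$.

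To extract the pointwise derivative bound, fix a unit vector $v\in E_f^\chi(x)$ and let $y_t\in W_F^\chi(x)$ be the point at signed arc-length $t$ from $x$ in the direction of $v$. Since $g$ is $C^\infty$ and the leaves of $W_f^\chi$ are uniformly $C^{1+\alpha}$ by Theorem~\ref{Thm:RegularityOfCenter}, one has $\intd_\chi(g^n x,g^n y_t)=t\|D_x g^n v\|+O(t^2)$ as $t\to 0$. Choosing $t=t(n)$ to balance the Hölder distortion against the smoothness remainder yields the upper bound $\|D_x g^n|_{E_f^\chi}\|\leq C e^{n\chi(M)(1+\varepsilon)}$. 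The lower bound follows from the analogous argument applied to $g^{-n}$, whose linearization is $M^{-1}$ with Lyapunov exponent $-\chi(M)$, and the case $\chi(M)<0$ is obtained by time reversal.

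The main obstacle is this final step: a bi-Hölder conjugacy does not by itself control pointwise derivatives, since the apparent Hölder loss $t^{-\varepsilon}$ grows unboundedly as $t\to 0$. Overcoming this requires using the $C^\infty$ smoothness of $g$ and the $C^{1+\alpha}$ regularity of $W_f^\chi$ to convert the coarse-scale estimate into a derivative estimate; the small rate loss $\varepsilon>0$ in the statement is the unavoidable price of this conversion.
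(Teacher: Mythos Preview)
Your setup through the two-sided distance estimate is exactly the paper's: use the bi-H\"older charts $\Phi_{\chi,x}$ from Claim~\ref{Claim:ExistenceFMcoordinates} and the functional equation $\Phi_{\chi,g^n x}(g^n y)=M^n\Phi_{\chi,x}(y)$ to get, with a common H\"older exponent $\beta$ close to $1$,
\[
c_1 e^{n\chi(M)/\beta}\|\Phi_{\chi,x}(y)\|^{1/\beta}\leq \intd_\chi(g^n x,g^n y)\leq c_2 e^{n\chi(M)\beta}\|\Phi_{\chi,x}(y)\|^{\beta}.
\]
The divergence is in how you pass from this metric estimate to a pointwise bound on $\|D_x g^n|_{E_f^\chi}\|$. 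The paper simply invokes \cite[Lemma~25]{CentralizerRigidity}, which is a bounded-distortion argument: work at the \emph{fixed} scale $\intd_\chi(x,y)=1$, use the one-dimensional mean value theorem on the leaf to locate some $z_n$ with $\|D_{z_n}g^n|_{E_f^\chi}\|$ already of the claimed size, and then propagate this to every base point via the distortion bound $\prod_k\bigl(\|D_{g^k x}g\|/\|D_{g^k z_n}g\|\bigr)\leq\exp\bigl([\log\|Dg\|]_{C^\alpha}\sum_k\intd_\chi(g^k x,g^k z_n)^\alpha\bigr)$, the sum being finite because the distance estimate itself forces exponential contraction when $\chi(M)<0$.

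Your proposed mechanism is different: fix $x$, let $y_t\to x$, expand $\intd_\chi(g^n x,g^n y_t)=t\|D_x g^n v\|+O(t^2)$, and choose $t=t(n)$ to balance the H\"older loss $t^{-\varepsilon}$ against the remainder. The gap is that the remainder is $O_n(t^2)$: its constant involves the second derivative (or the $C^{1+\alpha}$ seminorm) of $g^n$ along the leaf, which grows with $n$ at a rate you have not yet controlled---indeed, controlling it is essentially the statement you are proving. Optimising in $t$ then leaves a factor $A_n^{\varepsilon/(1+\varepsilon)}$ with $A_n$ depending on $g$, and this contaminates the exponent, producing an $\varepsilon$ that depends on $g$ rather than only on how close $f$ is to $L$. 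The bounded-distortion route avoids this circularity because it works at macroscopic scale and only ever uses the H\"older constant of $Dg$ (not $Dg^n$) along leaves; any $g$-dependence lands in the multiplicative constant $C$, not in the rate $\varepsilon$. You have identified the right ingredients (the $C^{1+\alpha}$ regularity of the leaves and of $g$), but the conversion device should be distortion control at unit scale rather than a Taylor expansion at shrinking scale.
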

\begin{proof}
By Lemma \ref{L:InjectiveLyapunovExponentMap} the Lyapunov exponent $\chi$ of $L$ is also a Lyapunov exponent for the image $\Gamma\subset Z_{\rm Aut}(L)$ of $Z^{\infty}(f)$ in ${\rm GL}(d,\mathbb{Z})$. We assume without loss of generality that $\chi(M) < 0$, the case $\chi(M) > 0$ follows by considering $g^{-1}$. Let $\beta$ be a joint Hölder exponent for $\Phi_{\chi,x}$ and $\Phi_{\chi,x}^{-1}$. Given $y\in W_{F}^{\chi}(x)$ with $\intd_{\chi}(x,y)\leq 1$ and $n\geq 0$ we have
\begin{align*}
\intd_{\chi}(g^{n}(x),g^{n}(y))\leq & C\cdot\norm{\Phi_{\chi,g^{n}(x)}(g^{n}x) - \Phi_{\chi,g^{n}(x)}(g^{n}y)}^{\beta} = \\ &
C\cdot\norm{M^{n}\Phi_{\chi,x}(x) - M^{n}\Phi_{\chi,x}(y)}^{\beta} = \\ &
Ce^{n\cdot\chi(M)\beta}\cdot\norm{\Phi_{\chi,x}(x) - \Phi_{\chi,x}(y)}^{\beta}
\end{align*}
for some $C$ independent of $x,y$ and $n$. Similarly, we have
\begin{align*}
e^{n\chi(M)}\cdot\norm{\Phi_{\chi,x}(x) - \Phi_{\chi,x}(y)} = & \norm{\Phi_{\chi,g^{n}x}(g^{n}x) - \Phi_{\chi,g^{n}x}(g^{n}y)}\leq \\ &
C\intd_{\chi}(g^{n}(x),g^{n}(y))^{\beta}
\end{align*}
and combined
\begin{align*}
& c_{1}e^{n\chi(M)/\beta}\cdot\norm{\Phi_{\chi,x}(x) - \Phi_{\chi,x}(y)}^{1/\beta}\leq\intd_{\chi}(g^{n}x,g^{n}y)\leq \\ &
c_{2}e^{n\chi(M)\beta}\cdot\norm{\Phi_{\chi,x}(x) - \Phi_{\chi,x}(y)}^{\beta}.
\end{align*}
We can now apply \cite[Lemma 25]{CentralizerRigidity} and the lemma follows since we can choose $\beta$ as close to $1$ as we want by letting $f$ be sufficiently $C^{1}-$close to $L$.
\end{proof}
Assume now that $L$ has no three eigenvalues of the same modulus. For a $Z^{\infty}(f) = \Gamma-$periodic point $p\in\mathbb{T}^{d}$, let $\chi_{p}:\Gamma\to\mathbb{R}$ be a Lyapunnov exponent of the derivative cocycle $D_{p}\Gamma|_{E_{f}^{\chi}}$ at $p$, restricted to $E_{f}^{\chi}(p)$. If $g\in\Gamma$ is homotopic to $M_{g}$, then Lemma \ref{L:PropLyapFunc} implies that
\begin{align}
(1-\varepsilon)\chi(M_{g})\leq\chi_{p}(g)\leq(1+\varepsilon)\chi(M_{g}).
\end{align}
In particular, $\chi_{p}(g)$ is positive if and only if $\chi(M_{g})$ is positive. Using Theorem \ref{Thm:InjectivityOfHom} (Theorem \ref{Thm:InjectivityOfHom} is proved in Section \ref{Sec:InjectivityOfHom} and guarantees that $Z^{\infty}(f)$ is virtually abelian) the following lemma follows.
\begin{lemma}\label{L:LyapFuncPosProp}
Let $L\in{\rm GL}(d,\mathbb{Z})$ be irreducible with $2-$dimensional isometric center and no three eigenvalues of the same modulus. Let $f\in{\rm Diff}^{\infty}(\mathbb{T}^{d})$ be $C^{1}-$close to $L$ and let $\Gamma = Z^{\infty}(f)$. If $p$ is a periodic point for $Z^{\infty}(f)$ then the Lyapunov exponents along $E_{f}^{\chi}$ at $p$ are positively proportional to $\chi$.
\end{lemma}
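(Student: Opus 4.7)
The plan is to pass to a finite-index abelian subgroup $\Gamma_{0}\leq Z^{\infty}(f)$ on which both the Lyapunov functional $\chi_{p}$ at the periodic point and the pullback of $\chi$ by the homology representation are honest group homomorphisms to $\mathbb{R}$, and then deduce positive proportionality from a sign-matching argument.

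First I would combine Theorem \ref{Thm:InjectivityOfHom} with Theorem \ref{Thm:RankCentralizer} to obtain a finite-index subgroup $\Gamma_{1}\leq Z^{\infty}(f)$ such that the homology representation injects $\Gamma_{1}$ into a free abelian subgroup of $Z_{{\rm Aut}}(L)$; in particular $\Gamma_{1}$ is abelian. Since $p$ is $Z^{\infty}(f)$-periodic, its stabilizer $\Gamma_{p}$ has finite index, and hence $\Gamma_{0}:=\Gamma_{1}\cap\Gamma_{p}$ is a finite-index abelian subgroup of $Z^{\infty}(f)$ every element of which fixes $p$. By Theorem \ref{Thm:RegularityOfCenter} each $g\in\Gamma_{0}$ preserves $W_{f}^{\chi}$, so $D_{p}g$ restricts to an automorphism of $E_{f}^{\chi}(p)$, and the commuting family $\{D_{p}g|_{E_{f}^{\chi}(p)}\}_{g\in\Gamma_{0}}$ admits a joint Lyapunov decomposition; each resulting Lyapunov functional $\chi_{p}:\Gamma_{0}\to\mathbb{R}$ is a group homomorphism, and so is $g\mapsto\chi(g_{*})$.

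Next I would invoke Lemma \ref{L:PropLyapFunc} and the estimate displayed right after it to conclude that $\chi_{p}(g)$ and $\chi(g_{*})$ have matching signs (including the vanishing case) for every $g\in\Gamma_{0}$. The remaining step is the elementary observation that two homomorphisms from a finitely generated abelian group into $\mathbb{R}$ whose signs agree on every element must be positive scalar multiples of one another: both extend $\mathbb{Q}$-linearly to $\Gamma_{0}\otimes\mathbb{Q}$, the sign condition forces their kernels to coincide, the quotient of $\Gamma_{0}\otimes\mathbb{Q}$ by that common kernel is one-dimensional over $\mathbb{Q}$ (using that $\chi$ is nonzero on some power of $f\in Z^{\infty}(f)$ lying in $\Gamma_{0}$), so the two functionals are proportional, and sign matching makes the constant positive. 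Applied with $\alpha=\chi$ and $\beta=\chi_{p}$ this yields $\chi_{p}=c\chi$ on $\Gamma_{0}$ with $c>0$, and the identity extends to all of $Z^{\infty}(f)$ because every element has some nonzero power in $\Gamma_{0}$ and both sides are homomorphisms.

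The only mildly delicate point is arranging that $\chi_{p}$ actually defines a homomorphism; once $\Gamma_{0}$ has been chosen abelian and fixing $p$, this is standard linear algebra for a commuting family of matrices. Everything else is bookkeeping on finite-index subgroups.
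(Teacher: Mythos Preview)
Your overall strategy matches the paper's: reduce to a finite-index abelian subgroup fixing $p$, realize both $\chi_p$ and $g\mapsto\chi(g_*)$ as homomorphisms to $\mathbb{R}$, use Lemma~\ref{L:PropLyapFunc} for sign matching, and conclude positive proportionality. The paper compresses the last step into a sentence; you spell it out, and that is where a gap appears. The claim that ``the quotient of $\Gamma_0\otimes\mathbb{Q}$ by that common kernel is one-dimensional over $\mathbb{Q}$'' is false in general: for a $\mathbb{Q}$-linear $\alpha:V\to\mathbb{R}$ one has $\dim_{\mathbb{Q}}(V/\ker\alpha)=\dim_{\mathbb{Q}}(\operatorname{im}\alpha)$, which can be as large as $\dim_{\mathbb{Q}}V$. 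In fact, by Lemma~\ref{L:AlgebraicFullCentralizer} the Lyapunov functionals along real eigendirections have trivial kernel on $Z_{\rm Aut}(L)$, so when ${\rm rank}(\Gamma_0)>1$ the quotient is genuinely multi-dimensional over $\mathbb{Q}$ (e.g.\ $\Gamma_0\cong\mathbb{Z}^2$ with $\chi(m,n)=m+\sqrt{2}\,n$ has $\ker\chi=0$). Your parenthetical only shows the functional is nonzero, not that its $\mathbb{Q}$-image is one-dimensional.

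The fix is to extend $\mathbb{R}$-linearly rather than $\mathbb{Q}$-linearly: identify $\Gamma_0\cong\mathbb{Z}^k$, extend both homomorphisms to $\mathbb{R}$-linear functionals on $\mathbb{R}^k$, and observe that if these were not proportional their kernels would be distinct hyperplanes, producing an open cone on which the signs disagree; any open cone contains lattice points, contradicting the sign matching on $\mathbb{Z}^k$. With this correction your argument is complete and coincides with the paper's.
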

\begin{lemma}\label{L:VolumeExpansionEst}
Let $L\in{\rm GL}(d,\mathbb{Z})$ be irreducible with $2-$dimensional isometric center and no three eigenvalues of the same modulus. Let $f\in{\rm Diff}_{\rm vol}^{\infty}(\mathbb{T}^{d})$ be $C^{1}-$close to $L$ and let $g\in Z^{\infty}(f)$ be homotopic to $M$. Then there is some small $\varepsilon > 0$, that can be chosen arbitrarily close to $0$ by letting $f$ be sufficiently $C^{1}-$close to $L$, and constant $c > 0$ such that
\begin{align*}
|\det(D_{x}g^{n}|_{E_{f}^{c}})|\geq & c\cdot\det(M^{n}|_{E_{L}^{c}})^{1-\varepsilon}\cdot\left[\prod_{\substack{\chi(M) > 0 \\ \chi(L)\neq0}}\det(M^{n}|_{E_{L}^{\chi}})\right]^{-2\varepsilon}.
\end{align*}
\end{lemma}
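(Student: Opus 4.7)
The plan is to combine volume-preservation of $g$ with the norm estimates of Lemma \ref{L:PropLyapFunc} applied to each non-central Lyapunov bundle. First, I show $g$ is volume preserving. Since $g$ commutes with the ergodic, volume preserving $f$, differentiating $g\circ f=f\circ g$ and using $|\det Df|=1$ shows that $|\det Dg|$ is $f$-invariant; as $g\in C^\infty$ this continuous function is constant by ergodicity, and its value is forced to be $1$ by the change of variables formula. Thus $|\det D_xg^n|\equiv 1$ for all $x$ and $n$.

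Next, consider the $Dg$-invariant decomposition $T_x\mathbb{T}^d=E_f^c(x)\oplus\bigoplus_{\chi(L)\neq 0}E_f^\chi(x)$, which $g$ preserves by the last assertion of Theorem \ref{Thm:RegularityOfCenter}. Each $E_f^\sigma$ is $C^0$-close to the $L$-orthogonal bundle $E_L^\sigma$, so the angles between the factors are uniformly bounded away from $0$ and
$$|\det(D_xg^n|_{E_f^c})|\geq K^{-1}\prod_{\chi(L)\neq 0}|\det(D_xg^n|_{E_f^\chi})|^{-1}.$$
Since $L$ has no three eigenvalues of equal modulus, $d_\chi:=\dim E_L^\chi\in\{1,2\}$, and the eigenvalues of $M|_{E_L^\chi}$ share the common modulus $e^{\chi(M)}$, so $|\det(M^n|_{E_L^\chi})|=e^{nd_\chi\chi(M)}$. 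Combining $|\det(D_xg^n|_{E_f^\chi})|\leq\|D_xg^n|_{E_f^\chi}\|^{d_\chi}$ with Lemma \ref{L:PropLyapFunc} (applied to $g^{-1}$ in the case $\chi(M)<0$, which yields the upper bound $\|D_xg^n|_{E_f^\chi}\|\leq Ce^{n(1-\varepsilon)\chi(M)}$ rather than $Ce^{n(1+\varepsilon)\chi(M)}$) gives
$$|\det(D_xg^n|_{E_f^\chi})|\leq C\,|\det(M^n|_{E_L^\chi})|^{1+\varepsilon}\quad\text{if }\chi(M)>0,$$
$$|\det(D_xg^n|_{E_f^\chi})|\leq C\,|\det(M^n|_{E_L^\chi})|^{1-\varepsilon}\quad\text{if }\chi(M)<0.$$

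Finally, I use the identity $|\det M|=1$, which translates on the invariant splitting to
$$\prod_{\substack{\chi(M)<0\\\chi(L)\neq 0}}|\det(M^n|_{E_L^\chi})|=|\det(M^n|_{E_L^c})|^{-1}\prod_{\substack{\chi(M)>0\\\chi(L)\neq 0}}|\det(M^n|_{E_L^\chi})|^{-1}.$$
Raising this to the power $-(1-\varepsilon)$ and multiplying by the contribution $\prod_{\chi(M)>0}|\det(M^n|_{E_L^\chi})|^{-(1+\varepsilon)}$ from the positive-Lyapunov part, the central factor acquires exponent $1-\varepsilon$ while the positive-Lyapunov factors collect exponent $(1-\varepsilon)-(1+\varepsilon)=-2\varepsilon$, giving the stated inequality. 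The only subtlety is the exact form of the upper bound for $\chi(M)<0$ in Lemma \ref{L:PropLyapFunc}; passing to $g^{-1}$, whose linearization $M^{-1}$ has positive Lyapunov exponent $-\chi(M)$ on $E_L^\chi$, produces the $(1-\varepsilon)$ weight, and this $(1-\varepsilon)$ vs.\ $(1+\varepsilon)$ asymmetry is precisely what yields the nontrivial exponent $-2\varepsilon$ in the conclusion rather than $0$.
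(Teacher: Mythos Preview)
Your argument from step 2 onward matches the paper's proof essentially line for line: volume preservation gives the determinant identity on the invariant splitting, Lemma \ref{L:PropLyapFunc} supplies the $(1\pm\varepsilon)$ bounds on each $|\det(D_xg^n|_{E_f^\chi})|$, and $|\det M|=1$ is used to trade the negative-$\chi(M)$ factors for the center and positive-$\chi(M)$ factors. (A small remark: since the splitting is $Dg^n$-invariant, the determinant factors \emph{exactly} as a product over the subbundles, so your angle constant $K$ is not needed; but this is harmless.)

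The gap is in step 1. You assert that $f$ is ergodic and then invoke ergodicity to conclude that the continuous $f$-invariant function $|\det Dg|$ is constant. But ergodicity of a $C^1$-small volume-preserving perturbation of $L$ is precisely the content of the main theorem of \cite{StabelErgodicity}, and that result requires the perturbation to be $C^r$-small with $r=5$ (for $d\geq 6$) or $r=22$ (for $d=4$). Under the hypotheses of this lemma only $C^1$-closeness is assumed, so ergodicity of $f$ is not available. The paper handles this by a different route: it shows directly that any continuous $f$-invariant function $u$ is constant, using only Theorem \ref{Thm:RHsuMinimality} (which \emph{is} valid for $C^1$-perturbations). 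Namely, $f$-invariance of $u$ forces each level set $u^{-1}(c)$ to be $su$-saturated (since $u$ is constant along stable and unstable leaves), hence dense by Theorem \ref{Thm:RHsuMinimality}, hence all of $\mathbb{T}^d$ since it is also closed. Applying this to $u=|\det Dg|$ yields volume preservation of $g$. Once you replace your ergodicity appeal with this argument, your proof is complete and agrees with the paper's.
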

\begin{proof}
We begin by noting that $g\in Z^{\infty}(f)$ preserves volume. This would be immediate if we could apply the main result from \cite{StabelErgodicity}, but we can not since we are only perturbing in the $C^{1}-$topology. Instead, let $u(x)$ be any continuous $f-$invariant function. Given $c\in\text{Im}(u)$ we define the closed set $X_{c} = u^{-1}(c) = \{x\text{ : }u(x) = c\}$. Since $u$ is $f-$invariant $fX_{c} = X_{c}$. Moreover, $f-$invariance implies that $u$ is constant on the $su-$saturation of any $x\in\mathbb{T}^{d}$. It follows that $X_{c}$ is $su-$saturated. By Theorem \ref{Thm:RHsuMinimality} the level sets $X_{c}$ are dense. But $X_{c}$ is also closed which shows $X_{c} = \mathbb{T}^{d}$. So $u$ is constant. Now let $\sigma:\mathbb{T}^{d}\to\mathbb{R}$ be the Jacobian of $g$, $(g^{*}\text{vol})_{x} = \sigma(x)\text{vol}_{x}$. Then $\sigma$ is $f-$invariant since $f$ preserves volume. It follows that $\sigma$ is constant, which implies that $g$ is volume preserving.

Volume preservation of $g$ implies that we have
\begin{align*}
\det(D_{x}g^{n}|_{E_{f}^{c}})\cdot\prod_{\chi(L)\neq0}\det(D_{x}g^{n}|_{E_{f}^{\chi}}) = 1
\end{align*}
or 
\begin{align*}
|\det(D_{x}g^{n}|_{E_{f}^{c}})| = \left[\prod_{\chi(L)\neq0}|\det(D_{x}g^{n}|_{E_{f}^{\chi}})|\right]^{-1}.
\end{align*}
However, Lemma \ref{L:PropLyapFunc} implies that for $\chi(M)\geq0$
\begin{align*}
|\det(D_{x}g^{n}|_{E_{f}^{\chi}})|\leq & C\cdot e^{n(1+\varepsilon)\chi(M)\cdot\dim(E_{f}^{\chi})} =
C\cdot \left[e^{n\chi(M)\dim(E_{f}^{\chi})}\right]^{1+\varepsilon}= \\ &
C|\det(M|_{E_{L}^{\chi}})|^{n(1+\varepsilon)}
\end{align*}
and similarly for $\chi(M) < 0$. For $\chi(L)\neq0$
\begin{align*}
& |\det(D_{x}g^{n}|_{E_{f}^{\chi}})|\leq C|\det(M|_{E_{L}^{\chi}})|^{n(1+\varepsilon)},\quad \chi(M) > 0, \\
& |\det(D_{x}g^{n}|_{E_{f}^{\chi}})|\leq C|\det(M|_{E_{L}^{\chi}})|^{n(1-\varepsilon)},\quad \chi(M) < 0.
\end{align*}
It follows that we have some $c > 0$ such that
\begin{align*}
|\det(D_{x}g^{n}|_{E_{f}^{c}})|\geq & c\left[\frac{1}{\prod_{\substack{\chi_{M} > 0 \\ \chi(L)\neq0}}|\det(M^{n}|_{E_{L}^{\chi}})|}\right]^{1+\varepsilon}\cdot \\ &
\left[\frac{1}{\prod_{\substack{\chi_{M} < 0 \\ \chi(L)\neq0}}|\det(M^{n}|_{E_{L}^{\chi}})|}\right]^{1-\varepsilon}
\end{align*}
or since $|\det(M)| = 1$
\begin{align*}
\prod_{\substack{\chi(M) < 0 \\ \chi(L)\neq0}}|\det(M^{n}|_{E_{L}^{\chi}})| = \frac{1}{|\det(M^{n}|_{E_{L}^{c}})|}\cdot\frac{1}{\prod_{\substack{\chi(M) > 0 \\ \chi(L)\neq0}}|\det(M^{n}|_{E_{L}^{\chi}})|}
\end{align*}
which implies
\begin{align*}
|\det(D_{x}g^{n}|_{E_{f}^{c}})|\geq & c|\det(M^{n}|_{E_{L}^{c}})|^{1-\varepsilon}\cdot\left[\prod_{\substack{\chi(M) > 0 \\ \chi(L)\neq0}}|\det(M^{n}|_{E_{L}^{\chi}})|\right]^{-2\varepsilon}
\end{align*}
proving the lemma.
\end{proof}

\section{Elements in the centralizer homotopic to identity}
\label{Sec:InjectivityOfHom}

Let $L\in\text{GL}(d,\mathbb{Z})$ be irreducible with $2-$dimensional isometric center. In this section we prove Theorem \ref{Thm:InjectivityOfHom}. Let $S_{N}$ be the symmetric group on $N$ symbols.
\begin{lemma}\label{L:SymmetricGroupMap}
Let $f$ be a $C^{1}-$small perturbation of $L$. Let $N = |\det(L-I)|$ be the number of fixed points of $L$. There is a natural homomorphism $\Pi:Z^{0}(f)\to S_{N}$ such that the kernel of $\Pi$ consists precisely of those $g\in Z^{0}(f)$ which fix all the fixed points of $f$.
\end{lemma}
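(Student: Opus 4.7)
The plan is to verify the three ingredients the statement involves: that $f$ has exactly $N$ fixed points, that any element of $Z^{0}(f)$ permutes them, and that the resulting map to $S_{N}$ is a homomorphism with the claimed kernel.

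First, I would count fixed points. Since $L$ is ergodic and irreducible with isometric center given by a non-real pair of eigenvalues on $S^{1}$, no eigenvalue of $L$ equals $1$, so $L-I \in \mathrm{GL}(d,\mathbb{R})$. The fixed points of $L$ on $\mathbb{T}^{d}$ are precisely the classes of $x \in \mathbb{R}^{d}$ with $(L-I)x \in \mathbb{Z}^{d}$, and they form the finite group $(L-I)^{-1}\mathbb{Z}^{d}/\mathbb{Z}^{d}$ of order $|\det(L-I)|=N$. At each such fixed point $q_{i}$, the derivative of $L-\mathrm{id}$ equals $L-I$ and is invertible, so for $f$ sufficiently $C^{1}$-close to $L$ the implicit function theorem gives a unique fixed point $p_{i}$ of $f$ near each $q_{i}$. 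To rule out extra fixed points, lift $f$ to $F:\mathbb{R}^{d}\to\mathbb{R}^{d}$ and note that for every $n \in \mathbb{Z}^{d}$ the map $x\mapsto F(x)-x-n$ is $C^{1}$-close (on a fundamental domain) to $x\mapsto Lx-x-n$, which is a linear homeomorphism with a single zero $(L-I)^{-1}n$. Hence for $f$ close enough to $L$ each class $n+\mathbb{Z}^{d}\cdot(L-I)$ contributes exactly one fixed point, so $f$ has precisely the $N$ fixed points $p_{1},\dots,p_{N}$.

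Second, I would construct $\Pi$. If $g \in Z^{0}(f)$, then $g$ is a homeomorphism with $gf=fg$, so for any fixed point $p$ of $f$ we have $f(g(p))=g(f(p))=g(p)$, and therefore $g$ sends $\{p_{1},\dots,p_{N}\}$ into itself. Since $g^{-1} \in Z^{0}(f)$ as well, this induced self-map of the finite set of fixed points is a bijection. Writing $g(p_{i})=p_{\sigma_{g}(i)}$ defines $\Pi(g):=\sigma_{g}\in S_{N}$. The relation
\begin{equation*}
(g_{1}g_{2})(p_{i})=g_{1}(p_{\sigma_{g_{2}}(i)})=p_{\sigma_{g_{1}}(\sigma_{g_{2}}(i))}
\end{equation*}
shows $\Pi(g_{1}g_{2})=\Pi(g_{1})\Pi(g_{2})$, so $\Pi$ is a homomorphism. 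By construction, $g$ lies in $\ker\Pi$ iff $\sigma_{g}=\mathrm{id}$, iff $g(p_{i})=p_{i}$ for every $i$, which is the characterization of the kernel claimed in the lemma.

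The only delicate step is the fixed-point count: the existence of nearby fixed points is the easy direction of the implicit function theorem, while the non-existence of additional ones uses that under a $C^{1}$-small perturbation of an affine map with invertible linear part the zero set of $F-\mathrm{id}-n$ cannot gain extra solutions on any fixed compact set. Everything else is formal, following from the definition of a continuous centralizer.
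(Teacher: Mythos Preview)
Your proof is correct and follows essentially the same approach as the paper: both show that $f$ has exactly $N=|\det(L-I)|$ fixed points, that any $g\in Z^{0}(f)$ permutes them, and that the resulting permutation map is a homomorphism with the stated kernel. The only cosmetic difference is in the fixed-point count: the paper phrases it as transversality of the graph $G(L)$ to the diagonal (stable under $C^{1}$ perturbation), whereas you unpack this via the implicit function theorem and an explicit lift to $\mathbb{R}^{d}$; these are equivalent formulations of the same fact.
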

\begin{proof}
The graph $G(L) = \{(x,Lx)\text{ : }x\in\mathbb{T}^{d}\}$ is transverse to the diagonal $\Delta = \{(x,x)\text{ : }x\in\mathbb{T}^{d}\}$ since $L$ is ergodic. The graph of $f$ is $C^{1}-$close to $G(L)$ and it follows by transversality that $G(f)$ is transverse to $\Delta$ and $\#G(L)\cap\Delta = \#G(f)\cap\Delta$. But $G(L)\cap\Delta$ and $G(f)\cap\Delta$ are the fixed points of $L$ and $f$ respectively. So $f$ and $L$ have the same number of fixed points, and the map $L$ has $|\det(L-I)|$ fixed points \cite{KatokIntroDynSyst}.

We denote the fixed points of $f$ by $\{x_{1},...,x_{N}\}$. If $g\in Z^{0}(f)$, then for any $x_{j}\in\{x_{1},...,x_{N}\}$ we have $fgx_{j} = gfx_{j} = gx_{j}$ so $gx_{j}$ is fixed by $f$. We define $\Pi(g)\in S_{N}$ by $gx_{j} = x_{\Pi(g)(j)}$. One sees
\begin{align*}
x_{\Pi(g')\Pi(g)(j)} = g'x_{\Pi(g)(j)} = g'(gx_{j}) = (g'\circ g)(x_{j}) = x_{\Pi(g'\circ g)(j)}
\end{align*}
so $\Pi$ is a homomorphism. It is immediate from the definition that $g\in Z^{0}(f)$ fixes the fixed points of $f$ if $\Pi(g) = \text{id}$.
\end{proof}
\begin{proof}[Proof of Theorem \ref{Thm:InjectivityOfHom}]
Let $f$ be as in the theorem and let $\pi_{f}:Z^{1}(f)\to{\rm GL}(d,\mathbb{Z})$ be the homology representation. We note that $\ker\pi_{f}$ are precisely the elements $g\in Z^{1}(f)$ homotopic to identity. Let $\Pi:Z^{1}(f)\to S_{N}$ be the map from Lemma \ref{L:SymmetricGroupMap}. We claim that the map $\Pi:\ker\pi_{f}\to S_{N}$ is injective, which proves the theorem since $S_{N}$ is a finite group. To show this, pick $g\in\ker\pi_{f}$. In particular it holds that
\begin{align*}
{\rm Fix}(g) = \{x\text{ : }gx = x\}\neq\emptyset.
\end{align*}
Let $x_{0}\in{\rm Fix}(g)$. By Claim \ref{Claim:ExistenceFMcoordinates} we have for $\sigma = s,u$ and $x\in W_{f}^{\sigma}(x_{0})$
\begin{align*}
\Phi_{\sigma,x_{0}}(g(x)) = \Phi_{\sigma,g(x_{0})}(g(x)) = {\rm Id}\cdot\Phi_{\sigma,x_{0}}(x) = \Phi_{\sigma,x_{0}}(x).
\end{align*}
Since $\Phi_{\sigma,x_{0}}:W_{F}^{\sigma}(x_{0})\to E_{L}^{\sigma}$ is a homeomorphism, Claim \ref{Claim:ExistenceFMcoordinates}, it follows that $gx = x$ for $x\in W_{F}^{\sigma}(x_{0})$. That is, ${\rm Fix}(g)$ is $\sigma-$saturated. Since this holds for $\sigma = s$ and $\sigma = u$ we conclude that ${\rm Fix}(g)$ is closed, $su-$saturated and non-empty. Theorem \ref{Thm:InjectivityOfHom} now follows from Theorem \ref{Thm:RHsuMinimality}.
\end{proof}

\section{Smooth conjugacies for perturbations with large centralizer}
\label{Sec:FullCentralizer}

\subsection{Perturbations with virtually full centralizer}

In this section, we prove Theorem \ref{Thm:FullCentralizer}. For the remainder of this section, let $L\in{\rm GL}(d,\mathbb{Z})$ be irreducible with $2-$dimensional isometric center and no three eigenvalues of the same modulus. Let $f\in{\rm Diff_{vol}^{\infty}}(\mathbb{T}^{d})$ be $C^{1}-$close to $L$. Recall that we assume, without loss of generality, that $f(0) = 0$.
\begin{lemma}\label{L:AlgCenterDomination}
If $\Gamma\leq Z_{\rm Aut}(L)$ has full rank, ${\rm rank}(\Gamma) = {\rm rank}(Z_{\rm Aut}(L))$, then for any $Q > 0$ there is $\gamma\in\Gamma$ satisfying
\begin{align}\label{Eq:CenterDomination}
\left|\det(\gamma|_{E_{L}^{c}})\right|\geq \left|\frac{\det(\gamma|_{E_{\gamma}^{u}})}{\det(\gamma|_{E_{L}^{c}})}\right|^{Q}
\end{align}
\end{lemma}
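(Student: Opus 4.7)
The plan is to recast inequality \eqref{Eq:CenterDomination} as a linear inequality in Lyapunov functionals of $\Gamma$, and then produce $\gamma$ by a lattice-point argument. Enumerate the real Lyapunov subspaces of $L$ as $V_1,\dots,V_N$, where $N = r_1+r_2$, with dimensions $d_i \in \{1,2\}$ and $\sum_i d_i = d$, and let $V_c = E_L^c$ so that $d_c = 2$. Since $L$ has no three eigenvalues of the same modulus, Lemma \ref{L:InjectiveLyapunovExponentMap} gives, for every $\gamma \in \Gamma$ and every $i$, a well-defined Lyapunov functional $\chi^{(i)}(\gamma) \in \mathbb{R}$ with $\log\lvert\det(\gamma|_{V_i})\rvert = d_i\,\chi^{(i)}(\gamma)$, and $\lvert\det(\gamma)\rvert = 1$ forces $\sum_i d_i\,\chi^{(i)}(\gamma) = 0$. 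For any $\gamma$ with $\chi_c(\gamma) > 0$, so that $E_L^c \subset E_\gamma^u$, the inequality \eqref{Eq:CenterDomination} is equivalent in logarithms to
\begin{equation*}
2\chi_c(\gamma) \;\geq\; Q \sum_{\substack{i \neq c \\ \chi^{(i)}(\gamma) > 0}} d_i\,\chi^{(i)}(\gamma).
\end{equation*}

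Set $H = \bigl\{x \in \mathbb{R}^N : \sum_i d_i x_i = 0\bigr\}$, an $(N-1)$-dimensional hyperplane, and introduce the open cone
\begin{equation*}
U \;=\; \Bigl\{x \in H : x_c > 0,\ 2x_c > Q \sum_{i \neq c,\, x_i > 0} d_i x_i\Bigr\}.
\end{equation*}
The cone $U$ is non-empty: for any fixed index $i_0 \neq c$, the vector $v \in H$ with $v_c = 1$, $v_{i_0} = -2/d_{i_0}$, and all other coordinates zero lies in $U$, because the sum on the right-hand side is then zero. By the algebraic lemma in Section \ref{Sec:AutomorphismsOnTori} (asserting that $\Lambda(\Gamma)$ is a lattice of rank $r_1+r_2-1 = N-1$), the image $\Lambda(\Gamma) \subset H$ is a full-rank lattice. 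Hence its $\mathbb{Q}$-span is dense in $H$, so some $\mathbb{Q}$-linear combination $y$ of lattice generators lies in the open set $U$; clearing a common denominator $k$ gives $ky \in \Lambda(\Gamma) \cap U$ (the intersection is preserved by positive scaling since $U$ is a cone). The corresponding $\gamma \in \Gamma$ satisfies \eqref{Eq:CenterDomination}.

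Beyond these two algebraic inputs, the argument is elementary, so no serious analytic obstacle is expected. The most delicate structural point is that a single common Lyapunov decomposition can be used simultaneously for every $\gamma \in \Gamma$, producing honest functionals $\chi^{(i)} : \Gamma \to \mathbb{R}$; this is exactly the role of the hypothesis that no three eigenvalues of $L$ share a modulus, as exploited in Lemma \ref{L:InjectiveLyapunovExponentMap}. Once that is in hand, the rest is linear algebra plus the density of rational directions in a full-rank lattice.
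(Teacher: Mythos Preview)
Your proof is correct and follows the same overall strategy as the paper: rewrite the inequality in terms of Lyapunov functionals and use that $\Lambda(\Gamma)$ is a full-rank lattice (Lemma~\ref{L:AlgebraicFullCentralizer}) to find a lattice point in an open cone. The only notable difference is the choice of cone. You work with the $Q$-dependent cone $U=\{x\in H:x_c>0,\ 2x_c>Q\sum_{i\neq c,\,x_i>0}d_ix_i\}$ and reach it via a density-of-rationals argument; the paper instead takes the simpler $Q$-independent negative orthant $\{x_j<0\text{ for all }j\neq c\}$, so that the resulting $\gamma$ satisfies $E_\gamma^u=E_L^c$ exactly and the right-hand side of \eqref{Eq:CenterDomination} equals $1$, making the inequality trivially hold for every $Q$ at once. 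Your cone $U$ is in fact also open and convex (since $x\mapsto\sum_{i\neq c}d_i(x_i)_+$ is convex), so the paper's ``open convex cone meets any full-rank lattice'' argument would apply to it directly and your rational-density step is not strictly needed.
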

\begin{proof}
Let $\chi_{1},...,\chi_{n}$, $n = r_{1} + r_{2}$, be the Lyapunov exponents of $\Gamma$ such that $\chi_{1}(\gamma)$ is the Lyapunov exponent of $\gamma$ along $E_{L}^{c}$. The map $\Lambda(\gamma) = (\chi_{2}(\gamma),...,\chi_{n}(\gamma))$ maps $\Gamma$ onto a lattice in $\mathbb{R}^{n-1}$ by Lemma \ref{L:AlgebraicFullCentralizer}. The set
\begin{align*}
U := \{(x_{2},...,x_{n})\text{ : }x_{2},...,x_{n} < 0\}
\end{align*}
is a open convex cone, so it intersects any lattice. It follows that we find some $\gamma\in\Gamma$ such that $\Lambda(\gamma)\in U$ or $\chi_{j}(\gamma) < 0$ for $j = 2,...,n$. Since $\chi_{1}(\gamma)+...+\chi_{n}(\gamma) = 0$ we have $\chi_{1}(\gamma) > 0$ and $\chi_{j}(\gamma) < 0$ for $j > 1$. For this $\gamma$ we have $E_{\gamma}^{u} = E_{L}^{c}$ and the lemma follows.
\end{proof}
\begin{remark}
To prove the results of this section, it would suffice to be able to choose $Q > 8\varepsilon$, where $\varepsilon$ only depends on the $C^{1}-$distance between $f$ and $L$, in Equation \ref{Eq:CenterDomination} for some $\gamma\in\Gamma$ (see the proofs of Lemmas \ref{L:NonlinearCenterDomination} and \ref{L:FrankManningIsInjective}). However, for any $r < {\rm rank}(Z_{\rm Aut}(L))$ and $Q_{0}\in\mathbb{R}_{>0}$ one can construct a subgroup $\Gamma\leq Z_{\rm Aut}(L)$ with ${\rm rank}(\Gamma) = r$ and such that Equation \ref{Eq:CenterDomination} does not hold with $Q > Q_{0}$ for any ${\rm id}\neq\gamma\in\Gamma$, Lemma \ref{L:NecessityOfLargeAction}. So, if one uses the methods of this section, then either one must assume that $\Gamma$ has maximal rank or one must make an assumption on $\Gamma$ involving more properties than the rank of $\Gamma$.
\end{remark}
Combining Lemma \ref{L:AlgCenterDomination} with Lemma \ref{L:VolumeExpansionEst} and Theorem \ref{Thm:InjectivityOfHom}, we obtain the following lemma.
\begin{lemma}\label{L:NonlinearCenterDomination}
If $f\in{\rm Diff_{vol}^{\infty}}(\mathbb{T}^{d})$ is a $C^{1}-$small perturbation of $L$ satisfying ${\rm rank}(Z^{\infty}(f)) = {\rm rank}(Z_{\rm Aut}(L))$, then there is $g\in Z^{\infty}(f)$, homotopic to some hyperbolic $M$, and $\lambda > 1$ such that
\begin{align*}
\det(D_{x}g|_{E_{f}^{c}})\geq\lambda > 1
\end{align*}
for all $x\in\mathbb{T}^{d}$. Moreover, given $\delta > 0$ we can choose $f$ sufficiently $C^{1}-$close to $L$ such that $\lambda$ satisfies $\det(M|_{E_{L}^{c}})^{1-\delta}\leq\lambda$.
\end{lemma}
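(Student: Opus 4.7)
The lemma is designed to follow from combining its three stated ingredients. First, Theorem \ref{Thm:InjectivityOfHom} states that the homology representation $Z^{\infty}(f)\to Z_{\rm Aut}(L)$ is virtually injective, and the rank hypothesis ${\rm rank}(Z^{\infty}(f))={\rm rank}(Z_{\rm Aut}(L))$ promotes this to a virtual isomorphism. In particular the image $\Gamma\leq Z_{\rm Aut}(L)$ has full rank, so Lemma \ref{L:AlgCenterDomination} applied to $\Gamma$ (with any $Q>0$) produces an element $M\in\Gamma$ whose Lyapunov functionals satisfy $\chi_1(M)>0$ along the center $E_L^c$ and $\chi_j(M)<0$ on every other Lyapunov subspace of $L$. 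In particular $M$ has no unit-modulus eigenvalue, so it is hyperbolic, and $E_M^u=E_L^c$.

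\textbf{The key cancellation.} Choose $g_0\in Z^{\infty}(f)$ with homology $M$; any such element automatically preserves volume, as verified at the start of the proof of Lemma \ref{L:VolumeExpansionEst} via Theorem \ref{Thm:RHsuMinimality}. For an integer $n\geq 1$ to be fixed later, set $g:=g_0^n$, so that $g$ is homotopic to the hyperbolic matrix $M^n$. Applying Lemma \ref{L:VolumeExpansionEst} to $g_0$ with iterate $n$ gives
\begin{align*}
|\det(D_xg|_{E_f^c})|\ \geq\ c\cdot|\det(M^n|_{E_L^c})|^{1-\varepsilon}\cdot\left[\prod_{\substack{\chi(M)>0\\ \chi(L)\neq 0}}|\det(M^n|_{E_L^\chi})|\right]^{-2\varepsilon}.
\end{align*}
By our construction of $M$ the only positive Lyapunov functional is $\chi_1$, associated to $E_L^c$ on which $\chi(L)=0$; hence the product is over the \emph{empty} set, the bracket equals $1$, and we are left with the clean estimate $|\det(D_xg|_{E_f^c})|\geq c\cdot|\det(M^n|_{E_L^c})|^{1-\varepsilon}$.

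\textbf{Choosing $n$ and the moreover clause.} Since $|\det(M|_{E_L^c})|=e^{2\chi_1(M)}>1$, taking $n$ sufficiently large gives the uniform bound $c\cdot|\det(M^n|_{E_L^c})|^{1-\varepsilon}=:\lambda>1$, establishing the first assertion (with $M^n$ playing the role of the hyperbolic matrix in the lemma statement). For the moreover clause, given $\delta>0$ we first invoke Lemma \ref{L:PropLyapFunc} to shrink $f$ toward $L$ in $C^1$ until $\varepsilon<\delta$, and then enlarge $n$ so that $c\geq|\det(M^n|_{E_L^c})|^{\varepsilon-\delta}$; this is possible because the right side tends to $0$ as $n\to\infty$ while $c>0$ is fixed. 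The resulting $\lambda$ then satisfies $\lambda\geq|\det(M^n|_{E_L^c})|^{1-\delta}$, as required. The only non-routine step is choosing $M$ so that its unstable direction coincides precisely with $E_L^c$; this is what annihilates the error product in Lemma \ref{L:VolumeExpansionEst}, and under a rank hypothesis weaker than full rank no such $M$ need exist, as noted in the remark following Lemma \ref{L:AlgCenterDomination}.
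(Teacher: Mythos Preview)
Your argument is correct and arrives at the same conclusion as the paper, but by a somewhat different and cleaner route. The key divergence is in how you use Lemma~\ref{L:AlgCenterDomination}. You invoke the fact that the element $M$ produced there satisfies $E_M^u=E_L^c$, i.e.\ all Lyapunov functionals of $M$ except the central one are negative; with this, the error product $\prod_{\chi(M)>0,\,\chi(L)\neq0}$ in Lemma~\ref{L:VolumeExpansionEst} is empty and the estimate collapses to one line. Note, however, that this property of $M$ is established in the \emph{proof} of Lemma~\ref{L:AlgCenterDomination}, not in its statement (which only asserts the inequality~\eqref{Eq:CenterDomination} for each $Q$, with $\gamma$ possibly depending on $Q$), so you are reaching inside the black box.

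The paper's proof treats Lemma~\ref{L:AlgCenterDomination} purely through its stated inequality. It introduces an auxiliary parameter $\eta\in(0,\delta-\varepsilon)$, peels off a factor $|\det(M^n|_{E_L^c})|^{\eta}$, and uses~\eqref{Eq:CenterDomination} with $Q>2\varepsilon/\eta$ to dominate the error product rather than annihilate it. This is more self-contained but requires the extra parameter juggling. Your shortcut buys simplicity at the cost of citing a fact from inside a lemma's proof; if you keep your version, it would be cleanest to either strengthen the statement of Lemma~\ref{L:AlgCenterDomination} to record $E_\gamma^u=E_L^c$, or to reproduce the one-line lattice argument (the open cone $\{\chi_2<0,\dots,\chi_n<0\}$ meets the full-rank lattice $\Lambda(\Gamma)$) directly in your proof.
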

\begin{proof}
Let $Q > 0$ and $\eta > 0$. By Theorem \ref{Thm:InjectivityOfHom} and Lemma \ref{L:AlgCenterDomination} we find some $g\in Z^{\infty}(f)$ homotopic to $M$ where $\det(M|_{E_{L}^{c}}) > 1$ and
\begin{align*}
|\det(M|_{E_{L}^{c}})|\geq\left|\frac{\det(M|_{E_{M}^{u}})}{\det(M|_{E_{L}^{c}})}\right|^{Q}.
\end{align*}
By Lemma \ref{L:VolumeExpansionEst} this implies
\begin{align*}
|\det(D_{x}g^{n}|_{E_{f}^{c}})|\geq & c\cdot |\det(M^{n}|_{E_{L}^{c}})|^{1-\varepsilon}\cdot\left|\frac{\det(M^{n}|_{E_{M}^{u}})}{\det(M^{n}|_{E_{L}^{c}})}\right|^{-2\varepsilon}\geq \\ &
c\cdot |\det(M^{n}|_{E_{L}^{c}})|^{1-\varepsilon-\eta}\cdot\left|\frac{\det(M^{n}|_{E_{M}^{u}})}{\det(M^{n}|_{E_{L}^{c}})}\right|^{Q\eta}\cdot \\ &
\left|\frac{\det(M^{n}|_{E_{M}^{u}})}{\det(M^{n}|_{E_{L}^{c}})}\right|^{-2\varepsilon} = \\ &
|\det(M^{n}|_{E_{L}^{c}})|^{1-\varepsilon-\eta}\cdot\left[c\cdot\left|\frac{\det(M^{n}|_{E_{M}^{u}})}{\det(M^{n}|_{E_{L}^{c}})}\right|^{Q\eta - 2\varepsilon}\right].
\end{align*}
We fix $\delta > \varepsilon > 0$ and $\eta$ such that $1 - \varepsilon - \eta > 1 - \delta$ or $0 < \eta < \delta - \varepsilon$. We also want $Q\eta - 2\varepsilon > 0$ so we choose
\begin{align*}
Q > \frac{2\varepsilon}{\eta} > \frac{2\varepsilon}{\delta - \varepsilon}
\end{align*}
which can be done by Lemma \ref{L:AlgCenterDomination}. With this choice of $Q$ and $n$ large enough we obtain
\begin{align*}
c\cdot\left|\frac{\det(M^{n}|_{E_{M}^{u}})}{\det(M^{n}|_{E_{L}^{c}})}\right|^{Q\eta - 2\varepsilon}\geq1.
\end{align*}
The lemma follows since $\varepsilon$, and therefore also $\delta$, can be chosen as small as we want by letting $f$ be $C^{1}-$close to $L$.
\end{proof}
\begin{lemma}\label{L:CenterExpansion}
Let $f$ be as in Lemma \ref{L:NonlinearCenterDomination}, $g\in Z^{\infty}(f)$ satisfy $|\det(D_{x}g|_{E_{f}^{c}})|\geq\lambda > 1$ and $g$ fix all fixed points of $f$. Either $D_{0}g|_{E_{f}^{c}}$ has a pair of complex eigenvalues with modulus at least $\sqrt{\lambda} > 1$ or $D_{0}g|_{E_{f}^{c}(0)}$ is diagonalizable with one eigenvalue of multiplicity two and modulus at least $\sqrt{\lambda} > 1$.
\end{lemma}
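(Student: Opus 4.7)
The plan is to localize at the fixed point $0$ of both $f$ and $g$, study the commuting pair of $2\times 2$ endomorphisms $D_0f|_{E_f^c(0)}$ and $D_0g|_{E_f^c(0)}$, and exploit that the first of these is forced to have two distinct complex conjugate eigenvalues, which will rigidly constrain the second. The hypothesis that $g$ fixes every fixed point of $f$ gives $g(0)=0$; Theorem \ref{Thm:RegularityOfCenter} guarantees that $g$ preserves $W_f^c$ and hence $D_0g$ preserves $E_f^c(0)$; and differentiating the identity $fg=gf$ at $0$ yields $[D_0f,D_0g]=0$ on $E_f^c(0)$.

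The first step is to analyse the spectrum of $D_0f|_{E_f^c(0)}$. Since $L$ is ergodic with two-dimensional isometric center, the two eigenvalues of $L|_{E_L^c}$ are complex conjugates $e^{\pm i\theta}$ on the unit circle with $\theta\notin\pi\mathbb{Z}$, because the real cases $\theta=0,\pi$ would produce $\pm 1$ as eigenvalues and contradict ergodicity. In particular these two eigenvalues are distinct and non-real. Since $E_f^c(0)$ is close to $E_L^c$ and $D_0f$ is close to $L$, the restriction $D_0f|_{E_f^c(0)}$, viewed through the orthogonal projection to $E_L^c$, is close to $L|_{E_L^c}$; continuity of the spectrum then guarantees that $D_0f|_{E_f^c(0)}$ still has two distinct complex conjugate eigenvalues $\nu,\bar\nu$.

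The core of the argument is now pure linear algebra on a two-dimensional space. Since $D_0f|_{E_f^c(0)}$ has two distinct eigenvalues over $\mathbb{C}$, every endomorphism commuting with it is simultaneously diagonalizable with it and in particular shares its (complex) eigenvectors. Writing $A:=D_0g|_{E_f^c(0)}$ and picking an eigenvector $v$ of $D_0f|_{E_f^c(0)}$ in the complexification, we obtain $Av=\alpha v$ for some $\alpha\in\mathbb{C}$, and reality of $A$ gives $A\bar v=\bar\alpha\bar v$. If $\alpha\notin\mathbb{R}$, then $A$ has a complex conjugate pair of eigenvalues of equal modulus, and $|\alpha|^2=|\det A|\ge\lambda$ gives $|\alpha|\ge\sqrt{\lambda}$. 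If $\alpha\in\mathbb{R}$, then $\mathrm{Re}(v)$ and $\mathrm{Im}(v)$ are $\mathbb{R}$-linearly independent (since $v,\bar v$ are $\mathbb{C}$-linearly independent) and both are real eigenvectors of $A$ with eigenvalue $\alpha$, so $A=\alpha I$ is diagonalizable with $\alpha$ of multiplicity two, and again $|\alpha|^2\ge\lambda$. Both a nontrivial Jordan block and two distinct real eigenvalues are thereby excluded, and the claimed dichotomy follows.

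I do not expect a substantial obstacle. The only potentially delicate point is the continuity argument in the second paragraph, where one must use $C^1$-closeness of $f$ to $L$ to conclude that the spectrum of $D_0f|_{E_f^c(0)}$ stays off the real line and off the diagonal; this follows immediately from continuity of eigenvalues together with the fact that the unperturbed pair $e^{\pm i\theta}$ is bounded away from itself and from $\mathbb{R}$. The rest of the argument is elementary.
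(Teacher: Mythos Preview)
Your proof is correct and takes essentially the same approach as the paper: both arguments use that $D_0f|_{E_f^c(0)}$ is $C^1$-close to $L|_{E_L^c}$ and hence has no real eigenvalue, and then exploit commutativity of $D_0f$ and $D_0g$ at $0$ to constrain the eigenstructure of $D_0g|_{E_f^c(0)}$. Your presentation via the shared complex eigenvectors is slightly more explicit than the paper's, which phrases the same constraint as ``$D_0g|_{E_f^c}$ cannot have a $1$-dimensional real eigenspace,'' but the content is identical.
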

\begin{proof}
Since $f(0) = 0$ we have $g(0) = 0$. The lemma follows by noting that $D_{0}g|_{E_{f}^{c}}:E_{f}^{c}(0)\to E_{f}^{c}(0)$ can not have a $1-$dimensional real eigenspace. Indeed, if $D_{0}g|_{E_{f}^{c}}$ has no $1-$dimensional real eigenspace then either $D_{0}g|_{E_{f}^{c}}$ has complex conjugate eigenvalues or $D_{0}g|_{E_{f}^{c}}$ has one eigenvalue with multiplicity $2$ and is diagonalizable. The size of the eigenvalues follows by $\det(D_{0}g|_{E_{f}^{c}})\geq\lambda > 1$, so the product of the eigenvalues must be at least $\lambda$ and since the eigenvalues have the same modulus we get the estimate of their size.

If $f$ is $C^{1}-$close to $L$ and fix $0$ then $E_{L}^{c}(0)$ is close to $E_{f}^{c}(0)$. It follows that $D_{0}f:E_{f}^{c}(0)\to E_{f}^{c}(0)$ is close to $L:E_{L}^{c}\to E_{L}^{c}$. In particular $D_{0}f|_{E_{f}^{c}}$ does not have real eigenvalues. Since the eigenspaces of $D_{0}g|_{E_{f}^{c}}$ are preserved by $D_{0}f|_{E_{f}^{c}}$ by commutativity it follows that $D_{0}g|_{E_{f}^{c}}$ can not have a $1-$dimensional eigenspace with multiplicity $1$ since this would imply that $D_{0}f|_{E_{f}^{c}}$ had a real eigenvalue.
\end{proof}
\begin{lemma}\label{L:ExistenceStationaryNormalForm}
Let $f$ and $g$ be as in Lemma \ref{L:CenterExpansion}. There is a diffeomorphism $\psi:{\rm Basin}(0)\to T_{0}W_{F}^{c}(0)$ where
\begin{align*}
{\rm Basin(0)} = \{x\in W_{F}^{c}(0)\text{ : }g^{-n}(x)\to 0\}
\end{align*}
such that
\begin{align*}
& \psi(g(x)) = D_{0}g|_{E_{f}^{c}}(\psi(x)), \\
& \psi(f(x)) = D_{0}f|_{E_{f}^{c}}(\psi(x))
\end{align*}
and $D_{0}\psi = \text{id}$.
\end{lemma}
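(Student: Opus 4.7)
The plan is to apply a smooth Sternberg-type linearization theorem to $g|_{W_F^c(0)}$ at the fixed point $0$, extend the resulting local conjugacy globally to the basin via the functional equation, and then obtain the $f$-equivariance from the commutativity $fg = gf$.

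First I would record the key algebraic input. By Lemma \ref{L:CenterExpansion}, the derivative $D := D_0 g|_{E_f^c(0)}$ is either a real $2 \times 2$ matrix with a complex conjugate pair of eigenvalues $\mu, \bar\mu$, or is diagonalizable with a single real eigenvalue $\mu$ of multiplicity two; in both cases $|\mu| \geq \sqrt{\lambda} > 1$. Since every eigenvalue of $D$ has the same modulus $|\mu| > 1$, any multiplicative resonance $\lambda_i = \lambda_1^{m_1}\lambda_2^{m_2}$ with $m_1 + m_2 \geq 2$ would force $|\mu|^{m_1+m_2} = |\mu|$, which is impossible. Hence $D$ is non-resonant at every order $\geq 2$. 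Moreover, by Theorem \ref{Thm:RegularityOfCenter} the leaf $W_F^c(0)$ is a $C^\infty$ $2$-manifold and $g|_{W_F^c(0)}$ is a $C^\infty$ diffeomorphism with an expanding fixed point at $0$.

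By the smooth Sternberg linearization theorem applied to $g|_{W_F^c(0)}$ near $0$, there exists a $C^\infty$ diffeomorphism $\psi_0$ from a $g^{-1}$-invariant neighborhood $U$ of $0$ in $W_F^c(0)$ onto a neighborhood of $0$ in $T_0 W_F^c(0) = E_f^c(0)$, satisfying $D_0\psi_0 = \mathrm{id}$ and $\psi_0 \circ g = D \circ \psi_0$ wherever both sides are defined. I then extend $\psi_0$ to the basin by setting
\[
\psi(x) := D^n\, \psi_0(g^{-n}(x))
\]
for any $n$ large enough that $g^{-n}(x) \in U$; since $x \in \mathrm{Basin}(0)$, such $n$ exists, and the local functional equation ensures that the value is independent of the choice of $n$, so $\psi$ is a well-defined smooth map. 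A mirror construction using that $D$ expands on $E_f^c(0)$ produces a smooth inverse, so $\psi : \mathrm{Basin}(0) \to E_f^c(0)$ is a $C^\infty$ diffeomorphism with $\psi \circ g = D \circ \psi$ and $D_0\psi = \mathrm{id}$.

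For the $f$-equivariance, note that $f(0) = 0$ and $f$ preserves the center foliation, so $f$ restricts to a $C^\infty$ diffeomorphism of $W_F^c(0)$. From $fg = gf$ a direct computation shows that $\psi' := D_0 f \circ \psi \circ f^{-1}$ is another smooth linearization of $g$ on $\mathrm{Basin}(0)$ with $D_0\psi' = \mathrm{id}$. The uniqueness of the Sternberg linearization under the normalization $D_0\psi = \mathrm{id}$ (which itself follows from the non-resonance of $D$ applied to $\psi \circ (\psi')^{-1}$, which commutes with $D$ and has identity linear part, and is therefore the identity) forces $\psi' = \psi$, i.e.\ $\psi \circ f = D_0 f \circ \psi$. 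The main technical step is invoking smooth Sternberg linearization in this two-dimensional equal-modulus setting; once the no-resonance observation is in hand, the rest of the argument is standard propagation of a local linearization by the dynamics.
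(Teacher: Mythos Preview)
Your proof is correct and follows the same strategy as the paper: obtain a local Sternberg-type linearization of $g|_{W_F^c(0)}$ at $0$, extend it to $\mathrm{Basin}(0)$ via the functional equation, and deduce $f$-equivariance from uniqueness of the linearization with normalized derivative. The only cosmetic difference is that the paper proves uniqueness by a direct estimate (using that both eigenvalues of $A=D_0g|_{E_f^c}$ have the same modulus to control $\norm{A^{-n}}$ against $\norm{A^n x}^2$), whereas you phrase the same spectral fact as non-resonance and invoke Sternberg uniqueness.
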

\begin{proof}
Local existence of $\psi$ follows from \cite[Theorem 2]{StationaryNormalForm} for $g$. One then extends $\psi$ to $\text{Basin}(0)$ by the relation
\begin{align*}
\psi(x) = (D_{0}g|_{E_{f}^{c}})^{n}\psi(g^{-n}(x)).
\end{align*}
That $\psi$ also gives a normal form for $f$ follows since $\psi$ is unique. Indeed, if $\psi_{1},\psi_{2}:\text{Basin}(0)\to T_{0}W_{F}^{c}(0)$ are two diffeomorphisms, $\psi_{j}(0) = 0$, such that $D_{0}\psi_{1} = D_{0}\psi_{2}$ and $\psi_{j}(g(x)) = D_{0}g(\psi_{j}(x))$ then $\psi_{1} = \psi_{2}$. We denote $D_{0}g|_{E_{f}^{c}} = A$. By considering $g^{-1}$ instead of $g$ we may assume that $\norm{A} < 1$. We write $\psi(x) = \psi_{1}\circ\psi_{2}^{-1}(x)$ and have $\psi(Ax) = A\psi(x)$, $\psi(0) = 0$ and $D_{0}\psi = \text{id}$. It follows
\begin{align*}
\psi(x) = & A^{-n}\psi(A^{n}x) = A^{-n}\left(\psi(0) + A^{n}x + \mathcal{O}(\norm{A^{n}x}^{2})\right) = \\ &
x + A^{-n}\mathcal{O}(\norm{A^{n}x}^{2}).
\end{align*}
Since $A$ has two eigenvalues of the same modulus we have $\norm{A^{-n}}\leq c\norm{A}^{-n}$ for some constant $c$. We obtain an estimate
\begin{align*}
\psi(x) = x + \mathcal{O}(\norm{A}^{-n}\cdot\norm{A}^{2n}) = x + \mathcal{O}(\norm{A}^{n})
\end{align*}
and letting $n\to\infty$ we have $\psi(x) = x$. So $\psi_{1} = \psi_{2}$. Noting that $\psi'(x) = D_{0}f^{-1}\circ\psi\circ f$ also defines a normal form for $g$ with $D_{0}\psi' = \text{id}$ it follows by the uniqueness that $\psi' = \psi$ or $\psi(f(x)) = D_{0}f(\psi(x))$.
\end{proof}
Since $g$ is homotopic to some hyperbolic $M$ there is a Hölder semi-conjugacy $H:\mathbb{T}^{d}\to\mathbb{T}^{d}$ homotopic to identity such that $H(g(x)) = MH(x)$ \cite{FranksManningConjugacy} (or \cite[Theorem 2.6.1]{KatokIntroDynSyst}). It holds that $H:W_{f}^{c}(0)\to H(0) + E_{L}^{c}$, we assume for simplicity that $H$ is chosen such that $H(0) = 0$ which implies $H:W_{F}^{c}(0)\to E_{L}^{c}$. We also write $H:\mathbb{R}^{d}\to\mathbb{R}^{d}$ for the lift such that $H(0) = 0$.
\begin{lemma}\label{L:ExtendingFranksManningSemiConj}
The map $H:\mathbb{T}^{d}\to\mathbb{T}^{d}$ satisfy $H(fx) = LH(x)$, and the lifted map $H:\mathbb{R}^{d}\to\mathbb{R}^{d}$ satisfy $H(Fx) = LH(x)$.
\end{lemma}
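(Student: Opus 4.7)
The plan is to show that the candidate $\tilde H(x) := L^{-1} H(Fx)$ equals $H(x)$ on $\mathbb R^d$, by verifying that both are Franks--Manning-type semi-conjugacies from $g$ to the hyperbolic matrix $M$ and invoking uniqueness. The resulting identity $\tilde H = H$ is exactly $H(Fx) = LH(x)$, and projecting to $\mathbb T^d$ gives $H(fx) = LH(x)$, so both statements of the lemma follow at once.

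First I would exploit the commutation. On the lift, $fg = gf$ forces $FG - GF = m$ for some constant $m \in \mathbb Z^d$, and the semi-conjugacy $H \circ g = M \circ H$ lifts (after a suitable choice of lift of $g$) to $H \circ G = M \circ H$. Using that $H$ is $\mathbb Z^d$-equivariant (because $H$ is homotopic to the identity) and that $LM = ML$ (both $L$ and $M$ lie in the abelian group $Z_{\mathrm{Aut}}(L)$), a direct computation gives
\begin{align*}
\tilde H(Gx) = L^{-1} H(FGx) = L^{-1}\bigl(H(GFx) + m\bigr) = L^{-1}\bigl(M H(Fx) + m\bigr) = M \tilde H(x) + L^{-1} m.
\end{align*}
Setting $\Phi := H - \tilde H$ then yields $\Phi \circ G = M \Phi - L^{-1} m$. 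Hyperbolicity of $M$ makes $M - I$ invertible, so there is a unique $a \in \mathbb R^d$ with $(M - I)a = L^{-1} m$, and the shifted map $\Phi_1 := \Phi - a$ satisfies $\Phi_1 \circ G = M \Phi_1$.

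Second I would show $\Phi_1$ is bounded and conclude $\Phi_1 \equiv 0$ by a standard hyperbolic splitting argument. Boundedness holds because $H - \mathrm{id}$ is bounded (since $H$ lifts a map homotopic to the identity) and $\tilde H - \mathrm{id} = L^{-1}(H \circ F - F) + L^{-1}(F - L)$ is a sum of bounded functions. Writing $\Phi_1 = \Phi_1^s + \Phi_1^u$ with respect to the $M$-invariant hyperbolic splitting $\mathbb R^d = E_M^s \oplus E_M^u$ of the range, iteration yields $\Phi_1^u(x) = M_u^{-n} \Phi_1^u(G^n x)$ and $\Phi_1^s(x) = M_s^n \Phi_1^s(G^{-n} x)$; combined with $\|M_u^{-n}\|, \|M_s^n\| \to 0$ and uniform boundedness, this forces $\Phi_1 \equiv 0$. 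Hence $\Phi \equiv a$ is a constant, and evaluating at the origin gives $\Phi(0) = H(0) - L^{-1} H(F(0)) = 0$ thanks to the normalizations $H(0) = 0 = F(0)$, so $a = 0$ and $\tilde H = H$.

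The main obstacle is the lift bookkeeping: because $g$ is not assumed to fix the origin, the lifts $F$ and $G$ need not commute on $\mathbb R^d$, and the constant $L^{-1} m$ that appears in the intertwining relation must be absorbed via the hyperbolicity of $M$ before the uniqueness argument for bounded semi-conjugacies can be applied.
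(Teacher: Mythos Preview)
Your proof is correct and follows essentially the same approach as the paper: both define $\tilde H = L^{-1} \circ H \circ f$ (on the torus in the paper, on the lift in your version), show it is again a Franks--Manning-type semi-conjugacy from $g$ to $M$, and invoke uniqueness of bounded solutions to the hyperbolically twisted equation $\Phi \circ G = M\Phi + \text{const}$ together with the normalization $H(0)=0=F(0)$ to conclude $\tilde H = H$. The only cosmetic difference is that you carry the lift bookkeeping (the integer $m=FG-GF$) explicitly, whereas the paper absorbs the analogous integer constants into the cohomological-equation formulation on $\mathbb T^d$.
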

\begin{proof}
The map $H$ is the unique map within its homotopy class that satisfies $H(0) = 0$ and $H(gx) = MH(x)$. Indeed, if $H_{1}$ and $H_{2}$ are two maps satisfying 
\begin{enumerate}[label = (\roman*)]
    \item $H_{j}(x) = x + \varphi_{j}(x)$, $\varphi_{j}:\mathbb{T}^{d}\to\mathbb{R}^{d}$, 
    \item $H_{j}(gx) = MH_{j}(x)$, 
    \item $H_{j}(0) = 0$,
\end{enumerate}
then $\varphi_{1}(x)$ and $\varphi_{2}(x)$ both solve a hyperbolically twisted cohomological equation $M\varphi_{j}(x) - \varphi_{j}(gx) = v(x) + m_{j}$ with $m_{j}\in\mathbb{Z}^{d}$ and $gx = Mx + v(x)$. Taking the difference $\psi(x) = \varphi_{2}(x) - \varphi_{1}(x)$ we see that $\psi$ solves
\begin{align}\label{Eq:HyperbolicTwist1}
M\psi(x) - \psi(gx) = m_{2} - m_{1}.
\end{align}
We note that $0 = H_{j}(0) = 0 + \varphi_{j}(0) = \varphi_{j}(0)$ on $\mathbb{T}^{d}$, so $\varphi_{j}(0)\in\mathbb{Z}^{d}$ for $j = 1,2$. It follows that $\psi(0)\in\mathbb{Z}^{d}$. Since Equation \ref{Eq:HyperbolicTwist1} has a unique solution $\psi$ which is constant, we conclude that $\psi(x)\in\mathbb{Z}^{d}$ for all $x$ so $\varphi_{2}(x) = \varphi_{1}(x) + m$ for some $m\in\mathbb{Z}^{d}$. This in turn implies that $H_{1}(x) = H_{2}(x)$ for all $x\in\mathbb{Z}^{d}$. The map $\Tilde{H}(x) = L^{-1}H(fx)$ satisfy $(i)$ since $f$ is homotopic to $L$ and it satisfies $(iii)$ since $f(0) = 0$. Moreover, using that $M$ commute with $L$ and $g$ commute with $f$ we obtain
\begin{align*}
\Tilde{H}(gx) = & L^{-1}H(fgx) = L^{-1}H(gfx) = L^{-1}MH(fx) = ML^{-1}H(fx) = \\ &
M\Tilde{H}(x)
\end{align*}
so $\Tilde{H}$ also satisfies $(ii)$. Since $\Tilde{H}$ satisfy $(i)$, $(ii)$ and $(iii)$ we conclude that $\Tilde{H}(x) = H(x)$ or $L^{-1}H(fx) = H(x)$, and the lemma follows for $H:\mathbb{T}^{d}\to\mathbb{T}^{d}$. On $\mathbb{R}^{d}$ the lemma follows for the case on $\mathbb{T}^{d}$ since the lift is normalized with $H(0) = 0$.
\end{proof}
In the remainder we will abuse notation and write $H$ for the map $H:W_{F}^{c}(0)\to E_{L}^{c}$.
\begin{lemma}\label{L:FrankManningIsInjective}
Let $f$ and $g$ be as in Lemma \ref{L:CenterExpansion}. The map $H:W_{F}^{c}(0)\to E_{L}^{c}$ is a homeomorphism when it is restricted to the set
\begin{align*}
{\rm Basin}(0) = \{x\in W_{F}^{c}(0)\text{ : }\lim_{n\to\infty}g^{-n}(x) = 0\}.
\end{align*}
\end{lemma}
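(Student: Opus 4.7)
The plan is to use the stationary normal form $\psi$ from Lemma \ref{L:ExistenceStationaryNormalForm} to transport the problem to $E_f^c(0)$, and then exploit the equivariance of $H$ under both $f$ and $g$ to pin down the resulting map explicitly.

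Define $\Psi := H\circ\psi^{-1}: E_f^c(0) \to E_L^c$, with $D := D_0 g|_{E_f^c(0)}$, $A := D_0 f|_{E_f^c(0)}$, $N := M|_{E_L^c}$, and $L_c := L|_{E_L^c}$. Combining Lemmas \ref{L:ExistenceStationaryNormalForm} and \ref{L:ExtendingFranksManningSemiConj} yields the functional equations
\begin{align*}
\Psi\circ D = N\circ\Psi, \qquad \Psi\circ A = L_c\circ\Psi,
\end{align*}
with $\Psi$ continuous and $\Psi(0)=0$. Since $\psi$ is a diffeomorphism of $\mathrm{Basin}(0)$ onto $E_f^c(0)$, proving Lemma \ref{L:FrankManningIsInjective} reduces to showing that $\Psi$ is a homeomorphism of $E_f^c(0)$ onto $E_L^c$.

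By Lemma \ref{L:CenterExpansion}, $D$ has eigenvalues of equal modulus $\mu\geq\sqrt{\lambda}>1$; since $M$ commutes with the non-trivial isometric rotation $L_c$, the map $N$ is conformal of the form $\nu R_\phi$ with $\nu>1$; and $L_c$ is itself an irrational rotation of $E_L^c$. Writing everything in complex coordinates on $E_f^c(0)\cong\mathbb{C}$ and $E_L^c\cong\mathbb{C}$, iterating the $A$-equivariance and using density of the rotation orbit of $L_c$ should yield, by a standard density argument, that $\Psi$ commutes with a full circle of rotations. In polar coordinates this gives the decomposition $\Psi(r,\phi) = (\rho(r),\phi+\omega_0(r))$. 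Substituting into the $D$-equivariance produces the scaling identities $\rho(\mu r)=\nu\rho(r)$ and $\omega_0(\mu r)-\omega_0(r)\equiv\text{const}$; together with continuity, $\rho(0)=0$, and the non-triviality of $\Psi$ (which follows from $H|_{W_F^c(0)}$ being a bounded perturbation of the linear projection $\pi^c$ onto $E_L^c$), these force $\rho(r)=c\,r^{\log\nu/\log\mu}$ with $c>0$. This explicit polar form then shows that $\Psi$ is a continuous bijection with continuous inverse, which is what we need.

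The main obstacle will be the case where $A$ is not an exact isometry: since $f$ is only $C^1$-close to $L$, the eigenvalues of $A$ need not lie exactly on $S^1$, so naively iterating the $A$-equivariance introduces a scaling component that could in principle force $|\Psi|$ to collapse in the density limit. One handles this by combining $A$ with suitable powers of $D$ to cancel the scaling and recover a pure rotation action, before applying the density argument. The alternative case of Lemma \ref{L:CenterExpansion}, where $D=\mu I$ is a real scalar, is simpler since $D$ already commutes with every rotation of $E_f^c(0)$ and the polar decomposition of $\Psi$ follows more directly.
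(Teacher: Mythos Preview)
There are two genuine gaps. First, the functional equation $\rho(\mu r)=\nu\rho(r)$ together with continuity, $\rho(0)=0$, and non-triviality does \emph{not} force $\rho(r)=c\,r^{\log\nu/\log\mu}$. Take any continuous $\rho_0:[1,\mu]\to(0,\infty)$ with $\rho_0(1)=1$, $\rho_0(\mu)=\nu$, and extend by $\rho(\mu^n r)=\nu^n\rho_0(r)$; this satisfies all your hypotheses yet $\rho_0$ can be chosen non-monotone, so $\Psi$ is not injective. The paper does not attempt a power-law conclusion; instead it proves directly that $\rho$ is injective by an area comparison: if $\rho(r_1)=\rho(r_2)$ then the annulus $S$ between the two level circles maps under $G^n$ into a tube of width $\approx 2C$ around a circle of radius $\eta^n\rho(r_1)$ (where $\eta=\sqrt{|\det(M|_{E_L^c})|}$), hence $\mathrm{Area}(G^nS)\lesssim\eta^n$, while the uniform lower bound $|\det(D_xg|_{E_f^c})|\geq\lambda$ gives $\mathrm{Area}(G^nS)\geq\lambda^n\mathrm{Area}(S)$. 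The crucial input is Lemma~\ref{L:NonlinearCenterDomination}, which gives $\lambda\geq\eta^{2(1-\delta)}>\eta$ for $\delta<1/2$, forcing $\mathrm{Area}(S)=0$. The same area argument is used to show $\rho(r)\neq0$ for $r\neq0$.

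Second, your proposed fix for the case where $A=D_0f|_{E_f^c}$ is not isometric---combining $A$ with powers of $D$ to cancel the scaling---would require the moduli $|\lambda_A|$ and $\mu$ to satisfy a diophantine condition you have no control over, and even then the resulting rotation angles $m\alpha+n\beta$ along the approximating sequence need not be dense. The paper avoids this entirely: it first proves that $A$ \emph{must} have eigenvalues on $S^1$. If not, $B^n z\to0$ (or $\infty$) for all $z$, so $|H(\psi^{-1}(z))|=|L^n H(\psi^{-1}(B^{-n}z))|=|H(\psi^{-1}(B^{-n}z))|\to0$, i.e.\ $H\equiv0$ on $\mathrm{Basin}(0)$; but $H^{-1}(0)$ is $G$-invariant and bounded (since $\norm{H-\mathrm{id}}\leq C$), hence has zero area by the expansion estimate, a contradiction. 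Once $A$ is a genuine irrational rotation, the polar decomposition and the reduction to studying $\rho$ proceed as you outline.
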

\begin{proof}
Denote by $A := D_{0}G|_{E_{F}^{c}}$ and $B := D_{0}F|_{E_{F}^{c}}$. We begin by showing that $H^{-1}(0)$ has zero area. Since $H:\mathbb{T}^{d}\to\mathbb{T}^{d}$ is homotopic to identity, any lift of $H$ can be written as $x + h(x)$ for some $h:\mathbb{T}^{d}\to\mathbb{R}^{d}$ which factors through a torus. In particular, there is $C\geq0$ such that if $H(x) = 0$ then $\intd(x,0)\leq C$. Since $W_{F}^{c}(0)$ is uniformly close to $E_{L}^{c}$, any $x\in W_{F}^{c}(0)$ satisfying $H(x) = 0$ also satisfy $\intd_{c}(x,0)\leq C$ for some, possibly larger, constant $C$. Since $H(Gx) = MH(x)$ it follows that $H^{-1}(0)$ is $G-$invariant. The estimate $\det(D_{x}G|_{E_{f}^{c}})\geq\lambda > 1$ imply that for any measurable $V\subset W_{F}^{c}(0)$ we have $\text{Area}(G(V))\geq\lambda\text{Area}(V)$ so, in particular, any $G-$invariant set must have either $0$ area or $\infty$ area. Since $H^{-1}(0)\subset W_{F,C}^{c}(0)$ the set $H^{-1}(0)$ must have zero area.

Let $\psi:\text{Basin}(0)\to T_{0}W_{F}^{c}(0)$ be the normal form from Lemma \ref{L:ExistenceStationaryNormalForm}. That is, we have $\psi(G(x)) = A(\psi(x))$ and $\psi(F(x)) = B(\psi(x))$. For $n\in\mathbb{Z}$
\begin{align*}
H\circ\psi^{-1}(B^{n}(z)) = H\circ F^{n}\circ\psi^{-1}(z) = L^{n}H\circ\psi^{-1}(z)
\end{align*}
which implies that $|H\circ\psi^{-1}(B^{n}(z))| = |H(\psi^{-1}(z))|$ since the eigenvalues of $L$ along $E_{L}^{c}$ lie on the unit circle. If the eigenvalues of $B$ do not lie on the unit circle, then by letting $n$ be either positive or negative we have $\lim_{n\to\pm\infty}B^{n}(z) = 0$. Since $H(\psi^{-1}(0)) = H(0) = 0$ this would imply $|H\circ\psi^{-1}(B^{n}(z))|\to 0$. So, if $B$ do not have eigenvalues on the unit circle, then $H(\psi^{-1}(z)) = 0$ for all $z\in T_{0}W_{F}^{c}(0)$. Since $\psi^{-1}$ is a diffeomorphism, this implies that $H^{-1}(0)$ has a non-empty interior and in particular positive area, which is a contradiction. We conclude that $B$ has eigenvalues on the unit circle. 

Let $E\subset U$ be an ellipse about $0$ such that $B(E) = E$ and $H(\psi^{-1}(E))\neq\{0\}$ (such an ellipse exists since there are $B-$invariant ellipses foliating an open neighbourhood about $0$ in $U$, so if $H(\psi^{-1}(E)) = \{0\}$ for all such $E$ then $H^{-1}(0)$ would have non-empty interior and positive area). We denote by $E' = H(\psi^{-1}(E))$ the closed curve image of $E$. The map $B:E\to E$ is semiconjugated to $L:E'\to E'$ by $H\circ\psi^{-1}$. Since $L|_{E_{L}^{c}}$ is an irrational rotation, it follows that $B:E\to E$ has no finite orbit. So, $B:E\to E$ is an irrational rotation. It follows that $E'$ is a circle. Indeed, if we fix any $y\in E$ then $\{B^{n}y\}$ is dense in $E$, so $\{H(\psi^{-1}(B^{n}y))\} = \{L^{n}H(\psi^{-1}(y))\}$ is dense in $E'$. But $\{L^{n}H(\psi^{-1}(y))\}$ is also dense on the circle of radius $|H(\psi^{-1}(y))|$, so $E'$ is a circle of radius $|H(\psi^{-1}(y))|$. After a linear coordinate change of $T_{0}W_{F}^{c}(0)$ the map $H\circ\psi^{-1}$ takes the form $H(\psi^{-1}(re^{2\pi i\theta})) = \rho(r)e^{2\pi i\eta(r,\theta)}$, where we have identified $E_{L}^{c}$ and $T_{0}W_{F}^{c}(0)$ with $\mathbb{C}$. Let $\alpha,\omega\in\mathbb{R}$ be such that $Bz = ze^{2\pi i\alpha}$ and $Lz = ze^{2\pi i\omega}$. For each $\rho(r)\neq0$ the relation $H(\psi^{-1}(Bz) = Lz$ implies $\eta(r,\theta + \alpha) = \eta(r,\theta) + \omega$. It follows that $\eta(r,\theta + \alpha) - \eta(r,\theta) = \omega$. Writing $\eta(r,\theta) = m_{r}\theta + u_{r}(\theta)$ with $m_{r}\in\mathbb{Z}$ and $u_{r}:\mathbb{R}/\mathbb{Z}\to\mathbb{R}$ we see that
\begin{align*}
\eta(r,\theta + \alpha) - \eta(r,\theta) = & m_{r}(\theta + \alpha) + u_{r}(\theta + \alpha) - m_{r}\theta - u_{r}(\theta) = \\ &
m_{r}\alpha + (u_{r}(\theta + \alpha) - u_{r}(\theta)) = \omega
\end{align*}
which implies that $m_{r}\alpha = \omega$ for some $m_{r}\in\mathbb{Z}$ and $u_{r}(\theta) = \theta_{0}(r)$ is constant since $\alpha$ is irrational. Since $e^{2\pi i\alpha}$ is an eigenvalue of $B = D_{0}f|_{E_{f}^{c}}$, $e^{2\pi i\omega}$ is an eigenvalue of $L$ and $B$ is close to $L|_{E_{L}^{c}}$ we have $m_{r} = 1$. So, $\eta(r,\theta) = \theta + \theta_{0}(r)$ and $\alpha = \omega$. We claim that $\rho(r)\neq0$ for all $r\neq0$. Indeed, for $r > 0$ we write $\gamma(\theta) = \psi^{-1}(re^{2\pi i\theta})$. Let $O$ be the (non-empty) interior of $\gamma$. Since $\text{Area}(G^{n}(O))\geq\lambda^{n}\text{Area}(O)\to\infty$ there is some sequence $x_{n}\in\partial G^{n}(O)$ such that $x_{n}\to\infty$. But $\partial G^{n}(O) = \text{Im}(G^{n}\circ\gamma)$, so we find $\theta_{n}\in\mathbb{R}/\mathbb{Z}$ with $x_{n} = G^{n}(\psi^{-1}(re^{2\pi i\theta_{n}}))$. If $H(\gamma(\theta)) = 0$ then
\begin{align*}
0 = M^{n}(H(\psi^{-1}(re^{2\pi i\theta_{n}})) = H(G^{n}\psi^{-1}(re^{2\pi i\theta_{n}}) = H(x_{n})
\end{align*}
which is a contradiction since $\intd_{c}(H(x_{n}),0)\geq\intd_{c}(x_{n},0) - C$. It follows that $\rho(r)\neq0$ for $r\neq0$, so the formula $H(\psi^{-1}(re^{2\pi i\theta})) = \rho(r)e^{2\pi i(\theta + \theta_{0}(r))}$ holds with both $\rho$ and $\theta_{0}$ continuous. Finally, we claim that $\rho(r)$ is increasing. It suffices to show that $\rho$ is injective since $\rho(0) = 0$. Let $0 < r_{1} < r_{2}$ be such that $\rho(r_{1}) = \rho(r_{2}) = \rho_{0}\neq0$. Consider the annulus $S$ in $W_{F}^{c}(0)$ bounded by the curves $\gamma_{1}(\theta) = \psi^{-1}(r_{1}e^{2\pi i\theta})$ and $\gamma_{2}(\theta) = \psi^{-1}(r_{2}e^{2\pi i\theta})$. Since $H(\text{Im}(\gamma_{1})) = H(\text{Im}(\gamma_{2})) = \rho_{0}\cdot S^{1}$ and $\norm{H(x) - x}\leq\norm{h}_{C^{0}} = C < \infty$ we have
\begin{align*}
|G^{n}\circ\gamma_{j}(\theta)|\leq & |H(G^{n}\circ\gamma_{j}(\theta))| + C = |M^{n}(\rho_{0}e^{2\pi i(\theta + \theta_{0}(r_{j}))})| + C = \\ &
\eta^{n}\rho_{0} + C,
\end{align*}
where $\eta = \sqrt{|\det(M|_{E_{L}^{c}})|} > 1$ is the modulus of the eigenvalue of $M$ along $E_{L}^{c}$. Similarly
\begin{align*}
|G^{n}\circ\gamma_{j}(\theta)|\geq & |H(G^{n}\circ\gamma_{j}(\theta))| - C = |M^{n}(\rho_{0}e^{2\pi i(\theta + \theta_{0}(r_{j}))})| - C = \\ &
\eta^{n}\rho_{0} - C.
\end{align*}
Since the boundary of $G^{n}S$ is given by the curves $G^{n}\circ\gamma_{j}$, $j = 1,2$, it follows that $G^{n}S$ lie within $W_{F}^{c}(0)\cap B_{2C}(\rho_{0}\eta^{n}\cdot S^{1})$. Noting that $\text{Area}(W_{F}^{c}(0)\cap B_{2C}(x))$ is uniformly bounded in $x$, we have for some constant $K$
\begin{align*}
\lambda^{n}\text{Area}(S)\leq\text{Area}(G^{n}S)\leq K\cdot\eta^{n}.
\end{align*}
By Lemma \ref{L:NonlinearCenterDomination}, we can, given $\delta > 0$, choose $f$ sufficiently $C^{1}-$close to $L$ such that $\lambda\geq|\det(M|_{E_{L}^{c}})|^{1-\delta} = \eta^{2(1-\delta)}$. It follows that we have
\begin{align*}
\eta^{2(1-\delta)n}\text{Area}(S)\leq K\eta^{n}.
\end{align*}
With $\delta < 1/2$ and letting $n\to\infty$ we obtain $\text{Area}(S) = 0$. It follows that $\text{Im}(\gamma_{1}) = \text{Im}(\gamma_{2})$ so $\rho(r)$ is indeed injective. We write $H(\psi^{-1}(re^{2\pi i\theta})) = \rho(r)e^{2\pi i(\theta + \theta_{0}(r))}$ where $\rho$ is strictly increasing. Since $\psi^{-1}$ is invertible this implies that $H$ is invertible.
\end{proof}
Finally we show that $H:W_{F}^{c}(0)\to E_{L}^{c}$ is invertible by showing that $\text{Basin}(0) = W_{F}^{c}(0)$.
\begin{lemma}
We have
\begin{align*}
{\rm Basin}(0) = W_{F}^{c}(0)
\end{align*}
so $H:W_{F}^{c}(0)\to E_{L}^{c}$ is invertible.
\end{lemma}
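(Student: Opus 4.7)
The strategy is to prove that ${\rm Basin}(0)$ is both open and closed as a subset of the connected $C^1$ manifold $W_F^c(0)$; since $0 \in {\rm Basin}(0)$, this forces ${\rm Basin}(0) = W_F^c(0)$, and invertibility of $H$ on all of $W_F^c(0)$ is then exactly the content of Lemma \ref{L:FrankManningIsInjective}.

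Openness is immediate from Lemma \ref{L:CenterExpansion}: the linear map $D_0 g|_{E_f^c}$ has both eigenvalues of modulus at least $\sqrt{\lambda} > 1$, so $0$ is an attracting fixed point for $g^{-1}|_{W_F^c(0)}$. A small open neighborhood $V$ of $0$ thus lies in ${\rm Basin}(0)$, and one checks that ${\rm Basin}(0) = \bigcup_{n \geq 0} g^n(V)$, which is open as a union of open sets.

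The main step is to show that $U := {\rm Basin}(0)$ is closed. By Lemma \ref{L:FrankManningIsInjective}, $H|_U : U \to E_L^c$ is a homeomorphism onto $E_L^c$, so its inverse $(H|_U)^{-1} : E_L^c \to U$ is continuous. Given a sequence $u_n \in U$ converging to some $x \in W_F^c(0)$, continuity of $H$ together with the inclusion $H(W_F^c(0)) \subset E_L^c$ yields $H(u_n) \to H(x) \in E_L^c = H(U)$. Applying the continuous inverse to this sequence gives
\begin{align*}
u_n = (H|_U)^{-1}(H(u_n)) \to (H|_U)^{-1}(H(x)) \in U,
\end{align*}
so $x = (H|_U)^{-1}(H(x)) \in U$. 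Hence $U$ is closed in $W_F^c(0)$, and combined with openness and connectedness we conclude $U = W_F^c(0)$.

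The hard input I would need to rely on is not the topological argument above but rather the fact that $H|_U$ is a homeomorphism onto the full space $E_L^c$, not merely onto some open subset; in the preceding lemma this comes from the self-similarity $\rho(\mu r) = \eta\rho(r)$ produced by combining $H \circ G = M \circ H$ with the normal form $\psi$, which forces the strictly increasing continuous function $\rho$ to be unbounded and hence surjective. Once that surjectivity is in hand, the passage from ${\rm Basin}(0)$ to all of $W_F^c(0)$ reduces to the purely formal continuity argument given above.
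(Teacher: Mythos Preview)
Your argument is correct and takes a genuinely different route from the paper. The paper proceeds geometrically: it fixes a small $F$-invariant circle $\gamma = \psi^{-1}(r\cdot S^{1})$ inside ${\rm Basin}(0)$, pushes it forward by $G^{n}$, and uses the uniform bound $\norm{H(x)-x}\leq C$ to show that $G^{n}\gamma$ lies in an annulus around the circle of radius $\eta^{n}\rho$ in $W_{F}^{c}(0)$; hence its interior $U_{n}\subset{\rm Basin}(0)$ contains the ball $W_{F,\eta^{n}\rho-K}^{c}(0)$, and letting $n\to\infty$ exhausts $W_{F}^{c}(0)$. Your approach instead runs a clean open--closed argument on the connected surface $W_{F}^{c}(0)$, with closedness forced by the fact that $(H|_{{\rm Basin}(0)})^{-1}$ is globally defined and continuous on $E_{L}^{c}$.

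The trade-off is exactly the one you identify in your final paragraph: your argument needs $H|_{{\rm Basin}(0)}$ to be \emph{onto} $E_{L}^{c}$, whereas the paper's direct exhaustion does not. The paper's proof of Lemma~\ref{L:FrankManningIsInjective} explicitly establishes only that $\rho$ is strictly increasing, i.e.\ injectivity; surjectivity is not written out there. Your self-similarity argument $\rho(\mu r)=\eta\rho(r)$ fills this honestly (and works because $A=D_{0}g|_{E_{f}^{c}}$ commutes with the irrational rotation $B$, hence is itself a rotation-dilation in the coordinates chosen). So your proof is valid but imports one extra line of reasoning into the preceding lemma; the paper's version avoids this at the cost of a more hands-on computation.
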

\begin{proof}
Let $\gamma:\mathbb{T}\to\text{Basin}(0)$ be a smooth, simple and closed $F-$invariant curve traversing $0$ (we can take $\gamma(\theta) = \psi^{-1}(re^{2\pi i\theta})$ for any $r > 0$ where we have chosen coordinates on $T_{0}W_{F}^{c}(0)$ as in the proof of Lemma \ref{L:FrankManningIsInjective}). As in the proof of Lemma \ref{L:FrankManningIsInjective} we see that $H(\text{Im}(\gamma))$ is a circle of some radius $\rho > 0$ in $E_{L}^{c}$. Consider the sequence of $F-$invariant curves $\gamma_{n}(\theta) = G^{n}\circ\gamma(\theta)$. One sees that $H(\text{Im}(\gamma_{n}))$ is a circle of radius $\eta^{n}\rho$ where $\eta$ is the modulus of the eigenvalue of $M$ along $E_{L}^{c}$. Since $\norm{x} - C\leq\norm{H(x)}\leq\norm{x} + C$ it follows that $\gamma_{n}$ is a smooth, simple and closed curve traversing $0$ that lies within $B_{C}^{c}(\eta^{n}\rho\cdot S^{1})$. If we denote by $U_{n}$ the interior of $\gamma_{n}$ in $W_{F}^{c}(0)$ it follows that there is some constant $K$ such that $W_{F,\eta^{n}\rho - K}^{c}(0)\subset U_{n}$. On the other hand, $U_{n}\subset\text{Basin}(0)$ since $\gamma$ lie in $\text{Basin}(0)$ and $\text{Basin}(0)$ is $G-$invariant. Letting $n\to\infty$ the lemma follows.
\end{proof}
Recall that $T_{n}:W_{F}^{c}(0)\to W_{F}^{c}(0)$ are defined in Definition \ref{Def:TnMapsFromFederico}. Since $f$ is semiconjugate to $L$ the diffeomorphism $f$ is not accessible, Claim \ref{Claim:AccessibilityFactor}, so $T_{n}$ forms a $\mathbb{Z}^{d}-$action on $W_{F}^{c}(0)$. Since $H$ is invertible, the action of $T_{n}$ is minimal.
\begin{lemma}
The map $H:W_{F}^{c}(0)\to E_{L}^{c}$ is a $C^{1}-$diffeomorphism. 
\end{lemma}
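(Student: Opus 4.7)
The plan is to work in the stationary normal form coordinates provided by Lemma \ref{L:ExistenceStationaryNormalForm}. Let $\psi:W_F^c(0)\to T_0W_F^c(0)\cong\mathbb{R}^2$ be the smooth diffeomorphism conjugating $F|_{W_F^c(0)}$ to the linear rotation $B=D_0F|_{E_f^c}$, and set $\Phi:=H\circ\psi^{-1}:\mathbb{R}^2\to E_L^c$. Then $\Phi$ is a homeomorphism, the maps $\tilde T_n:=\psi T_n\psi^{-1}$ are $C^1$-diffeomorphisms of $\mathbb{R}^2$ that act minimally and satisfy $\Phi\circ\tilde T_n = \tau_{n^c}\circ\Phi$ with $\tau_v$ denoting translation by $v$, and $\Phi$ intertwines $B$ with $L|_{E_L^c}$. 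Since $\psi$ is a smooth diffeomorphism, showing that $H$ is a $C^1$-diffeomorphism is equivalent to showing that $\Phi$ is.

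From the proof of Lemma \ref{L:FrankManningIsInjective}, recall the polar representation
\begin{align*}
\Phi(re^{2\pi i\theta}) = \rho(r)\,e^{2\pi i(\theta+\theta_0(r))},
\end{align*}
with $\rho$ continuous and strictly increasing, $\rho(0)=0$, and $\theta_0$ continuous on $(0,\infty)$. Thus $\Phi$ is already $C^\infty$ in the angular direction on each circle $\{|z|=r\}$, and the entire task reduces to showing that $\rho$ and $\theta_0$ are $C^1$ functions of $r\in[0,\infty)$.

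To obtain radial smoothness, I would exploit the translation equivariance $\Phi\circ\tilde T_n=\tau_{n^c}\circ\Phi$ together with the $C^1$-regularity of each $\tilde T_n$. Fixing a base point $z_0\ne 0$ and choosing $n_1,n_2\in\mathbb{Z}^d$ so that $n_1^c,n_2^c$ form a basis of $E_L^c$, the pair of identities $D\Phi|_{\tilde T_{n_j}(z_0)}\circ D\tilde T_{n_j}|_{z_0}=D\Phi|_{z_0}$ obtained by formally differentiating the equivariance, together with the known angular derivative of $\Phi$ coming from the polar representation, determine $D\Phi|_{z_0}$ uniquely wherever it exists, and force its existence via a difference-quotient argument: comparing the $C^1$-curves $s\mapsto\tilde T_{n_j}(\gamma(s))$ against their $\Phi$-images $s\mapsto\tau_{n_j^c}\Phi(\gamma(s))$, for smooth curves $\gamma$ through $z_0$, yields pointwise differentiability. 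Continuity of $z\mapsto D\tilde T_n|_z$ then propagates continuity of $z\mapsto D\Phi|_z$ along the $\mathbb{Z}^d$-orbit of $z_0$, which extends to all of $\mathbb R^2\setminus\{0\}$ by minimality. At $r=0$ one uses the $B$-equivariance instead: the points $p_n:=\tilde T_n(0)$ satisfy $\Phi(p_n)=n^c$ with $|n^c|\to0$ along appropriate sequences, and combined with the constraint that any derivative of $\Phi$ at $0$ must intertwine $B$ and $L|_{E_L^c}$ (both rotations by the same angle), $D\Phi|_0$ is forced to act as a nonzero complex scalar, yielding the one-sided limits of $\rho(r)/r$ and $\theta_0(r)$ at $r=0$.

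The main obstacle is promoting the pointwise $C^1$-regularity of the individual $\tilde T_n$ to $C^1$-regularity of the intertwining homeomorphism $\Phi$. The difficulty concentrates in the radial direction, where $\Phi$ has no a priori regularity and where the $F$-action yields no information (since $F$ acts by rotations fixing radii); the radial regularity must be extracted entirely from the $\tilde T_n$'s, whose derivatives mix radial and angular directions. Once $\Phi$ is $C^1$, the intertwining with rotations at the origin together with minimality of the orbits ensures that $D\Phi$ is everywhere invertible, so $\Phi$, and hence $H$, is a $C^1$-diffeomorphism.
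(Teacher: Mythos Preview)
Your proposal assembles the right ingredients --- the polar representation giving angular smoothness of $H$, the $C^1$ maps $T_n$, the equivariance $H\circ T_n = H + n^c$, and minimality of the $\mathbb{Z}^d$-action --- and these are exactly the pieces the paper uses. But the step you yourself label ``the main obstacle'' is the entire content of the proof, and you do not actually carry it out. The identity $D\Phi|_{\tilde T_n(z_0)}\circ D\tilde T_n|_{z_0}=D\Phi|_{z_0}$ relates values of $D\Phi$ at different points; it cannot manufacture differentiability at $z_0$ unless you already know $D\Phi$ exists at $\tilde T_n(z_0)$. Your difference-quotient sketch shows only that $\Phi$ is differentiable along the $C^1$ curve $\tilde T_n(\text{circle through }z_0)$, and this is useful precisely when that curve is \emph{transverse} to the circle foliation at $\tilde T_n(z_0)$. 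You never verify this transversality, and the choice of $n_1,n_2$ with $n_1^c,n_2^c$ a basis of $E_L^c$ is a condition on the target translations, not on the geometry of $D\tilde T_n$ in the source, so it does not address the issue.

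The paper supplies exactly this missing piece with a short topological argument. For $n\neq 0$ one has $T_n(0)\neq 0$, so a sufficiently small circle leaf $\mathcal{F}(x)=\psi^{-1}(r\cdot S^1)$ is mapped by $T_n$ to a closed curve that does not wind around $0$; hence $T_n(\mathcal{F}(x))$ cannot itself be a leaf of the circle foliation $\mathcal{F}$, and therefore at some point the two $C^1$ foliations $\mathcal{F}$ and $T_n(\mathcal{F})$ are transverse. On a neighbourhood $U$ where they are transverse, $H$ is $C^1$ along both ($C^1$ along $\mathcal{F}$ by the polar formula, $C^1$ along $T_n(\mathcal{F})$ via $H\circ T_n=H+n^c$), hence $C^1$ on $U$. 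Sard's theorem then gives points in $U$ with $\det DH\neq 0$, and for arbitrary $z$ minimality produces $n_0$ with $T_{n_0}(z)\in U$, so $H(z)=H(T_{n_0}(z))-n_0^c$ transfers both $C^1$-regularity and nondegeneracy to $z$. The winding argument forcing transversality is the idea your proposal is missing; once you insert it, your outline becomes the paper's proof.
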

\begin{proof}
Let $\mathcal{F}$ be the foliation on $W_{F}^{c}(0)\setminus\{0\}$ with leaves $\psi^{-1}(r\cdot S^{1})$. We denote by $X_{\theta}$ the unit vector field along $\mathcal{F}$. Given $n\in\mathbb{Z}^{d}\setminus\{0\}$ we denote $Y_{\theta}(T_{n}(x)) = D_{x}T_{n}(X_{\theta}(x))$. Since $n\neq0$ we have $T_{n}(0)\neq0$ and we find a leaf $\mathcal{F}(x)$ with $x$ sufficiently close to $0$ such that $T_{n}(\mathcal{F}(x))$ do not wind around $0$. We claim that there is some $y\in\mathcal{F}(x)$ such that $D_{y}T_{n}(X_{\theta}(y))$ is not proportional to $X_{\theta}(T_{n}(y))$. Indeed, if $D_{y}T_{n}(X_{\theta}(y))$ was proportional to $X_{\theta}(T_{n}(y))$ for all $y\in\mathcal{F}(x)$ then $T_{n}(\mathcal{F}(x))$ would be a leaf of $\mathcal{F}$, but this can not be since $T_{n}(\mathcal{F}(x))$ do not wind around $0$. Let $z_{0} = T_{n}(y_{0})$, $y_{0}\in\mathcal{F}(x)$, be chosen such that $X_{\theta}(z_{0})$ and $Y_{\theta}(z_{0}) = D_{y_{0}}T_{n}(X_{\theta}(y_{0}))$ are linearly independent. We find some open set $z_{0}\in U$ such that $U$ has transverse $C^{1}-$smooth foliations $\mathcal{F} = \mathcal{F}|_{U}$ and $\mathcal{G}$ where $\mathcal{G}(T_{n}(y))$ is the connected component of $U\cap T_{n}(\mathcal{F}(y))$ with $y\in T_{n}^{-1}(U)$. We claim that $H$ is smooth along $\mathcal{F}$. Indeed, from the proof of Lemma \ref{L:FrankManningIsInjective}, $H(\psi^{-1}(re^{2\pi i\theta})) = \rho(r)e^{2\pi i(\theta + \theta_{0}(r))}$ so by the definition of $\mathcal{F}$ it follows that $H$ is differentiable along $\mathcal{F}$. It also holds that $H$ is differentiable along $\mathcal{G}$ in $U$. Indeed, $H(T_{n}(x)) = H(x) + n^{c}$ so since $\mathcal{G}$ is the image of $\mathcal{F}$ under $T_{n}$ and $H$ is differentiable along $\mathcal{F}$ it follows that $H$ is differentiable along $\mathcal{G}$. Since $\mathcal{F}$ and $\mathcal{G}$ are transverse it follows that $H$ is $C^{1}$ in $U$. Moreover, since $H|_{U}$ is a homeomorphism onto its image, it follows by Sard's theorem that we may assume that $\det(D_{x}H)\neq0$ on $U$.

Let $z\in W_{F}^{c}(0)$ be arbitrary. The action $T_{n}$ of $\mathbb{Z}^{d}$ on $W_{F}^{c}(0)$ is minimal, so we find some $n_{0}$ such that $T_{n_{0}}(z)\in U$. Since $H(z) = H(T_{n_{0}}(z)) - n_{0}^{c}$ and since $T_{n_{0}}$ is a $C^{1}-$diffeomorphism it follows that $H$ is differentiable at $z$ with continuous derivative and $\det(D_{z}H)\neq0$. The lemma follows by the inverse function theorem.
\end{proof}
We can now prove Theorem \ref{Thm:FullCentralizer}.
\begin{proof}[Proof of Theorem \ref{Thm:FullCentralizer}]
We begin by showing that $H:\mathbb{T}^{d}\to\mathbb{T}^{d}$ is a homeomorphism. It suffices to show that $H:\mathbb{R}^{d}\to\mathbb{R}^{d}$ is a homeomorphism. Since $H$ is surjective, it suffices to show that $H$ is injective. Let $\pi^{su}:\mathbb{R}^{d}\to W_{F}^{c}(0)$ be the holonomy along $W_{F}^{su}$ (note that $E_{f}^{s}\oplus E_{f}^{u}$ is integrable since $f$ is not accessible \cite[Theorem 5.1]{StabelErgodicity}) and let $\pi_{0}^{su}:\mathbb{R}^{d}\to E_{L}^{c}$ be the holonomy along $W_{L}^{su}(x) = x + E_{L}^{su}$. If $H(p) = H(q)$ then $H(\pi^{su}(p)) = \pi_{0}^{su}(H(p)) = \pi_{0}^{su}(H(q)) = H(\pi^{su}(q))$. Since $H:W_{F}^{c}(0)\to E_{L}^{c}$ is a homeomorphism it follows that $\pi^{su}(p) = \pi^{su}(q)$. That $p = q$ now follows from Claim \ref{Claim:ExistenceFMcoordinates} since $H|_{W_{F}^{\chi}(x)} = \Phi_{\chi,x} + \Phi_{\chi}(x)$ by uniqueness of $\Phi_{\chi}$.

Since $H:W_{F}^{c}(0)\to E_{L}^{c}$ is a $C^{1}-$diffeomorphism and $\pi^{su}$ are diffeomorphisms along $W_{F}^{c}$ it follows that $H$ is smooth along $W_{F}^{c}$ with a derivative $D_{x}H|_{E_{f}^{c}}$ that is invertible. Since $H(g(x)) = MH(x)$ and $M$ is expanding along $E_{L}^{c}$ it follows that $g$ is expanding along $E_{f}^{c}$. Since $g$ has the same hyperbolic behaviour along $E_{f}^{\chi}$ as $M$ have along $E_{L}^{\chi}$ for all $\chi(L)\neq 0$ by Lemma \ref{L:PropLyapFunc}, it follows that $g$ is Anosov. That $H$ is a $C^{\infty}-$diffeomorphism now follows from Theorems \ref{Thm:RigidityOfHigherRankAnosov}, \ref{Thm:InjectivityOfHom} and \ref{Thm:AllSubgroupsHigerRank}.
\end{proof}

\subsection{Centralizer contains element homotopic to hyperbolic}

In this section, we want to show that, in some cases, accessibility is preserved for topological factors. More precisely, we show the following.
\begin{claim}\label{Claim:AccessibilityFactor}
Let $f\in{\rm Diff}^{\infty}(M)$ and $g\in{\rm Diff}^{\infty}(N)$ be partially hyperbolic diffeomorphisms on the manifold $M$ and $N$ respectively. Moreover, assume that $g$ is dynamically coherent with isometric center. If $g$ is a topological factor of $f$ and $f$ is accessible, then $g$ is also accessible.
\end{claim}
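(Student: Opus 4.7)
The plan is to transport an accessibility path for $f$ into one for $g$ via the factor map. Let $\pi:M\to N$ be continuous and surjective with $\pi\circ f=g\circ\pi$. The main technical step is to prove that $\pi$ maps $f$-stable leaves into $g$-stable leaves and $f$-unstable leaves into $g$-unstable leaves:
\[
\pi(W_f^{s}(x))\subseteq W_g^{s}(\pi(x)),\qquad \pi(W_f^{u}(x))\subseteq W_g^{u}(\pi(x))
\]
for every $x\in M$. Once this is established, given any $p,q\in N$ choose preimages $x\in\pi^{-1}(p)$, $y\in\pi^{-1}(q)$. Accessibility of $f$ provides an $su$-path $\gamma:[0,1]\to M$ from $x$ to $y$ with a subdivision $0=t_0<t_1<\cdots<t_N=1$ and indices $\sigma_i\in\{s,u\}$ such that $\gamma([t_{i-1},t_i])\subseteq W_f^{\sigma_i}(\gamma(t_{i-1}))$. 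Then $\pi\circ\gamma$ is continuous, connects $p$ to $q$, and the foliation-preserving property gives $(\pi\circ\gamma)([t_{i-1},t_i])\subseteq W_g^{\sigma_i}(\pi(\gamma(t_{i-1})))$; so $\pi\circ\gamma$ is an $su$-path for $g$ and $g$ is accessible.

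For the stable inclusion (the unstable case being obtained by replacing $f,g$ with their inverses), fix $y\in W_f^{s}(x)$. Then $d_M(f^n x,f^n y)\to 0$ and, by uniform continuity of $\pi$,
\[
d_N(g^n\pi(x),g^n\pi(y))=d_N(\pi(f^n x),\pi(f^n y))\to 0.
\]
It therefore suffices to prove the following claim: if $p,q\in N$ satisfy $d_N(g^n p,g^n q)\to 0$, then $q\in W_g^{s}(p)$. Since $W_g^{s}$ is $g$-invariant we may assume $p,q$ are close enough to apply the local product structure at $p$. Dynamical coherence of $g$ produces the foliation $W_g^{cu}$, so for $q$ near $p$ there is a unique intersection point $r\in W_{g,\mathrm{loc}}^{cu}(p)\cap W_{g,\mathrm{loc}}^{s}(q)$. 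From $q\in W_g^{s}(r)$ we get $d_N(g^n r,g^n q)\to 0$, hence $d_N(g^n p,g^n r)\to 0$ as well. But inside the leaf $W_g^{cu}(p)$ the dynamics of $g$ expands $W_g^{u}$ and acts isometrically on $W_g^{c}$; so if $r\neq p$ the distance $d_N(g^n p,g^n r)$ either grows exponentially (if $r$ has a nontrivial unstable coordinate in the local chart) or stays bounded below by a positive constant (if $r\in W_g^{c}(p)\setminus\{p\}$, using the isometric center assumption). Either way $d_N(g^n p,g^n r)\not\to 0$, a contradiction. Hence $r=p$ and $q\in W_g^{s}(p)$.

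The main obstacle is the last step, where the hypothesis that the center of $g$ is isometric is essential: without it, two points sharing a common center leaf could have $g^n$-orbits whose distance tends to zero, invalidating the conclusion $r=p$. Dynamical coherence is likewise used in a critical way to decompose the relative position of $q$ with respect to $p$ through the $W_g^{cu}$ foliation.
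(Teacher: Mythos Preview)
Your proposal is correct and follows essentially the same strategy as the paper: prove a topological characterization of $W_g^{s}$ (as the set of points whose forward $g$-orbits converge) using dynamical coherence together with the isometric center hypothesis, deduce that the factor map sends $f$-stable and $f$-unstable leaves into the corresponding $g$-leaves, and then push $su$-paths through $\pi$. The only cosmetic difference is the order of the local product decomposition---the paper first peels off the $W_g^{u}$ coordinate via $W_g^{cs}$ and then the $W_g^{c}$ coordinate inside $W_g^{cs}$, whereas you intersect $W_{g,\mathrm{loc}}^{cu}(p)$ with $W_{g,\mathrm{loc}}^{s}(q)$ and argue inside $W_g^{cu}$---but the two routes are equivalent.
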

\begin{lemma}\label{L:IsoCenterEstimateAlongCenter}
Let $f:M\to M$ be dynamically coherent with isometric center. For $y\in W^{c}_{\delta}(x)$, we have
\begin{align*}
\frac{1}{C}\intd(x,y)\leq\intd(f^{n}x,f^{n}y)\leq C\intd(x,y)
\end{align*}
for $n\in\mathbb{Z}$.
\end{lemma}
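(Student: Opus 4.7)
The plan is to reduce the Riemannian (ambient) distance estimate to an intrinsic distance estimate along the center leaves, where the isometric hypothesis gives exact preservation. Let $\intd_{c}$ denote the intrinsic Riemannian distance along a leaf of $W^{c}$. Since the foliation $W^{c}$ has uniformly $C^{1}$ leaves varying continuously over the compact manifold $M$, there exist $\delta_{0}>0$ and $K\geq 1$ (independent of the base point) such that for every $x\in M$ and every $y\in W^{c}_{\delta_{0}}(x)$ one has
\begin{equation*}
\intd(x,y)\leq\intd_{c}(x,y)\leq K\cdot\intd(x,y).
\end{equation*}
I would establish this comparison first, using only that $W^{c}$ is a continuous foliation with uniformly $C^{1}$ leaves and a compact ambient manifold.

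The second step is to use the isometric center hypothesis: $\norm{D_{p}f^{n}(v)}=\norm{v}$ for every $v\in E^{c}(p)$ and every $n\in\mathbb{Z}$. This means $f$ restricts to a Riemannian isometry on each center leaf (with the metric induced from the ambient Riemannian metric), so for $y$ and $x$ on the same center leaf we have the exact equality $\intd_{c}(f^{n}x,f^{n}y)=\intd_{c}(x,y)$ for every $n\in\mathbb{Z}$. Combining this equality with the two-sided comparison above yields, for $y\in W^{c}_{\delta}(x)$ with $\delta\leq\delta_{0}$,
\begin{equation*}
\intd(f^{n}x,f^{n}y)\leq\intd_{c}(f^{n}x,f^{n}y)=\intd_{c}(x,y)\leq K\cdot\intd(x,y),
\end{equation*}
and symmetrically $\intd(f^{n}x,f^{n}y)\geq K^{-1}\intd(x,y)$, provided the comparison can be applied on both ends.

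The only subtlety, and what I expect to be the main (mild) obstacle, is arranging that the comparison between $\intd$ and $\intd_{c}$ remains applicable at the iterated points $f^{n}x,f^{n}y$: one needs $f^{n}y$ to lie in the center ball $W^{c}_{\delta_{0}}(f^{n}x)$. This is precisely where the isometric hypothesis saves us, because $\intd_{c}(f^{n}x,f^{n}y)=\intd_{c}(x,y)\leq K\delta$, so choosing $\delta\leq\delta_{0}/K$ guarantees $f^{n}y\in W^{c}_{\delta_{0}}(f^{n}x)$ for every $n\in\mathbb{Z}$ simultaneously. The constant $C=K$ is then uniform in $x,y,n$, which finishes the proof. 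The only nontrivial ingredient beyond the definitions is the local comparability of intrinsic and ambient distance along a uniformly $C^{1}$ foliation, which follows from a standard compactness-and-chart argument.
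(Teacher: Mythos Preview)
Your proposal is correct and follows exactly the same approach as the paper's proof: reduce to the intrinsic leaf metric $\intd_{c}$, where the isometric center hypothesis makes the estimate immediate, and then invoke local comparability of $\intd_{c}$ with the ambient distance $\intd$. The paper's proof is a two-line sketch of precisely this argument; you have simply supplied the details, including the (correct) observation that one must choose $\delta$ small enough so that the comparability remains applicable at all iterates $f^{n}x,f^{n}y$.
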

\begin{proof}
The lemma is immediate in the $\intd_{c}$ metric along the leaves of $W^{c}$. Since the $\intd_{c}$ is comparable to $\intd$ for $x,y\in M$ close, the lemma follows.
\end{proof}
\begin{lemma}\label{L:CharacterizationStableUnstable}
Let $f:M\to M$ be partially hyperbolic, dynamically coherent and isometric along the center with each $W^{cs},W^{cu}$ uniquely integrable. The foliations $W^{s}$ and $W^{u}$ have a dynamical and topological characterisation
\begin{align*}
& W^{s}(x) = \{y\in M\text{ : }\intd(f^{n}x,f^{n}y)\to 0,\text{ }n\to\infty\}, \\
& W^{u}(x) = \{y\in M\text{ : }\intd(f^{-n}x,f^{-n}y)\to 0,\text{ }n\to\infty\}.
\end{align*}
\end{lemma}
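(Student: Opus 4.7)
The plan is to prove the characterization of $W^s$; the statement for $W^u$ then follows by applying the same argument to $f^{-1}$, which is partially hyperbolic and dynamically coherent with isometric center, and whose center-stable foliation is $W^{cu}_f$, uniquely integrable by hypothesis. The easy inclusion is immediate: if $y\in W^s(x)$, then $x$ and $y$ are joined by an arc in $W^s(x)$ of finite length $\ell$, and uniform contraction of $f$ along $E^s$ gives $\intd(f^n x, f^n y)\leq C\lambda^n\ell\to 0$. For the converse, assuming $\intd(f^n x, f^n y)\to 0$ I would argue in two steps: first show $y\in W^{cs}(x)$, then refine this to $y\in W^s(x)$.

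For the first step, suppose $y\notin W^{cs}(x)$. By $f$-invariance and unique integrability of $W^{cs}$, $f^n y\notin W^{cs}(f^n x)$ for every $n$. For $n$ large the local product of $W^{cs}$ and $W^u$ at $f^n x$ produces a unique point $w_n = W^{cs}_\delta(f^n x)\cap W^u_\delta(f^n y)$, and $\eta_n := \intd_u(w_n, f^n y)$ satisfies $0<\eta_n\leq C\,\intd(f^n x, f^n y)\to 0$ by continuity of the local product. The key estimate is that $f(w_n)$ still lies in $W^{cs}_\delta(f^{n+1}x)$ — since $f$ is isometric on $E^c$ and contracting on $E^s$, it does not expand $W^{cs}$-distance — while $\intd_u(f(w_n), f^{n+1}y)\geq \nu\eta_n$ with $\nu>1$ the unstable expansion rate. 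Provided $\nu\eta_n\leq \delta$, local uniqueness of the $W^{cs}\cap W^u$ intersection forces $f(w_n) = w_{n+1}$ and $\eta_{n+1}\geq \nu\eta_n$; iterating produces exponential growth of $\eta_n$, contradicting $\eta_n\to 0$. Hence $\eta_n=0$ for large $n$, and $y\in W^{cs}(x)$.

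For the second step, within $W^{cs}$ one has the local product of $W^c$ and $W^s$, so for $n$ large one can set $p_n := W^c_\delta(f^n x)\cap W^s_\delta(f^n y)$ inside $W^{cs}$. The relation $f(p_n)=p_{n+1}$ together with isometricity of $f$ on $W^c$ implies that the center displacement $\Delta c := \intd_c(f^n x, p_n)$ is independent of $n$, while stable contraction yields $\intd_s(p_n, f^n y)\to 0$. Using comparability of $\intd$ with $\intd_c$ at small scales (as in the proof of Lemma~\ref{L:IsoCenterEstimateAlongCenter}) and the triangle inequality, $\intd(f^n x, f^n y)\geq c\Delta c - o(1)$, which forces $\Delta c = 0$ and hence $y\in W^s(x)$.

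The main obstacle is the iteration in the first step: one must keep $w_n$ inside the local product region so that $f(w_n)$ can be identified with $w_{n+1}$ by uniqueness of the $W^{cs}\cap W^u$ intersection. The unique integrability of $W^{cs}$ is essential precisely here, since it is what guarantees that this intersection is a single continuously varying point at the scale $\delta$; once that is in place, the unstable expansion $\nu>1$ converts any nonzero unstable displacement into the exponential growth that contradicts the assumed convergence.
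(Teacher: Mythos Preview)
Your argument is correct and follows the same two-step strategy as the paper: first show $y\in W^{cs}(x)$ using that any nonzero unstable displacement must expand while the $cs$-size stays bounded, then refine to $y\in W^s(x)$ using center isometry and stable contraction. The only cosmetic difference is that in Step~1 the paper takes a single projection $x'\in W^u_\delta(x)$ with $y\in W^{cs}_\delta(x')$ and appeals once to $f^n(W^{cs}_\delta(x'))\subset W^{cs}_\varepsilon(f^nx')$, whereas you recompute the local product at each time and iterate $f(w_n)=w_{n+1}$; the underlying mechanism is identical.
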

\begin{proof}
We prove the lemma for $W^{s}(x)$ and the proof for $W^{u}(x)$ follows by reversing time. It is immediate that
\begin{align*}
W^{s}(x)\subset\{y\in M\text{ : }\lim_{n\to\infty}\intd(f^{n}x,f^{n}y) = 0\} =: \Tilde{W}_{x}
\end{align*}
so it suffices to show $\Tilde{W}_{x}\subset W^{s}(x)$. Let $y\in \Tilde{W}_{x}$. If $f^{n}y\in W^{s}(f^{n}x)$ for some $n$ then $y\in W^{s}(x)$ since $W^{s}(x)$ is $f-$invariant. So we may assume without loss of generality that $\intd(f^{n}x,f^{n}y) < \delta$ for all $n\geq0$. That is, we assume that $y$ is such that
\begin{align*}
& \lim_{n\to\infty}\intd(f^{n}x,f^{n}y) = 0, \\
& \intd(f^{n}x,f^{n}y) < \delta,\quad n\geq0.
\end{align*}
For any $\varepsilon > 0$ there is $\varepsilon > \delta > 0$ such that $f^{n}(W^{cs}_{\delta}(x))\subset W^{cs}_{\varepsilon}(f^{n}x)$ for $n\geq0$. Let $x'\in W^{u}(x)$, $x\neq x'$, be close to $x$ such that $y\in W^{cs}(x')$ is at most a distance $\delta$ from $x'$. For $\varepsilon>0$ sufficiently small we find $n > 0$ such that $\intd(f^{n}x',f^{n}x) > 2\varepsilon$, but then
\begin{align*}
\intd(f^{n}x,f^{n}y)\geq\intd(f^{n}x,f^{n}x') - \intd(f^{n}x',f^{n}y) > 2\varepsilon - \varepsilon = \varepsilon
\end{align*}
which contradicts $\intd(f^{n}x,f^{n}y) < \delta$. So $x = x'$ or $y\in W^{cs}(x)$. Now, let $x''\in W^{c}(x)$, $x\neq x''$, be close to $x$ such that $y\in W^{s}(x'')$. We have
\begin{align*}
\intd(f^{n}x,f^{n}y)\geq\intd(f^{n}x,f^{n}x'') - \intd(f^{n}x'',f^{n}y).
\end{align*}
By Lemma \ref{L:IsoCenterEstimateAlongCenter} $\intd(f^{n}x,f^{n}x'')$ is bounded away from $0$ and 
\begin{align*}
\intd(f^{n}x'',f^{n}y)\to 0
\end{align*}
which implies that $\intd(f^{n}x,f^{n}y)$ is bounded away from zero. This implies that $\intd(f^{n}x,f^{n}y)\not\to0$ which is a contradiction, so $x = x''$. It follows that $y\in W^{s}(x)$.
\end{proof}
We can now prove Claim \ref{Claim:AccessibilityFactor}.
\begin{proof}[Proof of Claim \ref{Claim:AccessibilityFactor}]
Let $h:M\to N$ be a surjective continuous map such that $hf = gh$. Since $g$ is dynamically coherent with isometric center, it follows from Lemma \ref{L:CharacterizationStableUnstable} that $h$ maps the stable foliation of $f$ to the stable foliation of $g$ and unstable foliation of $f$ to the unstable foliation of $g$. In particular, $h$ maps $su-$paths of $f$ to $su-$paths of $g$. Let $p,q\in N$ and $x\in h^{-1}p$ and $y\in h^{-1}q$. Since $f$ is accessible we find an $su-$path for $f$ from $x$ to $y$, composing this $su-$path with $h$ gives $su-$path for $g$ from $p$ to $q$.
\end{proof}

\subsubsection{Proof of Theorem \ref{Thm:HomotopicToHyperbolic}}

We can now prove Theorem \ref{Thm:HomotopicToHyperbolic}.
\begin{lemma}\label{L:ConjugacyWhenHaveHyperbolic}
Let $f$ be a $C^{5}-$small perturbation of $L$ and $g\in Z^{\infty}(f)$. If $g$ is homotopic to a hyperbolic automorphism, then $Z^{\infty}(f)$ contains an Anosov diffeomorphism.
\end{lemma}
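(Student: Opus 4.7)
The plan is to invoke the machinery developed in the proof of Theorem~\ref{Thm:FullCentralizer}, with the center volume expansion on which that proof rests now supplied not by a full-rank hypothesis but by the hyperbolicity of $M|_{E_{L}^{c}}$. The desired Anosov element will be a sufficiently high iterate of $g^{\pm 1}$.

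First I would apply the Franks--Manning construction \cite[Theorem~2.6.1]{KatokIntroDynSyst} to the hyperbolic homotopy class $M$ of $g$ to obtain a continuous surjection $H\colon\mathbb{T}^{d}\to\mathbb{T}^{d}$, homotopic to the identity, with $H\circ g=M\circ H$; Lemma~\ref{L:ExtendingFranksManningSemiConj} then gives $H\circ f=L\circ H$. The next ingredient is algebraic: because $L|_{E_{L}^{c}}$ has distinct non-real complex eigenvalues, every real matrix commuting with $L|_{E_{L}^{c}}$ is $\mathbb{C}$-linear for the associated complex structure. Applied to $M|_{E_{L}^{c}}$, this forces its eigenvalues to be a complex conjugate pair $\mu,\bar\mu$, and hyperbolicity of $M$ gives $|\mu|\neq 1$.

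After replacing $g$ by $g^{-1}$ if necessary we may assume $|\mu|>1$, so $|\det(M|_{E_{L}^{c}})|=|\mu|^{2}>1$. Lemma~\ref{L:VolumeExpansionEst}, applied with $\varepsilon$ sufficiently small (by shrinking the $C^{1}$-neighborhood of $L$), then produces some integer $N$ and $\lambda>1$ with $|\det(D_{x}g^{N}|_{E_{f}^{c}})|\geq\lambda$ uniformly in $x$; composing with a further power, Lemma~\ref{L:SymmetricGroupMap} also lets us arrange that $g$ fixes every fixed point of $f$. In place of the full-rank input used in Lemma~\ref{L:NonlinearCenterDomination}, we have arranged exactly the standing hypotheses required by Lemmas~\ref{L:CenterExpansion} and~\ref{L:ExistenceStationaryNormalForm}.

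At this point I would replay Lemmas~\ref{L:CenterExpansion}, \ref{L:ExistenceStationaryNormalForm}, \ref{L:FrankManningIsInjective} and the two unnamed lemmas that follow \ref{L:FrankManningIsInjective} essentially verbatim: they use only the center volume expansion, the semi-conjugacy $H$, the complex-eigenvalue structure of $D_{0}f|_{E_{f}^{c}}$ (which persists under $C^{1}$-perturbation), and the fact that $L|_{E_{L}^{c}}$ is an irrational rotation. The output is that $H|_{W_{F}^{c}(0)}\colon W_{F}^{c}(0)\to E_{L}^{c}$ is a $C^{1}$-diffeomorphism; from the intertwining $H\circ g=M\circ H$ one reads off that $D_{x}g|_{E_{f}^{c}}$ is pointwise conjugate to the hyperbolic $M|_{E_{L}^{c}}$ along center leaves, so $g$ uniformly expands $E_{f}^{c}$. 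Together with Lemma~\ref{L:PropLyapFunc}, which already supplies uniform exponential rates for $g$ on every $E_{f}^{\chi}$ with $\chi(L)\neq 0$ matching the signs of $\chi(M)$, this produces a hyperbolic splitting of $T\mathbb{T}^{d}$, making $g$ itself Anosov. The main obstacle I expect is the algebraic step ensuring $|\det(M|_{E_{L}^{c}})|\neq 1$; once that is in hand, the only remaining work is checking that every use of the full-rank hypothesis in Section~\ref{Sec:FullCentralizer} can indeed be replaced by the single uniform estimate $|\det(D_{x}g^{N}|_{E_{f}^{c}})|\geq\lambda>1$ that we have just produced.
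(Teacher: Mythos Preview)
Your approach is genuinely different from the paper's, and it contains a quantitative gap that the paper's route avoids.

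The paper does not replay the Section~\ref{Sec:FullCentralizer} machinery at all. Instead it argues: the Franks--Manning semiconjugacy $H$ (extended by Lemma~\ref{L:ExtendingFranksManningSemiConj}) makes $L$ a topological factor of $f$; since $L$ has isometric center and is not accessible, Claim~\ref{Claim:AccessibilityFactor} forces $f$ to be non-accessible; then the $C^{5}$ dichotomy, Theorem~\ref{Thm:RHaccessibilityDichom}, produces a bi-H\"older conjugacy $\tilde H$ which is $C^{1}$ along $W_{f}^{c}$. A uniqueness argument shows $H=\tilde H+p_{0}$, so $H$ itself is $C^{1}$ along center, and differentiating $H\circ g=M\circ H$ along $E_{f}^{c}$ exhibits $D_{x}g|_{E_{f}^{c}}$ as conjugate to $M|_{E_{L}^{c}}$. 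This is exactly where the $C^{5}$ hypothesis is spent.

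Your plan instead feeds a single bound $|\det(D_{x}g^{N}|_{E_{f}^{c}})|\geq\lambda>1$ into Lemmas~\ref{L:CenterExpansion}--\ref{L:FrankManningIsInjective}. The problem is that Lemma~\ref{L:FrankManningIsInjective} does \emph{not} run on $\lambda>1$ alone: the annulus argument showing that $\rho$ is injective compares $\lambda^{n}\,\mathrm{Area}(S)$ against $K\eta^{n}$ with $\eta=\sqrt{|\det(M|_{E_{L}^{c}})|}$, so one needs $\lambda>\eta$, equivalently $\lambda\geq|\det(M|_{E_{L}^{c}})|^{1-\delta}$ for some $\delta<1/2$. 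In the full-rank setting this is exactly what Lemma~\ref{L:NonlinearCenterDomination} supplies, because Lemma~\ref{L:AlgCenterDomination} lets one \emph{choose} $M$ with an arbitrarily large $Q$ in inequality~\eqref{Eq:CenterDomination}. Here $M$ is handed to you, and Lemma~\ref{L:VolumeExpansionEst} only gives
\[
\lambda\;\geq\;|\det(M|_{E_{L}^{c}})|^{1-\varepsilon}\cdot\Big[\prod_{\substack{\chi(M)>0\\ \chi(L)\neq0}}|\det(M|_{E_{L}^{\chi}})|\Big]^{-2\varepsilon},
\]
which exceeds $\eta$ only when $\varepsilon$ is small \emph{relative to this particular $M$}. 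Since the $C^{5}$-neighbourhood (and hence $\varepsilon$) must be fixed before $g$ and $M$ appear, and since by Lemma~\ref{L:NecessityOfLargeAction} there is no uniform lower bound over hyperbolic $M\in Z_{\mathrm{Aut}}(L)$, you cannot ``shrink the neighbourhood'' after the fact. Replacing $g$ by $f^{a}g^{b}$ does not help either: $\det(L^{a}M^{b}|_{E_{L}^{c}})=\det(M|_{E_{L}^{c}})^{b}$ is unchanged up to powers, while the competing product can be made large. So the verbatim replay fails at precisely the step that distinguishes the $C^{1}$ argument from the $C^{5}$ one.
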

\begin{proof}
Since $g$ is homotopic to a hyperbolic $M$ we have a Franks-Manning semiconjugacy \cite{FranksManningConjugacy} from $g$ to $M$. We denote this conjugacy by $H:\mathbb{T}^{d}\to\mathbb{T}^{d}$. From Lemma \ref{L:ExtendingFranksManningSemiConj} we also have $Hf = LH$. By Claim \ref{Claim:AccessibilityFactor} and the fact that $L$ is not accessible, it follows that $f$ is not accessible. From Theorem \ref{Thm:RHaccessibilityDichom} it follows that $f$ is conjugated to $L$ by a bi-Hölder conjugacy that is a $C^{1}-$diffeomorphim when restricted to $W_{f}^{c}$. We denote this conjugacy by $\Tilde{H}$. We claim that $\Tilde{H}(x) = H(x) + p_{0}$ with $p_{0}\in\mathbb{R}^{d}$. To see this, note that $LH\Tilde{H}^{-1} = Hf\Tilde{H}^{-1} = H\Tilde{H}^{-1}L$. The claim now follows from \cite{ConjRigidity} since $H$ and $\Tilde{H}$ are both homotopic to identity (in \cite{ConjRigidity} the authors consider homeomorphisms commuting with $L$, but the proof also works in the case of non-invertible maps).

The homeomorphism $H$ maps $W_{f}^{\sigma}$ to $W_{L}^{\sigma}$. Since $M$ is hyperbolic and commutes with $L$, it follows that $M$ preserves $W_{L}^{c}$. Since $W_{L}^{c}$ correspond to a complex eigenvalue, it follows that $M$ either contracts $W_{L}^{c}$ or expands $W_{L}^{c}$. Since $H$ is a $C^{1}-$diffeomorphism along $W_{f}^{c}$ we can differentiate $H$ along $W_{f}^{c}$ and get $D_{x}g|_{E_{f}^{c}} = D_{Mh(x)}h^{-1}\circ M\circ D_{x}h$. It follows that $D_{x}g$ either expands or contracts $E_{f}^{c}$ depending on whether $M$ expands or contracts $E_{L}^{c}$. So, $g$ is Anosov by Lemma \ref{L:PropLyapFunc}.
\end{proof}
\begin{proof}[Proof of Theorem \ref{Thm:HomotopicToHyperbolic}]
By Theorems \ref{Thm:AllSubgroupsHigerRank} and \ref{Thm:RigidityOfHigherRankAnosov}, it suffices to show that $Z^{\infty}(f)$ contains an Anosov element. By Lemma \ref{L:ConjugacyWhenHaveHyperbolic}, it suffices to show that $Z^{\infty}(f)$ contains a diffeomorphism homotopic to a hyperbolic automorphism. The theorem now follows from Theorem \ref{Thm:InjectivityOfHom} and Lemma \ref{L:LargestSubgroupWithoutHyperbolic}.
\end{proof}

\section{Symplectic perturbations}
\label{Sec:SymplecticPerturbations}

In this section we prove Theorems \ref{Thm:LeafConjugacy} and \ref{Thm:SymplecticRigidity}. Fix $L\in\text{GL}(d,\mathbb{Z})$, $d\geq6$, with property $(P)$ and a $C^{1}-$small perturbation $f$ of $L$. We begin by showing that if the perturbation $f$ is symplectic, then $f$ is $C^{1+\alpha}-$leaf conjugated to $L$.

\subsection{Regularity of coordinates along invariant foliations}

We denote the non-zero Lyapunov exponents of $L$ by $\chi_{1},...,\chi_{N},-\chi_{1},...,-\chi_{N}$, $N = (d-2)/2$. We also recall that if $\Gamma\leq Z_{\text{Aut}}(L)$ is an abelian subgroup we denote by $\chi_{j}:\Gamma\to\mathbb{R}$ the Lyapunov functional of $\Gamma$ with $\chi_{j}(L) = \chi_{j}$. We claim that the maps $\Phi_{\chi}:\mathbb{R}^{d}\to E_{L}^{\chi}$ from Claim \ref{Claim:ExistenceFMcoordinates} are regular if the centralizer of $f$ is large.
\begin{claim}\label{Claim:RegularityCoordinates}
Let $f\in{\rm Diff}_{\omega}^{\infty}(\mathbb{T}^{d})$ be a $C^{5}-$small symplectic perturbation of $L$. If ${\rm rank}(Z^{\infty}(f)) > 1$ then each $\Phi_{\chi}:\mathbb{R}^{d}\to E_{L}^{\chi}$ is $C^{1+\alpha}$. Moreover, if $L$ has $r-$spread spectrum, $r\geq 2$, then $\Phi_{\chi}:\mathbb{R}^{d}\to E_{L}^{\chi}$ is $C^{r-\varepsilon}$ for every $\varepsilon > 0$.
\end{claim}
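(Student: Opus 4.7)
The plan is to upgrade the leafwise Hölder estimate from Claim \ref{Claim:ExistenceFMcoordinates} to joint $C^{1+\alpha}$ (resp.\ $C^{r-\varepsilon}$) regularity in three stages: leafwise smoothness along each one-dimensional Lyapunov foliation, transverse Hölder regularity of the derivatives obtained from the higher-rank commuting action combined with the symplectic pairing, and finally assembly via Journé's lemma.

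For the leafwise step, Lemma \ref{L:SmoothnesSpreadSpec} gives that each $W_f^\chi$ with $\chi(L)\neq 0$ is one-dimensional with uniformly $C^r$ leaves, where $r$ can be made arbitrarily large when $f$ is sufficiently $C^5$-close to $L$ and is uniformly $C^r$ in the $r$-spread spectrum case. The restriction of $f$ to such a leaf is a one-dimensional $C^r$ uniformly expanding (or contracting) map, and the functional equation $\Phi_{\chi,Fx}(Fy) = L\Phi_{\chi,x}(y)$ of Claim \ref{Claim:ExistenceFMcoordinates} exhibits $\Phi_{\chi,x}$ as the unique Sternberg-type linearizing coordinate on the leaf. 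Since one-dimensional $C^r$ contractions with a single non-zero Lyapunov exponent are $C^r$-linearizable and this linearization is unique up to scaling, $\Phi_{\chi,x}$ must be uniformly $C^r$ along each leaf.

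The hard part is establishing transverse regularity of the derivatives of $\Phi_\chi$. Here the hypothesis $\text{rank}(Z^\infty(f)) > 1$ together with Theorem \ref{Thm:InjectivityOfHom} supplies a higher-rank commuting action, and each commuting $g \in Z^\infty(f)$ satisfies the intertwining $\Phi_{\chi,gx}(gy) = M\Phi_{\chi,x}(y)$ with $M = g_*$. Following the scheme of Katok--Lewis \cite{KatokLewis}, I would use this intertwining together with Kalinin's real normal form theory \cite{RealNormalFormsExist} applied along each one-dimensional Lyapunov foliation to construct normal form coordinates that are $C^r$ along leaves and whose derivatives vary Hölder transversally with exponent close to $1$ (resp.\ close to $r$ in the spread spectrum case). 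The symplectic assumption enters essentially: the symplectic form pairs $E_L^{\chi_j}$ with $E_L^{-\chi_j}$, so symplectic commuting elements act with reciprocal weights on paired foliations, producing genuinely distinct expansion ratios transverse to each $W_f^\chi$ and removing the resonances that would otherwise obstruct joint regularity of the normal form across leaves.

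Once $\Phi_\chi$ is known to be smooth along each $W_f^{\chi'}$ with $\chi'(L) \neq 0$ and along $W_f^c$ --- distributions whose sum is $T\mathbb{R}^d$ --- iterated application of Journé's lemma (as used throughout \cite{CenterFoliationRig}) yields joint $C^{1+\alpha}$ (resp.\ $C^{r-\varepsilon}$) regularity of $\Phi_\chi$ on $\mathbb{R}^d$. The principal obstacle lies in the transverse step: the twisted cohomological equation $v_\chi(x) = L\varphi_\chi(x) - \varphi_\chi(Fx)$ that defines $\Phi_\chi$ gives only Hölder regularity when $f$ is merely partially hyperbolic, so extracting transverse Hölder continuity of derivatives genuinely requires the combination of the higher-rank commuting structure and the symplectic pairing, implemented carefully through the Kalinin--Katok--Lewis bootstrap.
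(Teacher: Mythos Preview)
Your outline captures the right ingredients (leafwise smoothness, Katok--Lewis, Journ\'e) but misidentifies both the role of the symplectic hypothesis and the mechanism of the transverse step, and this leaves a genuine gap.

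The paper does \emph{not} use the symplectic form to pair $E_L^{\chi_j}$ with $E_L^{-\chi_j}$ or to remove resonances. Its role is to supply dense $Z^{\infty}(f)$-periodic points: by Avila--Viana \cite{AvilaViana} a symplectic $C^5$-perturbation is either non-uniformly hyperbolic or already topologically conjugate to $L$; in the first case Katok's closing lemma gives dense hyperbolic $f$-periodic points, and each hyperbolic $f$-periodic orbit is finite and $Z^{\infty}(f)$-invariant, hence $Z^{\infty}(f)$-periodic (Lemma~\ref{L:DenseHigherRankPeriodicPoints}). Without this density the Katok--Lewis argument below does not start.

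More seriously, the transverse step is not ``H\"older regularity of derivatives'' but the much stronger assertion that $\Phi_\chi$ is \emph{constant} along every $W_F^{\lambda}$ with $\lambda\neq\chi$ and along $W_F^{cs}$ (Lemma~\ref{L:RegularityOfFMalongLeaves}). For $\lambda$ not positively proportional to $\chi$ this follows directly by choosing $g\in Z^\infty(f)$ with $\chi(A_g)>0$, $\lambda(A_g)<0$ and iterating the functional equation. The delicate case is $\lambda$ positively proportional to $\chi$ but $\lambda\neq\chi$: here one composes $\Phi_{\chi,p}^{\lambda}$ with Kalinin's non-stationary linearization $\Psi_{\lambda,p}$, uses the higher-rank functional equation at a periodic point to force the composition to be $t\mapsto c_p|t|^{\eta_p}$, and then exploits the affine structure of $\Psi_{\lambda,p}$ along the leaf to derive $\eta_p=1$ whenever $c_p\neq 0$. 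But at a periodic point $\eta_p=\chi/\lambda_{p}\neq 1$ by Lemma~\ref{L:PropLyapFunc}, so $c_p=0$ on the dense set of periodic points, hence everywhere. The same argument with $\lambda=\chi$ gives $\eta_p=1$ and $c_p\neq 0$, which is exactly the statement that $\Phi_{\chi,p}=\Psi_{\chi,p}$ up to a linear reparametrization---this is how leafwise smoothness is actually obtained, not via a Sternberg-type uniqueness for a single map (the situation is non-stationary). Your sketch does not isolate the positively-proportional case or the density of periodic points, and without these the argument does not close.
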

We will use the following lemma, which is an immediate consequence of \cite{KatokPeriodicPoints}.
\begin{lemma}\label{L:DenseSetOfHyperbolic}
Let $X$ be a closed, smooth manifold, $f\in{\rm Diff}^{1+\alpha}(X)$ and $\mu$ a $f-$invariant ergodic volume. If $\mu$ is a hyperbolic measure for $f$ then $f$ has a dense set of hyperbolic periodic points.
\end{lemma}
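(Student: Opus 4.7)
The plan is to reduce the statement directly to the Katok closing lemma for hyperbolic ergodic measures, as developed in \cite{KatokPeriodicPoints}. The relevant consequence of that paper is the following: if $g$ is a $C^{1+\alpha}$ diffeomorphism of a closed manifold and $\nu$ is an ergodic $g$-invariant Borel probability measure which is hyperbolic (that is, all its Lyapunov exponents along the Oseledec splitting are non-zero), then the hyperbolic periodic points of $g$ are dense in $\mathrm{supp}(\nu)$. In \cite{KatokPeriodicPoints} this is obtained by fixing a Pesin regular set $\Lambda_{\ell}$ of positive $\nu$-measure, applying Poincaré recurrence to produce orbit segments that return arbitrarily close to themselves while remaining in $\Lambda_{\ell}$ (so the uniform hyperbolicity estimates of the Lyapunov metric apply along the entire segment), and then running a graph-transform / shadowing-and-closing argument in the Lyapunov metric to straighten the recurrent segment into a genuine hyperbolic periodic orbit nearby.

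Given this input, my plan for the lemma is essentially a one-line specialization. The only step specific to our setting is the observation that $\mu$, being an $f$-invariant volume measure on $X$, lies in the Lebesgue measure class and therefore assigns positive mass to every non-empty open set; consequently $\mathrm{supp}(\mu) = X$. Applying the Katok closing lemma to the pair $(f,\mu)$ we obtain a set of hyperbolic periodic points of $f$ that is dense in $\mathrm{supp}(\mu) = X$, which is exactly the conclusion of the lemma.

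The main ``obstacle'' is contained entirely inside the Katok closing lemma, which we invoke as a black box. Beyond that black box, the only verification is the trivial remark that a volume measure has full support and that ``dense in the support'' is then the same as ``dense in $X$,'' so there is no substantive additional difficulty. (One should also note that the hyperbolicity assumption on $\mu$ — that all Oseledec exponents are non-zero almost everywhere — is precisely the input that makes the Pesin blocks $\Lambda_{\ell}$ non-trivial and makes the Lyapunov metric at recurrent points genuinely hyperbolic, which is what the closing step needs.)
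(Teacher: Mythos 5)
Your proposal is correct and follows essentially the same route as the paper: both reduce to the Katok closing lemma for hyperbolic ergodic measures and then use that a volume measure assigns positive mass to every open set. The only cosmetic difference is that you cite the density-in-$\mathrm{supp}(\mu)$ consequence as a black box and then note $\mathrm{supp}(\mu)=X$, whereas the paper re-derives that density by hand (Pesin blocks, Lebesgue density points, Poincar\'e recurrence, closing lemma) and passes to density in $X$ via fullness of $\bigcup_{\ell}\Lambda_{\chi,\ell}^{k}$ in volume; the underlying mechanism is identical.
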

\begin{proof}
Let $\Lambda_{\chi,\ell}^{k}\subset X$ be Pesin blocks, see \cite[Section 2]{KatokPeriodicPoints}. By \cite[Main Lemma Section 3]{KatokPeriodicPoints} there is $\psi(k,\chi,\ell,\delta)$ such that if $x\in\Lambda_{\chi,\ell}^{k}$, $f^{n}x\in\Lambda_{\chi,\ell}^{k}$ and $\intd(x,f^{n}x) < \psi$ then there is a $f-$hyperbolic periodic point $p\in X$ (of period $n$) satisfying $\intd(x,p) < \delta$. By choosing $k$ as the dimension of the stable direction for $f$ with respect to $\mu$ and $\ell$ sufficiently large we have $\mu(\Lambda_{\chi,\ell}^{k}) > 0$. After dropping to the set of Lebesgue density points of $\Lambda_{\chi,\ell}^{k}$, we may assume that $\mu(\Lambda_{\chi,\ell}^{k}\cap B_{r}(x)) > 0$ for all $x\in\Lambda_{\chi,\ell}^{k}$ and $r > 0$. For $x\in\Lambda_{\chi,\ell}^{k}$, let $\varepsilon = \min(\delta,\psi)$ and $D_{x} = \Lambda_{\chi,\ell}^{k}\cap B_{\varepsilon}(x)$. Since $\mu(D_{x}) > 0$, the Poincaré recurrence theorem implies that there is some $y\in D_{x}$ and $n > 0$ such that $f^{n}y\in D_{x}$. By \cite[Main Lemma Section 3]{KatokPeriodicPoints} there is some $f-$hyperbolic periodic point $p\in X$ such that $\intd(p,y) < \delta$. It follows that $\intd(x,p)\leq\intd(x,y)+\intd(y,p) < 2\delta$. If $\delta = 1/n$ and $p_{n}$ is the hyperbolic periodic point constructed above, then $\intd(x,p_{n})\to0$. That is, $x$ lie in the closure of the $f-$hyperbolic periodic points. So $\Lambda_{\chi,\ell}^{k}$ lie in the closure of the $f-$hyperbolic periodic points. Since the union $\cup_{\ell > 0}\Lambda_{\chi,\ell}^{k}$ has full volume, it follows that the closure of the hyperbolic periodic points is dense and the lemma follows.
\end{proof}
\begin{lemma}\label{L:DenseHigherRankPeriodicPoints}
With $f$ as in Claim \ref{Claim:RegularityCoordinates}, the action of $Z^{\infty}(f)$ on $\mathbb{T}^{d}$ have a dense set of periodic points.
\end{lemma}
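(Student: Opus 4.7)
The plan is to reduce the lemma to density of $f$-periodic points on $\mathbb{T}^{d}$, and then to produce such points using the Rodriguez Hertz accessibility dichotomy together with Katok's closing lemma (Lemma \ref{L:DenseSetOfHyperbolic}).

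First, the reduction. Suppose $p\in\text{Fix}(f^{n})$ for some $n\geq 1$. Since every $\gamma\in\Gamma:=Z^{\infty}(f)$ commutes with $f$, the group $\Gamma$ permutes the finite set $\text{Fix}(f^{n})$; the induced homomorphism $\Gamma\to\text{Sym}(\text{Fix}(f^{n}))$ has a finite-index kernel that fixes $p$, so $\text{Stab}_{\Gamma}(p)$ has finite index in $\Gamma$ and $p$ is $\Gamma$-periodic in the sense of Section \ref{Sec:DefBackground}. The same argument applies to any $g\in\Gamma$ in place of $f$. Hence it is enough to produce a dense set of $g$-periodic points for some $g\in\Gamma$.

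Next, apply Theorem \ref{Thm:RHaccessibilityDichom}. If $f$ is bi-Hölder conjugate to $L$ by $h$, then $h^{-1}(\mathbb{Q}^{d}/\mathbb{Z}^{d}) = \bigcup_{n}\text{Fix}(f^{n})$ is dense and we are done. Otherwise $f$ is accessible; then the symplectic volume $\text{vol}$ is $f$-ergodic by the main theorem of \cite{StabelErgodicity}, and is preserved by every $g\in\Gamma$ by the argument in the proof of Lemma \ref{L:VolumeExpansionEst}. The strategy is now to find $g\in\Gamma$ for which $\text{vol}$ is additionally $g$-hyperbolic and apply Lemma \ref{L:DenseSetOfHyperbolic} to this $g$. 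By Theorem \ref{Thm:InjectivityOfHom}, the image $\Gamma_{*}$ of $\Gamma$ in $Z_{\text{Aut}}(L)$ has rank at least two and contains $L$; Lemma \ref{L:PropLyapFunc} then controls the non-central Lyapunov exponents of $g$ on $E_{f}^{\chi}$ in terms of the algebraic functionals $\chi_{j}(g_{*})$, and since $\chi_{j}(L)\neq 0$ for every non-central $j$, selecting $g$ outside the finitely many proper sublattices $\{\chi_{j}=0\}\cap\Gamma_{*}$ yields $g$ with all non-central Lyapunov exponents non-zero. The central Lyapunov exponents of $g$ are controlled by Lemma \ref{L:VolumeExpansionEst} together with the symplectic pairing on the $2$-dimensional symplectic center $E_{f}^{c}$: they are non-zero provided $\chi^{c}(g_{*}) := \log\bigl|\det(g_{*}|_{E_{L}^{c}})\bigr|\neq 0$, since Lemma \ref{L:VolumeExpansionEst} then forces $\bigl|\det(D_{x}g^{n}|_{E_{f}^{c}})\bigr|$ to grow exponentially and the symplectic symmetry forces both central exponents to be non-zero.

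The main obstacle will be guaranteeing the existence of $g\in\Gamma$ with $\chi^{c}(g_{*})\neq 0$. The kernel of $\chi^{c}$ in $Z_{\text{Aut}}(L)$ has corank one and hence rank $d-3$, so the rank $\geq 2$ lattice $\Gamma_{*}$ is not automatically forced to contain an element that is hyperbolic on the center direction. I expect to resolve this either by exploiting the symplectic pairing together with Lemma \ref{L:VolumeExpansionEst} to extract positive central expansion from any $g\in\Gamma$ with sufficiently nontrivial linearization, or by arguing that the degenerate configuration in which $\Gamma_{*}$ is entirely isometric on $E_{L}^{c}$ is incompatible with $f$ being accessible, thereby driving $f$ back into the non-accessible branch of the dichotomy where Franks-Manning already gives a dense set of periodic points.
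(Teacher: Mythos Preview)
Your argument has two real gaps, one minor and one fatal for the approach as written.

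\textbf{The reduction step.} You claim that $\mathrm{Fix}(f^{n})$ is finite, so that $\Gamma$ permutes it and every $f$-periodic point is $\Gamma$-periodic. Finiteness of $\mathrm{Fix}(f)$ follows from transversality when $f$ is $C^{1}$-close to $L$, but for large $n$ the map $f^{n}$ need not be $C^{1}$-close to $L^{n}$, and a partially hyperbolic diffeomorphism with $2$-dimensional center can in principle have non-isolated periodic orbits. The paper's proof avoids this by restricting to \emph{hyperbolic} periodic points: it defines
\[
\Lambda_{h,\delta}^{n}=\{p:\ f^{n}p=p,\ \mathrm{dist}(\sigma(D_{p}f^{n}|_{E_{f}^{c}}),S^{1})\geq\delta\},
\]
which is discrete by Hartman--Grobman and closed by continuity of the center spectrum, hence finite; then $\Gamma$-invariance of $\Lambda_{h,\delta}^{n}$ gives $\Gamma$-periodicity.

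\textbf{The main obstacle you flag is real for your route, and you do not resolve it.} The kernel of $\chi^{c}$ in $Z_{\mathrm{Aut}}(L)$ is exactly the non-hyperbolic subgroup, which by Lemma~\ref{L:LargestSubgroupWithoutHyperbolic} has rank $(d-2)/2$; since $\mathrm{rank}(\Gamma_{*})$ is only assumed $\geq 2$, $\Gamma_{*}$ can lie entirely inside $\ker\chi^{c}$ when $d\geq 6$, and neither of your suggested escape routes is made to work. The paper bypasses this completely: instead of the Rodriguez Hertz accessibility dichotomy, it invokes the Avila--Viana dichotomy \cite{AvilaViana}, which for these symplectic perturbations says that either $f$ is topologically conjugate to $L$ or $f$ itself is non-uniformly hyperbolic with respect to volume. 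In the second branch Lemma~\ref{L:DenseSetOfHyperbolic} applies directly to $f$ (volume is $f$-ergodic by \cite{StabelErgodicity}), yielding dense hyperbolic $f$-periodic points; combined with the finiteness of $\Lambda_{h,\delta}^{n}$ this finishes the proof. No search for a special $g\in\Gamma$ is needed, and the ergodicity issue you would additionally face for such a $g$ never arises.
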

\begin{proof}
By \cite{AvilaViana} $f$ is either non-uniformly hyperbolic with respect to volume or $f$ is topologically conjugated to $L$. In the second case, the lemma is immediate. In the first case, we define
\begin{align*}
\Lambda_{h,\delta}^{n} = \{p\in\mathbb{T}^{d}\text{ : }f^{n}p = p,\text{ }\intd(\sigma(D_{p}f^{n}|_{E_{f}^{c}}),S^{1})\geq\delta\}.
\end{align*}
That is, $\Lambda_{h,\delta}^{n}$ consists of periodic points of period $n$ where the hyperbolicity has a uniform lower bound. It is clear that if $g\in Z^{\infty}(f)$ then $g(\Lambda_{h,\delta}^{n}) = \Lambda_{h,\delta}^{n}$. Moreover, the set $\Lambda_{h,\delta}^{n}$ is discrete and closed. It is closed since after identifying $E_{f}^{c}$ with a trivial vector bundle the spectrum of $D_{x}f^{n}|_{E_{f}^{c}}$ vary continuously in $x\in\mathbb{T}^{d}$. The fact that $\Lambda_{h,\delta}^{n}$ is discrete follows from the Hartman-Grobman theorem \cite{KatokIntroDynSyst}. Since $\Lambda_{h,\delta}^{n}$ is closed and discrete it follows that $\Lambda_{h,\delta}^{n}$ is finite, it follows that every $p\in\Lambda_{h,\delta}^{n}$ is $g-$periodic. So, $\Lambda_{h,\delta}^{n}$ consists of periodic points for the action of $Z^{\infty}(f)$. By Lemma \ref{L:DenseSetOfHyperbolic} $f$ has a dense set of hyperbolic periodic points, so
\begin{align*}
\Lambda = \bigcup_{n\geq 1,\delta > 0}\Lambda_{h,\delta}^{n}
\end{align*}
is dense.
\end{proof}
Let $\chi,\lambda\in{\rm Lyap}(L)$ and $W_{f}^{\chi}$, $W_{f}^{\lambda}$ be the associated Lyapunov foliations. For the proof of the next Lemma, it will be convenient define a map $\Phi_{\chi,p}^{\lambda}$ as
\begin{align}\label{Eq:OfFoliationFMCoordinates}
\Phi_{\chi,p}^{\lambda}:W_{f}^{\lambda}(p)\to E_{L}^{\chi},\quad\Phi_{\chi,p}^{\lambda}(q) = \Phi_{\chi}(q) - \Phi_{\chi}(p).
\end{align}
That is, $\Phi_{\chi,p}^{\lambda}$ is defined precisely as $\Phi_{\chi,p}$ from Claim \ref{Claim:ExistenceFMcoordinates}, but instead of restricting to $W_{f}^{\chi}(p)$ we restrict to $W_{f}^{\lambda}(p)$ (when $\lambda = \chi$ then clearly we have $\Phi_{\chi,p}^{\chi} = \Phi_{\chi,p}$). Note that, as in the proof of Claim \ref{Claim:ExistenceFMcoordinates}, $\Phi_{\chi,p}^{\lambda}$ is well-defined for $p\in\mathbb{T}^{d}$ and is continuous (in the Hölder topology) in $p$ for some Hölder exponent $\theta$. Moreover, if $g\in Z^{\infty}(f)$ is homotopic to $A_{g}\in{\rm GL}(d,\mathbb{Z})$ then the functional equation
\begin{align}
\Phi_{\chi,gp}^{\lambda}(g(q)) = e^{\chi(A_{g})}\Phi_{\chi,p}^{\lambda}(q),\quad q\in W_{f}^{\lambda}(p)
\end{align}
still holds as for $\Phi_{\chi,p}$. However, as opposed to $\Phi_{\chi,p}$, the map $\Phi_{\chi,p}^{\lambda}$ may not be a homeomorphism (in fact, we will show that if $\lambda\neq\chi$ then $\Phi_{\chi,p}^{\lambda} = 0$) and the Hölder exponent $\theta$ of $\Phi_{\chi,p}^{\lambda}$ may not be close to $1$. 

We will also use non-stationary linearizations of $f$ (and $Z^{\infty}(f)$) along $W_{f}^{\lambda}$ for each Lyapunov functional $\lambda$. We find a continuous family of $C^{r}$ diffeomorphisms 
\begin{align}\label{Eq:NonStationaryNormalForm}
\Psi_{\lambda,p}:T_{p}W_{f}^{\lambda}\to W_{f}^{\lambda}(p),\quad \Psi_{\lambda,p}(0) = p\text{ and }D_{0}\Psi_{\lambda,p} = {\rm id}_{T_{p}W_{f}^{\lambda}},
\end{align}
with a functional equation
\begin{align}
\Psi_{\lambda,fp}(D_{p}f(v)) = f\Psi_{\lambda,p}(v),
\end{align}
see \cite[Corollary 4.8]{RealNormalFormsExist} (to see a construction of $\Psi_{\lambda,p}$, in the non-uniformly contracting or expanding setting, along $W_{f}^{\lambda}$, see \cite[Lemma 3.2, Lemma 3.4]{NormalFormsExist}). That is, $\Psi_{\lambda,p}$ (or rather $\Psi_{\lambda,p}^{-1}$) linearize $f$ along $W_{f}^{\lambda}$. The family of maps $\Psi_{\lambda,p}$ also linearize every $g\in Z^{\infty}(f)$, see \cite[Theorem 4.6, (3)]{RealNormalFormsExist}. We will use that the family $\Psi_{\lambda,p}$ define an affine structure on the leaves of $W_{f}^{\lambda}$. That is, given $q\in W_{f}^{\lambda}(p)$ the map
\begin{align}
\Psi_{q,\lambda}^{-1}\circ\Psi_{p,\lambda}:E_{f}^{\lambda}(p)\to E_{f}^{\lambda}(q)
\end{align}
is an affine map, see \cite[Corollary 4.8]{RealNormalFormsExist}. We fix, once and for all, a nowhere vanishing vector field $v:\mathbb{T}^{d}\to E_{f}^{\lambda}$ and define for $q\in W_{f}^{\lambda}(p)$
\begin{align}
\Psi_{q,\lambda}^{-1}\circ\Psi_{p,\lambda}(tv(p)) = \left[\sigma_{p,q}t  + s_{p,q}\right]v(q)
\end{align}
where $\sigma_{p,q},s_{p,q}\in\mathbb{R}$.
\begin{lemma}\label{L:RegularityOfFMalongLeaves}
Let $f$ be as in Claim \ref{Claim:RegularityCoordinates}. If $W_{F}^{\chi}$ have (uniformly) $C^{r}$ leaves, then $\Phi_{\chi}$ is uniformly $C^{r}$ along $W_{F}^{\chi}$. If $\lambda\neq\chi$ are distinct Lyapunov functionals then $\Phi_{\chi}$ is constant along $W_{F}^{\lambda}$.
\end{lemma}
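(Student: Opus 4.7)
The plan is to prove the two assertions by different methods: the constancy along $W_F^\lambda$ will follow from iterating the functional equation under a suitable element of $Z^\infty(f)$, whereas the smoothness along $W_F^\chi$ requires the non-stationary linearization \eqref{Eq:NonStationaryNormalForm} together with higher-rank rigidity in the spirit of Katok--Lewis \cite{KatokLewis}.

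For the constancy, fix $p\in\mathbb{R}^d$ and $q\in W_F^\lambda(p)$. Iterating the functional equation for $\Phi_{\chi,p}^\lambda$ from \eqref{Eq:OfFoliationFMCoordinates} gives, for any $g\in Z^\infty(f)$ homotopic to $A_g$,
\[
\Phi_{\chi,p}^\lambda(q) = e^{-n\chi(A_g)}\,\Phi_{\chi,g^np}^\lambda(g^nq).
\]
Combined with the global Hölder estimate on $\Phi_\chi$ with some exponent $\theta$ and the uniform contraction $\intd(g^np,g^nq)\leq C\,e^{n(1+\varepsilon)\lambda(A_g)}\,\intd(p,q)$ from Lemma \ref{L:PropLyapFunc} (valid once $\lambda(A_g)<0$), this yields
\[
\|\Phi_{\chi,p}^\lambda(q)\| \leq C\,e^{n(\theta(1+\varepsilon)\lambda(A_g)-\chi(A_g))}\,\intd(p,q)^\theta.
\]
The plan is then to pick $g$ so that $\chi(A_g)>\theta(1+\varepsilon)\lambda(A_g)$, so that letting $n\to\infty$ forces $\Phi_{\chi,p}^\lambda\equiv 0$. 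Such $g$ exists by Theorem \ref{Thm:InjectivityOfHom}: the image of $Z^\infty(f)$ in $Z_{\rm Aut}(L)$ has rank at least two, and the Lyapunov functionals $\chi,\lambda$---distinct as functionals by Lemma \ref{L:InjectiveLyapunovExponentMap}---are not positively proportional on this image, except in the case $\lambda=-\chi$, where any $g$ with $\chi(A_g)>0$ works directly.

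For the smoothness along $W_F^\chi$, the $C^r$-leaf hypothesis lets me invoke \cite{RealNormalFormsExist} to obtain the non-stationary linearization $\Psi_{\chi,p}$ of \eqref{Eq:NonStationaryNormalForm} as a $C^r$ family of diffeomorphisms intertwining $f$---and in fact every $g\in Z^\infty(f)$---with its derivative cocycle along $W_F^\chi$. Since $\Psi_{\chi,p}$ is a $C^r$-diffeomorphism in the linear variable, it suffices to show that the Hölder homeomorphism $h_p:=\Phi_{\chi,p}\circ\Psi_{\chi,p}\colon E_f^\chi(p)\to E_L^\chi$ is $C^r$. The functional equations combine to give
\[
h_{gp}\bigl(D_pg|_{E_f^\chi}(v)\bigr) = e^{\chi(A_g)}\,h_p(v),\qquad g\in Z^\infty(f).
\]
At each $Z^\infty(f)$-periodic point $p$, which are dense by Lemma \ref{L:DenseHigherRankPeriodicPoints}, this is a simultaneous continuous conjugacy between two commuting families of scalar multiplications on the line. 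The plan is to invoke the Katok--Lewis rigidity method to deduce that such a simultaneous conjugacy must be affine, and then propagate affinity to every $p$ by the continuous dependence of $p\mapsto h_p$.

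The main obstacle is exactly this final step: promoting the continuous intertwining at periodic points to an affine one requires sharpening the positive proportionality of periodic Lyapunov functionals from Lemma \ref{L:LyapFuncPosProp} into an exact matching with the constant cocycle $L|_{E_L^\chi}$. This is where the symplectic hypothesis enters essentially: the pairing $\chi_j\leftrightarrow-\chi_j$ of Lyapunov exponents supplies the additional linear constraints on the periodic spectrum which, combined with the higher-rank algebra of Theorem \ref{Thm:InjectivityOfHom}, pin the proportionality constant down to one and force $h_p$ to be linear.
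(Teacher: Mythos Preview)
Your constancy argument has a genuine gap: you assert that distinct Lyapunov functionals $\chi,\lambda$ on the image of $Z^\infty(f)$ are never positively proportional, but Lemma~\ref{L:InjectiveLyapunovExponentMap} only says that $\chi(L)\neq\lambda(L)$, not that $\chi$ and $\lambda$ are independent as linear functionals on the possibly low-rank image $\Gamma\leq Z_{\rm Aut}(L)$. When ${\rm rank}(\Gamma)=2$ and $N=(d-2)/2\geq 2$, distinct positive Lyapunov functionals can and generally will become positively proportional on $\Gamma$. In that case your inequality $\chi(A_g)>\theta(1+\varepsilon)\lambda(A_g)$ with $\lambda(A_g)<0$ becomes $c<\theta(1+\varepsilon)$ where $\chi=c\lambda$, and since the H\"older exponent $\theta$ of $\Phi_{\chi,p}^\lambda$ need not be close to $1$ (the paper explicitly notes this after \eqref{Eq:OfFoliationFMCoordinates}), the argument does not close. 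The paper treats this positively proportional case separately and it is the hardest part of the proof: one shows via the normal forms $\Psi_{\lambda,p}$ that at periodic points $\Phi_{\chi,p}^\lambda\circ\Psi_{\lambda,p}$ has the form $t\mapsto c_p^\pm|t|^{\eta_p^\pm}$, and then exploits the \emph{affine structure} on $W_f^\lambda$ (the fact that $\Psi_{\lambda,q}^{-1}\circ\Psi_{\lambda,p}$ is affine) to compare neighbouring points on the same leaf, forcing $\eta_p^\pm=1$; but $\eta_p^\pm$ equals the proportionality constant $c\neq 1$ at periodic points, a contradiction unless $c_p^\pm=0$.

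Your smoothness argument along $W_F^\chi$ misidentifies the mechanism. The exponent $\eta_p$ in $h_p(tv)=c_p t^{\eta_p}$ is precisely the ratio $\chi(A_g)/\chi_{p_0}(g)$ between the linear and the actual periodic Lyapunov functional, and by Lemma~\ref{L:PropLyapFunc} this ratio is only \emph{close} to $1$, not equal to $1$; it can vary from periodic point to periodic point. The symplectic pairing does not force it to equal $1$---symplecticity is used in the paper only to supply dense periodic points via \cite{AvilaViana} and Lemma~\ref{L:DenseSetOfHyperbolic}. What actually forces $\eta_p=1$ is again the affine structure: comparing the power-law expression at two points $p,q$ on the same $W_f^\chi$-leaf via the affine change $\Psi_{\lambda,q}^{-1}\circ\Psi_{\lambda,p}$ produces an identity of the form $c_q t^{\eta_q}=c_p(\sigma t+s)^{\eta_p}+{\rm const}$, and elementary analysis of this identity gives $\eta_p=\eta_q=1$. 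You have the right ingredients (normal forms, periodic points, functional equation) but not the key step that links them.
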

\begin{remark}
The proof of this lemma closely follows proofs from \cite{KatokLewis}.
\end{remark}
\begin{proof}
We will split the proof into three parts. First, we consider the case when the Lyapunov functional $\chi$ is either not proportional to the Lyapunov functional $\lambda$, or negatively proportional to $\lambda$ (the exponents $\chi$ and $\lambda$ are proportional if $\chi = c\lambda$ for some $c$, negatively proportional if $c < 0$, and positively proportional if $c > 0$). Second, we show that if $\lambda$ and $\chi$ are positively proportional, but not equal, then $\Phi_{\chi}$ is constant along $W_{F}^{\lambda}$. This finishes the proof of the second part of the lemma. Finally we show that when $\lambda = \chi$, then $\Phi_{\chi}$ is (uniformly) $C^{r}$ along $W_{F}^{\chi}$.

From the construction of $\Phi_{\chi,p}$ in Claim \ref{Claim:ExistenceFMcoordinates}, and $\Phi_{\chi,p}^{\lambda}$ in Equation \ref{Eq:OfFoliationFMCoordinates}, it is immediate that $\Phi_{\chi}$ is smooth along $W_{F}^{\chi}$ if and only if $\Phi_{\chi,p}^{\chi} = \Phi_{\chi,p}$ is smooth for each $p\in\mathbb{T}^{d}$, and $\Phi_{\chi}$ is constant along $W_{F}^{\lambda}$ if and only if $\Phi_{\chi,p}^{\lambda}$ is identically $0$ for every $p\in\mathbb{T}^{d}$. As a consequence, we can study the map $\Phi_{\chi,p}^{\lambda}$ instead of $\Phi_{\chi}$ for the remainder of the proof.

\textit{Independent or negatively proportional exponents}. Let $\lambda$ and $\chi$ be either independent (as elements of the vector space of linear functionals on $Z^{\infty}(f)\cong\mathbb{Z}^{k}$), or negatively proportional. Either way, we find some $g\in Z^{\infty}(f)$, homotopic to $A_{g}\in{\rm GL}(d,\mathbb{Z})$, such that $\chi(A_{g}) > 0$ and $\lambda(A_{g}) < 0$ (if $\chi$ and $\lambda$ are negatively proportional we can choose $g = f$ or $g = f^{-1}$, if they are independent then the set which satisfy $\chi(A_{g}) > 0$ and $\lambda(A_{g}) < 0$ define an open cone in $Z^{\infty}(f)\cong\mathbb{Z}^{k}$ which is, in particular, non-empty). We fix such a $g$. Let $\Psi_{\lambda,p}$ be the map from Equation \ref{Eq:NonStationaryNormalForm} and $\Phi_{\chi,p}^{\lambda}$ the map from Equation \ref{Eq:OfFoliationFMCoordinates}. For any $n\geq0$ we have
\begin{align*}
\Phi_{\chi,g^{n}p}^{\lambda}\left(\Psi_{\lambda,g^{n}p}(D_{p}g^{n}(v))\right) = \Phi_{\chi,g^{n}p}^{\lambda}\left(g^{n}\Psi_{\lambda,p}(v)\right) = e^{n\chi(A_{g})}\Phi_{\chi,p}^{\lambda}(\Psi_{\lambda,p}(v))
\end{align*}
or after dividing by $e^{n\chi(A_{g})}$
\begin{align}\label{Eq:FormulaForNotPosPropExpAlongLyapFoliation}
\Phi_{\chi,p}^{\lambda}(\Psi_{\lambda,p}(v)) = e^{-n\chi(A_{g})}\Phi_{\chi,g^{n}p}^{\lambda}\left(\Psi_{\lambda,g^{n}p}(D_{p}g^{n}(v))\right).
\end{align}
Note that $\norm{D_{p}g^{n}(v)}\to 0$ as $n\to\infty$ since, from Lemma \ref{L:PropLyapFunc}, $\lambda(A_{g}) < 0$ implies that $g$ contract $E_{f}^{\lambda}$. It follows that $\intd\left(\Psi_{\lambda,g^{n}p}(D_{p}g^{n}(v)),g^{n}p\right)\to0$, and since $\Phi_{\chi,g^{n}p}^{\lambda}(g^{n}(p)) = 0$ we conclude that $\Phi_{\chi,g^{n}p}^{\lambda}\left(\Psi_{\lambda,g^{n}p}(D_{p}g^{n}(v))\right)\to0$ as $n\to\infty$. Using that $\chi(A_{g}) > 0$ we obtain
\begin{align}
\Phi_{\chi,p}^{\lambda}(\Psi_{\lambda,p}(v)) = \lim_{n\to\infty}e^{-n\chi(A_{g})}\Phi_{\chi,g^{bn}p}^{\lambda}\left(\Psi_{\lambda,g^{n}p}(D_{p}g^{n}(v))\right) = 0
\end{align}
showing that $\Phi_{\chi,p}^{\lambda}$ identically vanishes along $W_{f}^{\lambda}(p)$ for every $p$. It follows that $\Phi_{\chi}$ is constant along $W_{F}^{\lambda}(p)$ for every $p\in\mathbb{R}^{d}$, finishing the first part of the proof.

\textit{Positively proportional exponents.}
We now turn to the case when $\lambda$ and $\chi$ are positively proportional, but not equal. Recall that $v:\mathbb{T}^{d}\to E_{f}^{\lambda}$ is a nowhere vanishing vector field. Let
\begin{align*}
X_{\pm} := \left\{p\in\mathbb{R}^{d}\text{ : }\Phi_{\chi,p}^{\lambda}(\Psi_{\lambda,p}(tv(p))) = \pm c_{p}^{\pm}|t|^{\eta_{p}^{\pm}},\quad t\in\mathbb{R}_{\pm}\right\}
\end{align*}
where $\mathbb{R}_{+} = \{t > 0\}$, $\mathbb{R}_{-} = \{t < 0\}$, $c_{p}^{\pm}\in E_{L}^{\chi}$ and $\eta_{p}^{\pm} > 0$. Our first goal is to show that $X_{\pm}$ (and $X_{+}\cap X_{-}$) are dense in $\mathbb{T}^{d}$.

We claim that every $Z^{\infty}(f)-$periodic point $p_{0}\in\mathbb{T}^{d}$ lie in $X_{\pm}$, which shows that $X_{+}$, $X_{-}$ and $X_{+}\cap X_{-}$ are all dense by Lemma \ref{L:DenseHigherRankPeriodicPoints}. It suffices to consider the case $X_{+}$, since if we replace the vector field $v$ by $-v$ then we transform $X_{+}$ to $X_{-}$. At any periodic point $p_{0}$ we define $\Gamma\leq Z^{\infty}(f)$ as the finite index subgroup that fix $p_{0}$ and preserve the orientation of $W_{f}^{\lambda}(p_{0})$. If $\lambda_{p_{0}}$ is the Lyapunov functional at $p_{0}$ along $W_{f}^{\lambda}$, then $\lambda_{p_{0}}$ is positively proportional to $\lambda$ by Lemma \ref{L:LyapFuncPosProp}, so since $\lambda$ is positively proportional to $\chi$ we find $c > 0$ such that $\chi = c\lambda_{p_{0}}$. We note, for future reference, that the constant $c > 0$ is not $1$ if $\chi$ and $\lambda$ are distinct, since $\lambda_{p_{0}}$ is close to $\lambda$ by Lemma \ref{L:PropLyapFunc}. Given any $g\in\Gamma$ we have
\begin{align*}
\Phi_{\chi,p_{0}}^{\lambda}\left(\Psi_{\lambda,p_{0}}\left(e^{\lambda_{p_{0}}(g)}v(p_{0})\right)\right) = & \Phi_{\chi,p_{0}}^{\lambda}\left(\Psi_{\lambda,p_{0}}\left(D_{p_{0}}g\left[v(p_{0})\right]\right)\right) = \\ &
\Phi_{\chi,p_{0}}^{\lambda}\left(g\Psi_{\lambda,p_{0}}\left(v(p_{0})\right)\right) = \\ &
e^{\chi(A_{g})}\Phi_{\chi,p_{0}}^{\lambda}\left(\Psi_{\lambda,p_{0}}\left(v(p_{0})\right)\right) = \\ &
e^{c\lambda_{p_{0}}(g)}\Phi_{\chi,p_{0}}^{\lambda}\left(\Psi_{\lambda,p_{0}}\left(v(p_{0})\right)\right)
\end{align*}
where the last equality use that $\chi$ and $\lambda_{p_{0}}$ are positively proportional. That is, we have an equality
\begin{align}\label{Eq:NormalFormEquationAtPerPoint}
\Phi_{\chi,p_{0}}^{\lambda}\left(\Psi_{\lambda,p_{0}}\left(e^{\lambda_{p_{0}}(g)}v(p_{0})\right)\right) = \left[e^{\lambda_{p_{0}}(g)}\right]^{c}\Phi_{\chi,p_{0}}^{\lambda}\left(\Psi_{\lambda,p_{0}}\left(v(p_{0})\right)\right).
\end{align}
By Lemma \ref{L:AlgebraicFullCentralizer} the kernel of $\chi$, and therefore also $\lambda_{p_{0}}$, is trivial. Since ${\rm rank}(\Gamma) = {\rm rank}(Z^{\infty}(f)) > 1$ it follows that the image of $\lambda_{p_{0}}$ is dense in $\mathbb{R}$. From Equation \ref{Eq:NormalFormEquationAtPerPoint} and density of ${\rm Im}(\lambda_{p_{0}})$ it follows that $p_{0}\in X_{+}$. The periodic points are dense in $\mathbb{T}^{d}$ by Lemma \ref{L:DenseHigherRankPeriodicPoints}, so $X_{+}$ is dense. For any $p\in X_{+}$ the function $c_{p}^{+}$ can be expressed as
\begin{align}
c_{p}^{+} = \Phi_{\chi,p}^{\lambda}(\Psi_{\lambda,p}(v(p)))
\end{align}
which implies, in particular, that $c_{p}^{+}$ vary continuously in $p$. Moreover, if $c_{p}^{+}\neq0$ then $c_{q}^{+}\neq0$ in some ball $q\in B_{r}(p)\cap X_{+}$, so we can solve for $\eta_{q}^{+}$ in $B_{r}(p)\cap X_{+}$, to see that $\eta_{q}^{+}$ vary continuously in $q\in B_{r}(q)\cap X_{+}$. In particular, if $p\in X_{+}$ is such that $c_{p}^{+}\neq0$ and $q_{n}\in B_{r}(p)\cap X_{+}$ is a sequence such that $q_{n}\to q\in B_{r}(p)\cap X_{+}$ then we also have $q\in X_{+}$. Since $X_{+}$ is dense this implies that $X_{+}\cap B_{r}(p) = B_{r}(p)$, so $p$ is an interior point in $X_{+}$. We fix $p\in X_{+}$ such that $c_{p}^{+}\neq0$ and let $r > 0$ be such that $B_{r}(p)\subset X_{+}$ and $c_{q}^{+}\neq0$ for all $q\in B_{r}(p)$. Given any $q\in W_{f}^{\lambda}(p)$ sufficiently close to $p$ we have $q\in B_{r}(p)$, so we can write
\begin{align*}
c_{q}^{+}t^{\eta_{q}^{+}} = & \Phi_{\chi,q}^{\lambda}(\Psi_{\lambda,q}(tv(q))) = \Phi_{\chi,q}^{\lambda}(\Psi_{\lambda,p}\Psi_{\lambda,p}^{-1}\Psi_{\lambda,q}(tv(q))) = \\ &
\Phi_{\chi,p}^{\lambda}(\Psi_{\lambda,p}\Psi_{\lambda,p}^{-1}\Psi_{\lambda,q}(tv(q))) + \Phi_{\chi}(p) - \Phi_{\chi}(q) = \\ &
\Phi_{\chi,p}^{\lambda}(\Psi_{\lambda,p}\left(\left[(\sigma_{q,p}t + s_{q,p})v(p)\right]\right) + \Phi_{\chi}(p) - \Phi_{\chi}(q) = \\ &
c_{p}^{+}\left[\sigma_{q,p}t + s_{q,p}\right]^{\eta_{p}^{+}} + \Phi_{\chi}(p) - \Phi_{\chi}(q)
\end{align*}
where we have used that $\Psi_{\lambda,p}$ defines an affine structure on $W_{f}^{\lambda}$. Considering the asymptotics as $t\to\infty$ the equality
\begin{align}\label{Eq:AsymptoticEquation}
c_{q}^{+}t^{\eta_{q}^{+}} = c_{p}^{+}\left[\sigma_{q,p}t + s_{q,p}\right]^{\eta_{p}^{+}} + \Phi_{\chi}(p) - \Phi_{\chi}(q)
\end{align}
implies that $\eta_{p}^{+} = \eta_{q}^{+}$ and $c_{q}^{+} = c_{p}^{+}\sigma_{q,p}^{\eta_{p}^{+}}$. Moreover, if $a\in\mathbb{R}_{>0}$ then the function
\begin{align*}
\mathbb{R}_{>0}\ni x\mapsto(x + a)^{\eta_{p}^{+}} - x^{\eta_{p}^{+}}
\end{align*}
is constant if and only if $\eta_{p}^{+} = 1$ (note that if $\eta_{p}^{+}\neq1$ then the derivative in $x$ is not $0$). Using this in Equation \ref{Eq:AsymptoticEquation}, we conclude that $\eta_{p}^{+} = 1$. As noted above, if $p\in\mathbb{T}^{d}$ is periodic, then $\eta_{p}^{+}\neq1$ so the set of $p\in X_{+}$ such that $c_{p}^{+}\neq0$ is open and disjoint from the $Z^{\infty}(f)-$periodic points. We conclude that $c_{p}^{+} = 0$ for every $p\in X_{+}$ since the periodic points are dense by Lemma \ref{L:DenseHigherRankPeriodicPoints}. Since $\Phi_{\chi,p}^{\lambda}$ is zero when $c_{p}^{+} = c_{p}^{-} = 0$, and since $X_{+}\cap X_{-}$ is dense we conclude that $\Phi_{\chi,p}^{\lambda}$ is identically zero for every $p\in\mathbb{T}^{d}$.

\textit{Equal exponents.} If $\lambda = \chi$ then $\Phi_{\chi,p}^{\chi} = \Phi_{\chi,p}^{\chi}$ is invertible along $W_{f}^{\chi}(p)$ by Claim \ref{Claim:ExistenceFMcoordinates}. It follows that $c_{p}^{\pm}\neq0$ for all $p\in X_{\pm}$. In fact, since 
\begin{align*}
c_{p}^{\pm} = \Phi_{\chi,p}^{\lambda}(\Psi_{\chi,p}(\pm v(p)))
\end{align*}
it holds that $c_{p}^{\pm}$ are uniformly bounded away from $0$. We conclude that we can always solve for $\eta_{p}^{\pm}$, and the argument above shows that $\eta_{p}^{\pm} = 1$. It follows that $c_{p}^{\pm}$ and $\eta_{p}^{\pm}$ both vary continuously in $p$, so $X_{\pm}$ are both closed sets. Since $X_{\pm}$ are both also dense we have $X_{\pm} = \mathbb{T}^{d}$. Since $\Psi_{\chi,p}$ is smooth the formulas
\begin{align}
tc_{p}^{\pm} = \Phi_{\chi,p}(\Psi_{\chi,p}(tv(p)))
\end{align}
implies that $\Phi_{\chi,p}$ is smooth along $W_{f}^{\chi}$ at all $q\in W_{f}^{\chi}(p)\setminus\{p\}$ for every $p\in\mathbb{T}^{d}$. By choosing some $q\in W_{F}^{\chi}(p)\setminus\{p\}$ it also follows that $\Phi_{\chi}$ is smooth along $W_{f}^{\chi}$ at $p$. But this can only hold if $c_{p}^{+} = c_{p}^{-} = c_{p}$, so we have
\begin{align}
c_{p}t = \Phi_{\chi,p}(\Psi_{\chi,p}(tv(p))),\quad t\in\mathbb{R}
\end{align}
showing that $\Phi_{\chi,p}$ is uniformly $C^{r}$ along $W_{f}^{\chi}$.
\end{proof}
\begin{proof}[Proof of Claim \ref{Claim:RegularityCoordinates}]
Let $\chi\in{\rm Lyap}(L)$. Assume, without loss of generality, that $\chi > 0$. We begin by proving that $\Phi_{\chi}$ is smooth along $W_{F}^{cs}$. We claim that $\Phi_{\chi}$ is constant along $W_{F}^{cs}$, so in particular smooth. Given $y\in W_{F}^{cs}(x)$
\begin{align*}
\norm{\Phi_{\chi}(y) - \Phi_{\chi}(x)} = & e^{-n\chi}\norm{\Phi_{\chi}(F^{n}y) - \Phi_{\chi}(F^{n}x)}\leq \\ &
e^{-n\chi}\norm{(F^{n}y)^{\chi} - (F^{n}x)^{\chi}} + 2e^{-n\chi}\norm{\varphi_{\chi}}_{C^{0}}\leq \\ &
Ce^{-n\chi}\cdot\left(\intd_{cs}(F^{n}x,F^{n}y) + 1\right)\to 0
\end{align*}
where the last inequality use Lemma \ref{L:QuasiIsometryForPert}. To show that $\Phi_{\chi}$ is smooth, it suffices, by Journé's lemma \cite{Journe}, to show that $\Phi_{\chi}$ is smooth along $W_{F}^{u}$. Let $r$ be the number from the $r-$spread assumption, or $1 + \alpha$ if we do not assume that the spectrum of $L$ is $r-$spread. We get regularity for $\Phi_{\chi}$ along $W_{F}^{u}$ in three steps, first we show that $\Phi_{\chi}$ is $C^{\infty}$ along $W_{F}^{\lambda > \chi}$. Second we show that $\Phi_{\chi}$ is $C^{r}$ along $W_{F}^{0 < \lambda < \chi}$. Finally we show that $\Phi_{\chi}$ is $C^{r}$ along $W_{F}^{\chi}$. Applying Journé's lemma we see that $\Phi_{\chi}$ is $C^{r}$ along $W_{F}^{0 < \lambda\leq\chi}$ and applying Journé's lemma once more we see that $\Phi_{\chi}$ is $C^{r}$ along $W_{F}^{u}$.

Since the foliations $W_{F}^{\lambda}$, $\lambda > \chi$, gives a product structure in $W_{F}^{\lambda > \chi}$ and $\Phi_{\chi}$ is constant along each $W_{F}^{\lambda}$, Lemma \ref{L:RegularityOfFMalongLeaves}, it follows that $\Phi_{\chi}$ is constant, and therefore smooth, along $W_{F}^{\lambda > \chi}$ (note that the leaves $W_{F}^{\lambda > \chi}$ are unstable manifolds for a partially hyperbolic diffeomorphism and therefore $C^{\infty}$, so $\Phi_{\chi}$ is $C^{\infty}$ along $W_{F}^{\lambda > \chi}$). Similarly we see that $\Phi_{\chi}$ is constant, and therefore also $C^{r}$ along $W_{F}^{\lambda < \chi}$ (since the leaves $W^{0 < \lambda < \chi}$ are not necessarily more regular than $C^{r}$, we obtain regularity from Lemma \ref{L:SmoothnesSpreadSpec}, we do not get $C^{\infty}$ regularity along $W_{F}^{0 < \lambda < \chi}$). That $\Phi_{\chi}$ is $C^{r}$ along $W_{F}^{\chi}$ is immediate from Lemmas \ref{L:RegularityOfFMalongLeaves} and \ref{L:SmoothnesSpreadSpec}. The claim now follows from Journé's lemma \cite{Journe}.
\end{proof}
We now prove Theorem \ref{Thm:LeafConjugacy}.
\begin{proof}[Proof of Theorem \ref{Thm:LeafConjugacy}]
By Claim \ref{Claim:RegularityCoordinates} each $\Phi_{\chi}:\mathbb{R}^{d}\to E_{L}^{\chi}$ is $C^{1+\alpha}$, constant along $W_{F}^{\chi'},W_{F}^{c}$ with $\chi\neq\chi'$ and has non-vanishing derivative along $W_{F}^{\chi}$. We define
\begin{align*}
H_{{\rm leaf}}(p) = \sum_{0\neq\chi\in{\rm Lyap}(L)}\Phi_{\chi}(p) + p^{c}
\end{align*}
and note that for $n\in\mathbb{Z}^{d}$
\begin{align*}
H_{{\rm leaf}}(p + n) = & \sum_{0\neq\chi\in{\rm Lyap}(L)}\Phi_{\chi}(p + n) + (p+n)^{c} = \\ &
\sum_{0\neq\chi\in{\rm Lyap}(L)}\left(\Phi_{\chi}(p) + n^{\chi}\right) + p^{c} + n^{c} = \\ = &
H_{{\rm leaf}}(p) + n
\end{align*}
so $H_{{\rm leaf}}$ descends to a $C^{1+\alpha}-$map of $\mathbb{T}^{d}$ homotopic to identity. Moreover, since $D_{p}H_{{\rm leaf}}$ is invertible for all $p\in\mathbb{T}^{d}$ and $H_{\rm leaf}$ has degree $1$ it follows that $H_{{\rm leaf}}$ is in fact a $C^{1+\alpha}-$diffeomorphism. Each $\Phi_{\chi}$ is constant along $W_{F}^{c}(p)$ so for $q\in W_{F}^{c}(p)$ we have
\begin{align*}
H_{{\rm leaf}}(q) = H_{{\rm leaf}}(p) + (q - p)^{c}\in H_{{\rm leaf}}(p) + E_{L}^{c}
\end{align*}
or $H_{\rm leaf}(W_{F}^{c}(x)) = H_{\rm leaf}(x) + E_{L}^{c}$. Finally, for any $x\in\mathbb{R}^{d}$
\begin{align*}
H_{\rm leaf}(Fx) = & \sum_{0\neq\chi\in{\rm Lyap}(L)}\Phi_{\chi}(Fx) + (Fx)^{c} = \\ &
\sum_{0\neq\chi\in{\rm Lyap}(L)}L\Phi_{\chi}(x) + (Lx + v(x))^{c} = \\ &
\left[\sum_{0\neq\chi\in{\rm Lyap}(L)}L\Phi_{\chi}(x) + Lx^{c}\right] + v^{c}(x) = \\ &
LH_{\rm leaf}(x) + v^{c}(x)\in LH_{\rm leaf}(x) + E_{L}^{c}
\end{align*}
where $v:\mathbb{T}^{d}\to\mathbb{R}^{d}$ is defined such that $Fx = Lx + v(x)$. So, since 
\begin{align*}
H_{\rm leaf}(W_{F}^{c}(x)) = H_{\rm leaf}(x) + E_{L}^{c},
\end{align*}
we have $H_{\rm leaf}(W_{F}^{c}(Fx)) = L(H_{\rm leaf}(x) + E_{L}^{c})$.
\end{proof}

\subsection{Proof of Theorem \ref{Thm:SymplecticRigidity}}

In this section, we improve the result of the previous section when the spectrum of $L$ is $3-$spread proving Theorem \ref{Thm:SymplecticRigidity}.
\begin{proof}[Proof of Theorem \ref{Thm:SymplecticRigidity}]
By Claim \ref{Claim:RegularityCoordinates} each $\Phi_{\chi}$ is $C^{3-\varepsilon}$ which implies that we can define $C^{2-\varepsilon}$ non-vanishing $1-$forms $\omega_{\chi} = \intd\Phi_{\chi}$ on $\mathbb{T}^{d}$. Since $E_{f}^{c}$ is the intersections of the kernels of all $\omega_{\chi}$, $E_{f}^{c}$ is also $C^{2-\varepsilon}$. Since $E_{f}^{s}\oplus E_{f}^{u}$ is the symplectic dual of $E_{f}^{c}$ it follows that $E_{f}^{s}\oplus E_{f}^{u}$ is also $C^{2-\varepsilon}$. That $f$ is $C^{\infty}-$conjugated to $L$ follows from $(6)$ in \cite[Corollary 1.8]{CenterFoliationRig}.
\end{proof}

\newpage

\appendix

\section{Some algebraic properties of partially hyperbolic automorphisms of the torus}
\label{Sec:AlgebraicProperties}

In this appendix, we prove some basic properties of automorphisms of the torus, some of which are well-known. For a more thorough treatment of centralizers of toral automorphisms, see \cite{Baake2001,Baake2013}. We will assume throughout the appendix that $d > 2$. Before starting the proofs, recall that any irreducible $L\in{\rm GL}(d,\mathbb{Z})$ is determined by its characteristic polynomial. This is essentially shown in \cite{CentAut}. Let $L\in{\rm GL}(d,\mathbb{Z})$ be irreducible. If $p\in\mathbb{Q}[t]$ is a rational polynomial and $p(L)v = 0$ for some $0\neq v\in\mathbb{Q}^{d}$ then $\ker p(L)$ is a rational non-trivial $L-$invariant subspace. But if $L$ is irreducible, then any rational non-trivial $L-$invariant subspace is $\mathbb{R}^{d}$ so $\ker p(L) = \mathbb{R}^{d}$ or $p(L) = 0$. Given some $0\neq v_{0}\in\mathbb{Q}^{d}$ we have a natural map $\eta:\mathbb{Q}[L]\to\mathbb{Q}^{d}$ defined by $p(L)\mapsto p(L)v_{0}$, and since $p(L)v_{0} = 0$ if and only if $p(L) = 0$ it follows that this map is injective. The dimension of $\mathbb{Q}[L]$ as a vector space over $\mathbb{Q}$ is $d$, so the map $p(L)\mapsto p(L)v_{0}$ is an isomorphism. One sees that $\eta$ conjugates $L:\mathbb{Q}^{d}\to\mathbb{Q}^{d}$ to the multiplication action $p(L)\mapsto L\cdot p(L)\in\mathbb{Q}[L]$. Finally, $\mathbb{Q}[L]$ is isomorphic to the number field $K = \mathbb{Q}[t]/\langle p_{L}(t)\rangle$ where $p_{L}(t)$ is the characteristic polynomial of $L$. It follows that $L$ is conjugate (over the rationals) to the map $K\to K$ defined by $k\mapsto t\cdot k$.

We begin by showing that irreducible automorphisms with $2-$dimensional isometric center are \textit{pseudo Anosov} in the sense of \cite{StabelErgodicity}.
\begin{lemma}\label{L:PseudoIsAllTrue}
If $L\in\rm GL(d,\mathbb{Z})$ is irreducible with precisely two eigenvalues on $S^{1}$ then $p_{L}(t)$ can not be written as a polynomial in $t^{n}$ for any $n\geq 2$.
\end{lemma}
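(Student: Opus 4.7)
The plan is to argue by contradiction. Suppose $p_L(t) = Q(t^n)$ for some $n \geq 2$ and some polynomial $Q \in \mathbb{Z}[t]$. Then for any primitive $n$-th root of unity $\zeta$, we have $p_L(\zeta t) = Q(\zeta^n t^n) = Q(t^n) = p_L(t)$, so the multiset of roots of $p_L$ is invariant under multiplication by $\zeta$. In particular, if $\lambda$ is an eigenvalue of $L$, then so is $\zeta^k \lambda$ for every $k = 0,1,\dots,n-1$, and these all have the same modulus as $\lambda$.

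Next I would pin down the two eigenvalues of $L$ on $S^1$. They cannot be real: the only real numbers on $S^1$ are $\pm 1$, and if $\pm 1$ were a root of $p_L(t)$, then $(t \mp 1)$ would divide $p_L$, contradicting irreducibility (since $d > 2 > 1$). So the two eigenvalues on $S^1$ form a complex conjugate pair $\lambda, \bar\lambda$ with $\lambda = e^{i\theta}$, $\theta \notin \pi\mathbb{Z}$.

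Now I would split on $n$. If $n \geq 3$, the points $\lambda, \zeta \lambda, \zeta^2 \lambda$ are three pairwise distinct points on $S^1$ that are all eigenvalues of $L$, contradicting the assumption that there are precisely two such eigenvalues. If $n = 2$, then $\zeta = -1$, so the three eigenvalues $\lambda, -\lambda, \bar\lambda$ all lie on $S^1$. These are distinct unless two of them coincide. The equalities $\lambda = -\lambda$ and $\lambda = \bar\lambda$ are excluded ($\lambda \ne 0$ and $\lambda$ is not real), leaving only $-\lambda = \bar\lambda$, i.e.\ $\lambda \in \{\pm i\}$.

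To finish, I would rule out $\lambda = \pm i$: in that case $t^2 + 1$ divides $p_L(t)$ in $\mathbb{Q}[t]$, and by irreducibility of $p_L$ we would have $p_L(t) = t^2 + 1$, forcing $d = 2$, which contradicts the standing assumption $d > 2$ of the appendix. The only technical point worth flagging is the case analysis for $n = 2$; the $n \geq 3$ case is immediate from the root-orbit argument, while the $n = 2$ case needs the mild input that $\pm i$ being an eigenvalue would fix the characteristic polynomial completely and collapse the dimension.
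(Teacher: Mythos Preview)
Your proof is correct and follows essentially the same approach as the paper: both exploit that if $p_L(t)=Q(t^n)$ then the root set is invariant under multiplication by an $n$-th root of unity, forcing too many eigenvalues on $S^1$ unless $\lambda$ is a root of unity, which is then ruled out via irreducibility and $d>2$. The only cosmetic difference is that the paper treats all $n\geq 2$ uniformly (from $\zeta_n\lambda=\overline{\lambda}$ it takes $n$-th powers to get $\lambda^{2n}=1$, then uses irreducibility to force every root onto $S^1$), whereas you split off $n\geq 3$ and handle $n=2$ by pinning down $\lambda=\pm i$ explicitly.
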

\begin{proof}
Let $\lambda,\overline{\lambda}\in S^{1}\subset\mathbb{C}$ be the roots of $p_{L}(t)$ on the unit circle. Let $Q(t)$ be such that $p_{L}(t) = Q(t^{n})$ and let $\zeta_{n}$ be a $n'$th root of unity. We have 
\begin{align*}
p_{L}(\zeta_{n}\lambda) = Q((\zeta_{n}\lambda)^{n}) = Q(\lambda^{n}) = p_{L}(\lambda) = 0
\end{align*}
which implies, since $L$ has only two eigenvalues on $S^{1}$, that $\zeta_{n}\lambda = \lambda$ or $\zeta_{n}\lambda = \overline{\lambda}$. The first case implies $\zeta_{n} = 1$. For the other case, we take everything to the $n'$th power and obtain $\lambda^{n} = \lambda^{-n}$ or $\lambda^{2n} = 1$. So $\lambda$ is a root of unity. If $\lambda$ is a root of unity, then the irreducibility of $p_{L}(t)$ implies that every root of $p_{L}(t)$ is a root of unity. This is a contradiction since $p_{L}(t)$ only have two roots on $S^{1}$ and $d > 2$. We conclude that if $p_{L}(t) = Q(t^{n})$ then $n = 1$.
\end{proof}
\begin{lemma}\label{L:AlgebraicFullCentralizer}
Let $L\in{\rm GL}(d,\mathbb{Z})$ be irreducible with affine centralizer $Z_{\rm Aut}(L)$. For any subgroup $\Gamma\leq Z_{\rm Aut}(L)$ we define
\begin{align*}
\Lambda:\Gamma\to\mathbb{R}^{r_{1} + r_{2} - 1},\quad\Lambda(\gamma) = (\chi_{1}(\gamma),...,\chi_{r_{1} + r_{2}-1}(\gamma))
\end{align*}
where each $\chi_{j}$ is the Lyapunov functional along some eigendirection. If $\Gamma\leq Z_{\rm Aut}(L)$ has rank $r_{1} + r_{2} - 1$ then the image of $\Lambda$ is a lattice. Moreover, if $\chi_{j}$ correspond to a real eigenvalue and $\chi_{j}(\gamma) = 0$ then $\gamma = \pm I$.
\end{lemma}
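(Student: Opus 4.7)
The plan is to reduce both statements to Dirichlet's unit theorem for the number field $K = \mathbb{Q}[t]/\langle p_{L}(t)\rangle$. As recalled in the preamble to this appendix, irreducibility of $L$ forces the characteristic polynomial to equal the minimal polynomial, so the centralizer of $L$ in $M_{d}(\mathbb{Q})$ is exactly $\mathbb{Q}[L]\cong K$. In particular, any $\gamma\in Z_{\mathrm{Aut}}(L)$ can be written as $\gamma = q(L)$ for some $q\in\mathbb{Q}[t]$, and $\gamma$ corresponds to a unit in the order $\mathcal{O} = \mathbb{Q}[L]\cap M_{d}(\mathbb{Z})\subset K$. Under this identification, the $r_{1}$ real eigenvalues and $r_{2}$ pairs of complex conjugate eigenvalues of $L$ correspond to the $r_{1}$ real embeddings and $r_{2}$ pairs of complex embeddings $\sigma_{1},\dots,\sigma_{r_{1}+r_{2}}$ of $K$, and the Lyapunov functionals take the form $\chi_{j}(\gamma) = \log|\sigma_{j}(\gamma)|$.

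For the lattice statement, I would consider the full Dirichlet map $F:\Gamma\to\mathbb{R}^{r_{1}+r_{2}}$, $F(\gamma) = (\chi_{1}(\gamma),\dots,\chi_{r_{1}+r_{2}}(\gamma))$. The relation $|\det\gamma|=1$ combined with the product formula $\prod_{j}\sigma_{j}(\gamma)\overline{\sigma_{j}(\gamma)}^{[K_{j}:\mathbb{R}]-1}=\pm 1$ forces $F(\Gamma)$ to lie in the hyperplane
\begin{align*}
H = \left\{x\in\mathbb{R}^{r_{1}+r_{2}}\text{ : }\sum_{j\le r_{1}}x_{j} + 2\sum_{j>r_{1}}x_{j} = 0\right\}.
\end{align*}
Dirichlet's unit theorem applied to the order $\mathcal{O}$ asserts that $F(\mathcal{O}^{\times})$ is a full-rank lattice in $H$ of rank $r_{1}+r_{2}-1$, with kernel equal to the finite group of roots of unity in $K$ (any algebraic integer whose conjugates all lie on the unit circle is a root of unity, by Kronecker). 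Since $Z_{\mathrm{Aut}}(L)\subset\mathcal{O}^{\times}$ and already has rank $r_{1}+r_{2}-1$, the image $F(Z_{\mathrm{Aut}}(L))$ is a finite-index sublattice of $F(\mathcal{O}^{\times})$, hence a lattice in $H$. Now $\Gamma$ has rank $r_{1}+r_{2}-1$ and $\ker F|_{\Gamma}$ is finite (same reason), so $F(\Gamma)$ has rank $r_{1}+r_{2}-1$ inside the lattice $F(Z_{\mathrm{Aut}}(L))$ and is therefore itself a lattice in $H$. Composing with the linear isomorphism $H\to\mathbb{R}^{r_{1}+r_{2}-1}$ obtained by dropping the last coordinate (this is an isomorphism because the hyperplane equation determines the last coordinate from the others) yields that $\Lambda(\Gamma)$ is a lattice in $\mathbb{R}^{r_{1}+r_{2}-1}$.

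For the second claim, suppose $\chi_{j}$ is a Lyapunov functional coming from a real eigenvalue $\lambda\in\mathbb{R}$ of $L$, and that $\chi_{j}(\gamma)=0$. Writing $\gamma = q(L)$ with $q\in\mathbb{Q}[t]$, the eigenvalue of $\gamma$ on the one-dimensional $\lambda$-eigenspace is $q(\lambda)\in\mathbb{R}$, and the hypothesis says $|q(\lambda)|=1$. Since $q(\lambda)$ is real, $q(\lambda) = \pm 1$. Because $p_{L}$ is the minimal polynomial of $\lambda$ over $\mathbb{Q}$ (by irreducibility), it divides $q(t)\mp 1$, which forces $q(L)\mp I = 0$, so $\gamma = \pm I$.

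The main obstacle is setup rather than substance: one must carefully identify the dynamical Lyapunov data with the Dirichlet embedding of $\mathcal{O}^{\times}$, and verify that $Z_{\mathrm{Aut}}(L)$ exhausts the unit group of $\mathcal{O}$ up to finite index (for the first claim one can sidestep this by comparing directly with the larger lattice $F(\mathcal{O}_{K}^{\times})$, since any order of $K$ has unit rank $r_{1}+r_{2}-1$). Once the algebraic translation is fixed, both assertions follow at once from Dirichlet's unit theorem and the fact that $p_{L}$ is the minimal polynomial of every eigenvalue of $L$.
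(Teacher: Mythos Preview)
Your proof is correct and follows essentially the same route as the paper: both identify $Z_{\mathrm{Aut}}(L)$ with units in (an order of) $K=\mathbb{Q}[t]/\langle p_L(t)\rangle$ and invoke Dirichlet's unit theorem to obtain the lattice statement. For the second claim the paper argues that $\ker(\gamma\mp I)$ is a nontrivial rational $L$-invariant subspace, hence all of $\mathbb{R}^d$ by irreducibility; your polynomial divisibility argument ($p_L\mid q(t)\mp 1$) is an equivalent rephrasing of the same idea.
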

\begin{proof}
This lemma is essentially a reformulation of the Dirichlet unit theorem. If $p_{L}(t)$ is the characteristic polynomial of $L$, then we obtain the corresponding number field $K = \mathbb{Q}[t]/\langle p_{L}(t)\rangle$. The action of $L$ on $\mathbb{T}^{d}$ is, up to finite index, conjugated to the action of $q(t)\mapsto tq(t)$ on $(K/\mathcal{O}_{K})\otimes\mathbb{R}$ where $\mathcal{O}_{K}$ are the integers in $K$. We can describe $Z_{\rm Aut}(L)$ in these coordinates, it corresponds to the units $U_{K}$ of $\mathcal{O}_{K}$ and they act on $K/\mathcal{O}_{K}$ by $q(t)\mapsto u\cdot q(t)$ for $u\in U_{K}$ \cite{WangNumberTheoryDescription}. Let $\sigma_{1}^{\mathbb{R}},...,\sigma_{r_{1}}^{\mathbb{R}}:K\to\mathbb{R}$ be the real embeddings of $K$ and $\sigma_{1}^{\mathbb{\mathbb{C}}},...,\sigma_{r_{2}}^{\mathbb{\mathbb{C}}}:K\to\mathbb{C}$ complex embeddings such that any other complex embedding can be written as $\sigma = \overline{\sigma_{j}}^{\mathbb{C}}$ for some $j = 1,...,r_{2}$. The eigenvalues of $u\in U_{K}$ considered as an automorphism on $K/\mathcal{O}_{K}$ are given by $\{\sigma_{i}^{\mathbb{R}}(u),\sigma_{j}^{\mathbb{C}}(u),\overline{\sigma}_{j}^{\mathbb{C}}(u)\}_{i,j}$. If we define
\begin{align*}
& \Lambda':U_{K}\to\mathbb{R}^{r_{1} + r_{2}},\\
& \Lambda'(u) = (\log|\sigma_{1}^{\mathbb{R}}(u)|,...,\log|\sigma_{r_{1}}^{\mathbb{R}}(u)|,\log|\sigma_{1}^{\mathbb{C}}(u)|,...,\log|\sigma_{r_{2}}^{\mathbb{C}}(u)|)
\end{align*}
then $\Lambda'$ take values in $V = \{x_{1}+...+x_{r_{1}} + 2y_{1} + ... + 2y_{r_{2}} = 0\}$ (this can be seen by noting that $\eta:U_{K}\to\mathbb{R}\setminus0$, $\eta(u) = \sigma_{1}^{\mathbb{R}}(u)\cdot...\cdot\sigma_{r_{1}}^{\mathbb{R}}(u)\cdot|\sigma_{1}^{\mathbb{C}}(u)|^{2}\cdot...\cdot|\sigma_{r_{2}}^{\mathbb{C}}(u)|^{2}$ is a homomorphism taking values in $\mathbb{Z}$ so it must only take values $\pm1$). So we can define $\Lambda:U_{K}\to V$ and the (proof of) Dirichlet unit theorem \cite{SerreDirichletUnit} implies that the image of $\Lambda$ is a lattice in $V$.

To see the last part, suppose that $\gamma$ satisfies $\chi_{j}(\gamma) = 0$. Let $v_{\chi}$ be an eigendirection for $E_{L}^{\chi}$ then we have $\gamma v_{\chi} = \pm v_{\chi}$. So $\ker(\gamma\mp I)$ is non-empty and rational. Irreducibility of $L$ imply that $\ker(\gamma\mp I) = \mathbb{R}^{d}$ so $\gamma(x) = \pm x$.
\end{proof}
\begin{lemma}\label{L:NecessityOfLargeAction}
Let $L\in{\rm GL}(d,\mathbb{Z})$ be irreducible and ergodic with precisely one pair of eigenvalues on the unit circle. Denote by $E_{L}^{c}$ the eigendirection for $L$ corresponding to the eigenvalues on the unit circle. For any $r < r_{1} + r_{2} - 1$ and $Q > 0$ there is a subgroup $\Gamma\leq Z_{\rm Aut}(L)$ such that $\Gamma\cong\mathbb{Z}^{r}$ and for all $\gamma\in\Gamma$ we have
\begin{align}\label{Eq:CounterExampleEquation}
|\det(\gamma|_{E_{L}^{c}})|\leq\left|\frac{\det(\gamma|_{E_{\gamma}^{u}})}{\det(\gamma|_{E_{L}^{c}})}\right|^{Q}
\end{align}
\end{lemma}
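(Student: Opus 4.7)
The plan is to identify $Z_{\rm Aut}(L)$ modulo torsion with a full-rank lattice in a Euclidean space via the logarithmic embedding, reduce Equation \ref{Eq:CounterExampleEquation} to a linear condition on a conical neighborhood of a hyperplane, and then produce a rank $r$ sublattice inside that cone. By the proof of Lemma \ref{L:AlgebraicFullCentralizer} together with Theorem \ref{Thm:RankCentralizer}, the logarithmic map $\Lambda$ identifies $Z_{\rm Aut}(L)$, modulo torsion, with a full-rank lattice inside the $n$-dimensional Euclidean space
\begin{equation*}
V = \bigl\{(x_1,\dots,x_{r_1},y_1,\dots,y_{r_2})\in\mathbb{R}^{r_1+r_2} : x_1+\cdots+x_{r_1}+2(y_1+\cdots+y_{r_2}) = 0\bigr\},
\end{equation*}
where $n := r_1+r_2-1 = \dim V$. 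Choose coordinates so that the Lyapunov functional along $E_L^c$ is $\phi := y_1$; since the two center eigenvalues of $L$ have modulus $1$, the point $\Lambda(L)$ lies in the hyperplane $H := \ker\phi\cap V$. Writing $a(v) := 2\phi(v)$ and $b(v) := \sum_{x_i(v)>0} x_i(v) + 2\sum_{y_j(v)>0} y_j(v)$, Equation \ref{Eq:CounterExampleEquation} for $v = \Lambda(\gamma)$ becomes, after taking logarithms, $a(v)\leq Q(b(v)-a(v))$.

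Next I would observe that $b$ is continuous, positive-homogeneous of degree one, and strictly positive on $V\setminus\{0\}$: if every coordinate of $v$ were non-positive, the linear constraint defining $V$ would force $v = 0$. By compactness of the unit sphere in $V$ there exists $c > 0$ with $b(v)\geq c\norm{v}$. Setting $\varepsilon := Qc/(2(1+Q))$ and $C_\varepsilon := \{v\in V : |\phi(v)|\leq\varepsilon\norm{v}\}$, on $C_\varepsilon$ one has
\begin{equation*}
a(v)\leq 2\varepsilon\norm{v} = Q(c-2\varepsilon)\norm{v}\leq Q(b(v)-a(v)),
\end{equation*}
so it is enough to produce a rank $r$ sublattice of $\Lambda(Z_{\rm Aut}(L))$ contained in $C_\varepsilon$.

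For the construction, since $r\leq n-1 = \dim H$ we fix linearly independent $v_1,\dots,v_r\in H$. After a linear change of variables sending the lattice to $\mathbb{Z}^n$, Dirichlet's approximation theorem shows that the radial projections of non-zero lattice points are dense in the unit sphere of $V$; hence for any $\eta > 0$ there exist lattice vectors $u_1,\dots,u_r\in\Lambda(Z_{\rm Aut}(L))$ with $\norm{u_i/\norm{u_i}-v_i/\norm{v_i}} < \eta$. For $\eta$ sufficiently small the $u_i$ are linearly independent and the subspace $W := {\rm span}_{\mathbb{R}}(u_1,\dots,u_r)$ lies arbitrarily close in the Grassmannian to $W_0 := {\rm span}_{\mathbb{R}}(v_1,\dots,v_r)\subset H$, so by continuity $|\phi(v)|\leq\varepsilon\norm{v}$ for every $v\in W$. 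In particular, the rank $r$ sublattice $M := \mathbb{Z}\text{-span}(u_1,\dots,u_r)\subset W$ is contained in $C_\varepsilon$; lifting $M$ through the splitting $Z_{\rm Aut}(L)\cong\mathbb{Z}^n\times F$ from Theorem \ref{Thm:RankCentralizer} produces the desired $\Gamma\cong\mathbb{Z}^r$.

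The main subtlety will be the Grassmannian-continuity step: it is straightforward to produce a single lattice vector with direction close to $H$, but one needs $r$ of them whose $\mathbb{R}$-span $W$ is itself uniformly close to $H$, so that \emph{every} $\mathbb{Z}$-combination (not just the individual $u_i$) remains in $C_\varepsilon$. This is why the approximation must be carried out simultaneously to $r$ independent target directions inside $H$, rather than one lattice vector at a time.
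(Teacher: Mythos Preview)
Your proposal is correct and shares the paper's core strategy: rewrite the determinant inequality, after taking logarithms and using $|\det\gamma|=1$, as a cone condition $|\chi_0(\gamma)|\le\text{const}\cdot\norm{\gamma}$ around the hyperplane $H=\ker\chi_0$ inside the logarithmic lattice, and then exhibit a rank~$r$ sublattice in that cone. The difference is purely in how the sublattice is produced. The paper observes that it suffices to treat $r=n-1$ and then simply picks a \emph{rational} hyperplane $W=\ker(\chi_0+\omega)$ with $\norm{\omega}$ small; rationality forces $W$ to meet the lattice in a rank~$n-1$ subgroup, and $|\chi_0(\gamma)|=|\omega(\gamma)|\le\norm{\omega}\norm{\gamma}_L$ gives the cone estimate in one line. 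Your route instead approximates $r$ independent directions in $H$ by lattice points and invokes continuity of the span map into the Grassmannian to push the whole $r$-plane into the cone. This is a little more work (and ``Dirichlet'' is more than you need---density of lattice directions follows from rounding $Nv$), but it handles general $r$ without the reduction step and makes explicit the point you flag at the end: one must control $\phi$ on the entire span, not just on the generators.
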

\begin{proof}
It suffices to show the lemma with $r = r_{1} + r_{2} - 2$. We number the Lyapunov functionals of $\Gamma$ by $\chi_{0},\chi_{1},...,\chi_{n}:Z_{\rm Aut}(L)\to\mathbb{R}$ where $n = r_{1} + r_{2} - 1$ and $\chi_{0}$ is the Lyapunov functional along $E_{L}^{c}$. We will, for the remainder of the proof, identify $Z_{\rm Aut}(L)$ with a lattice in $\mathbb{R}^{n}$, Lemma \ref{L:AlgebraicFullCentralizer}. We let $d_{j}$ be the dimension of the Lyapunov space corresponding to the Lyapunov functional $\chi_{j}$. In particular, $d_{0} = 2$. If we take the logarithm of Equation \ref{Eq:CounterExampleEquation} then we obtain
\begin{align}\label{Eq:CounterExample1}
2\chi_{0}(\gamma)\leq Q\sum_{\chi_{j}(\gamma) > 0}d_{j}\chi_{j}(\gamma) - 2Q\chi_{0}(\gamma).
\end{align}
Using that each $\gamma\in\Gamma$ satisfy $|\det(\gamma)| = 1$ we have
\begin{align*}
\sum_{j = 0}^{n}d_{j}\chi_{j}(\gamma) = 0,\quad\sum_{\chi_{j}(\gamma) > 0}d_{j}\chi_{j}(\gamma) = \frac{1}{2}\sum_{j = 0}^{n}d_{j}|\chi_{j}(\gamma)|.
\end{align*}
So equation \ref{Eq:CounterExample1} can be written
\begin{align}\label{Eq:CounterExample2}
\chi_{0}(\gamma)\leq\frac{Q}{4(Q+1)}\cdot\sum_{j = 0}^{n}d_{j}|\chi_{j}(\gamma)|.
\end{align}
We define a norm on $\mathbb{R}^{n}$ by
\begin{align*}
\norm{v}_{L} = \sum_{j = 0}^{n}d_{j}|\chi_{j}(v)|.
\end{align*}
Let $W_{0} = \ker\chi_{0}\subset\mathbb{R}^{n}$ be the kernel of $\chi_{0}$. We choose a linear functional $\omega\in(\mathbb{R}^{n})^{*}$ close to $0$ such that $W := \ker(\chi_{0} + \omega)\subset\mathbb{R}^{n}$ is rational and close to $W_{0}$. Since $W\subset\mathbb{R}^{n}$ is rational of dimension $n-1$ the intersection $\Gamma = W\cap Z_{\rm Aut}(L)$ has rank $n-1$. For any $\gamma\in\Gamma$ we have
\begin{align*}
\chi_{0}(\gamma)\leq|\chi_{0}(\gamma)| = |\omega(\gamma)|\leq\norm{\omega}\cdot\norm{\gamma}_{L} = \norm{\omega}\cdot\sum_{j = 0}^{n}d_{j}|\chi_{j}(v)|
\end{align*}
so with $\norm{\omega}\leq Q/4(Q+1)$ the lemma follows.
\end{proof}

\subsection{Automorphisms with property (P)}

In this section, we show some basic properties of automorphisms with property $(P)$ (see Definition \ref{Def:PropertyP}).
\begin{lemma}\label{L:PropertiesOfP}
Let $L\in{\rm GL}(d,\mathbb{Z})$ have property $(P)$. The following properties hold
\begin{enumerate}[label = (\roman*)]
    \item $L$ is ergodic, 
    \item $p_{L}(t)$ is not a polynomial in $t^{n}$ for $n\geq 2$,
    \item if $A\in Z_{{\rm GL}(d,\mathbb{Z})}(L)$ lie in the centralizer of $L$, $A$ is not hyperbolic and $A\neq\pm I$ then $A$ is irreducible,
    \item the real eigenvalues of $L$ can be  written as $\lambda_{1},...,\lambda_{N},\lambda_{1}^{-1},...,\lambda_{N}^{-1}$ where $|\lambda_{1}| > |\lambda_{2}| > ... > |\lambda_{N}| > 1$ for $N = (d-2)/2$,
    \item there is a rational symplectic structure $\omega$ such that $L^{*}\omega = \omega$.
\end{enumerate}
\end{lemma}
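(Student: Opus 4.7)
The plan is to work in the number field $K = \mathbb{Q}[t]/\langle p_L(t)\rangle$, in which $L$ acts (up to $\mathbb{Q}$-conjugacy) as multiplication by $t$, and eigenvalues of $L$ correspond to the embeddings $\sigma : K \hookrightarrow \mathbb{C}$ via $\lambda = \sigma(t)$. Property $(P)$ then says that $K$ has signature $(r_1, r_2) = (d-2, 1)$, with the unique complex embedding pair $\sigma^c, \overline{\sigma^c}$ satisfying $|\sigma^c(t)| = 1$. Claim (i) is immediate: if some eigenvalue were a root of unity, irreducibility of $p_L$ would force every Galois conjugate, hence every eigenvalue, to be a root of unity, contradicting the existence (for $d\geq 4$) of real eigenvalues off $S^1$. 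Claim (ii) is a direct application of Lemma~\ref{L:PseudoIsAllTrue}, since property $(P)$ gives $L$ exactly two eigenvalues on $S^1$.

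For (iv), I would first establish that $p_L$ is reciprocal. Since $e^{-i\theta} = 1/e^{i\theta}$, both $p_L(t)$ and $q(t) := t^d p_L(1/t)$ vanish at $e^{i\theta}$; irreducibility of $p_L$ and a degree count then give $q = \pm p_L$, so the set of eigenvalues is closed under $\lambda \mapsto \lambda^{-1}$ and the real ones pair as $\{\lambda, \lambda^{-1}\}$ (none equal to $\pm 1$ by (i)). To get strict inequality of moduli: if $\lambda_i \ne \lambda_j$ were two real eigenvalues with $|\lambda_i| = |\lambda_j|$ then $\lambda_j = -\lambda_i$, forcing $p_L(-t) = \pm p_L(t)$; the $-$ case contradicts $p_L(0)\neq 0$ and the $+$ case yields $p_L(t) = Q(t^2)$, contradicting (ii). For (v), the reciprocal property lets me define the involution $\iota : K \to K$, $\iota(t) = 1/t$, which is a well-defined field automorphism (since $t$ and $1/t$ share the same minimal polynomial) and nontrivial (since $L \neq \pm I$). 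Picking any nonzero $\alpha \in K$ with $\iota(\alpha) = -\alpha$, which exists since $[K : K^\iota] = 2$, the bilinear form
\[
B(u,v) = \operatorname{Tr}_{K/\mathbb{Q}}(\alpha\, \iota(u)\, v)
\]
is nondegenerate, antisymmetric, and invariant under multiplication by $t$; clearing denominators and transporting back to $\mathbb{Z}^d$ yields the desired rational symplectic form preserved by $L$.

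The delicate part is (iii). Since $p_L$ is irreducible, the centralizer of $L$ in $M_d(\mathbb{Q})$ equals $\mathbb{Q}[L] \cong K$, so any $A \in Z_{{\rm GL}(d,\mathbb{Z})}(L)$ corresponds to a unit $u \in \mathcal{O}_K^\times$, and $A$ is irreducible iff $\mathbb{Q}(u) = K$. The key number-theoretic input is that every proper subfield $F \subsetneq K$ is totally real: if $F$ had a complex embedding $\tau$, then $\tau$ and $\bar\tau$ would each extend to $[K:F]$ necessarily complex embeddings of $K$, producing at least $2[K:F] \geq 4$ complex embeddings, contradicting $r_2(K) = 1$. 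Now if $A$ is reducible, then $u$ lies in such a totally real $F$, so every eigenvalue $\sigma(u) = \sigma|_F(u)$ is real; if in addition $A$ is not hyperbolic, some eigenvalue lies in $S^1 \cap \mathbb{R} = \{\pm 1\}$, and since this value is a Galois conjugate of $u$ whose minimal polynomial over $\mathbb{Q}$ is $t \mp 1$, we conclude $u = \pm 1$ and hence $A = \pm I$. The main obstacle is isolating this subfield-signature argument; the other claims then follow mechanically from the reciprocal structure of $p_L$.
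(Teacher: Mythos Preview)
Your proof is correct. Parts (i), (ii), and (iv) are essentially identical to the paper's arguments (the paper shortcuts your $q=\pm p_L$ to $q=p_L$ by noting $d$ is even, but the content is the same).

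For (iii) and (v) you take genuinely different routes. For (iii), the paper argues directly with rational invariant subspaces: since $A$ preserves each (one-dimensional) real Lyapunov space of $L$, any non-real eigenvalue of $A$ must sit on $E_L^c$; writing $p_A=PQ$ with the complex pair in $Q$, the subspace $\ker P(A)$ is rational, $L$-invariant, and misses $E_L^c$, hence is zero by irreducibility of $L$, forcing $P$ constant. Your approach instead passes to the number field $K$ and observes that $r_2(K)=1$ forces every proper subfield to be totally real, so a reducible non-hyperbolic $A$ would have a real eigenvalue of modulus $1$, hence be $\pm I$. Your argument is cleaner and more conceptual (it isolates exactly the arithmetic obstruction), while the paper's is more self-contained and avoids any number-theoretic vocabulary. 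For (v), the paper simply cites an external reference, whereas you give an explicit construction via the trace form $B(u,v)=\operatorname{Tr}_{K/\mathbb{Q}}(\alpha\,\iota(u)\,v)$ using the reciprocal involution $\iota(t)=t^{-1}$; this is a nice bonus, since it makes the statement self-contained and transparently explains \emph{why} property $(P)$ forces a symplectic structure.
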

\begin{proof}
The real eigenvalues of $L$ can not be $\pm1$ since that would contradict that $L$ was irreducible. If $L$ had an eigenvalue that was a root of unity, then irreducibility of $L$ implies that all roots of $p_{L}(t)$ are roots of unity. This is a contradiction since $L$ only has $2$ non-real eigenvalues.

Property $(ii)$ follows from Lemma \ref{L:PseudoIsAllTrue}.

For $(iii)$, let $A\in{\rm GL}(d,\mathbb{Z})$ commute with $L$. If $\pm1$ is an eigenvalue for $A$ then $\ker(A\pm I)$ defines a rational subspace invariant by $L$ and by irreducibility of $L$ this implies that $\ker(A\pm I) = \mathbb{R}^{d}$. So if $A$ is not equal to $\pm I$ and not hyperbolic, then $A$ must have at least one pair of complex eigenvalues. Since $A$ preserves the eigendirections of $L$, it follows that $A$ has $d-2$ real eigenvalues and the pair of complex eigenvalues for $A$ must be along $E_{L}^{c}$. Suppose that we can write $p_{A}(t) = P(t)Q(t)$ where we assume that the eigenvalues of $A$ along $E_{L}^{c}$ is a root of $Q(t)$. It follows that $P(t)$ has only real roots. Let $W = \ker P(A)$, which is not $\mathbb{R}^{d}$ since $E_{L}^{c}\not\subset W$. Since $P(t)$ is rational, $W$ is rational and since $L$ commutes with $P(A)$, $L$ preserves $W$. It follows by the irreducibility of $L$ that $W = 0$. This can only hold if $P(t)$ is a constant polynomial, so $p_{A}(t)$ is indeed irreducible in $\mathbb{Q}[t]$.

Part $(iv)$ follows from the fact that $L$ is irreducible with an eigenvalue on $S^{1}$. Let $\omega\in S^{1}$ be one of the roots of $p_{L}(t)$ on $S^{1}$. Then $\overline{\omega}$ is also a root, so $\omega$ is a root of $q(t) := t^{d}p_{L}(t^{-1})$. Since the polynomials $q(t)$ and $p_{L}(t)$ have the same degree, $p_{L}(t)$ is irreducible and both $p_{L}(t)$ and $q(t)$ are monic it follows that $q(t) = p_{L}(t)$. So the roots of $p_{L}(t)$ comes in pairs $\lambda,\lambda^{-1}$. To see that no two distinct real roots of $p_{L}(t)$ have the same modulus, we note that if $|\lambda| = |\mu|$ for real $\lambda\neq\mu$ then $\lambda = -\mu$. It follows that $\lambda$ is a root of $q(t) = p_{L}(-t)$ and again using irreducibility of $p_{L}(t)$ and the fact that $\deg(q(t)) = \deg(p_{L}(t))$ with both $q(t)$ and $p_{t}(t)$ monic it follows that $p_{L}(t) = p_{L}(-t)$. But if $p_{L}(t) = p_{L}(-t)$ then all odd coefficients in the polynomial $p_{L}(t)$ vanish so $p_{L}(t)$ is a polynomial in $t^{2}$ contradicting $(ii)$.

The last part follows from \cite[Theorem A.1]{ExistenceSymplecticStructureForInteger} and the fact that any two $L\in{\rm GL}(d,\mathbb{Z})$ with the same irreducible characteristic polynomial are conjugate over the rationals.
\end{proof}
\begin{remark}
In the remainder, if we have an automorphism with property $(P)$ then we will always order the real eivenvalues as in $(iv)$. Moreover we order the Lyapunov exponents similarly with $\chi_{j} = \log|\lambda_{j}|$.
\end{remark}
\begin{lemma}\label{L:IrreducibleSymPoly}
If $p(t)\in\mathbb{Q}[t]$ is irreducible with one root $\lambda\in(-2,2)$ then $q(t) = t^{d}p(t + t^{-1})$, $d = \deg(p(t))$, is irreducible.
\end{lemma}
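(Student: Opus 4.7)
The plan is to use elementary field theory to compute the degree of the minimal polynomial of a natural root of $q$ and show that it equals $\deg q = 2d$. First, I would exhibit a root of $q(t)$. Since $\lambda\in(-2,2)$ is real, we can write $\lambda = \mu + \mu^{-1}$ for some $\mu\in S^1\subset\mathbb{C}$, namely any root of $t^2 - \lambda t + 1 = 0$ (the discriminant $\lambda^2 - 4$ is negative, so $\mu\in\mathbb{C}\setminus\mathbb{R}$). Then
\begin{align*}
q(\mu) = \mu^d p(\mu + \mu^{-1}) = \mu^d p(\lambda) = 0,
\end{align*}
so $\mu$ is a root of $q$.

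Next I would compute $[\mathbb{Q}(\mu):\mathbb{Q}]$. Because $\lambda = \mu + \mu^{-1}\in\mathbb{Q}(\mu)$, we have $\mathbb{Q}(\lambda)\subseteq\mathbb{Q}(\mu)$, and since $\mu$ satisfies the polynomial $t^2 - \lambda t + 1$ of degree $2$ over $\mathbb{Q}(\lambda)$, we have $[\mathbb{Q}(\mu):\mathbb{Q}(\lambda)]\leq 2$. The key (and only nontrivial) step is to show that this index equals $2$, i.e. that $\mu\notin\mathbb{Q}(\lambda)$. This follows from the assumption $\lambda\in(-2,2)$: the embedding $\mathbb{Q}(\lambda)\hookrightarrow\mathbb{C}$ sending the abstract root of $p$ to the real number $\lambda$ lands inside $\mathbb{R}$, but $\mu$ is non-real. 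Hence $[\mathbb{Q}(\mu):\mathbb{Q}(\lambda)] = 2$, and using irreducibility of $p$ to conclude $[\mathbb{Q}(\lambda):\mathbb{Q}] = d$, I get
\begin{align*}
[\mathbb{Q}(\mu):\mathbb{Q}] = [\mathbb{Q}(\mu):\mathbb{Q}(\lambda)]\cdot [\mathbb{Q}(\lambda):\mathbb{Q}] = 2d.
\end{align*}

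Finally I would verify that $\deg q = 2d$: writing $p(t) = \sum_{k=0}^{d}a_{k}t^{k}$ with $a_{d}\neq 0$, the top-degree term of $q(t) = t^{d}p(t+t^{-1})$ comes from $a_{d}t^{d}(t + t^{-1})^{d}$, contributing $a_{d}t^{2d}$. So $q\in\mathbb{Q}[t]$ has degree exactly $2d$. Since the minimal polynomial of $\mu$ over $\mathbb{Q}$ has degree $2d$ and divides $q$, the polynomial $q$ must be a nonzero rational multiple of this minimal polynomial, hence irreducible. The only subtle point is the step $\mu\notin\mathbb{Q}(\lambda)$, but this is immediate from the sign of the discriminant $\lambda^2-4$; every other step is routine bookkeeping.
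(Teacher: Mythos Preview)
Your proof is correct and is genuinely cleaner than the paper's. You argue via field degrees: pick $\mu\in S^{1}\setminus\mathbb{R}$ with $\mu+\mu^{-1}=\lambda$, observe $\mathbb{Q}(\lambda)\subset\mathbb{R}$ under the given real embedding, and conclude $[\mathbb{Q}(\mu):\mathbb{Q}(\lambda)]=2$ from $\mu\notin\mathbb{R}$, giving $[\mathbb{Q}(\mu):\mathbb{Q}]=2d=\deg q$. The paper instead works at the level of polynomial factorizations: it takes an irreducible factor $Q_{1}$ of $q$ containing the root $\omega$, uses that an irreducible polynomial with a root on $S^{1}\setminus\{\pm1\}$ is self-reciprocal to write $Q_{1}(t)=t^{n}P(t+t^{-1})$, and then runs a polynomial division argument to show $P\mid p$, forcing $P=p$ and $Q_{1}=q$. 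Your approach avoids the (implicitly used) lemma that irreducible polynomials with a root on $S^{1}$ are palindromic, and replaces the somewhat delicate division-with-remainder computation by a one-line tower-of-fields argument; the paper's approach, on the other hand, never leaves $\mathbb{Q}[t]$ and so is more self-contained for readers allergic to field extensions.
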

\begin{proof}
Let $p(t)$ and $q(t)$ be as in the lemma. We find some $\omega\in S^{1}\setminus\{\pm1\}$ such that $\omega + 1/\omega = \lambda\in(-2,2)$ and $q(\omega) = 0$. Let $q(t) = Q_{1}(t)Q_{2}(t)...Q_{N}(t)$, $Q_{1}(t),...,Q_{N}(t)\in\mathbb{Q}[t]$, be the decomposition of $q(t)$ into irreducible factors. We assume that we have numbered the factors of $q(t)$ such that $\omega$ is a root of $Q_{1}(t)$. Since $Q_{1}(t)$ is irreducible with a root on $S^{1}$, it follows that $\deg(Q_{1}(t)) = 2n$ is even and we have some $P(z)\in\mathbb{Q}[z]$ such that $Q_{1}(t) = t^{n}P(t + t^{-1})$. We note that $P(z)$ is irreducible since if $P(z)$ splits into a product, then so would $Q_{1}(t)$, which would contradict the irreducibility of $Q_{1}(t)$. In particular, this implies that $P(t)$ only has roots away from $0$. We claim that $P(t)$ divides $p(t)$. To see this, let $p(z)/P(z) = \gamma(z) + r(z)/P(z)$ where $\deg(r(z)) < \deg(P(z))$ and $\gamma(z)$ is some polynomial in $\mathbb{Q}[z]$ of degree $d - n$. It follows that
\begin{align*}
Q_{2}(t)...Q_{N}(t) = & \frac{q(t)}{Q_{1}(t)} = t^{d-n}\cdot\frac{p(t + t^{-1})}{P(t + t^{-1})} = \\ &
t^{d-n}\cdot\left(\gamma(t + t^{-1}) + \frac{r(t + t^{-1})}{P(t + t^{-1})}\right).
\end{align*}
Since $\deg(r(z)) < \deg(P(z))$ we find some root, $z_{0}\neq 0$, of $P(z)$ such that $\lim_{z\to z_{0}}r(z)/P(z) = \infty$ provided that $r(z)$ is not identically $0$. Since the map $z\mapsto z + 1/z$ is a surjective map on the Riemannsphere that map $0$ and $\infty$ to $\infty$ it follows that we find $w_{0}\neq0,\infty$ such that $z_{0} = w_{0} + 1/w_{0}$. Since $w_{0}\neq0,\infty$ and $t^{d-n}\gamma(t + t^{-1})$ is a polynomial
\begin{align*}
\infty = & \lim_{w\to w_{0}}|w^{d-n}|\cdot\left|\frac{r(w + w^{-1})}{P(w + w^{-1})}\right| = \\ &
\lim_{w\to w_{0}}\left|Q_{2}(w)...Q_{N}(w) - w^{d-n}\gamma(w + w^{-1})\right| = \\ &
\left|Q_{2}(w_{0})...Q_{N}(w_{0}) - w_{0}^{d-n}\gamma(w_{0} + w_{0}^{-1})\right| < \infty
\end{align*}
which gives a contradiction. It follows that $r(z)$ is identically zero. In extension $P(z)$ divides $p(z)$. By irreducibility of $p(z)$, it follows that $p(z) = c\cdot P(z)$ which implies that $q(t)$ is proportional to $Q_{1}(t)$ and $q(t)$ is irreducible.
\end{proof}
\begin{lemma}\label{L:ExistenceSpreadSpectrum}
For any even $d\geq 4$ and any $r\in\mathbb{N}$ there is $L\in{\rm GL}(d,\mathbb{Z})$ with property $(P)$ and satisfying $\chi_{j} > r\chi_{j+1}$ for $j = 1,...,(d-2)/2 - 1$.
\end{lemma}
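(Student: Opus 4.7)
My plan is to apply Lemma \ref{L:IrreducibleSymPoly} to a carefully chosen polynomial. I would like to exhibit a monic irreducible polynomial $p(z)\in\mathbb{Z}[z]$ of degree $N+1 = d/2$ (where $N = (d-2)/2$) with one real root $\mu_0\in(-2,2)$ and $N$ further distinct real roots $\mu_1 > \mu_2 > \cdots > \mu_N > 2$ satisfying $\mu_j > 2\mu_{j+1}^r$ for $j = 1,\ldots,N-1$. Granting this, Lemma \ref{L:IrreducibleSymPoly} will show that $q(t) := t^{N+1} p(t+t^{-1})\in\mathbb{Z}[t]$ is monic and irreducible of degree $d$; a direct computation gives $q(0) = 1$, so the companion matrix $L$ of $q$ will lie in $\mathrm{GL}(d,\mathbb{Z})$. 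Each root $\mu$ of $p$ contributes a pair of eigenvalues of $L$, namely the roots of $t^2 - \mu t + 1$: if $\mu\in(-2,2)$ this pair will consist of complex conjugates on $S^1$, while if $|\mu| > 2$ it will consist of real reciprocals $\lambda^\pm = (\mu\pm\sqrt{\mu^2-4})/2$ with the larger-modulus one satisfying $|\mu|/2 \le |\lambda| \le |\mu|$. In particular $L$ will have property $(P)$, and the spread $\mu_j > 2\mu_{j+1}^r$ will yield $|\lambda_j| \ge \mu_j/2 > \mu_{j+1}^r \ge |\lambda_{j+1}|^r$, that is $\chi_j > r\chi_{j+1}$.

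To build $p$, I would fix integers $M_1 > M_2 > \cdots > M_N$ satisfying $M_j > C_r\, M_{j+1}^r$ for a sufficiently large constant $C_r$ depending only on $r$, and take $M_N$ itself to be large. Define $P(z) := z\prod_{j=1}^N (z - M_j)\in\mathbb{Z}[z]$, which has $N+1$ simple real roots $0, M_N, M_{N-1}, \ldots, M_1$. Fix a small prime $k$, say $k := 2$, and set $p(z) := P(z) + k$. For $M_N$ large enough, the local extrema of $P$ between consecutive roots dominate $k$ in magnitude, so the intermediate value theorem (together with the implicit function theorem applied at each simple root of $P$) will guarantee that $p$ also has $N+1$ real roots $\beta_0, \beta_1, \ldots, \beta_N$, with $|\beta_0| < 1$ and $|\beta_j - M_j| < 1$ for $j\ge 1$. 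Thus $\beta_0\in(-2,2)$, $\beta_j > 2$ for $j\ge 1$, and a suitable choice of $C_r$ ensures $\beta_j > 2\beta_{j+1}^r$.

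The hard part will be verifying that $p$ is irreducible in $\mathbb{Z}[z]$; this is the core of the argument and exploits the primality of $p(0) = k$. Assume for contradiction that $p = g\cdot h$ in $\mathbb{Z}[z]$ with $g,h$ monic of positive degree. Then $|g(0)|\cdot|h(0)| = k$, forcing $\{|g(0)|,|h(0)|\} = \{1,k\}$. The roots of $g$ (respectively $h$) form a subset of $\{\beta_0,\beta_1,\ldots,\beta_N\}$, and $|g(0)|$ equals the product of the moduli of the roots of $g$. If some $\beta_j$ with $j\ge 1$ is a root of $g$, then $|g(0)|\ge|\beta_j| > M_N - 1 > k$ once $M_N$ is large, contradicting $|g(0)|\in\{1,k\}$; the same applies to $h$. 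Hence all of $\beta_1,\ldots,\beta_N$ must belong to a single factor, say $h$, whereupon $g(z) = z - \beta_0$. Integrality of $g$ then forces $\beta_0\in\mathbb{Z}$, and $|\beta_0| < 1$ forces $\beta_0 = 0$, contradicting $p(0) = k\ne 0$. Therefore $p$ will be irreducible in $\mathbb{Z}[z]$, hence in $\mathbb{Q}[z]$ by Gauss's lemma, completing the construction.
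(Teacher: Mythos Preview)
Your approach is correct in spirit and genuinely more elementary than the paper's. The paper invokes the Dirichlet unit theorem in a totally real number field of degree $d/2$ to locate a unit $u$ whose logarithmic embedding lands in a prescribed open cone (forcing the spread and one conjugate of modulus below $2$), then argues that the minimal polynomial of $u$ is irreducible by a ``only one small root, constant term $\pm1$'' trick before passing to $q(t)=t^{d/2}p(t+t^{-1})$. Your explicit perturbation $p=P+k$ with $P(z)=z\prod_{j}(z-M_j)$ and $k$ prime avoids the number-theoretic machinery entirely and makes the spread condition completely transparent; both routes converge at Lemma~\ref{L:IrreducibleSymPoly}. Your version has the advantage of being constructive, while the paper's argument shows in passing that such $L$ exist inside the centralizer of \emph{any} automorphism with property $(P)$ arising from a given totally real field.

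There is, however, a small slip in your irreducibility step. The inequality $|g(0)|\ge|\beta_j|$ fails when $\beta_0$ is also a root of $g$, since $|\beta_0|<1$ can drag the product of root moduli below $|\beta_j|$; so you cannot apply the argument symmetrically to both $g$ and $h$. The easy repair: $\beta_0$ lies in exactly one of the two factors, and the \emph{other} factor has all its roots among $\beta_1,\dots,\beta_N$, each of modulus exceeding $M_N-1>k$. Hence that other factor has constant term of modulus strictly greater than $k$, contradicting $\{|g(0)|,|h(0)|\}=\{1,k\}$. With this correction your argument goes through.
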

\begin{proof}
Let $d = 2n$ and let $K$ be a degree $n$ totally real number field with (real) embeddings $\sigma_{1},...,\sigma_{n}:K\to\mathbb{R}\subset\mathbb{C}$. From (the proof of) the Dirichlét unit theorem \cite{SerreDirichletUnit}, the logarithmic embedding
\begin{align*}
& \Lambda:U_{K}\to V = \{(x_{1},...,x_{n})\text{ : }x_{1}+...+x_{n} = 0\}\subset\mathbb{R}^{n}, \\
& \Lambda(u) = (\log|\sigma_{1}(u)|,...,\log|\sigma_{n}(u)|)
\end{align*}
defines a lattice in $V$. That is, $\Lambda(U_{K})\subset V$ is a lattice. We define the open set $W\subset V$
\begin{align*}
W := \{&x_{j} > rx_{j+1},\text{ }x_{n-1} > \log(2)\text{ : }j = 1,...,n-2,\text{ }\\ &
x_{n} = -x_{1}-...-x_{n-1}\}.
\end{align*}
The open set $W$ intersects $\Lambda(U_{K})$. Indeed, $W$ is non-empty and open, and if we have some $w\in W$, then $Rw\in W$ for all $R > 0$. Moreover, if $\rho(w) > 0$ is the largest radius of a ball such that $B_{\rho(w)}(w)\subset W$, then it is clear that $\rho(Rw)\to\infty$ as $R\to\infty$. It follows that $W$ contains balls of arbitrarily large radius. There is some radius $R_{0} > 0$ such that $\Lambda(U_{K})$ intersect any ball of radius $\geq R_{0}$ since $\Lambda(U_{K})$ is a lattice. We conclude that $\Lambda(U_{K})\cap W\neq\emptyset$. Let $u\in U_{K}$ be such that $\Lambda(u)\in W$. The minimal polynomial, $p(t)$, of $u$ is irreducible. Indeed, suppose that $p(t) = r_{1}(t)r_{2}(t)$ splits. Since $p(t)$ is primitive over $\mathbb{Z}$, we may assume that both $r_{j}(t)$ is defined over $\mathbb{Z}$ and since the constant and highest term of $p(t)$ are both $1$ it follows that the same must hold for both $r_{j}(t)$. Now, $p(t)$ only has one root with modulus less than $1$, so one of $r_{j}(t)$ only has roots with modulus larger than $1$, suppose that $r_{2}(t)$ only have roots of modulus larger than $1$. If $r_{2}(t)$ is not constant, then its constant term is the product of its roots, but $r_{2}(t)$ by assumption have only roots of modulus larger. We conclude that $r_{2}(t)$ is constant, so $p(t)$ is irreducible. We can write $p(t)$ as
\begin{align*}
p(t) = t^{n} + c_{n-1}t^{n-1}+...+c_{1}t\pm1.
\end{align*}
We define $q(t) = t^{n}p(t + t^{-1})\in\mathbb{Z}[t]$ which has the form
\begin{align*}
q(t) = t^{d} + c_{d-1}'t^{d-1} +...+1.
\end{align*}
If we denote by $L$ the companion matrix of $q(t)$, then $L\in\rm GL(d,\mathbb{Z})$. Since $p(t)$ has only real roots and one root with modulus less than $2$, it follows that $q(t)$ has one pair of complex roots on $S^{1}$ and all other roots real. Moreover, by Lemma \ref{L:IrreducibleSymPoly}, the polynomial $q(t)$ is irreducible. Finally, the real roots of $q(t)$ that are larger than $1$ in modulus satisfy $|\lambda_{j}| + |\lambda_{j}|^{-1} = e^{x_{j}}$, $|\lambda_{j}| > 1$, where $\Lambda(u) = (x_{1},...,x_{n-1})$. Since $|\lambda_{j}| + |\lambda_{j}|^{-1} > |\lambda_{n-1}| + |\lambda_{n-1}|^{-1} > 2$ we obtain a formula for $|\lambda_{j}|$ in terms of $x_{j}$ by
\begin{align*}
|\lambda_{j}| = \frac{e^{x_{j}}}{2} + \sqrt{\frac{e^{2x_{j}}}{4} - 1}
\end{align*}
so with $\chi_{j} = \log|\lambda_{j}|$ we obtain $x_{j} - \log(2)\leq\chi_{j}\leq x_{j}$. In extension, we have
\begin{align*}
\chi_{j}\geq x_{j} - \log(2) > rx_{j+1} - \log(2)\geq r\chi_{j+1} - \log(2)
\end{align*}
the lemma now follows after possibly replacing $u$ by $u^{N}$ for some large $N\in\mathbb{N}$.
\end{proof}
\begin{lemma}\label{L:LargestSubgroupWithoutHyperbolic}
Let $L\in{\rm GL}(d,\mathbb{Z})$ have property $(P)$. If $\Gamma\leq Z_{{\rm GL}(d,\mathbb{Z})}(L)$ is a subgroup that contains no hyperbolic matrix then we can order the non-zero Lyapunov functionals $\chi_{1},...,\chi_{N},\mu_{1},...,\mu_{N}:\Gamma\to\mathbb{R}$, $N = (d-2)/2$, such that $\mu_{j} = -\chi_{j}$. In particular, if $\Gamma\leq Z_{{\rm GL}(d,\mathbb{Z})}(L)$ contain no hyperbolic matrix then ${\rm rank}(\Gamma)\leq (d-2)/2$.
\end{lemma}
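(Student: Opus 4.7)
My approach uses the number-field dictionary from the opening paragraph of this appendix. Set $K = \mathbb{Q}[t]/\langle p_L(t)\rangle$ and identify $Z_{{\rm GL}(d,\mathbb{Z})}(L)$ with the unit group $U_K$. Under this identification the element $\gamma$ corresponding to $u\in U_K$ acts on the real eigenlines $E_L^{\chi_j}$ and $E_L^{-\chi_j}$ by the scalars $\sigma_j^{\mathbb{R}}(u)$ and $\bar\sigma_j^{\mathbb{R}}(u)$, and on the center $E_L^c$ by the $2\times 2$ real matrix representing multiplication by $\sigma^{\mathbb{C}}(u)\in\mathbb{C}$; here $\sigma_j^{\mathbb{R}}$ and $\bar\sigma_j^{\mathbb{R}}$ are the real embeddings of $K$ sending $t$ to $\lambda_j$ and $\lambda_j^{-1}$, and $\sigma^{\mathbb{C}}(t) = \omega$. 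By Lemma \ref{L:PropertiesOfP}(iv), $p_L(t)$ is reciprocal, so $t\mapsto t^{-1}$ extends to a field involution $\sigma\colon K\to K$ satisfying $\bar\sigma_j^{\mathbb{R}} = \sigma_j^{\mathbb{R}}\circ\sigma$ and $\sigma^{\mathbb{C}}\circ\sigma = \overline{\sigma^{\mathbb{C}}}$ (this last identity uses $\overline\omega = \omega^{-1}$, which is precisely where property $(P)$ enters). The fixed field $K^+ = K^\sigma$ has degree $N+1$, and it is totally real: the $N$ pairs of real places of $K$ restrict to $N$ real embeddings of $K^+$, and the complex place of $K$ restricts to a real embedding (since $t + t^{-1}\mapsto \omega + \omega^{-1}\in\mathbb{R}$).

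The first step is to translate the non-hyperbolicity hypothesis. By the irreducibility argument used in the proof of Lemma \ref{L:AlgebraicFullCentralizer}, any $u\in U_K\setminus\{\pm I\}$ satisfies $|\sigma_j^{\mathbb{R}}(u)|\ne 1$ for every real embedding. Hence the only eigenvalues of $\gamma$ that can lie on the unit circle are $\sigma^{\mathbb{C}}(u)$ and $\overline{\sigma^{\mathbb{C}}(u)}$, and $\gamma$ is hyperbolic iff $|\sigma^{\mathbb{C}}(u)|\ne 1$. The assumption on $\Gamma$ therefore becomes $|\sigma^{\mathbb{C}}(u)| = 1$ for every $u\in\Gamma$.

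The main computation is then a short argument in the totally real field $K^+$. For $u\in\Gamma$, set $v := u\,\sigma(u)$; since $\sigma(v)=v$, $v$ lies in $U_{K^+}$, and
\begin{align*}
|\sigma^{\mathbb{C}}(v)| = |\sigma^{\mathbb{C}}(u)|\cdot|\overline{\sigma^{\mathbb{C}}(u)}| = |\sigma^{\mathbb{C}}(u)|^2 = 1.
\end{align*}
Because $K^+$ is totally real, $\sigma^{\mathbb{C}}|_{K^+}$ is a real embedding, so $\sigma^{\mathbb{C}}(v)\in\mathbb{R}$ and hence $\sigma^{\mathbb{C}}(v) = \pm 1$; injectivity of the field embedding then forces $v = \pm 1$. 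Thus $u\,\sigma(u) = \pm 1$, and applying any $\sigma_j^{\mathbb{R}}$ yields $\sigma_j^{\mathbb{R}}(u)\cdot\bar\sigma_j^{\mathbb{R}}(u) = \pm 1$, which after taking $\log|\cdot|$ reads $\chi_j^\Gamma(\gamma) + \mu_j^\Gamma(\gamma) = 0$. This establishes the desired pairing $\mu_j = -\chi_j$.

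The rank bound is now a Dirichlet count. The norm homomorphism $N\colon U_K\to U_{K^+}$, $N(u) = u\sigma(u)$, has kernel $(U_K)^- := \{u : \sigma(u) = u^{-1}\}$, and since $N(v) = v^2$ for $v\in U_{K^+}$ its image has finite index in $U_{K^+}$, hence rank $N$ (the rank of $U_{K^+}$, totally real of degree $N+1$). Since ${\rm rank}\,U_K = r_1 + r_2 - 1 = 2N$, we obtain ${\rm rank}(U_K)^- = N$. The group $\{u\in U_K : u\sigma(u) = \pm 1\}$ containing $\Gamma$ has $(U_K)^-$ as an index-at-most-two subgroup, so ${\rm rank}(\Gamma)\leq N = (d-2)/2$. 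The only real subtlety in the whole argument is recognizing that property $(P)$ (the complex eigenvalues being on $S^1$) is what makes $K^+$ totally real, and hence what forces $u\sigma(u) = \pm 1$ rather than merely $|\sigma^{\mathbb{C}}(u\sigma(u))| = 1$; without this the rank bound would only give the weaker estimate ${\rm rank}(\Gamma)\leq 2N-1$.
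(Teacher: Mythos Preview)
Your proof is correct and takes a genuinely different route from the paper's. The paper argues matrix-theoretically: it first invokes Lemma \ref{L:PropertiesOfP}(iii) to say that each non-hyperbolic $A\neq\pm I$ in $Z_{\rm Aut}(L)$ itself has property $(P)$, so its real eigenvalues pair as $\lambda,\lambda^{-1}$, giving a permutation $\sigma(A)$ with $\mu_j(A)=-\mu_{\sigma(A)j}(A)$; a limiting trick with $L^nA$ then forces $\sigma(A)=\sigma(L)$, establishing the uniform pairing. The rank bound is deduced from a discreteness argument (small Lyapunov data forces all eigenvalues onto $S^1$, hence $\Lambda(\gamma)=0$). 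Your approach instead exploits the reciprocal structure of $p_L$ to produce the Galois involution $\sigma$ on $K$, observes that property $(P)$ makes the fixed field $K^+$ totally real, and shows directly that non-hyperbolicity forces $u\sigma(u)=\pm1$; the pairing and the rank bound (via ${\rm rank}\,\ker(N_{K/K^+})=N$) then fall out of the Dirichlet unit theorem for $K^+$. Your argument is more structural---it identifies the set of non-hyperbolic elements with the relative unit group $\{u:u\sigma(u)=\pm1\}$ in one stroke, and isolates exactly where property $(P)$ is used (total reality of $K^+$)---while the paper's proof is more self-contained, needing only the linear-algebraic facts already assembled in Lemma \ref{L:PropertiesOfP} and avoiding the relative-unit machinery. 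One small imprecision: $Z_{{\rm GL}(d,\mathbb{Z})}(L)$ is the unit group of an order in $K$, not literally $U_K$, but since units of any order embed in $U_K$ this does not affect either the pairing argument or the rank bound.
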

\begin{proof}
Let $\Gamma_{0}\leq Z_{\rm Aut}(L)$ consist of all $A$ that are not hyperbolic. If $A\neq\pm I$ is not hyperbolic, it follows from Lemma \ref{L:PropertiesOfP} that $A$ has property $(P)$. In particular, if we denote by $\mu_{1},...,\mu_{d-2}:Z_{\rm Aut}(L)\to\mathbb{R}$ the Lyapunov exponents then for each $A\in\Gamma_{0}$ we find some permutation $\sigma\in S_{d-2}$ such that $\mu_{j}(A) = -\mu_{\sigma(A)j}(A)$. We claim that $\sigma(A) = \sigma(L)$ for all $A\in\Gamma_{0}$, which proves the first part of the lemma. To see this, we have
\begin{align}\label{Eq:LyapunovExponentTrick}
\mu_{j}(A) + n\mu_{j}(L) = & \mu_{j}(L^{n}A) = -\mu_{\sigma(L^{n}A)j}(L^{n}A) = \\ &
-\mu_{\sigma(L^{n}A)j}(A) - n\mu_{\sigma(L^{n}A)j}(L).
\end{align}
Dividing by $n$ and letting $n$ be sufficiently large we obtain
\begin{align*}
\mu_{j}(L) = -\mu_{\sigma(L^{n}A)j}(L) = -\mu_{\sigma(L)j}(L)
\end{align*}
so $\sigma(L^{n}A) = \sigma(L)$ if $n$ is large enough. Applying this in equation \ref{Eq:LyapunovExponentTrick} we have
\begin{align*}
\mu_{j}(A) = -\mu_{\sigma(L)j}(A) = -\mu_{\sigma(A)j}(A)
\end{align*}
which shows that $\sigma(L) = \sigma(A)$.

We order the Lyapunov exponents by $\chi_{1},...,\chi_{N},-\chi_{1},...,-\chi_{N}:\Gamma_{0}\to\mathbb{R}$ where $\chi_{1}(L) > ... > \chi_{N}(L) > 0$. For the second part of the lemma, it suffices to show that ${\rm rank}(\Gamma_{0})\leq N$. We claim that the map $\Lambda:\Gamma\to\mathbb{R}^{N}$, $\Lambda(\gamma) = (\chi_{1}(\gamma),...,\chi_{N}(\gamma))$ has discrete image which proves the lemma. There is $\varepsilon > 0$ such that if $|\chi_{1}(\gamma)|,...,|\chi_{N}(\gamma)| < \varepsilon$, $\gamma\in\Gamma_{0}$, then all eigenvalues of $\gamma$ are close to $1$ in modulus and for $\varepsilon$ sufficiently small this implies that $\gamma$ only have eigenvalues on $S^{1}$. So any $\gamma\in\Gamma_{0}$, that has image $\Lambda(\gamma)$ sufficiently close to $0$, is mapped to $0$. So, the image of $\Lambda$ is discrete.
\end{proof}

\newpage
\bibliography{main.bib}{}
\bibliographystyle{abbrv}

\end{document}